\newtoks\prt
\newtheorem{thm}{Theorem}[section]
\newtheorem{lemma}[thm]{Lemma}
\newtheorem{prop}[thm]{Proposition}
\newtheorem{cor}[thm]{Corollary}
\newtheorem{obs}[thm]{Observation}
\newtheorem{example}[thm]{Example}
\newtheorem*{question}{Question}
\theoremstyle{definition}
\newtheorem{example2}[thm]{Example}
\newtheorem{remark}[thm]{Remark}
\def\eqn#1$$#2$${\begin{equation}\label#1#2\end{equation}}
 \def\J#1#2#3{ \left\{ #1,#2,#3 \right\} }
\def\1{\boldsymbol{1}}
\def\B{\mathcal B}
\def\C{\mathcal C}
\def\U{\mathcal U}
\def\P{\mathcal P}
\def\ce{\mathbb C}
\def\en{\mathbb N}
\def\er{\mathbb R}
\def\Ha{\mathbb H}
\def\TT{\mathbb T}
\def \b{\boldsymbol{b}}
\def \e{\boldsymbol{e}}
\def \x{\boldsymbol{x}}
\def \f{\boldsymbol{f}}
\def \g{\boldsymbol{g}}
\def \h{\boldsymbol{h}}
\def \uu{\boldsymbol{u}}
\def \vv{\boldsymbol{v}}
\def \p{\boldsymbol{p}}
\def\span{\operatorname{span}}
\def \reg {\partial _{\kern1pt\text{reg}}}
\def\inv{\diamond}
\def\ip#1#2{\left\langle#1,#2\right\rangle}
\def\dt{\operatorname{dt}}
\newcommand{\norm}[1]{\left\|#1\right\|}
\newcommand{\wscl}[1]{\overline{#1}^{w^*}}
\newcommand{\abs}[1]{\left|#1\right|}
\newcommand{\setsep}{;\,}
\numberwithin{equation}{section}
\definecolor{green}{rgb}{0,0.5,0}
\title{Order type relations on the set of tripotents in a JB$^*$-triple}
\author[J. Hamhalter]{Jan Hamhalter}
\author[O.F.K. Kalenda]{Ond\v{r}ej F.K. Kalenda}
\author[A.M. Peralta]{Antonio M. Peralta}
\address{Czech Technical University in Prague, Faculty of Electrical Engineering, Department of Mathematics, Technicka 2, 166 27, Prague 6,
Czech Republic}
\email{hamhalte@math.feld.cvut.cz}
\address{Charles University, Faculty of Mathematics and Physics, Department of
Mathematical Analysis, Sokolovsk{\'a} 86, 186 75 Praha 8, Czech Republic}
\email{kalenda@karlin.mff.cuni.cz}
\address{Instituto de Matem{\'a}ticas de la Universidad de Granada (IMAG), Departamento de An{\'a}lisis Matem{\'a}tico, Facultad de
Ciencias, Universidad de Granada, 18071 Granada, Spain.}
\email{aperalta@ugr.es}
\keywords{JB$^*$-triple, JBW$^*$-triple, relations on tripotents, von Neumann algebra, partial isometries, products of symmetries}
\subjclass[2010]{17C65, 17C27, 17C10, 06A99, 46L10}
\begin{document}

\begin{abstract}
We introduce, investigate and compare several order type relations on the set of tripotents in a JB$^*$-triple.
The main two relations we address are $\le_h$ and $\le_n$.  We say that $u\le_h e$ (or $u\le_n e$) if $u$ is a self-adjoint (or normal) element of the Peirce-2 subspace associated to $e$ considered as a unital JB$^*$-algebra with unit $e$.
It turns out that these relations need not be transitive, so we consider their transitive hulls as well. Properties of these transitive hulls appear to be closely connected with types of von Neumann algebras, with the results on products of symmetries, with determinants in finite-dimensional Cartan factors, with finiteness and other structural properties of JBW$^*$-triples.
\end{abstract}

\maketitle

\section{Introduction}

A {\em JB$^*$-triple} is a complex Banach space $E$ equipped with a continuous mapping $\J{\cdot}{\cdot}{\cdot}:E^3\to E$ ({\em triple product}) which is symmetric and bilinear in the outer variables and conjugate linear in the second variable and satisfies, moreover, the following properties:
\begin{enumerate}[$(a)$]
    \item $\J xy{\J abc}=\J{\J xya}bc-\J a{\J yxb}c+\J ab{\J xyc}$ for $x,$ $y,$ $a,$ $b,$ $c\in E$ ({\em Jordan identity});
    \item for any $a\in E$ the operator $L(a,a):x\mapsto \J aax$ is a hermitian operator with non-negative spectrum;
    \item $\norm{\J xxx}=\norm{x}^3$ for $x\in E$.
\end{enumerate}
We recall that an operator $T$ on a Banach space is {\em hermitian} if $\norm{e^{i\alpha T}}=1$ for each $\alpha\in\er$.

Any C$^*$-algebra becomes a JB$^*$-triple if we equip it with the triple product defined by $\J abc=\frac12(ab^*c+cb^*a)$. More generally, any closed subspace of a C$^*$-algebra which is stable under the above-defined triple product, is a JB$^*$-triple (cf. \cite{harris1974bounded,kaup1983riemann}). Such spaces are called \emph{JC$^*$-triples}. However, there are some JB$^*$-triples which are not of this form (known as {\em exceptional JB$^*$-triples}, cf. Section~\ref{sec:spinetc} below).

The triple product on a C$^*$-algebra is an algebraic structure which captures the metric structure -- by the Kadison-Paterson-Sinclair theorem a linear bijection of two C$^*$-algebras is an isometry if and only if it preserves the triple product (see \cite{Paterson-Sinclair} or \cite[Theorem  2.2.19]{Cabrera-Rodriguez-vol1}). The same holds for linear bijections between JB$^*$-triples by Kaup's theorem (see \cite[Proposition 5.4]{kaup1977-maan} or \cite[Theorem 5.6.57]{Cabrera-Rodriguez-vol2}). This is closely related to another characterization of JB$^*$-triples as those complex Banach spaces such that biholomorphic  selfmaps of the unit ball act transitively on the ball (see \cite[Theorem 5.4]{kaup1983riemann} or \cite[Theorem 5.6.68]{Cabrera-Rodriguez-vol2}). The triple product then naturally arises from these biholomorphic maps (cf. \cite[Fact 5.6.29]{Cabrera-Rodriguez-vol2}). This witnesses that JB$^*$-triples are a natural class of Banach spaces in which the algebraic and metric structures are tightly connected.

It is known (see \cite[Corollary 3]{Friedman-Russo-GN}) that the triple product of each JB$^*$-triple $E$ satisfies the following property \begin{equation}\label{eq triple product is contractive} \|\{x,y,z\}\|\leq \|x\| \ \|y\| \ \|z\|, \hbox{ for all } x,y,z\in E.
\end{equation}

A JB$^*$-triple which is also a dual Banach space is called a \emph{JBW$^*$-triple}. A result by Barton and Friedman proves that any JBW$^*$-triple has a unique (isometric) predual (see e.g. \cite[Theorem 5.7.38]{Cabrera-Rodriguez-vol2}) and the triple product is separately weak$^*$-to-weak$^*$ continuous (see \cite[Theorem 5.7.20]{Cabrera-Rodriguez-vol2}). Moreover, an original result due to Dineen states that the bidual of any JB$^*$-triple $E$ is a JBW$^*$-triple and its triple product extends that on $E$ (see \cite[Proposition 5.7.10]{Cabrera-Rodriguez-vol2}).

 An important role in the study of JB$^*$-triples and especially JBW$^*$-triples is played by tripotents. Let us recall that a \emph{tripotent} in a JB$^*$-triple $E$ is an element $e\in E$ satisfying $\J eee=e$. In a C$^*$-algebra this formula reduces to $ee^*e=e$, which is a characterization of partial isometries. We also recall that an element $e$ of a C$^*$-algebra $A$ is a \emph{partial isometry} if $p_i(e)=e^*e$ and  $p_f(e)=ee^*$ are projections (i.e., self-adjoint idempotents). Then $p_i(e)$ is called the \emph{initial projection} and $p_f(e)$ is the \emph{final projection}. 
 
 There is a natural partial order (denoted by $\le$) on tripotents, defined in terms of orthogonality, and   generalizing the standard partial order on projections. In \cite{Finite} we studied two weaker preorders on tripotents (denoted by $\le_2$ and $\le_0$). The preorder $\le_2$ was used in \cite{hamhalter2019mwnc} (without giving the notation) to study the strong$^*$ topology and is implicitly mentioned already in  \cite[Lemma 1.14(1)]{horn1987ideal}. If $A$ is a unital C$^*$-algebra, then $e\le 1$ means that $e$ is a projection in $A$ and $e\le_2 1$ is valid for any partial isometry in $A$. More concretely, two partial isometries $e,v\in A$ satisfy $e\le_2 v$ if $e e^* A e^* e \subseteq vv^* A v^* v$. There is a large gap between these two relations and there are some intermediate types of partial isometries -- for example the self-adjoint ones (i.e., satisfying $e^*=e$)
 or the normal ones (i.e., those satisfying $e^*e=ee^*$). As we shall see later, the triple product of $A$ can be employed to characterize normality. Namely, if $p$ is a projection in $A$ and $e$ is a partial isometry in $p A p$ (i.e. $\{p,p,e\} =e$), then $e$ is a normal element (i.e. $e e^* = e^* e$) if and only if $\{e,e,p\}$ is a projection, or equivalently a partial isometry (cf. page \pageref{eq first motivation for leqn}). 
 
 On the other hand, if $e$ and $v$ are two partial isometries in $A$ with $v\in ee^* A e^* e,$ and the latter is regarded as a C$^*$-algebra with product $x \cdot_e y := x e^* y$ and involution $x^{*_e} = e x^* e$ ($x,y\in A$), then $v$ is self-adjoint with respect to the new structure if and only if $\{e,v,e \} = v$ (actually this condition also guarantees that $v\in ee^* A e^* e$). 
 
 Since the previous characterizations of normality and self-adjointness are given in terms of triple products, we shall abstract their meaning to define two new relations $\le_n$ and $\le_h$ for tripotents in a general JB$^*$-triple (see section \ref{sec:relations} for the concrete definitions).

In the present paper we define and study several order type relations on tripotents inspired by the mentioned gap.
 
 The paper is organized as follows: In the rest of the introductory section we recall some background information on JB$^*$-triples, JB$^*$-algebras, tripotents, the usual partial order $\le$ and the two above-mentioned preorders.
 
 In Section~\ref{sec:relations} we introduce the intermediate relations, give their
 basic properties and characterizations and compare them to each other.
 
 In Section~\ref{sec:JBW*} we look at the relations in JBW$^*$-triples and present several auxilliary tools to study them.
 
 Then, in several subsequent sections we provide a detailed study of the relations in the individual summands from the standard representation of JBW$^*$-triples recalled in \eqref{eq:representation of JBW* triples} below.
 
 In the final section we give an overview of the results and open problems.
 
 \subsection{JB$^*$-algebras and JBW$^*$-algebras}

Recall that  a {\em JB$^*$-algebra} is a complex Banach space $B$  equipped with a product $\circ$ and an involution $*$ satisfying the following properties.
\begin{enumerate}[$(a)$]
    \item $(B,+,\circ)$ is a (possibly) non-associative complex algebra;
    \item $x\circ y=y\circ x$ for $x,y\in B$;
    \item  $(x\circ x)\circ (y\circ x)=((x\circ x)\circ y)\circ x$ for $x,y\in B$ (\emph{Jordan identity});
    \item $\norm{x\circ y}\le\norm{x}\norm{y}$ for $x,y\in B$;
    \item $*$ is an involution on the algebra $(B,+,\circ)$;
    \item $\norm{2(x\circ x^*)\circ x-(x\circ x)\circ x^*}=\norm{x}^3$ for $x\in B$.
\end{enumerate}
Note that the conditions $(a)$--$(c)$ are the axioms defining {\em complex Jordan algebra} (cf. \cite[\S2.4.1]{hanche1984jordan}), if we add the condition $(d)$, we get a {\em complex Jordan Banach algebra}.

Any C$^*$-algebra becomes a JB$^*$-algebra if equipped with the Jordan product $x\circ y=\frac12(xy+yx)$. More generally, any closed subspace of a C$^*$-algebra which is stable under involution and the Jordan product is a JB$^*$-algebra. There are some JB$^*$-algebras which are not of this form (named {\em exceptional} JB$^*$-algebras, cf. Section~\ref{subsec:c6} below).

Further, any JB$^*$-algebra  becomes a JB$^*$-triple when equipped with the triple product
\begin{equation}\label{eq:triple product in JB*-algebra}
    \J xyz=(x\circ y^*)\circ z+x\circ(y^*\circ z)-(x\circ z)\circ y^*,
\end{equation}
see \cite[Theorem 4.1.45]{Cabrera-Rodriguez-vol1}.
Note that the condition $(f)$ from the definition of JB$^*$-algebras yields the condition $(c)$ from the definition of JB$^*$-triples.

An element $a$ in a unital JB$^*$-algebra $B$ is called \emph{invertible} if there exists a (unique) element $b$ (called the \emph{Jordan inverse} of $a$ and denoted by $a^{-1}$) satisfying $a\circ b = 1$ and $a^2\circ b = a,$ equivalently, the mapping $U_a : B\to B$ defined by 
$$U_a (x) =  2 (a\circ x) \circ a - a^2\circ x\ (=\J a{x^*}a)$$  is invertible 
(cf. \cite[3.2.9]{hanche1984jordan} or \cite[\S 4.1.1]{Cabrera-Rodriguez-vol1}). Each element $u\in B$ whose Jordan inverse is $u^*$ is called \emph{unitary}.  

Similarly as in the case of triples, a JB$^*$-algebra which is a dual Banach space is called a \emph{JBW$^*$-algebra}. Again, the predual is (isometrically) unique and, moreover, the Jordan product is separately weak$^*$-to-weak$^*$ continuous and the involution is weak$^*$-to-weak$^*$ continuous (cf. \cite[Theorem 4.4.16 and Corollaries 4.5.4 and 4.1.6]{hanche1984jordan} or \cite[Theorem 5.1.29, Corollary 5.1.41 and Fact 5.1.42]{Cabrera-Rodriguez-vol2}).

\subsection{Tripotents, Peirce decomposition and three preorders}

If $u$ is a tripotent in a JB$^*$-triple $E$, it generates a decomposition of $E$ in terms of the eigenspaces of the operator $L(u,u)$ (recall that it is defined by $L(u,u)x=\J uux$ for $x\in E$). This operator has eigenvalues contained in the set $\{0,\frac12,1\}$ and the mentioned decomposition is formed by the following  {\em Peirce subspaces}:
$$E_j(u)=\left\{x\in E\setsep \J uux=\frac j2 x\right\}\mbox{ for }j=0,1,2.$$
It is known that $E=E_2(u)\oplus E_1(u)\oplus E_0(u)$ and that the canonical projections (called {\em Peirce projections} and denoted by $P_j(u)$, $j=0,1,2$) have norm one or zero \cite[Corollary 1.2]{Friedman-Russo}. Further, if $E$ is a JBW$^*$-triple, the Peirce subspaces are weak$^*$-closed and the Peirce projections are weak$^*$-to-weak$^*$ continuous since they can be described in terms of the triple product (cf. the concrete expression in \eqref{eq description of Peirce projections} below).

Moreover, it is easy to check, that
\begin{equation}
\J{E_j(u)}{E_k(u)}{E_l(u)}\subset E_{j-k+l}(u),
\end{equation}
where the right-hand side is defined to be $\{0\}$ if $j-k+l\notin\{0,1,2\}$.
Moreover, it is known (but not obvious) that
\begin{equation}
\J{E_2(u)}{E_0(u)}E=\J{E_0(u)}{E_2(u)}E=\{0\}.    
\end{equation}
The two above rules are known and will be referred to as the {\em Peirce arithmetics} or the {\em Peirce calculus}. It easily follows that $E_j(u)$ is a JB$^*$-subtriple of $E$ for $j=0,1,2$. 

The following formulas for the Peirce projections may be easily deduced from the definitions.
\begin{equation}\label{eq description of Peirce projections}
\begin{aligned}
P_2(u)x&=2L(u,u)^2x-L(u,u)x,\\
P_1(u)x&=4(L(u,u)x-L(u,u)^2x),\\
P_0(u)x&=x-3L(u,u)x+2L(u,u)^2x.
\end{aligned}    
\end{equation}
Another useful formula for $P_2(u)$ is
\begin{equation}
    P_2(u)x=Q(u)^2x \mbox{ where }Q(u)x=\J uxu\mbox{ for }x\in E.
\end{equation}

A tripotent $u$ is called {\em complete} if $E_0(u)=\{0\}$ and it is called {\em unitary} if $E=E_2(u)$. Recall that each unital JB$^*$-algebra $B$ can be also regarded as a JB$^*$-triple with the triple product \eqref{eq:triple product in JB*-algebra}, so in this case we have two notions of unitary elements. Fortunately, they coincide, that is, an element $u\in B$ is unitary as an element in a unital JB$^*$-algebra if and only if it is unitary in the triple sense (cf. \cite[Proposition 4.3]{braun1978holomorphic} or \cite[Theorem 4.2.28]{Cabrera-Rodriguez-vol1}).

In a JB$^*$-triple there need not be any complete tripotent (in fact, there need not be any nonzero tripotent, take for example the non-unital C$^*$-algebra $\C_0(\er)$); but in a JBW$^*$-triple there is an abundance of complete tripotents, as they are exactly the extreme points of the unit ball.

On the other hand, JBW$^*$-triples need not contain any unitary element. For example, the space of $1\times 2$ complex matrices
(with the structure of the space of linear functionals on the two-dimensional Hilbert space) is a JBW$^*$-triple without unitary elements. In fact, JB$^*$-triples with a unitary element are just the triples coming from unital JB$^*$-algebras (see \cite[examples in page 525]{kaup1983riemann} or \cite[Theorem 4.1.55]{Cabrera-Rodriguez-vol1}).

Indeed, if $E$ is a JB$^*$-triple with a unitary tripotent $e$, it becomes a unital JB$^*$-algebra if it is equipped with the operations
\begin{equation}\label{eq:algebra from unitary}
x\circ_e y=\J xey\mbox{ and }x^{*_e}=\J exe.\end{equation} In particular, for each tripotent $v$ in a JB$^*$-triple $F$ the Peirce-$2$ subspace $F_2(v)$ is a unital JB$^*$-algebra. Furthermore, $v$ is called an \emph{abelian
tripotent} if the subtriple $F_2(v)$ is an associative JB$^*$-algebra --equivalently, a commutative unital  C$^*$-algebra -- (cf. \cite{horn1987classification,horn1988classification}).  

Now we recall definitions of three preorders studied in \cite{Finite}.
Let $E$ be a JB$^*$-triple and let $e,u\in E$ be two tripotents.
We say that
\begin{itemize}
    \item $u\le e$ if $e-u$ is a tripotent orthogonal to $u$;
    \item $u\le_2 e$ if $u\in E_2(e)$;
    \item $u\le_0 e$ if $E_0(e)\subset E_0(u)$.
\end{itemize}

Here $\le$ is the standard partial order on tripotents used in \cite{Friedman-Russo, BattagliaOrder1991, horn1987ideal,  horn1987classification, horn1988classification} (and elsewhere). Recall that tripotents $e_1,e_2\in E$ are orthogonal if $L(e_1,e_2)=0$ (or, equivalently, $e_1\in E_0(e_2)$) and that this relation is symmetric.

Relations $\le_2$ and $\le_0$ are preorders -- reflexive and transitive, but not antisymmetric (see \cite{Finite}).
The relation $\le_2$ was used  already in \cite[Sections 6 and 7]{hamhalter2019mwnc}  and \cite{HKPP-BF}, without introducing the notation.

 Following the notation of \cite{Finite}  we will write $u\sim_2 e$ if $u\le_2 e$ and $e\le_2 u$. If $u\le_2 e$ and $e\not\le_2 u$, we write $u<_2 e$. The relations $\sim_0$ and $<_0$ have the analogous meaning. In this paper we shall focus on 
a variety of relations  lying in between $\le$ and $\le_2$. We will not consider the relations $\le_0$ and $\sim_0$
as they have very different nature and were studied in \cite{Finite} (mainly in Section 2).

The following proposition summarizes known characterizations of the partial order gathered from different papers and authors.

\begin{prop}{\rm\cite[Proposition 2.4]{Finite}}\label{P:order char} Let $u,e$ be two tripotents in a JB$^*$-triple $E$. The following assertions are equivalent. 
\begin{enumerate}[$(i)$]
    \item $u\le e$;
    \item $u=\J ueu$;
    \item $u=\J uue$;
    \item $u=P_2(u)e$;
    \item $L(e-u,u)=0$;
    \item $L(u,e-u)=0$;
    \item $u$ is a projection in the JB$^*$-algebra $E_2(e)$;
    \item $E_2(u)$ is a JB$^*$-subalgebra of $E_2(e)$.
\end{enumerate}
\end{prop}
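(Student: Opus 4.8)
The plan is to treat $(i)$ and $(iii)$ as the two ``anchor'' conditions and to read off everything else from the Peirce decomposition of $e$ with respect to $u$. Write $e=a_2+a_1+a_0$ with $a_j=P_j(u)e\in E_j(u)$. Since $L(u,u)$ acts as the identity on $E_2(u)$, as $\tfrac12\,\mathrm{id}$ on $E_1(u)$, and as $0$ on $E_0(u)$, condition $(iii)$, namely $u=\J uue=L(u,u)e=a_2+\tfrac12 a_1$, is (comparing Peirce components) equivalent to ``$a_2=u$ and $a_1=0$''. Condition $(iv)$ is literally $a_2=u$ by definition of $a_2$; and since $Q(u)$ annihilates $E_1(u)\oplus E_0(u)$ (by $\J{E_2(u)}{E_1(u)}{E_2(u)}=\{0\}$ and the special Peirce rule $\J{E_2(u)}{E_0(u)}{E}=\{0\}$), we have $\J ueu=Q(u)a_2$, the image of $a_2$ under the involution $x\mapsto\J uxu$ of the JB$^*$-algebra $E_2(u)$; hence $(ii)$ also amounts to $a_2=u$. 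Thus $(ii)\Leftrightarrow(iv)$, $(iii)\Rightarrow(iv)$, and $(iii)\Rightarrow(ii)$ are immediate.

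Next I would run the easy implications out of $(i)$. Writing $e=u+v$ with $v=e-u$ a tripotent orthogonal to $u$, one has $v\in E_0(u)$, and every cross term in the expansions of $\J uue$, $\J ueu$, $\J eeu$ and $\J eue$ is killed by the Peirce rules; this gives at once $(ii)$, $(iii)$, the membership $u\in E_2(e)$, and (since $L(u,v)=L(v,u)=0$) also $(v)$ and $(vi)$. Moreover $u$ is then idempotent and self-adjoint in $E_2(e)$, i.e. $(vii)$ holds, and the Jordan--Peirce decomposition of the projection $u$ inside the unital JB$^*$-algebra $E_2(e)$ identifies $E_2(u)$ with the subalgebra $U_u(E_2(e))$, yielding $(viii)$. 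Conversely, the cheap converses funnel each remaining condition into $(ii)$ or $(iii)$: applying $L(e-u,u)=0$ to $u$ gives $L(u,u)e=u$, so $(v)\Rightarrow(iii)$; applying $L(u,e-u)=0$ to $u$ gives $\J ueu=u$, so $(vi)\Rightarrow(ii)$; a projection satisfies $u\circ_e u=\J ueu=u$, so $(vii)\Rightarrow(ii)$; and equality of the products $\circ_u,\circ_e$ on $E_2(u)$ evaluated at $(u,u)$ gives $\J ueu=u$, so $(viii)\Rightarrow(ii)$. (Independently, $(vii)\Rightarrow(i)$ without any further work, since a projection is always below the unit in the triple order of the unital JB$^*$-algebra $E_2(e)$.)

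It then remains to close the cycle via $(iii)\Rightarrow(i)$ and $(ii)\Rightarrow(i)$. For $(iii)\Rightarrow(i)$ I have $e=u+a_0$ with $a_0\in E_0(u)$; expanding $\J eee=e$ and cancelling all $u$--$a_0$ cross terms (again by the Peirce rules) leaves $\J{a_0}{a_0}{a_0}=a_0$, so $a_0=e-u$ is a tripotent, and it is orthogonal to $u$ because it lies in $E_0(u)$, which is exactly $(i)$. The genuinely hard step, on which the whole equivalence hinges, is to promote $(ii)$/$(iv)$, i.e. $a_2=u$, to $(iii)$, i.e. to prove that $a_2=u$ forces $a_1=0$. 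This cannot follow from the linear Peirce identities alone, as they constrain only $a_2$; one must use that $e$ is a tripotent. Concretely, projecting $\J eee=e$ onto $E_2(u)$ collapses to
\begin{equation*}
2\,\J u{a_1}{a_1}+\J{a_1}{a_0}{a_1}=0 .
\end{equation*}
Here $\J u{a_1}{a_1}=\J{a_1}{a_1}u$ is a positive element of $E_2(u)$ with $\norm{\J u{a_1}{a_1}}\ge\tfrac12\norm{a_1}^2$, while $\norm{\J{a_1}{a_0}{a_1}}$ is controlled through $\norm{a_0}\le\norm{e}=1$ by a Cauchy--Schwarz-type estimate for the triple product; comparing the two forces $\J u{a_1}{a_1}=0$, hence $a_1=0$. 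Equivalently, this is the standard contractivity fact that $u+\mathrm{Ball}(E_0(u))$ is a face of the unit ball, i.e. $\norm{x}\le1$ with $P_2(u)x=u$ implies $P_1(u)x=0$. I expect this positivity/contractivity estimate to be the main obstacle; once it is in place, $(iv)\Rightarrow(iii)$ (and via $(ii)\Leftrightarrow(iv)$ also $(ii)\Rightarrow(iii)$), and all eight conditions are linked.
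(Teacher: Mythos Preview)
The paper does not prove this proposition at all; it is quoted verbatim from \cite[Proposition 2.4]{Finite} as a summary of known characterizations, so there is no ``paper's own proof'' to compare against. Your argument is a correct and natural way to establish the equivalences.

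A few remarks on the write-up. All the easy implications you list are fine, and your reduction of $(ii)$, $(iii)$, $(iv)$ to conditions on the Peirce components $a_2,a_1$ of $e$ relative to $u$ is exactly the right framework. The crucial step is indeed $(iv)\Rightarrow(iii)$, i.e.\ $a_2=u\Rightarrow a_1=0$. Your second formulation via the face property is the clean and standard route: the result that $\|x\|\le1$ and $P_2(u)x=u$ force $P_1(u)x=0$ is \cite[Lemma~1.6]{Friedman-Russo} (equivalently, $u+\mathrm{Ball}(E_0(u))$ is a norm-closed face of the unit ball), and applying it to $x=e$ immediately gives $a_1=0$. Your first formulation via the displayed identity $2\J u{a_1}{a_1}+\J{a_1}{a_0}{a_1}=0$ is a correct consequence of projecting $\J eee=e$ onto $E_2(u)$, but the norm comparison you sketch does not by itself yield a contradiction: with $\|\J u{a_1}{a_1}\|\ge\tfrac12\|a_1\|^2$ and $\|\J{a_1}{a_0}{a_1}\|\le\|a_1\|^2\|a_0\|\le\|a_1\|^2$ one only obtains $\|a_0\|\ge1$ when $a_1\ne0$, not an outright impossibility. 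One would need a sharper positivity argument (or equivalently, exactly the face lemma) to conclude; so in the final version simply invoke the Friedman--Russo face result rather than the norm estimate.
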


We will need also the following easy properties.

\begin{prop}{\rm\cite[Proposition 2.5]{Finite}}\label{P:order properties}
Let $E$ be a JB$^*$-triple. The relation $\le$ is a partial order on the set of all tripotents in $E$. Moreover, given tripotents $u,v,e\in E$ the following holds.
\begin{enumerate}[$(a)$]
    \item If $u\le e$, then $\alpha u\le \alpha e$ for any complex unit $\alpha$;
    \item If $u\le e$, $v\le e$ and $u,v$ are orthogonal, then $u+v\le e$.
 \end{enumerate}
\end{prop}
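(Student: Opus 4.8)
The plan is to rely throughout on the equivalent reformulations collected in Proposition~\ref{P:order char}, which reduce everything to short triple-product computations and to the Peirce calculus; recall also that two tripotents $a,b$ are orthogonal exactly when $L(a,b)=0$, a symmetric relation. \emph{Reflexivity} is immediate: $u-u=0$ is a tripotent with $L(0,u)=0$, so $0$ is orthogonal to $u$ and $u\le u$. For \emph{antisymmetry}, suppose $u\le e$ and $e\le u$ and set $w=e-u$. The equivalence of (i) and (vii), applied to $u\le e$, shows that $u$ is a projection in $E_2(e)$, so $u\in E_2(e)$; since trivially $e\in E_2(e)$, we get $w=e-u\in E_2(e)$. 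On the other hand $e\le u$ means that $u-e=-w$ is a tripotent orthogonal to $e$, i.e. $L(w,e)=0$, so $w\in E_0(e)$. As $E_2(e)\cap E_0(e)=\{0\}$, this forces $w=0$, that is $e=u$.

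For \emph{transitivity}, assume $u\le v$ and $v\le e$. Applying the equivalence of (i) and (vii) to $u\le v$, the element $u$ is a projection in the JB$^*$-algebra $E_2(v)$, and in particular $u\in E_2(v)$. Applying the equivalence of (i) and (viii) to $v\le e$, the subspace $E_2(v)$ is a JB$^*$-subalgebra of $E_2(e)$; hence $u\in E_2(v)\subseteq E_2(e)$, and the Jordan product and involution of $E_2(v)$ are the restrictions of those of $E_2(e)$. Consequently the projection $u$ of $E_2(v)$ is still a self-adjoint idempotent, i.e. a projection, in $E_2(e)$, and (vii) gives $u\le e$.

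It remains to establish (a) and (b), for which the cleanest tool is the characterization $u\le e\iff u=\J uue$ (equivalence of (i) and (iii)). For (a), note first that for a modulus-one scalar $\alpha$ the elements $\alpha u,\alpha e$ are tripotents, since $\J{\alpha u}{\alpha u}{\alpha u}=|\alpha|^2\alpha\,\J uuu=\alpha u$; then $\J{\alpha u}{\alpha u}{\alpha e}=|\alpha|^2\alpha\,\J uue=\alpha u$, whence $\alpha u\le\alpha e$ by (iii). For (b), assume $u\le e$, $v\le e$ and $u,v$ orthogonal. Then $u+v$ is a tripotent, as one sees by expanding $\J{u+v}{u+v}{u+v}$ and discarding the mixed terms using $u\in E_0(v)$, $v\in E_0(u)$ and the Peirce calculus, and
\begin{equation*}
\J{u+v}{u+v}e=\J uue+\J uve+\J vue+\J vve=u+0+0+v=u+v,
\end{equation*}
since $\J uve=L(u,v)e=0$ and $\J vue=L(v,u)e=0$ by orthogonality, while $\J uue=u$ and $\J vve=v$ by the hypotheses and (iii). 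Hence $u+v\le e$, again by (iii).

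The only genuinely delicate point is the transitivity step, where ``projection'' must be read in the correct algebra: the argument hinges on the fact, built into characterization (viii), that $E_2(v)$ embeds in $E_2(e)$ with the \emph{same} Jordan product and involution, so that self-adjoint idempotents of $E_2(v)$ are literally self-adjoint idempotents of $E_2(e)$. Everything else is a direct unwinding of the definitions together with the Peirce arithmetic.
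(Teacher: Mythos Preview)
The paper does not actually prove this proposition: it is quoted with the citation \cite[Proposition 2.5]{Finite} and introduced only as ``the following easy properties,'' with no argument supplied. Your proof is correct and self-contained, relying only on Proposition~\ref{P:order char} and the Peirce decomposition, so there is nothing in the paper to compare it against; it would serve perfectly well as the omitted verification.
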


\section{Intermediate order type relations}\label{sec:relations}

In this section we introduce a variety of order type relations on tripotents lying in between the preorder $\le_2$ and the standard partial order $\le$. 

Two main relations which inspired our research are the following ones.
We say that
\begin{itemize}
    \item $u\le_h e$ if $u=\J eue$;
    \item $u\le_n e$ if $u=\J eeu$ and $\J uue$ is a tripotent. 
\end{itemize}

Note  that $u\le_h e$ if and only if $u$ is a self-adjoint tripotent in $E_2(e)$. Indeed, it follows by Peirce arithmetic that $\J eue\in E_2(e)$, so the equality $u=\J eue$ implies $u\in E_2(e)$. Further, $\J eue=u^{*_e}$ in the JB$^*$-algebra $E_2(e)$.

Further, $u\le_n e$ means that $u$ is a `normal tripotent' in $E_2(e)$:
If $E$ is a C$^*$-algebra and $e$ is a projection, it means that $u$ is a normal partial isometry in $E_2(e)$.
Indeed, the equality $u=\J eeu$ implies $u\in E_2(e)$. Further, let us analyze the assumption that $\J uue$ is a tripotent (i.e., a partial isometry). Note that
$$\J uue=\frac12(uu^*e+eu^*u)=\frac12(p_f(u)e+ep_i(u))=\frac12(p_f(u)+p_i(u)).$$
It is easy to check that the arithmetic mean of two projections is a partial isometry if an only if these two projections coincide.  Therefore, $\J uue$ is a tripotent if and only if $p_f(u)=p_i(u)$, i.e, $uu^*=u^*u$, in other words if and only if $u$ is a normal operator.\label{eq first motivation for leqn}

The relations $\le_h$ and $\le_n$ are clearly reflexive, but they are not transitive -- as witnessed by counterexamples below. Therefore we will consider also their transitive hulls.

In fact, the interval between $\le$ and $\le_2$ has a richer structure. In several subsections we will describe and analyze several relations including the above-defined relations $\le_h$ and $\le_n$. Later we will compare them.

\subsection{Modifications of the classical partial order by a multiple}
It is obvious that $u\sim_2 \alpha u$ whenever $u$ is a tripotent and $\alpha$ is a complex unit. But $u$ and $\alpha u$ are connected much more than by the coincidence of their Peirce decomposition. On the other hand, unless $\alpha=1$, they are incomparable with respect to $\le$. This inspires definitions of the following two preorders.

Let $E$ be a JB$^*$-triple and let $e,u\in E$ be two tripotents.
We say that
\begin{itemize}
    \item $u\le_r e$ if $u\le e$ or $-u\le e$;
    \item $u\le_c e$ if there is a complex unit $\alpha$ with $\alpha u\le e$.
\end{itemize}
The relations $\sim_r$, $<_r$, $\sim_c$ and $<_c$ have the obvious meaning.
In the following two propositions we collect properties of the relations $\le_r$ and $\le_c$.

\begin{prop}\label{P:charact ler}
Let $E$ be a JB$^*$-triple. Then the relation $\le_r$ is a preorder on the set of tripotents in $E$. Moreover, given tripotents $u,e\in E$, the following holds.
\begin{enumerate}[$(a)$]
    \item Let $u,e\in E$ be two tripotents. Then $u\le_r e$ if and only if either $u$ or $-u$ is a projection in  the JB$^*$-algebra $E_2(e)$.
    \item For two tripotents $e,u\in E$ we have $e\sim_r u$ if and only if either $e=u$ or $e=-u$.
\end{enumerate}
\end{prop}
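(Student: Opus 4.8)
The plan is to reduce everything to the classical partial order $\le$ and its characterizations from Proposition~\ref{P:order char}, together with the elementary property in Proposition~\ref{P:order properties}(a).

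First I would establish that $\le_r$ is a preorder. Reflexivity is immediate since $u\le u$ gives $u\le_r u$. For transitivity, suppose $u\le_r e$ and $e\le_r w$. By definition each relation offers a sign: either $u\le e$ or $-u\le e$, and either $e\le w$ or $-e\le w$. The subtle point is that $\le$ is not directly compatible with these sign choices, so I would invoke Proposition~\ref{P:order properties}(a), which says $u\le e \Rightarrow \alpha u\le \alpha e$ for a complex unit $\alpha$; applied with $\alpha=-1$ this lets me flip signs simultaneously on both sides. Combining the four cases, in each case I can arrange a common sign and chain the two instances of $\le$ using transitivity of $\le$ (Proposition~\ref{P:order properties}) to conclude $\pm u\le w$, hence $u\le_r w$.

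For part $(a)$, I would use the equivalence $(i)\iff(vii)$ in Proposition~\ref{P:order char}: $u\le e$ holds iff $u$ is a projection in the JB$^*$-algebra $E_2(e)$. Thus $u\le_r e$ means $u\le e$ or $-u\le e$, which translates directly into: $u$ is a projection in $E_2(e)$, or $-u$ is a projection in $E_2(e)$. This is essentially a restatement, so the only care needed is to note that the characterization $(vii)$ applies verbatim to both $u$ and $-u$ (both being tripotents once $u$ is).

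For part $(b)$, the direction $(\Leftarrow)$ is trivial: if $e=u$ then certainly $e\sim_r u$, and if $e=-u$ then $-u\le u$? — here I must be careful. The clean approach for $(\Rightarrow)$ is the main obstacle. Assuming $e\sim_r u$, i.e. $e\le_r u$ and $u\le_r e$, I get signs $\varepsilon,\delta\in\{1,-1\}$ with $\varepsilon e\le u$ and $\delta u\le e$. Using Proposition~\ref{P:order properties}(a) to normalize, I expect to derive $\varepsilon\delta=1$ and that the two tripotents have equal ``size'' in the sense that $P_2$ of one applied to the other returns it, forcing $e=\pm u$. Concretely, from $\varepsilon e\le u$ and characterization $(iv)$, $\varepsilon e = P_2(\varepsilon e)u$, and symmetrically $\delta u = P_2(\delta u)e$; since $P_2(\pm w)=P_2(w)$, combining these and using that $\le$ is antisymmetric (Proposition~\ref{P:order properties}) on the normalized tripotents should pin down the equality. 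The delicate bookkeeping of the signs — ensuring the two sign choices are consistent rather than independent — is where I expect the real work to lie, and I would handle it by reducing to antisymmetry of $\le$ applied to a suitably sign-adjusted pair.
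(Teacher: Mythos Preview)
Your treatment of the preorder property and part $(a)$ matches the paper exactly: reflexivity is trivial, transitivity is the four-case sign juggle via Proposition~\ref{P:order properties}$(a)$ and transitivity of $\le$, and $(a)$ is a direct restatement of Proposition~\ref{P:order char}$(vii)$.

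For part $(b)$ your outline is along the right lines but contains a small misconception and is more elaborate than necessary. You write that you ``expect to derive $\varepsilon\delta=1$''; this cannot be derived in general, and the case $\varepsilon\delta=-1$ is genuinely possible --- it just forces $e=u=0$. The paper handles $(b)$ by a plain four-case split using only antisymmetry of $\le$ and Proposition~\ref{P:order properties}$(a)$, with no appeal to $P_2$ or characterization $(iv)$: if $e\le u$ and $u\le e$ then $e=u$; if $-e\le u$ and $-u\le e$ then (flipping signs) $u\le -e$ and antisymmetry gives $u=-e$; if $e\le u$ and $-u\le e$ then $u\le -e$, so $e\le -e$, and by symmetry $-e\le e$, whence $e=0$ and then $u=0$; the fourth case is symmetric. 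So the ``delicate bookkeeping'' you anticipate is resolved by this direct case analysis rather than by forcing the signs to agree.
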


\begin{proof} Reflexivity of $\le_r$ is obvious. Transitivity follows easily from the transitivity of $\le$ using Proposition~\ref{P:order properties}$(a)$.
Thus $\le_r$ is indeed a preorder.

$(a)$ This follows from the definition and Proposition~\ref{P:order char} (using  property $(vii)$). 

$(b)$ The `if part' is obvious. To see the converse assume $e\sim_r u$. We distinguish the following cases:

If $e\le u$ and $u\le e$, then $e=u$. If $-e\le u$ and $-u\le e$, then $u\le -e$ (by Proposition~\ref{P:order properties}$(a)$), so $u=-e$.

Assume $e\le u$ and $-u\le e$. Then $u\le -e$, hence $e\le -e$. We deduce that $-e\le e$ as well, thus $e=-e$, so $e=0$. It follows that $u=0$ as well.

The fourth case is similar.
\end{proof}

\begin{prop}\label{P:charact lec} Let $E$ be a JB$^*$-triple. Then the relation $\le_c$ is a preorder on the set of tripotents in $E$. Moreover, given tripotents $u,e\in E$, the following holds.
\begin{enumerate}[$(a)$]
    \item Let $u,e\in E$ be two tripotents. Then $u\le_c e$ if and only if there is a projection $p\in E_2(e)$ and a complex unit $\alpha$ such that $u=\alpha p$.
    \item For two tripotents $e,u\in E$ we have $e\sim_c u$ if and only $u=\alpha e$ for a complex unit $\alpha$. 
    \end{enumerate}
\end{prop}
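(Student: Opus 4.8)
To establish that $\le_c$ is a preorder, I would first note that reflexivity is immediate by taking $\alpha=1$. For transitivity, suppose $u\le_c e$ and $e\le_c f$, so that $\alpha u\le e$ and $\beta e\le f$ for some complex units $\alpha,\beta$. Applying Proposition~\ref{P:order properties}$(a)$ to $\alpha u\le e$ gives $(\beta\alpha)u=\beta(\alpha u)\le\beta e$, and transitivity of the partial order $\le$ then yields $(\beta\alpha)u\le f$. Since $\beta\alpha$ is again a complex unit, we conclude $u\le_c f$.

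For part $(a)$ I would rely on the equivalence $(vii)$ in Proposition~\ref{P:order char}. If $u\le_c e$, pick a complex unit $\alpha$ with $\alpha u\le e$; here $\alpha u$ is again a tripotent (by homogeneity of the triple product), so $p:=\alpha u$ is a projection in $E_2(e)$ by $(vii)$, and $u=\overline{\alpha}\,p$. Conversely, if $u=\alpha p$ with $p$ a projection in $E_2(e)$, then $\overline{\alpha}\,u=p$ is a projection in $E_2(e)$, whence $\overline{\alpha}\,u\le e$ by $(vii)$ and therefore $u\le_c e$.

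For part $(b)$, the \emph{if} direction is routine: if $u=\alpha e$, then $\overline{\alpha}\,u=e\le e$ and $\alpha e=u\le u$ give $u\le_c e$ and $e\le_c u$, so $e\sim_c u$. The heart of the matter is the converse. Assuming $e\sim_c u$, I would read the two comparisons through two \emph{different} characterizations. From $u\le_c e$, choose a complex unit $\alpha$ with $\alpha u\le e$; then Proposition~\ref{P:order char}$(iii)$ gives $\J uue=\J{\alpha u}{\alpha u}{e}=\alpha u$. From $e\le_c u$, choose a complex unit $\beta$ with $\beta e\le u$; then Proposition~\ref{P:order char}$(viii)$ gives $\beta e\in E_2(\beta e)\subseteq E_2(u)$, hence $e\in E_2(u)$, that is $\J uue=e$. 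Comparing the two resulting expressions for $\J uue$ forces $e=\alpha u$, i.e. $u=\overline{\alpha}\,e$, as required.

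The main obstacle is precisely this converse in $(b)$, where one must combine the two opposite $\le_c$-comparisons into a single identity. The key idea is asymmetry: exploit one comparison via characterization $(iii)$ to produce $\J uue=\alpha u$, and the other via the Peirce condition $(viii)$ to produce $\J uue=e$, so that equating the two pins $u$ down as a unimodular multiple of $e$. Everything else reduces to the cited equivalences together with the scaling behaviour $\J{\alpha u}{\alpha u}{e}=\abs{\alpha}^2\J uue$ of the triple product.
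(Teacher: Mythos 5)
Your proposal is correct throughout. The preorder verification and part $(a)$ coincide with the paper's argument (reflexivity is trivial, transitivity combines Proposition~\ref{P:order properties}$(a)$ with transitivity of $\le$, and $(a)$ is just characterization $(vii)$ of Proposition~\ref{P:order char} plus a unimodular rescaling). Where you genuinely diverge is the converse of $(b)$. The paper first uses Proposition~\ref{P:order properties}$(a)$ to turn $\beta e\le u$ into $e\le\overline{\beta}u$, obtaining the sandwich $\alpha u\le e\le\overline{\beta}u$; it then computes $\J{\alpha u}{\alpha u}{\overline{\beta}u}=\overline{\beta}u$ to force $\alpha=\overline{\beta}$ (after disposing of the case $u=0$ separately), and finally invokes antisymmetry of $\le$ to conclude $e=\alpha u$. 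You instead evaluate $\J uue$ twice: characterization $(iii)$ applied to $\alpha u\le e$ gives $\J uue=\alpha u$, while $\beta e\le u$ forces $e\in E_2(u)$ and hence $\J uue=e$, so $e=\alpha u$ in one stroke. Your route is marginally cleaner in that it needs neither the case distinction $u=0$ versus $u\ne0$ nor the antisymmetry of $\le$; the paper's route has the mild advantage of staying entirely inside the order-theoretic calculus of $\le$ and $\sim_c$ rather than switching between two different characterizations. Both are valid, and the computation $\J{\alpha u}{\alpha u}{e}=\abs{\alpha}^2\J uue=\J uue$ that underpins your first evaluation is exactly the sesquilinearity fact the paper also relies on.
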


\begin{proof}
Reflexivity of $\le_c$ is obvious.  Transitivity follows easily from the transitivity of $\le$ using Proposition~\ref{P:order properties}$(a)$. 
Thus $\le_c$ is indeed a preorder.

$(a)$ This follows from the definition and Proposition~\ref{P:order char} (using property $(vii)$). 

$(b)$ The `if part' is obvious. To see the converse assume $e\sim_c u$. It means that there are two complex units $\alpha,\beta$ such that $\alpha u\le e$ and $\beta e\le u$. By Proposition~\ref{P:order properties}(a) we get $e\le \overline{\beta}u$, thus $\alpha u\le\overline{\beta}u$.
If $u=0$, then necessarily $e=0$ as well. Assume $u\ne0$. Then 
$$\alpha u=\J{\alpha u}{\alpha u}{\overline{\beta} u}= \overline{\beta} \J uuu=\overline{\beta} u,$$
thus $\alpha=\overline{\beta}$. The inequalities $\alpha u\le e\le\overline{\beta}u$ then yield $e=\alpha u$.
\end{proof}

\subsection{The relation $\le_h$ and its transitive hull}

In this subsection we  provide some characterizations of the relation $\le_h$ and its variants.
We recall that $u\le_h e$ if $u=\J eue$. We write $u\sim_h e$ if $u\le_h e$ and $e\le_h u$ and $u<_h e$ if $u\le_h e$ and $e\not\le_h u$.
Note that even though we use an order-like notation, these relations are not transitive (see Example~\ref{ex:leh not transitive} below).

 We start by two propositions characterizing $\le_h$ and $\sim_h$.

\begin{prop}\label{P:charact leh}
Let $E$ be a JB$^*$-triple and let $u,e\in E$ be two tripotents. The following assertions are equivalent.
\begin{enumerate}[$(i)$]
    \item $u\le_h e$;
    \item $u$ is a self-adjoint element in the JB$^*$-algebra $E_2(e)$;
    \item $u=p-q$, where $p,q\in E_2(e)$ are two mutually orthogonal projections;
   \item $u=v-w$, where $v$ and $w$ are two orthogonal tripotents with $v\le e$ and $w\le e$.
\end{enumerate}
\end{prop}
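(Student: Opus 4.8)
The plan is to establish the three equivalences $(i)\Leftrightarrow(ii)$, $(ii)\Leftrightarrow(iii)$ and $(iii)\Leftrightarrow(iv)$ separately, reading off the remaining implications by transitivity.

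For $(i)\Leftrightarrow(ii)$ I would argue, as already indicated in the discussion preceding the statement, purely by Peirce arithmetic. Writing $u=u_2+u_1+u_0$ with $u_j\in E_j(e)$, the rule $\J{E_2(e)}{E_j(e)}{E_2(e)}\subset E_{4-j}(e)$ forces $\J eue=\J e{u_2}e\in E_2(e)$; hence the equality $u=\J eue$ in $(i)$ already entails $u\in E_2(e)$. Since $\J eue=u^{*_e}$ is precisely the involution of the unital JB$^*$-algebra $E_2(e)$ (see \eqref{eq:algebra from unitary}), the condition $u=\J eue$ reads $u^{*_e}=u$, i.e.\ $u$ is self-adjoint in $E_2(e)$; the converse is the same computation read backwards. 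This step is routine.

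The crux is $(ii)\Leftrightarrow(iii)$. The direction $(iii)\Rightarrow(ii)$ is immediate: as projections are self-adjoint, $u=p-q\in E_2(e)$ satisfies $u^{*_e}=u$. The substantial direction is $(ii)\Rightarrow(iii)$, where I would invoke the continuous functional calculus for self-adjoint elements of a JB$^*$-algebra. A self-adjoint tripotent $u$ satisfies, via \eqref{eq:triple product in JB*-algebra}, the identity $u=\J uuu=u^{3}$ (powers and products taken in $E_2(e)$); the JB$^*$-subalgebra generated by $u$ and the unit $e$ is associative and isometrically $*$-isomorphic to $C(\operatorname{sp}(u))$, and $u^{3}=u$ forces $\operatorname{sp}(u)\subseteq\{-1,0,1\}$. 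The points $\pm1$ are then isolated in the spectrum, so their spectral projections $p,q$ are genuine mutually orthogonal projections in $E_2(e)$ with $u=p-q$. The only real obstacle here is justifying the use of the spectral theorem, i.e.\ recalling the standard fact that a single self-adjoint element together with the unit generates an associative JB$^*$-subalgebra carrying the corresponding functional calculus.

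Finally $(iii)\Leftrightarrow(iv)$ translates Jordan orthogonality into triple orthogonality through the partial order $\le$. If $u=p-q$ as in $(iii)$, then $p,q$ are projections in $E_2(e)$, hence $p\le e$ and $q\le e$ by Proposition~\ref{P:order char}$(vii)$; moreover $\J ppq=0$ follows from \eqref{eq:triple product in JB*-algebra} together with $p\circ_e q=0$, and since $L(p,p)$ acts as $\tfrac j2$ on $E_j(p)$ its kernel is exactly $E_0(p)$, whence $q\in E_0(p)$, i.e.\ $p\perp q$; taking $v=p$ and $w=q$ gives $(iv)$. Conversely, given $(iv)$, Proposition~\ref{P:order char}$(vii)$ shows that $v,w$ are projections in $E_2(e)$, and Proposition~\ref{P:order properties}$(b)$ shows $v+w\le e$, so $v+w$ is again a projection in $E_2(e)$; expanding $(v+w)\circ_e(v+w)=v+w$ and using $v\circ_e v=v$, $w\circ_e w=w$ yields $v\circ_e w=0$, so $p=v$ and $q=w$ are mutually orthogonal projections with $u=p-q$, which is $(iii)$.
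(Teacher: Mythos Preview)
Your proof is correct and follows essentially the same route as the paper. The paper organizes the argument as a cycle $(i)\Rightarrow(ii)\Rightarrow(iii)\Rightarrow(iv)\Rightarrow(i)$, declaring $(iii)\Rightarrow(iv)$ obvious and closing the loop with a direct $(iv)\Rightarrow(i)$ (namely, $v\le e$ and $w\le e$ give $v=\J eve$ and $w=\J ewe$, hence $u=\J eue$); you instead prove three separate equivalences and in particular spell out carefully why Jordan-orthogonal projections in $E_2(e)$ are triple-orthogonal, which the paper leaves implicit. The crucial step $(ii)\Rightarrow(iii)$ via the associative subalgebra generated by $u$ and the unit and the spectral condition $u^3=u$ is identical in both.
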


\begin{proof}
$(i)\Rightarrow(ii)$ Assume $u\le_h e$, i.e., $u=\J eue$. It means that $u=Q(e)u$, hence $P_2(e)u=Q(e)^2u=u$. Therefore $u\in E_2(e)$. Moreover,
$$u^{*_e}=\J eue=u,$$
so $u$ is self-adjoint.

$(ii)\Rightarrow(iii)$ This is well known. Let us give a proof for the sake of completeness.
Let us work in the unital JB$^*$-algebra $E_2(e)$. Assume that $u$ is a self-adjoint tripotent. Let $B$ be the JB$^*$-subalgebra generated by $u$ and by the unit (which is $e$). By \cite[Lemma 2.4.5, Theorem 3.2.2 and Remark 3.2.3]{hanche1984jordan} $B$ is associative, so it is a unital commutative C$^*$-algebra, i.e., it may be represented as a $C(K)$ space for a compact $K$. The element $u$ is self-adjoint, so it is a real-valued function. Moreover, it is a tripotent, hence $u^3=u$. It follows that $u$ attains only values $0,1,-1$. Therefore $p=\frac12(u^2+u)$ and $q=\frac12(u^2-u)$ are characteristic functions of disjoint sets, hence mutually orthogonal projections and $u=p-q$.

$(iii)\Rightarrow(iv)$ This is obvious.

$(iv)\Rightarrow(i)$ If $v\le e$ and $w\le e$, by Proposition~\ref{P:order char} the elements $v$ and $w$ are self-adjoint in $E_2(e)$ (we use  property $(vii)$), hence
$$v=\J eve\mbox{ and }w=\J ewe.$$
If $u=v-w$, we deduce that $u=\J eue$.
\end{proof}

\begin{prop}\label{P:charact simh}
Let $E$ be a JB$^*$-triple and let $u,e\in E$ be two tripotents. The following assertions are equivalent.
\begin{enumerate}[$(i)$]
    \item $u\sim_h e$;
    \item $u\le_h e$ and $u\sim_2 e$;
    \item There are two orthogonal tripotents $v,w\in E$ such that $e=v+w$ and $u=v-w$;
    \item $\frac12(e+u)$ and $ \frac12(e-u)$ are tripotents.
\end{enumerate}
\end{prop}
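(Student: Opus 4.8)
The plan is to prove the equivalence of the four conditions characterizing $u \sim_h e$ by establishing a cycle of implications, leaning heavily on Proposition~\ref{P:charact leh} and the Peirce calculus. The relation $\sim_h$ is symmetric in $u$ and $e$ by definition, so I expect the proof to exploit this symmetry to reduce work.

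First I would handle $(i)\Rightarrow(ii)$. If $u\sim_h e$, then in particular $u\le_h e$, which by Proposition~\ref{P:charact leh} gives $u\in E_2(e)$, hence $u\le_2 e$. Symmetrically $e\le_h u$ gives $e\in E_2(u)$, hence $e\le_2 u$, so together $u\sim_2 e$. For $(ii)\Rightarrow(iii)$ I would use Proposition~\ref{P:charact leh} $(iii)$ applied to $u\le_h e$: write $u=p-q$ with $p,q$ mutually orthogonal projections in $E_2(e)$. The extra hypothesis $u\sim_2 e$ (so also $e\le_2 u$, i.e. $e\in E_2(u)$) should force $p+q=e$; the idea is that $p,q$ are tripotents below $e$ whose Peirce-2 spaces must together fill up $E_2(e)$, since otherwise $e$ could not lie in $E_2(u)=E_2(p-q)=E_2(p)\oplus E_2(q)$. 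Setting $v=p$ and $w=q$ then gives orthogonal tripotents with $e=v+w$ and $u=v-w$.

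The implications $(iii)\Rightarrow(iv)$ and $(iv)\Rightarrow(iii)$ are where the concrete computation lives, but they are short. For $(iii)\Rightarrow(iv)$, from $e=v+w$ and $u=v-w$ with $v,w$ orthogonal tripotents we get $\tfrac12(e+u)=v$ and $\tfrac12(e-u)=w$, which are tripotents by assumption. Conversely, for $(iv)\Rightarrow(iii)$, set $v=\tfrac12(e+u)$ and $w=\tfrac12(e-u)$; then $e=v+w$ and $u=v-w$ automatically, and the only thing to verify is that $v$ and $w$ are \emph{orthogonal}. I would check $L(v,w)=0$ by expanding $\{v,w,\cdot\}$ in terms of $e$ and $u$ and using that $v,w,e$ are all tripotents together with the Jordan identity; alternatively one can observe $v+w=e$ and $v-w=u$ are tripotents and deduce orthogonality from $\{v+w,v+w,v+w\}=v+w$ and $\{v-w,v-w,v-w\}=v-w$, which upon subtraction/addition isolate the mixed terms and force $L(v,w)=0$.

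Finally $(iii)\Rightarrow(i)$ closes the cycle: given orthogonal tripotents $v,w$ with $e=v+w$ and $u=v-w$, Proposition~\ref{P:order properties}$(b)$ shows $e$ is a tripotent and $v,w\le e$, so $u=v-w$ satisfies $u\le_h e$ by Proposition~\ref{P:charact leh}$(iv)$; by the symmetry of the decomposition ($u=v+(-w)$ with $v\le u$ and $-w\le u$ orthogonal), the same argument gives $e\le_h u$, whence $u\sim_h e$. The main obstacle I anticipate is the step $(ii)\Rightarrow(iii)$, specifically justifying rigorously that $p+q=e$ rather than merely $p+q\le e$; the key point is to use $e\le_2 u$ (equivalently $e\in E_2(u)$) together with the identity $E_2(u)=E_2(p)\oplus E_2(q)$ for orthogonal tripotents, forcing the Peirce-$0$ part of $e$ relative to $p+q$ to vanish.
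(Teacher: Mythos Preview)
Your overall cycle and most of the steps match the paper's proof. There is one genuine slip and one place where your sketch underestimates the work.

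In $(ii)\Rightarrow(iii)$, the identity you invoke, $E_2(u)=E_2(p)\oplus E_2(q)$ for orthogonal tripotents $p,q$ with $u=p-q$, is false in general: the joint Peirce decomposition gives $E_2(p+q)=E_2(p)\oplus\bigl(E_1(p)\cap E_1(q)\bigr)\oplus E_2(q)$, and the middle piece need not vanish. What you actually need (and what the paper uses) is the simpler identity $E_2(u)=E_2(p+q)$, which is immediate from $L(p-q,p-q)=L(p,p)+L(q,q)=L(p+q,p+q)$. With this, $e\in E_2(u)=E_2(p+q)$; since $p+q\le e$, the tripotent $e-(p+q)$ is orthogonal to $p+q$ and hence lies in $E_0(p+q)\cap E_2(p+q)=\{0\}$, giving $e=p+q$. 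So your strategy survives, but via a different (and correct) intermediate identity.

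For $(iv)\Rightarrow(iii)$, the paper simply cites a known lemma (if $v$, $w$ and $v+w$ are all tripotents then $v\perp w$). Your direct expansion of $\{v\pm w,v\pm w,v\pm w\}=v\pm w$ does not immediately ``force $L(v,w)=0$'': adding and subtracting yields $2\{v,v,w\}+\{v,w,v\}=0$ and its symmetric counterpart, and one still needs an extra step. Decomposing $w=w_2+w_1+w_0$ in the Peirce spaces of $v$, the first equation gives $w_1=0$ and $Q(v)w_2=-2w_2$; since $Q(v)$ is isometric on $E_2(v)$ this forces $w_2=0$, hence $w\in E_0(v)$. So your approach can be completed, but it is not quite as automatic as your sketch suggests.
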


\begin{proof}
$(i)\Rightarrow(ii)$ Assume $u\sim_h e$. Then clearly $u\le_h e$. Moreover,  by Proposition~\ref{P:charact leh} we have $u\in E_2(e)$ and $e\in E_2(u)$, i.e., $u\sim_2 e$.

$(ii)\Rightarrow(iii)$ Assume $u\le_h e$ and $u\sim_2 e$. Then $E_2(e)=E_2(u)$ (by \cite[Proposition 2.3]{Finite}, cf. also \cite[Proposition 6.5]{hamhalter2019mwnc}).

Further, using Proposition~\ref{P:charact leh} 
we get two orthogonal tripotents $v,w$ such that $v\le e$, $w\le e$ and $u=v-w$. By Proposition~\ref{P:order properties}$(b)$ we have $v+w\le e$. Finally, clearly $L(v+w,v+w)=L(v-w,v-w)$, so $P_2(v+w)=P_2(v-w)$. It follows that $E_2(v+w)=E_2(v-w)=E_2(u)=E_2(e)$. Hence $e-(v+w)$  is a tripotent orthogonal to $v+w$ which belongs to $E_2(v+w)$, so necessarily $e=v+w$.

$(iii)\Rightarrow(i)$ This follows from Proposition~\ref{P:charact leh} (using property $(iv)$).

$(iii)\Rightarrow(iv)$ Let $v,w$ be given by $(iii)$. Then it is easy to observe that $\frac12(e+u)=v$ and $\frac12(e-u)=w$.

$(iv)\Rightarrow(iii)$ Set $v=\frac12(e+u)$ and $w=\frac12(e-u)$. Assuming $(iv)$, $v$ and $w$ are tripotents. Moreover, $v+w=e$ and $v-w=u$. Since $e$ and $u$ are tripotents,
\cite[Lemma 3.6]{isidro1995real}
shows that $v$ and $w$ are orthogonal.
\end{proof}

\begin{example}\label{ex:leh not transitive}
The relations $\le_h$ and $\sim_h$ are not transitive on $M_2$.
\end{example} 

\begin{proof}
Let 
$$e=\begin{pmatrix} 1 & 0 \\ 0 & 1 \end{pmatrix},\quad u=\begin{pmatrix} 0 & -1 \\ -1 & 0 \end{pmatrix}, \quad
v=\begin{pmatrix} i & 0 \\ 0 & -i \end{pmatrix}.$$
Then $e,u,v$ are unitary matrices, so tripotents in $M_2$ satisfying $e\sim_2 u\sim_2 v$.
Moreover, clearly $u\le_h e$ ($e$ is the unit matrix and $u$ is self-adjoint). We further have
$v\le_{h} u$ as
$$\J uvu=uv^*u=\begin{pmatrix} 0 & -1 \\ -1 & 0 \end{pmatrix}\begin{pmatrix} -i & 0 \\ 0 & i \end{pmatrix}\begin{pmatrix} 0 & -1 \\ -1 & 0 \end{pmatrix}
=\begin{pmatrix} 0 & -i \\ i & 0 \end{pmatrix}\begin{pmatrix} 0 & -1 \\ -1 & 0 \end{pmatrix}=v.$$
So, $v\le_h u\le_h e$. By Proposition~\ref{P:charact simh} (using equivalence $(i)\Leftrightarrow(ii)$) we deduce $v\sim_h u\sim_h e$.

However, $e$ and $v$ are incomparable for $\le_h$. Indeed, since $e$ is the unit matrix and $v^*=-v\ne v$, we have $v\not\le_he$. Using again Proposition~\ref{P:charact simh} we deduce that $e\not\le_h v$ as well. 
\end{proof}

We continue by a lemma on factorization of $\le_h$ via $\le$ and $\sim_h$.

\begin{lemma}\label{L:factor leh} Let $E$ be a JB$^*$-triple and $u,e\in E$ two tripotents. Consider the following statements.
\begin{enumerate}[$(i)$]
    \item $u\le_h e$;
    \item there is a tripotent $v$ such that $u\sim_h v$ and $v\le e$;
    \item there is a tripotent $w$ such that $e\sim_h w$ and $u\le w$.
\end{enumerate}
Then 
$$(i)\Leftrightarrow(ii)\Rightarrow(iii).$$
The implication $(iii)\Rightarrow(ii)$ fails for example in $E=M_2$.
\end{lemma}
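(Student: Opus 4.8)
The plan is to handle the equivalence $(i)\Leftrightarrow(ii)$ and the implication $(ii)\Rightarrow(iii)$ separately, using throughout the decomposition characterizations of Propositions~\ref{P:charact leh} and \ref{P:charact simh}, and then to produce an explicit pair in $M_2$ witnessing the failure of $(iii)\Rightarrow(ii)$. For $(i)\Rightarrow(ii)$ I would start from $u\le_h e$ and invoke Proposition~\ref{P:charact leh}$(iv)$ to write $u=v-w$ with $v,w$ orthogonal tripotents satisfying $v\le e$ and $w\le e$. Proposition~\ref{P:order properties}$(b)$ then gives $v+w\le e$, and since $v,w$ are orthogonal, Proposition~\ref{P:charact simh}$(iii)$ (applied to the pair $v,w$) yields $u\sim_h(v+w)$; thus the tripotent $v+w$ witnesses $(ii)$. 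For the converse $(ii)\Rightarrow(i)$, given a tripotent $v$ with $u\sim_h v$ and $v\le e$, I would use Proposition~\ref{P:charact simh}$(iii)$ to write $v=a+b$ and $u=a-b$ for orthogonal tripotents $a,b$. Since $a\perp b$ we have $a\le a+b=v$ and $b\le a+b=v$, so combined with $v\le e$ and the transitivity of $\le$ (Proposition~\ref{P:order properties}) we obtain $a\le e$ and $b\le e$. Proposition~\ref{P:charact leh}$(iv)$ then returns $u\le_h e$, closing the equivalence.

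As $(i)$ and $(ii)$ are now equivalent, it suffices to prove $(i)\Rightarrow(iii)$. Again write $u=v-w$ with $v,w$ orthogonal, $v\le e$, $w\le e$, and set $r=e-(v+w)$. Because $v+w\le e$, the element $r$ is a tripotent orthogonal to $v+w$, hence orthogonal to each of $v$ and $w$, so that $v,w,r$ are mutually orthogonal. I would then take $\tilde w=v-w+r=u+r$. The orthogonal decomposition $e=(v+r)+w$ together with $\tilde w=(v+r)-w$, in which $v+r$ and $w$ are orthogonal tripotents, shows via Proposition~\ref{P:charact simh}$(iii)$ that $\tilde w$ is a tripotent with $e\sim_h\tilde w$. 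Moreover $\tilde w-u=r$ is a tripotent which, being orthogonal to both $v$ and $w$, satisfies $L(r,u)=L(r,v-w)=0$ and so is orthogonal to $u$; hence $u\le\tilde w$, which is exactly $(iii)$.

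Finally, since $(i)\Leftrightarrow(ii)$, to see that $(iii)\Rightarrow(ii)$ fails it is enough to exhibit tripotents $u,e$ in $M_2$ satisfying $(iii)$ but with $u\not\le_h e$. I would take $e=I$ (so that $u\le_h e$ reduces to self-adjointness of $u$), the non-self-adjoint partial isometry $u=\begin{pmatrix}0&1\\0&0\end{pmatrix}$, and the symmetry $w=\begin{pmatrix}0&1\\1&0\end{pmatrix}$. A direct computation shows that $\frac12(e+w)=\frac12\begin{pmatrix}1&1\\1&1\end{pmatrix}$ and $\frac12(e-w)=\frac12\begin{pmatrix}1&-1\\-1&1\end{pmatrix}$ are rank-one projections, so $e\sim_h w$ by Proposition~\ref{P:charact simh}$(iv)$, while $w-u=\begin{pmatrix}0&0\\1&0\end{pmatrix}$ is a tripotent orthogonal to $u$, giving $u\le w$; thus $(iii)$ holds. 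On the other hand $u^{*}\neq u$ shows $u\not\le_h I$, so $(ii)$ fails for this pair.

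The computations are all routine; the only genuinely creative steps are the choice of the auxiliary tripotent $\tilde w=u+r$ in $(i)\Rightarrow(iii)$ and the design of the counterexample, where one must arrange simultaneously that $w$ is a self-adjoint unitary (forcing $e\sim_h w$ when $e=I$) and that $u\le w$ while $u$ stays non-self-adjoint. I expect the main subtlety to lie in the orthogonality bookkeeping for $r$, namely verifying that $r$ is orthogonal to $u$ and that the pair $(v+r,w)$ is orthogonal, which is what makes $\tilde w$ both a tripotent and a witness for $e\sim_h\tilde w$.
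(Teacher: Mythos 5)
Your proof is correct and follows essentially the same route as the paper: the same decomposition $u=p-q$ from Proposition~\ref{P:charact leh}, the same witness for $(ii)$ (namely $p+q$), and the same witness for $(iii)$ — your $\tilde w=u+r$ equals $e-2q$, which is exactly the element the paper uses, with your orthogonality bookkeeping for $r$ just making explicit what the paper leaves to the reader. The only cosmetic difference is the counterexample: the paper points to Example~\ref{ex:len not transitive}$(c)$ (with $u=-E_{12}$ and the negated flip symmetry), while you use $E_{12}$ and $E_{12}+E_{21}$, which is the same construction up to sign.
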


\begin{proof}
$(i)\Rightarrow(ii)\&(iii)$ If $u\le_h e$, by Proposition~\ref{P:charact leh} $u=p-q$, where $p,q$ are orthogonal tripotents, $p\le e$, $q\le e$. It is enough to take $v=p+q$ and $w=e-2q$ (and use Proposition~\ref{P:charact simh}).

$(ii)\Rightarrow(i)$ If $u\sim_h v$, by Proposition~\ref{P:charact simh} there are orthogonal tripotents $p,q$ such that $u=p-q$ and $v=p+q$. Since $v\le e$, clearly $p\le e$ and $q\le e$. Due to Proposition~\ref{P:charact leh} this completes the proof.

A counterexample to $(iii)\Rightarrow(i)$ is given in Example~\ref{ex:len not transitive}$(c)$ below.
\end{proof}

Since the relation $\le_h$ is not transitive, it natural to consider its transitive hull $\le_{h,t}$, i.e., $u\le_{h,t} e$ if there are tripotents
$$e=v_0,v_1,\dots,v_k=u$$
such that
$v_j\le_h v_{j-1}$ for $j=1,\dots,k$.

Then $\le_{h,t}$ is clearly a preorder, hence the relation $\sim_{h,t}$ defined by $$e\sim_{h,t}u \equiv^{\mbox{def}} e\le_{h,t}u\mbox{ and }u\le_{h,t}e$$
is an equivalence relation.
The symbol $<_{ht}$ has then the obvious meaning.

The following lemma provides a factorization of the preorder $\le_{ht}$.
\begin{lemma}\label{L:factor leht}
Let $E$ be a JB$^*$-triple and $u,e\in E$ two tripotents. Then $u\le_{h,t} e$ if and only if there is a tripotent $v\in E$ with $u\le v$ and $v\sim_{h,t}e$.
\end{lemma}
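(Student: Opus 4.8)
The plan is to prove the two implications separately; the reverse implication is routine and the forward one carries the content. For the reverse implication, suppose $u\le v$ and $v\sim_{h,t}e$. Since $u\le v$, Proposition~\ref{P:order char}$(vii)$ says that $u$ is a projection, in particular a self-adjoint element, of the JB$^*$-algebra $E_2(v)$, so $u\le_h v$ by Proposition~\ref{P:charact leh}$((ii)\Rightarrow(i))$; hence $u\le_{h,t}v$, and combining this with $v\le_{h,t}e$ and transitivity of $\le_{h,t}$ gives $u\le_{h,t}e$.

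For the forward implication I would induct on the length $k$ of a chain $e=v_0,v_1,\dots,v_k=u$ with $v_j\le_h v_{j-1}$ that witnesses $u\le_{h,t}e$. The case $k=0$ takes $v=e$. For the inductive step, Lemma~\ref{L:factor leh}$((i)\Rightarrow(iii))$ applied to the top link $v_1\le_h e$ produces a tripotent $w$ with $e\sim_h w$ and $v_1\le w$, while the induction hypothesis applied to the shorter chain $v_1,\dots,v_k=u$ (which witnesses $u\le_{h,t}v_1$) produces a tripotent $v'$ with $u\le v'$ and $v'\sim_{h,t}v_1$. What remains is to ``commute'' the relation $v'\sim_{h,t}v_1$ past $v_1\le w$. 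I would isolate this as a commutation lemma: if $a\sim_{h,t}b$ and $b\le c$, then there is a tripotent $c'$ with $a\le c'$ and $c'\sim_{h,t}c$. Granting it and applying it to $(a,b,c)=(v',v_1,w)$, I obtain $w'$ with $v'\le w'$ and $w'\sim_{h,t}w$; then $u\le v'\le w'$ yields $u\le w'$, and $w'\sim_{h,t}w\sim_h e$ yields $w'\sim_{h,t}e$, so $v:=w'$ closes the induction.

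I would prove the commutation lemma in two stages. First, $\sim_{h,t}$ is nothing but the equivalence relation generated by $\sim_h$: given $a\sim_{h,t}b$, pick a chain $b=s_0,\dots,s_p=a$ with $s_i\le_h s_{i-1}$; since each $s_i\le_h s_{i-1}$ forces $s_i\le_2 s_{i-1}$ and $\le_2$ is transitive, while $a\sim_2 b$ (a consequence of $a\sim_{h,t}b$), all the $s_i$ are mutually $\sim_2$-equivalent, so each link upgrades to $s_i\sim_h s_{i-1}$ by Proposition~\ref{P:charact simh}$((ii)\Rightarrow(i))$, giving a $\sim_h$-chain from $a$ to $b$. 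Second, the single-step case: if $a\sim_h b$ and $b\le c$, then Proposition~\ref{P:charact simh}$(iii)$ gives orthogonal tripotents $p,q$ with $b=p+q$ and $a=p-q$; then $p,q\le b\le c$, so $p+q\le c$ by Proposition~\ref{P:order properties}$(b)$ and $r:=c-(p+q)$ is a tripotent lying in $E_0(p+q)$. Setting $c':=c-2q$, one has $c\sim_h c'$ (write $c=(c-q)+q$, $c'=(c-q)-q$ and apply Proposition~\ref{P:charact simh}$(iii)$), and $c'-(p-q)=r$; since $p,q\in E_2(p+q)$ whereas $r\in E_0(p+q)$, the Peirce rule $\J{E_0(p+q)}{E_2(p+q)}{E}=\{0\}$ gives $r\perp p$ and $r\perp q$, hence $r\perp(p-q)$ and $a=p-q\le c'$. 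Iterating the single-step case along the $\sim_h$-chain from the first stage then yields the commutation lemma for $\sim_{h,t}$.

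The main obstacle is the commutation lemma, and its conceptual crux is the first stage above: a priori $a\sim_{h,t}b$ only supplies two independent downward $\le_h$-chains, and it is the containment of $\le_h$ in $\le_2$ combined with the transitivity of $\le_2$ that collapses them into a single $\sim_h$-chain, which can then be pushed past a fixed $\le$ one orthogonal summand at a time. The single-step Peirce computation and the outer induction are routine once this observation is in place.
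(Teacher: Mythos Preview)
Your proof is correct, but it takes a longer route than the paper's. Both argue by induction on the length~$k$ of the witnessing chain, but they split the chain at opposite ends. The paper applies the induction hypothesis to the \emph{top} $k-1$ links, obtaining $w'$ with $v_{k-1}\le w'\sim_{h,t}e$, and then handles the last link $u\le_h v_{k-1}$ via the one-line Peirce observation that $u\le_h v_{k-1}\le w'$ forces $u\le_h w'$ (since $u\in E_2(v_{k-1})$ and $P_2(v_{k-1})w'=v_{k-1}$, so $\J{u}{w'}{u}=\J{u}{v_{k-1}}{u}=u$); a single further application of Lemma~\ref{L:factor leh}$(i)\Rightarrow(iii)$ then finishes. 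You instead apply the induction hypothesis to the \emph{bottom} $k-1$ links, which forces you to prove the auxiliary commutation lemma ``$a\sim_{h,t}b$ and $b\le c$ imply $a\le c'\sim_{h,t}c$ for some $c'$'', and in doing so you essentially reprove Proposition~\ref{P:simht-char} (that $\sim_{h,t}$ is the transitive hull of $\sim_h$). Your commutation lemma is a valid and potentially reusable fact, but here it is a detour: the paper's Peirce observation is exactly the special case of Proposition~\ref{p:partial transitivity} that short-circuits the need for it.
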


\begin{proof}
The `if' part is obvious. Let us prove the `only if part'. 
Assume that $u\le_{h,t}e$. Then there are tripotents
$$e=v_0,v_1,\dots,v_k=u$$
such that
$v_j\le_h v_{j-1}$ for $j=1,\dots,k$. We will prove the statement by induction on $k$. The case $k=1$ follows from the implication $(i)\Rightarrow(iii)$ of Lemma~\ref{L:factor leh}. Assume that $k>1$ and the statement holds for $k-1$. Then there is a tripotent $w'\in E$ such that $w'\sim_{h,t} e$ and $v_{k-1}\le w'$. Note that then $u=v_k\le_h w'$. Indeed, $u\in E_2(v_{k-1})$ and $v_{k-1}=P_2(v_{k-1})(w')$, hence
$$\J u{w'}u=\J u{v_{k-1}}u=u,$$
where the first equality follows by Peirce arithmetics.
Thus, using again the implication $(i)\Rightarrow(iii)$ of Lemma~\ref{L:factor leh} we get a tripotent $w$ with $w\sim_h w'$ and $u\le w$. Then $w\sim_{h,t}e$ and the proof is complete.
\end{proof}

We continue by a characterization of the relation $\sim_{h,t}$.

\begin{prop}\label{P:simht-char}
Let $E$ be a JB$^*$-triple and $u,e\in E$ two tripotents. The following assertions are equivalent.
\begin{enumerate}[$(i)$]
    \item $u\sim_{h,t}e$;
    \item $u\le_{h,t}e$ and $u\sim_2 e$;
    \item there are tripotents $v_1,\dots,v_k\in E$ such that
    $$u=v_1\sim_h v_2\sim_h\dots\sim_h v_k=e.$$
\end{enumerate}
In particular, the relation $\sim_{h,t}$ coincides with the transitive hull of $\sim_h$.
\end{prop}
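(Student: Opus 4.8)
The plan is to prove the cycle $(iii)\Rightarrow(i)\Rightarrow(ii)\Rightarrow(iii)$. Once this is done the final ``in particular'' assertion is immediate: condition $(iii)$ states precisely that $u$ and $e$ are joined by a finite chain of $\sim_h$-related tripotents, i.e.\ that they are related by the transitive hull of the (reflexive, symmetric) relation $\sim_h$, and the equivalence $(i)\Leftrightarrow(iii)$ then identifies that hull with $\sim_{h,t}$.

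Both $(iii)\Rightarrow(i)$ and $(i)\Rightarrow(ii)$ are essentially formal. For the former, if $u=v_1\sim_h v_2\sim_h\cdots\sim_h v_k=e$ then, as $\sim_h$ is symmetric, each step yields $v_j\le_h v_{j-1}$ \emph{and} $v_{j-1}\le_h v_j$; reading the chain in the two directions witnesses $u\le_{h,t}e$ and $e\le_{h,t}u$, hence $u\sim_{h,t}e$. For the latter I would use that $\le_h$ refines $\le_2$: if $x\le_h y$ then $x=Q(y)x\in E_2(y)$, i.e.\ $x\le_2 y$ (exactly as in the proof of Proposition~\ref{P:charact leh}). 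Since $\le_2$ is a preorder, transitivity collapses any $\le_{h,t}$-chain, so $u\le_{h,t}e$ forces $u\le_2 e$ and $e\le_{h,t}u$ forces $e\le_2 u$; together with the trivially available $u\le_{h,t}e$ this gives $(ii)$.

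The substance of the proof lies in $(ii)\Rightarrow(iii)$, which I expect to be the main obstacle. Assume $u\le_{h,t}e$ and $u\sim_2 e$, and fix tripotents $e=v_0,v_1,\dots,v_k=u$ with $v_j\le_h v_{j-1}$. The key idea is to use the \emph{global} hypothesis $u\sim_2 e$ to upgrade each \emph{local} relation $v_j\le_h v_{j-1}$ to $v_j\sim_h v_{j-1}$. Each $v_j\le_h v_{j-1}$ gives $v_j\le_2 v_{j-1}$, and monotonicity of the Peirce-$2$ subspaces under $\le_2$ (namely $x\le_2 y\Rightarrow E_2(x)\subseteq E_2(y)$, cf.\ \cite{Finite}) produces the decreasing chain
$$E_2(u)=E_2(v_k)\subseteq E_2(v_{k-1})\subseteq\cdots\subseteq E_2(v_0)=E_2(e).$$
But $u\sim_2 e$ means exactly $E_2(u)=E_2(e)$, so the two ends coincide and every inclusion is forced to be an equality. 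Hence $E_2(v_j)=E_2(v_{j-1})$ for each $j$, and since $v_j\in E_2(v_{j-1})$ while $v_{j-1}\in E_2(v_{j-1})=E_2(v_j)$, we conclude $v_j\sim_2 v_{j-1}$. Applying the implication $(ii)\Rightarrow(i)$ of Proposition~\ref{P:charact simh} to each consecutive pair converts the chain into $e=v_0\sim_h v_1\sim_h\cdots\sim_h v_k=u$, which is precisely $(iii)$.

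The decisive point is this ``squeeze'': the Peirce-$2$ inclusions are automatically monotone along a $\le_h$-chain, and the coincidence of the endpoints imposed by $u\sim_2 e$ collapses them all to equalities. The only non-formal ingredients are the monotonicity $x\le_2 y\Rightarrow E_2(x)\subseteq E_2(y)$ and the characterization of $\sim_h$ in Proposition~\ref{P:charact simh}; everything else is bookkeeping with the chains.
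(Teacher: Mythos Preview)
Your proof is correct and follows essentially the same route as the paper's: the cycle $(iii)\Rightarrow(i)\Rightarrow(ii)\Rightarrow(iii)$, with the core step $(ii)\Rightarrow(iii)$ handled by the squeeze argument that turns the $\le_h$-chain into a $\sim_h$-chain via $u\sim_2 e$. The only cosmetic difference is that you phrase the squeeze through the inclusion chain $E_2(v_k)\subseteq\cdots\subseteq E_2(v_0)$ with equal endpoints, whereas the paper argues directly from transitivity of the preorder $\le_2$ (from $v_j\le_2\cdots\le_2 e$ and $e\le_2 u\le_2\cdots\le_2 v_j$ one gets $v_j\sim_2 v_{j-1}$); both amount to the same observation and both then invoke Proposition~\ref{P:charact simh}.
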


\begin{proof} The `in particular' part follows from the equivalence $(i)\Leftrightarrow(iii)$. So, let us prove the equivalences:

$(i)\Rightarrow(ii)$ Assume $u\sim_{h,t} e$. Then $u\le_{h,t} e$ and $e\le_{h,t} u$. Hence the first statement of $(ii)$ is obviously fulfilled.
Further, since $u_1\le_h u_2$ implies $u_1\le_2 u_2$ (by Proposition~\ref{P:charact leh}) and the relation $\le_2$ is transitive, we deduce that $u\le_2 e$ and $e\le_2u$, i.e., $u\sim_2 e$.

$(ii)\Rightarrow(iii)$
 Assume $u\le_{h,t}e$ and $u\sim_2 e$. It follows that there is a finite sequence
$$u=v_1\le_h v_2\le_h v_3\le_h\dots\le_h v_k=e.$$
Hence,
$$u=v_1\le_2 v_2\le_2 v_3\le_2\dots\le_2 v_k=e$$
(by Proposition~\ref{P:charact leh}). Since $u\sim_2e$, we deduce that
$$u=v_1\sim_2 v_2\sim_2 v_3\sim_2\dots\sim_2 v_k=e.$$
Finally, we apply Proposition~\ref{P:charact simh} and get
$$u=v_1\sim_h v_2\sim_h v_3\sim_h\dots\sim_h v_k=e.$$
Hence, $(iii)$ holds.

$(iii)\Rightarrow(i)$ This is obvious.
\end{proof}

\subsection{Modification of the relation $\le_h$ by a multiple}

Let $E$ be a JB$^*$-triple and let $e,u\in E$ be two tripotents. It is clear that $u\le_h e$ implies $-u\le_h e$. It is natural to define the following weaker relation.

We say that $u\le_{hc} e$ if $\alpha u\le_h e$ for a complex unit $\alpha$. The relations $\sim_{hc}$ and $<_{hc}$ have the obvious meaning.

Although the relation $\le_{hc}$ has originally rather an auxilliary role, later we will see that it is really natural because its transitive hull in many cases coincides with $\le_2$.

Note that the relations $\le_{hc}$ and $\sim_{hc}$ are very close to $\le_h$ and $\sim_h$. So, natural modifications of the results from the previous subsection hold. In the following proposition we give some characterizations of $\le_{hc}$ and $\sim_{hc}$, which may be completed by further properties in an obvious way.

\begin{prop}\label{P:hc-charact}
Let $E$ be a JB$^*$-triple and let $u,e\in E$ be two tripotents.
\begin{enumerate}[$(a)$] 
    \item The following assertions are equivalent.
    \begin{enumerate}[$(i)$]
        \item $u\le_{hc} e$;
        \item $u=\alpha\J eue$ for a complex unit $\alpha$;
        \item$ u$ is a scalar multiple of a self-adjoint element of $E_2(e)$.
    \end{enumerate}
      \item  The following assertions are equivalent.
      \begin{enumerate}[$(i)$]
          \item $u\sim_{hc}e$;
          \item $u\le_{hc} e$ and $u\sim_2 e$;
          \item $\alpha u\sim_h e$ for a complex unit $\alpha$.
      \end{enumerate}
    
\end{enumerate}
\end{prop}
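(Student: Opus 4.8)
The plan is to reduce everything to the already-established characterizations of $\le_h$ and $\sim_h$ (Propositions~\ref{P:charact leh} and~\ref{P:charact simh}); the only genuinely new ingredient is the bookkeeping of complex units, handled by repeatedly invoking that the triple product is conjugate-linear in the middle variable and linear in the outer ones. Concretely, I would record at the outset the identities $\J e{\alpha u}e=\overline{\alpha}\,\J eue$ and $\J{\alpha u}e{\alpha u}=\alpha^2\,\J ueu$, valid for every complex unit $\alpha$, and note that $\alpha\mapsto\alpha^2$ maps the complex units onto themselves.

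For part $(a)$, I would first do $(i)\Leftrightarrow(ii)$. Unfolding the definition, $u\le_{hc}e$ says $\alpha u\le_h e$ for some complex unit $\alpha$, i.e. $\alpha u=\overline{\alpha}\,\J eue$, which rearranges to $\J eue=\alpha^2 u$; since $\alpha^2$ ranges over all complex units this is equivalent to the existence of a complex unit $\beta$ with $u=\beta\,\J eue$, which is $(ii)$. (In passing, $\J eue\in E_2(e)$ forces $u\in E_2(e)$.) For $(ii)\Leftrightarrow(iii)$ I would pass to the involution $u^{*_e}=\J eue$ of the unital JB$^*$-algebra $E_2(e)$; after disposing of the trivial case $u=0$, given $(ii)$ I would choose a complex unit $\gamma$ with $\gamma^2=\overline{\beta}$ and verify that $h:=\gamma u$ satisfies $h^{*_e}=h$, so that $u=\overline{\gamma}\,h$ displays $u$ as a scalar multiple of a self-adjoint element; conversely, writing $u=\lambda h$ with $h=h^{*_e}$ and $\lambda\neq0$ and computing $\J eue=(\overline{\lambda}/\lambda)u$ recovers $(ii)$.

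For part $(b)$, the implication $(i)\Rightarrow(ii)$ should be immediate, since $u\le_{hc}e$ is half of $u\sim_{hc}e$, while $\le_h\Rightarrow\le_2$ (Proposition~\ref{P:charact leh}) together with $\alpha u\sim_2 u$ upgrades the two $\le_{hc}$-relations to $u\le_2 e$ and $e\le_2 u$, i.e. $u\sim_2 e$. For $(ii)\Rightarrow(iii)$ I would take a complex unit $\alpha$ with $\alpha u\le_h e$; since $\alpha u\sim_2 u\sim_2 e$, the equivalence $(i)\Leftrightarrow(ii)$ of Proposition~\ref{P:charact simh} applied to the pair $\alpha u,\,e$ yields $\alpha u\sim_h e$. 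Finally, for $(iii)\Rightarrow(i)$, from $\alpha u\sim_h e$ the relation $\alpha u\le_h e$ gives $u\le_{hc}e$, while the symmetry of $\sim_h$ gives $e\le_h\alpha u$, i.e. $e=\alpha^2\,\J ueu$; this yields $\J ueu=\overline{\alpha}^2 e$, hence $\overline{\alpha}\,e=\J u{\overline{\alpha}\,e}u$, that is $\overline{\alpha}\,e\le_h u$ and so $e\le_{hc}u$. Combining the two gives $u\sim_{hc}e$.

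I do not expect a real obstacle here: the argument is essentially formal once the characterizations of $\le_h$ and $\sim_h$ are in hand. The only steps demanding care are the correct tracking of conjugations, the choice of the square-root complex unit $\gamma$ in $(ii)\Leftrightarrow(iii)$, and the separate (trivial) treatment of the case $u=0$.
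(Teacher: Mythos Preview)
Your proposal is correct and follows essentially the same approach as the paper's proof: both reduce everything to the characterizations of $\le_h$ and $\sim_h$ and track complex units via the conjugate-linearity of the middle variable. The only organizational difference is that the paper disposes of $(i)\Leftrightarrow(iii)$ in part $(a)$ directly by quoting Proposition~\ref{P:charact leh}, whereas you route it through $(ii)$ with an explicit square-root choice; in part $(b)$ your argument for $(iii)\Rightarrow(i)$ is exactly the paper's one-line deduction ``$e\le_h\alpha u$, thus $\overline{\alpha}e\le_h u$'' spelled out in full.
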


\begin{proof}
$(a)$ The equivalence $(i)\Leftrightarrow(iii)$ follows from the definitions and Proposition~\ref{P:charact leh}.

$(i)\Rightarrow(ii)$ Assume $u\le_{hc} e$. Then there is a complex unit $\alpha$ such that $\alpha u\le_h e$, i.e., 
$$\alpha u=\J e{\alpha u}e=\overline{\alpha} \J eue,$$
thus
$$u=\overline{\alpha}^2\J eue.$$
It remains to observe that $\overline{\alpha}^2$ is a complex unit.

$(ii)\Rightarrow(i)$ Assume $u=\alpha \J eue$ for a complex unit $\alpha$. Then there is a complex unit $\beta$ with $\overline{\beta}^2=\alpha$. Hence
$$\J e{\beta u}e=\overline{\beta} \J eue=\overline{\beta}\overline{\alpha} u=\beta u,$$
thus $\beta u\le_h e$ and so $u\le_{hc} e$.

$(b)$ The implication $(i)\Rightarrow(ii)$ can be proved exactly in the same way as the analogous implication from Proposition~\ref{P:charact simh}.

$(ii)\Rightarrow(iii)$ Assume $u\le_{hc}e$ and $u\sim_2e$. By the definition there is a complex unit $\alpha$ with $\alpha u\le_{h}e$. Since clearly $\alpha u\sim_2 u$, we deduce that $\alpha u\sim_2e$. Thus, by Proposition~\ref{P:charact simh} we deduce that $\alpha u\sim_h e$.

$(iii)\Rightarrow(i)$ Assume that $\alpha u\sim_h e$. Then $\alpha u\le_h e$, hence $u\le_{hc}e$. Further, $e\le_h \alpha u$, thus $\overline{\alpha}e\le_h u$, therefore $e\le_{hc}u$. We conclude that $u\sim_{hc}e$.
\end{proof}

\begin{example}\label{ex:lehc not transitive}
\begin{enumerate}[$(a)$]
    \item There are tripotents $e,u,v\in M_2$ such that $e\sim_h u$, $u\sim_h v$, $e$ and $v$ are incomparable with respect to $\le_h$ and $e\sim_{hc} v$.
     \item There are tripotents $e,u,v\in M_2$ such that $e\sim_h u$, $u\sim_h v$ and $e$, $v$ are incomparable with respect to $\le_{hc}$. In particular, the relations $\le_{hc}$ and $\sim_{hc}$ are not transitive on $M_2$.
\end{enumerate}
\end{example}

\begin{proof}
$(a)$ The matrices from Example~\ref{ex:leh not transitive} work.

$(b)$ Set
Let 
$$e=\begin{pmatrix} 1 & 0 \\ 0 & 1 \end{pmatrix},\quad u=\begin{pmatrix} 0 & 1 \\ 1 & 0 \end{pmatrix}, \quad
v=\begin{pmatrix} \frac1{\sqrt2} & -\frac1{\sqrt2} \\ \frac1{\sqrt2} & \frac1{\sqrt2} \end{pmatrix}.$$
Then $e,u,v$ are unitary matrices, so tripotents in $M_2$ satisfying $e\sim_2 u\sim_2 v$.
Moreover, clearly $u\le_h e$ ($e$ is the unit matrix and $u$ is self-adjoint). Thus $u\sim_h e$ by Proposition~\ref{P:charact simh}.

Further, $v\le_h u$ as
$$\J uvu=uv^*u=\begin{pmatrix} 0 & 1 \\ 1 & 0 \end{pmatrix}\begin{pmatrix} \frac1{\sqrt2} & \frac1{\sqrt2} \\ -\frac1{\sqrt2} & \frac1{\sqrt2} \end{pmatrix}\begin{pmatrix} 0 & 1 \\ 1 & 0 \end{pmatrix}
=\begin{pmatrix} -\frac1{\sqrt2} & \frac1{\sqrt2} \\ \frac1{\sqrt2} & \frac1{\sqrt2} \end{pmatrix}\begin{pmatrix} 0 & 1 \\ 1 & 0 \end{pmatrix}=v$$
and hence $u\sim_h v$ by Proposition~\ref{P:charact simh}.

Thus we have $e\sim_h u$ and $u\sim_h v$. However, $e$ and $v$ are incomparable for $\le_{hc}$. Indeed, by Propositon~\ref{P:hc-charact}$(b)$ it is enough to prove that $v\not\le_{hc}e$. But this is clear, as $e$ is the unit matrix and $v$ is not a scalar multiple of a self-adjoint element.
\end{proof}

Since the relation $\le_{hc}$ is not transitive, we will consider its transitive hull $\le_{hc,t}$. The relations $\sim_{hc,t}$ and $<_{hc,t}$ then have the obvious meaning. The following proposition summarizes properties of these relations.

\begin{prop}\label{P:hct-charact}
Let $E$ be a JB$^*$-triple and let $u,e\in E$ be two tripotents.
\begin{enumerate}[$(a)$]
    \item $u\le_{hc,t} e$ $\Leftrightarrow$ $\alpha u\le_{h,t} e$ for a complex unit $\alpha$.
    \item The following assertions are equivalent.
    \begin{enumerate}[$(i)$]
        \item $u\sim_{hc,t}e$;
        \item $u\le_{hc,t} e$ and $u\sim_2 e$;
        \item $\alpha u\sim_{h,t} e$ for a complex unit $\alpha$.
    \end{enumerate}  
    \end{enumerate}
\end{prop}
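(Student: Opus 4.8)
The plan is to isolate a single \emph{scaling} property of $\le_h$, propagate it through the transitive hulls by induction, and then finish part $(b)$ by invoking Proposition~\ref{P:simht-char}. The lemma I would establish first is: \emph{if $u\le_h e$ and $\beta$ is a complex unit, then $\beta u\le_h\beta e$.} This is a one-line computation: since the triple product is linear in the outer variables and conjugate linear in the middle one, $\J{\beta e}{\beta u}{\beta e}=\beta\,\overline\beta\,\beta\,\J eue=\beta u$, using $\J eue=u$ and $|\beta|=1$. Applying this to every link of a chain gives the same scaling for the transitive hull: $u\le_{h,t}e$ implies $\beta u\le_{h,t}\beta e$. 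I would also record two trivial but useful facts: $u\sim_{hc}\alpha u$ for every complex unit $\alpha$ (both inequalities follow from reflexivity of $\le_h$, taking $\gamma=\alpha$ and $\gamma=\overline\alpha$ respectively in the definition of $\le_{hc}$), and $\le_h\Rightarrow\le_{hc}$, whence $\le_{h,t}\Rightarrow\le_{hc,t}$.

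For part $(a)$, the implication `$\Leftarrow$' is immediate: if $\alpha u\le_{h,t}e$ then $\alpha u\le_{hc,t}e$, and since $u\sim_{hc}\alpha u$ (so $u\le_{hc,t}\alpha u$) transitivity of $\le_{hc,t}$ gives $u\le_{hc,t}e$. The implication `$\Rightarrow$' I would prove by induction on the length $k$ of a witnessing chain $e=v_0,v_1,\dots,v_k=u$ with $v_j\le_{hc}v_{j-1}$. The cases $k=0,1$ are trivial. For the step, the inductive hypothesis applied to the subchain through $v_{k-1}$ yields a complex unit $\beta$ with $\beta v_{k-1}\le_{h,t}e$; writing the final link as $\alpha_k u\le_h v_{k-1}$, the scaling lemma gives $\beta\alpha_k u\le_h\beta v_{k-1}$, and chaining with $\beta v_{k-1}\le_{h,t}e$ produces $(\beta\alpha_k)u\le_{h,t}e$, so $\alpha=\beta\alpha_k$ works.

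For part $(b)$ I would argue $(i)\Rightarrow(ii)\Rightarrow(iii)\Rightarrow(i)$. For $(i)\Rightarrow(ii)$ the only nontrivial point is $u\sim_2 e$: this holds because $\le_{hc}$ implies $\le_2$ (if $\alpha u\le_h e$ then $\alpha u\le_2 e$ by Proposition~\ref{P:charact leh}, and $\alpha u\sim_2 u$), and $\le_2$ is transitive, so $\le_{hc,t}$ implies $\le_2$ in both directions. For $(ii)\Rightarrow(iii)$, part $(a)$ supplies a complex unit $\alpha$ with $\alpha u\le_{h,t}e$; since $\alpha u\sim_2 u\sim_2 e$, the equivalence $(i)\Leftrightarrow(ii)$ of Proposition~\ref{P:simht-char} upgrades this to $\alpha u\sim_{h,t}e$. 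For $(iii)\Rightarrow(i)$, from $\alpha u\sim_{h,t}e$ I read off $\alpha u\le_{h,t}e$, which gives $u\le_{hc,t}e$ by part $(a)$, and $e\le_{h,t}\alpha u$, which gives $e\le_{hc,t}\alpha u$; since $\alpha u\sim_{hc}u$ we conclude $e\le_{hc,t}u$, hence $u\sim_{hc,t}e$.

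The main obstacle is the induction in the `$\Rightarrow$' direction of part $(a)$, specifically the need to merge the accumulated scalar $\beta$ coming from the inductive hypothesis with the scalar $\alpha_k$ of the new link into a single complex unit. This is exactly what the scaling lemma permits, so once that lemma is in hand the remainder is routine bookkeeping.
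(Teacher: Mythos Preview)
Your proof is correct and follows essentially the same approach as the paper: the paper also reduces part $(a)$ to the observation that $\alpha_j v_j\le_h v_{j+1}$ implies $\beta\alpha_j v_j\le_h \beta v_{j+1}$ for any complex unit $\beta$ (your scaling lemma), then multiplies the accumulated scalars along the chain, and handles part $(b)$ by the same $(i)\Rightarrow(ii)\Rightarrow(iii)\Rightarrow(i)$ route via Proposition~\ref{P:simht-char}. The only cosmetic difference is that you structure the chain argument as an explicit induction, whereas the paper writes the whole rescaled chain in one line.
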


\begin{proof}
$(a)$  The implication `$\Leftarrow$' is obvious. To prove the converse one assume that $u\le_{hc,t} e$. It means there there are tripotents
$$u=v_1,v_2,\dots,v_k=e$$
in $E$ such that
$$v_1\le_{hc} v_2\le_{hc}\dots\le_{hc} v_k.$$
By the very defintion there are complex units $\alpha_1,\dots,\alpha_{k-1}$ such that
$$\alpha_1 v_1\le_h v_2,\alpha_2 v_2\le_h v_3,\dots,\alpha_{k-1} v_{k-1}\le_h v_k.$$
Now, its clear that
$$\alpha_1\cdots\alpha_{k-1} v_1\le \alpha_2\cdots\alpha_{k-1}v_2\le_h \dots \le_h \alpha_{k-1}v_{k-1}\le_h v_k,$$
therefore
$$\alpha_1\cdots\alpha_{k-1} u\le_{h,t} e.$$

$(b):$ $(i)\Rightarrow(ii)$ Assume that $u\sim_{hc,t} e$. It means that $u\le_{hc,t}e$ and $e\le_{hc,t}u$. Thus the first condition in $(ii)$ is obviously true.

Furhter, by $(a)$ we get two complex units $\alpha,\beta$ such that $\alpha u\le_{h,t} e$ and $\beta e\le_{h,t}u$. In particular, $\alpha u\le_2 e$ and $\beta e\le_2 u$ (by Proposition~\ref{P:hc-charact}$(a)$). Since clearly $\alpha u\sim_2 u$ and $\beta e\sim_2 e$, we deduce that $u\sim_2 e$.

$(ii)\Rightarrow(iii)$ Assume $u\le_{hc,t}$ and $u\sim_2e$. By $(a)$ we get a complex unit $\alpha$ with $\alpha u\le_{h.t}e$. Since $\alpha u\sim_2 u$, we deduce that $\alpha u\sim_2 e$. Now, by Proposition~\ref{P:simht-char} we get $\alpha u\sim_{h,t} e$.

$(iii)\Rightarrow(i)$ This follows easily from $(a)$.
\end{proof}

\begin{cor}\label{cor:simhct=trhull}
The relation $\sim_{hc,t}$ coincides with the transitive hull of $\sim_{hc}$.
\end{cor}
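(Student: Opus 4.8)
The plan is to prove the two inclusions between $\sim_{hc,t}$ and the transitive hull of $\sim_{hc}$ separately, leaning entirely on the characterizations already established in Proposition~\ref{P:hct-charact} and Proposition~\ref{P:simht-char}; no new triple-product computation should be needed.

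First I would dispose of the easy inclusion. Since $\le_h$ implies $\le_{hc}$ (take $\alpha=1$ in the definition of $\le_{hc}$), the relation $\sim_h$ implies $\sim_{hc}$, and likewise $u\sim_{hc}e$ gives $u\le_{hc}e$ and $e\le_{hc}u$, whence $u\le_{hc,t}e$ and $e\le_{hc,t}u$, that is $u\sim_{hc,t}e$. Thus $\sim_{hc}\subseteq\sim_{hc,t}$. As $\le_{hc,t}$ is a preorder, $\sim_{hc,t}$ is an equivalence relation, hence transitive; being a transitive relation containing $\sim_{hc}$, it contains the transitive hull of $\sim_{hc}$.

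For the reverse inclusion I would start from $u\sim_{hc,t}e$ and apply the equivalence $(i)\Leftrightarrow(iii)$ of Proposition~\ref{P:hct-charact}$(b)$ to produce a complex unit $\alpha$ with $\alpha u\sim_{h,t}e$. The ``in particular'' clause of Proposition~\ref{P:simht-char} then supplies a finite chain of tripotents
$$\alpha u=v_1\sim_h v_2\sim_h\dots\sim_h v_k=e.$$
Promoting each link $\sim_h$ to $\sim_{hc}$ (again via $\le_h\Rightarrow\le_{hc}$) turns this into a $\sim_{hc}$-chain from $\alpha u$ to $e$. Finally I would prepend the single step $u\sim_{hc}\alpha u$: indeed, choosing the scalar $\alpha$ gives $\alpha u\le_h\alpha u$ by reflexivity of $\le_h$, hence $u\le_{hc}\alpha u$, and choosing $\overline\alpha$ gives $\overline\alpha(\alpha u)=u\le_h u$, hence $\alpha u\le_{hc}u$; together these yield $u\sim_{hc}\alpha u$. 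Concatenating produces the $\sim_{hc}$-chain
$$u\sim_{hc}\alpha u=v_1\sim_{hc}v_2\sim_{hc}\dots\sim_{hc}v_k=e,$$
so $u$ and $e$ are related by the transitive hull of $\sim_{hc}$, completing the second inclusion.

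The argument is short precisely because the substantive content has been absorbed into the two quoted propositions; the only genuinely new observation is the reduction by the scalar $\alpha$. I expect the one mildly delicate point to be recognizing that $u$ and $\alpha u$ are already $\sim_{hc}$-related, so that the scalar factor can be ``paid for'' by a single extra link in the chain rather than obstructing the passage from $\sim_{hc,t}$ to chains of $\sim_{hc}$; beyond this, I anticipate no real obstacle apart from keeping the complex units consistent with the conventions of Proposition~\ref{P:hct-charact}.
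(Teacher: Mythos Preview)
Your proof is correct and follows essentially the same route as the paper's: both invoke Proposition~\ref{P:hct-charact}$(b)$ to obtain a complex unit $\alpha$ with $\alpha u\sim_{h,t}e$, then apply Proposition~\ref{P:simht-char} to expand this into a $\sim_h$-chain, which becomes a $\sim_{hc}$-chain. Your version is simply more explicit, in particular spelling out the step $u\sim_{hc}\alpha u$ and the concatenation that the paper leaves implicit.
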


\begin{proof}
It is clear that $\sim_{hc,t}$ is finer than the transitive hull of $\sim_{hc}$. Conversely, assume that $u\sim_{hc,t}e$. By Proposition~\ref{P:hct-charact} there is a complex unit $\alpha$ with $\alpha u\sim_{h,t}e$. Now we may conclude by using Proposition~\ref{P:simht-char}.
\end{proof}

\subsection{The relation $\le_n$ and its transitive hull}

Recall that $u\le_n e$ means that 
$u=\J eeu$ and $\J uue$ is a tripotent.  The symbols $\sim_n$ and $<_n$ will have the obvious meaning. The following proposition contains a basic characterization of the relation $\le_n$.

\begin{prop}\label{P:charact len} Let $E$ be a JB$^*$-triple and let $u,e\in E$ be two tripotents. Then the following assertions are equivalent:
\begin{enumerate}[$(i)$]
\item $u\le_n e$;
\item $u\in E_2(e)$ and $\J uue$ is a tripotent satisfying $\J uue\le e$;
\item $u\in E_2(e)$ and $u\circ_e u^{*_e}$ is a projection in $E_2(e)$.
\end{enumerate}
\end{prop}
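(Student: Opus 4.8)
The strategy is to carry out all computations inside the unital JB$^*$-algebra $B:=E_2(e)$ (with unit $e$, product $\circ_e$ and involution $*_e$), and to identify the tripotent $\J uue$ occurring in the definition of $\le_n$ with an algebraic expression in $B$. Note first that in each of the three statements $u$ belongs to $E_2(e)$: in $(ii)$ and $(iii)$ this is explicit, while in $(i)$ the condition $u=\J eeu$ is exactly $u\in E_2(e)$. So throughout we may assume $u\in E_2(e)$ and it remains to compare the condition ``$\J uue$ is a tripotent'' with the conditions in $(ii)$ and $(iii)$.

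The first step is the key computation. Since $E_2(e)$ is a JB$^*$-subtriple of $E$ on which the original triple product agrees with the one induced by the JB$^*$-algebra structure of $B$ via \eqref{eq:triple product in JB*-algebra}, I would substitute $x=y=u$, $z=e$ into that formula and use that $e$ is the unit of $B$ to get
$$\J uue=(u\circ_e u^{*_e})\circ_e e+u\circ_e(u^{*_e}\circ_e e)-(u\circ_e e)\circ_e u^{*_e}=u\circ_e u^{*_e}.$$
This single identity already binds $(ii)$ and $(iii)$ together, since the element $\J uue$ appearing in $(ii)$ and the element $u\circ_e u^{*_e}$ appearing in $(iii)$ are literally the same element $p\in B$.

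The heart of the argument is the observation that $p=u\circ_e u^{*_e}$ is always a \emph{positive} self-adjoint element of $B$. Self-adjointness is immediate from commutativity of $\circ_e$, as $(u\circ_e u^{*_e})^{*_e}=u^{*_e}\circ_e u=p$. For positivity I would write $u=a+ib$ with $a=\tfrac12(u+u^{*_e})$ and $b=\tfrac1{2i}(u-u^{*_e})$ self-adjoint; expanding and using commutativity of $\circ_e$ yields $p=a\circ_e a+b\circ_e b$, a sum of squares of self-adjoint elements, hence positive. Now the self-adjoint element $p$ generates, together with $e$, an associative --- hence commutative C$^*$- --- subalgebra of $B$, so by functional calculus it is represented as a nonnegative real-valued function on a compact space. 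Such a $p$ is a tripotent, i.e. satisfies $p^3=p$, if and only if it takes only the values $0$ and $1$ (the value $-1$ being excluded by $p\ge 0$), that is, if and only if it is a projection. This is exactly the dichotomy used in the proof of Proposition~\ref{P:charact leh}, now specialized to nonnegative elements.

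With these two facts the equivalences assemble quickly, and I would arrange them as a cycle $(i)\Rightarrow(iii)\Rightarrow(ii)\Rightarrow(i)$. For $(i)\Rightarrow(iii)$: given $u\in E_2(e)$, statement $(i)$ says $p$ is a tripotent, and since $p$ is positive and self-adjoint the previous paragraph forces $p$ to be a projection. For $(iii)\Rightarrow(ii)$: a projection is automatically a tripotent, and by Proposition~\ref{P:order char} (property $(vii)$) a projection $p\in E_2(e)$ satisfies $p\le e$, which is precisely $(ii)$. Finally $(ii)\Rightarrow(i)$ is trivial, as $(ii)$ already asserts that $p=\J uue$ is a tripotent. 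The step I expect to require the most care is the positivity of $u\circ_e u^{*_e}$ together with the consistency claim that the triple product of $E$, restricted to $E_2(e)$, coincides with the one arising from the JB$^*$-algebra $(E_2(e),\circ_e,{}^{*_e})$ via \eqref{eq:triple product in JB*-algebra}; once these are secured, the remainder is routine bookkeeping with the Peirce calculus and Proposition~\ref{P:order char}.
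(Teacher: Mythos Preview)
Your proof is correct and follows essentially the same approach as the paper: both identify $\J uue=u\circ_e u^{*_e}$, observe this element is positive in $E_2(e)$, and conclude that a positive tripotent must be a projection. The only cosmetic difference is that the paper deduces the last step by invoking the $p-q$ decomposition of Proposition~\ref{P:charact leh} and noting positivity forces $q=0$, whereas you use functional calculus directly; both arguments are equivalent.
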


\begin{proof}
$(i)\Rightarrow(iii)$ Assume $u\le_n e$. Then $u=\J eeu$, hence $u\in E_2(e)$. 
Moreover, $\J uue=u\circ_e u^{*_e}$, so it is a positive element in the JB$^*$-algebra $E_2(e)$.
By the assumption it is a tripotent. Being positive, it is self-adjoint, so by Proposition~\ref{P:charact leh} it is the difference of a pair of mutually orthogonal projections in $E_2(e)$. But since it is even positive, it must be a projection.

The implications $(iii)\Rightarrow(ii)\Rightarrow(i)$ are obvious.
\end{proof}

We continue by a characterization of $\sim_n$, which is very easy.

\begin{prop}\label{p:simn=sim2}
Let $E$ be a JB$^*$-triple and let $u,e\in E$ be two tripotents.
Then
$$u\sim_n e \Leftrightarrow u\sim_2 e.$$
\end{prop}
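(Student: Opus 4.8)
The plan is to prove the two implications separately, and the direction $u\sim_n e\Rightarrow u\sim_2 e$ is essentially immediate: by Proposition~\ref{P:charact len} the relation $u\le_n e$ entails $u\in E_2(e)$, i.e., $u\le_2 e$; symmetrically $e\le_n u$ gives $e\le_2 u$; hence $u\sim_2 e$. So the whole content lies in the converse.

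For the converse I would assume $u\sim_2 e$, i.e., $u\in E_2(e)$ and $e\in E_2(u)$. The key structural fact, recorded just before Proposition~\ref{P:charact simh} (citing \cite[Proposition 2.3]{Finite}), is that $u\sim_2 e$ forces $E_2(u)=E_2(e)$; thus $u$ and $e$ generate the \emph{same} Peirce-$2$ subspace $B:=E_2(e)=E_2(u)$, a unital JB$^*$-algebra with unit $e$, in which $u$ is a \emph{unitary} element (it is a tripotent whose Peirce-$2$ space is all of $B$, hence a unitary tripotent of $B$, equivalently a unitary element of the JB$^*$-algebra $B$ by the identification recalled after \eqref{eq:algebra from unitary}). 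To verify $u\le_n e$ I must check that $\J uue$ is a tripotent. Working inside $B$ with the algebra operations \eqref{eq:algebra from unitary} relative to $e$, I compute $\J uue=u\circ_e u^{*_e}$, which is a \emph{positive} element of $B$; moreover, since $u$ is unitary its Jordan inverse is $u^{*_e}$, so $u\circ_e u^{*_e}=e$, the unit of $B$. Being the unit $e$, it is certainly a tripotent (indeed a projection, in fact $\J uue\le e$ trivially). Hence $u\le_n e$, and by symmetry $e\le_n u$, giving $u\sim_n e$.

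The step I expect to be the main (though still modest) obstacle is the identification $\J uue=e$: one must be careful that the triple product $\J uue$ computed in $E$ agrees with the JB$^*$-algebra expression $u\circ_e u^{*_e}$ computed in $B=E_2(e)$, and then invoke that a unitary tripotent $u$ satisfies $u\circ_e u^{*_e}=e$. This is where the equality $E_2(u)=E_2(e)$ is essential: it guarantees $u$ is unitary (not merely self-adjoint) in $B$, so that its Jordan inverse coincides with $u^{*_e}$ and the product collapses to the unit. Once this is in place the remaining assertions are immediate, and the symmetric roles of $u$ and $e$ close the argument.
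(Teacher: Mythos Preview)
Your proof is correct, but you take a detour where the paper proceeds in one line. The paper simply observes that $u\sim_2 e$ \emph{means} $\J eeu=u$ and $\J uue=e$ (this is just the definition of $E_2(e)$ and $E_2(u)$), so both conditions in the definition of $u\le_n e$ are satisfied trivially: $\J eeu=u$, and $\J uue=e$ is a tripotent. Symmetrically for $e\le_n u$.

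Your argument reaches the same identity $\J uue=e$, but via the JB$^*$-algebra structure of $E_2(e)$: you invoke $E_2(u)=E_2(e)$ to conclude $u$ is unitary in $B=E_2(e)$, then use that the Jordan inverse of a unitary is its adjoint to get $u\circ_e u^{*_e}=e$. This is all valid, but it obscures that $\J uue=e$ is nothing more than the statement $e\in E_2(u)$, which you are already assuming. So while your route buys nothing extra here, it does illustrate the algebraic picture (unitarity in the Peirce-$2$ algebra) that becomes genuinely useful in later parts of the paper.
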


\begin{proof}
The implication $\Rightarrow$ is obvious. To prove the converse assume $u\sim_2 e$. It means that $\J uue=e$ and $\J eeu=u$, so by the very definition $u\sim_n e$.
\end{proof}

The following lemma is a key observation which helps to understand the relation $\le_n$.

\begin{lemma}\label{L:len implies sim2} Let $E$ be a JB$^*$-triple and $u,e\in E$ two tripotents.
If $u\le_n e$, then $u\sim_2\J uue$.
\end{lemma}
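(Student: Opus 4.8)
The plan is to carry out the whole argument inside the unital JB$^*$-algebra $A=E_2(e)$ (with unit $e$, product $\circ_e$ and involution $*_e$), analysing $u$ via the \emph{Jordan} Peirce decomposition determined by the projection $w:=\J uue$. By Proposition~\ref{P:charact len}, the hypothesis $u\le_n e$ gives $u\in A$, that $w$ is a tripotent with $w\le e$, and that $w=u\circ_e u^{*_e}$ is a projection in $A$ (indeed $\J uue=u\circ_e u^{*_e}$ by \eqref{eq:triple product in JB*-algebra}, since $e$ is the unit). Writing $w'=e-w$ for the complementary projection, I would decompose $u=u_2+u_1+u_0$ into its Jordan Peirce components $u_j\in A_j(w)$. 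Because $w\le e$ one has $A_2(w)=E_2(w)$, so the statement reduces to two things: first, that $u_1=u_0=0$ (so that $u\in E_2(w)$, i.e.\ $u\le_2 w$), and second, that $w\in E_2(u)$ (i.e.\ $w\le_2 u$); together these give $u\sim_2 w=\J uue$.

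For the first reduction I would compute the Peirce-$0$ component of the identity $w=u\circ_e u^{*_e}$. Since $*_e$ commutes with the Peirce projections of $w$, and using the Jordan Peirce multiplication rules ($A_2\circ_e A_0=0$ and $A_1\circ_e A_1\subseteq A_2\oplus A_0$), the $A_0(w)$-part of $u\circ_e u^{*_e}$ equals $u_0\circ_e u_0^{*_e}+P_0^A(u_1\circ_e u_1^{*_e})$, where $P_0^A=U_{w'}$ denotes the Peirce-$0$ projection of $A$ onto $A_0(w)$. As $w\in A_2(w)$, this component vanishes. Now both summands are positive elements of $A_0(w)$ (the first is of the form $y\circ_e y^{*_e}$, the second is the image of such an element under the positive map $U_{w'}$), so both must be $0$. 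From $u_0\circ_e u_0^{*_e}=0$ I obtain $u_0=0$: writing $u_0=h+ik$ with $h,k$ self-adjoint gives $h^2+k^2=0$, hence $h=k=0$.

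The remaining, and most delicate, point is to deduce $u_1=0$ from $P_0^A(u_1\circ_e u_1^{*_e})=0$; this is exactly where the normality hypothesis (that $w$ is a genuine projection rather than a mere self-adjoint tripotent) is used. The key computation I would record is that, for every $x\in A_1(w)$, one has $\J xx{w'}=P_0^A(x\circ_e x^{*_e})$. This follows by expanding $\J xx{w'}$ through \eqref{eq:triple product in JB*-algebra}, using $x\circ_e w'=\tfrac12 x$ and $x^{*_e}\circ_e w'=\tfrac12 x^{*_e}$ (as $x,x^{*_e}\in A_1(w)$) together with the fact that $(x\circ_e x^{*_e})\circ_e w'$ extracts precisely the $A_0(w)$-part; the two half-terms then cancel. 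Applied to $x=u_1$ this yields $\J{u_1}{u_1}{w'}=0$, i.e.\ $u_1$ is orthogonal to $w'$. Invoking the well-known fact that for elements $a,b$ of a JB$^*$-triple the relation $\J aab=0$ is symmetric in $a$ and $b$, I get $\J{w'}{w'}{u_1}=0$; but $u_1\in A_1(w)=A_1(w')\subseteq E_1(w')$, so $\J{w'}{w'}{u_1}=\tfrac12 u_1$, whence $u_1=0$.

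Having established $u=u_2\in A_2(w)=E_2(w)$, it remains to verify $w\in E_2(u)$. Here I would pass to the unital JB$^*$-algebra $B:=E_2(w)$ with unit $w$; since $w\le e$, a short Peirce computation shows that its product and involution are the restrictions of $\circ_e$ and $*_e$, so that $u\circ_w u^{*_w}=u\circ_e u^{*_e}=w=1_B$. A direct expansion of $\J uuw$ via \eqref{eq:triple product in JB*-algebra} inside $B$ then collapses: each of its three terms equals $w$, with signs $+,+,-$, giving $\J uuw=w$, i.e.\ $w\in E_2(u)$. Combining $u\le_2 w$ with $w\le_2 u$ yields $u\sim_2\J uue$, as required. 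I expect the hard part to be the vanishing of $u_1$ in the third paragraph; the identity $\J xx{w'}=P_0^A(x\circ_e x^{*_e})$ and the symmetry of triple-orthogonality are the two ingredients that make it work.
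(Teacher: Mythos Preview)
Your proof is correct and takes a genuinely different route from the paper's argument. The paper passes to the unital JB$^*$-subalgebra $N$ generated by $u$ and $u^{*_e}$, invokes the Shirshov--Cohn theorem to embed $N$ as a JB$^*$-subalgebra of a C$^*$-algebra $A$, and then observes that in $A$ the element $u\circ u^*=\tfrac12(p+q)$ (with $p=u^*u$, $q=uu^*$) is a projection; since projections are extreme points of the positive part of the unit ball, this forces $p=q$, from which $A_2(u)=A_2(u\circ u^*)$ follows immediately. Your argument, by contrast, stays entirely inside the Jordan setting: you split $u$ along the Peirce decomposition of the projection $w=u\circ_e u^{*_e}$, kill $u_0$ and $u_1$ by positivity considerations together with the identity $\J xx{w'}=U_{w'}(x\circ_e x^{*_e})$ for $x\in A_1(w)$ and the symmetry of triple-orthogonality, and finish with a short computation showing $\J uuw=w$. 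The paper's approach is shorter once one is willing to quote Shirshov--Cohn and the extremality of projections, and it makes transparent why the word ``normal'' is apt. Your approach avoids the C$^*$-embedding altogether and is purely intrinsic to the JB$^*$-triple/algebra structure; the price is the somewhat delicate vanishing of $u_1$, which you isolate correctly as the crux and handle via the orthogonality lemma $\J aab=0\Leftrightarrow\J bba=0$.
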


\begin{proof} Assume $u\le_n e$. We know that both $u$ and $\J uue$ are tripotents belonging to $E_2(e)$. So, without loss of generality we may assume that $E$ is a unital JB$^*$-algebra and $e=1$.

So, $u$ is a tripotent and $u^*\circ u$ is a projection in $E$. Let $N$ be the unital JB$^*$-subalgebra of $E$ generated by $u$ and $u^*$. By \cite[Theorems 2.4.13 and 2.4.14]{hanche1984jordan} (or \cite[Corollary 2.2]{Wright1977}), $N$ is a JC$^*$-algebra, hence we may assume it is a JB$^*$-subalgebra of a $C^*$-algebra $A$. Note that $p=u^*u\in A$ is the inital projection of $u$ and $q=uu^*\in A$ is the final projection. By the assumption we know that $\frac12(p+q)=u\circ u^*$ is a projection. Since projections are extreme points of the the positive portion of the unit ball of each C$^*$-algebra (by \cite[Theorem 4]{KadisonAnn51}), we deduce that $p=q$.
Hence $u\circ u^*=p=q$ is simultaneously the initial projection and the final projection of $u$ in $A$. Now it easily follows $A_2(u)=A_2(u^*\circ u)$, thus $u\sim_2 u^*\circ u$ in $A$ (hence in $N$ and thus in $E$).
\end{proof}

The following example provides an analysis of the definition of $\le_n$ and, moreover, illustrates non-transitivity of $\le_n$.

\begin{example}\label{ex:len not transitive} 
\begin{enumerate}[$(a)$]
    \item The assumption $\J uue\le e$ does not imply $u\in E_2(e)$.
    \item If $P_1(e)u=0$, then $\J uue=\J{P_2(e)u}{P_2(e)u}e$. In this case $\J uue\le e$ if and only if $P_2(e)u\le_n e$.
    \item There are tripotents $e,u,v\in M_2$ such that $e\sim_h u$, $v\le u$, but $v\not\le_n e$. In particular, the relation $\le_n$ is not transitive.
\end{enumerate}
\end{example}

\begin{proof}
$(a)$ Set $E=M_2$. Let 
$$e=\begin{pmatrix} 1&0\\0&0\end{pmatrix}, \quad u=\begin{pmatrix} 1&0\\0&1\end{pmatrix}, \quad
v=\begin{pmatrix} \frac1{\sqrt2}&\frac1{\sqrt2}\\-\frac1{\sqrt2}&\frac1{\sqrt2}\end{pmatrix}.$$
Then $e,u$ are projections, hence tripotents in $M_2$. Moreover, $v$ is a unitary matrix, hence it is a tripotent as well.
Clearly $u\notin E_2(e)$ and $v\notin E_2(e)$. Moreover,
$\J uue=e\le e$ and
$$\J vve = \frac12(vv^*e+ev^*v)=\frac12(e+e)=e\le e.$$

$(b)$ The Peirce arithmetic easily implies
$$\begin{aligned}
P_2(e)\J uue&=\J{P_2(e)u}{P_2(e)u}e+\J{P_1(e)u}{P_1(e)u}e,\\ P_1(e)\J uue&=\J{P_1(e)u}{P_2(e)u}e+\J{P_0(e)u}{P_1(e)u}e,\\
 P_{0} (e)\J uue&=0.
\end{aligned}
$$
Thus, if $P_1(e)u=0$, then $\J uue=\J{P_2(e)u}{P_2(e)u}e\in E_2(e)$. Moreover, in this case $P_2(e)u$ is a tripotent
as
$$\begin{aligned}P_2(e)u+P_0(e)u&=u=\J uuu \\&= \J {P_2(e)u+P_0(e)u}{P_2(e)u+P_0(e)u}{P_2(e)u+P_0(e)u}\\&= \J{P_2(e)u}{P_2(e)u}{P_2(e)u}+ \J{P_0(e)u}{P_0(e)u}{P_0(e)u},\end{aligned}$$
so the assertion easily follows from Proposition \ref{P:charact len}.

$(c)$ Let 
$$e=\begin{pmatrix} 1 & 0 \\ 0 & 1 \end{pmatrix},\quad u=\begin{pmatrix} 0 & -1 \\ -1 & 0 \end{pmatrix}, \quad
v=\begin{pmatrix} 0 & -1 \\ 0 & 0 \end{pmatrix}.$$
Then $e,u$ are unitary matrices, so tripotents. Moreover, in Example \ref{ex:leh not transitive} we proved that $u\sim_h e$.
$v$ is clearly a partial isometry. We claim that $v\le u$. Indeed,
$$\J vuv= vu^*v=\begin{pmatrix} 0 & -1 \\ 0 & 0 \end{pmatrix}\begin{pmatrix} 0 & -1 \\ -1 & 0 \end{pmatrix}\begin{pmatrix} 0 & -1 \\ 0 & 0 \end{pmatrix}=\begin{pmatrix} 1 & 0 \\ 0 & 0 \end{pmatrix}\begin{pmatrix} 0 & -1 \\ 0 & 0 \end{pmatrix}=v.$$
However, $v\not\le_n e$, as
$$\J vve= \frac12(vv^*e+ev^*v)=\begin{pmatrix} \frac12 & 0 \\ 0 & \frac12 \end{pmatrix},$$
which is not a tripotent.
\end{proof}

Since the relation $\le_n$ is not transitive, we define $\le_{n,t}$ to be its transitive hull. The symbols $\sim_{n,t}$ and $<_{n,t}$ have the obvious meaning.

Notice that $\sim_{n,t}$ coincides with $\sim_2$. Indeed, since $\le_2$ is finer than $\le_{n,t}$, obviously $\sim_{n,t}$ is finer than $\sim_2$. The converse inclusion follows from Proposition~\ref{p:simn=sim2}.

The following lemma provides a factorization of the relations $\le_n$ and $\le_{n,t}$.

\begin{lemma}\label{L:factor len} Let $E$ be a JB$^*$-triple and $u,e\in E$ two tripotents. Consider the following statements.
\begin{enumerate}[$(i)$]
    \item $u\le_n e$;
    \item there is a tripotent $v$ such that $u\sim_2 v$ and $v\le e$;
    \item there is a tripotent $w$ such that $e\sim_2 w$ and $u\le w$;
    \item $u\le_{n,t}e$.
\end{enumerate}
Then 
$$(i)\Leftrightarrow(ii)\Rightarrow(iii)\Leftrightarrow(iv).$$
The implication $(iii)\Rightarrow(ii)$ fails for example in $E=M_2$.
\end{lemma}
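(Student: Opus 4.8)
The plan is to close the cycle in the order $(i)\Leftrightarrow(ii)$, then $(i)\Rightarrow(iii)$ (which, combined with $(i)\Leftrightarrow(ii)$, automatically yields $(ii)\Rightarrow(iii)$ for \emph{every} pair of tripotents), and finally $(iii)\Leftrightarrow(iv)$, collapsing the transitive hull to a single step by an induction parallel to Lemma~\ref{L:factor leht}. The workhorses will be the characterization of $\le_n$ in Proposition~\ref{P:charact len}, the fact that $u\le_n e$ forces $u\sim_2\J uue$ (Lemma~\ref{L:len implies sim2}), the coincidence $E_2(u)=E_2(v)$ whenever $u\sim_2 v$ (\cite[Proposition 2.3]{Finite}), and the Peirce calculus.

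For $(i)\Rightarrow(ii)$ I would simply put $v=\J uue$: by Proposition~\ref{P:charact len} this is a tripotent with $v\le e$, and by Lemma~\ref{L:len implies sim2} we get $u\sim_2 v$. The converse $(ii)\Rightarrow(i)$ is the heart of the lemma and the step I expect to be the main obstacle. Given $v\le e$ with $u\sim_2 v$, one first notes $u\in E_2(v)$ (from $u\le_2 v$) and $E_2(u)=E_2(v)$, so $u$ is a \emph{unitary} tripotent of the unital JB$^*$-algebra $F:=E_2(v)$; hence $u\circ_v u^{*_v}=v$, the unit of $F$. Writing $e=v+(e-v)$ with $e-v\in E_0(v)$ and using the strong Peirce rule $\J{E_0(v)}{E_2(v)}{E}=\{0\}$ together with the symmetry of the triple product in its outer variables, the term $\J uu{e-v}=\J{e-v}uu$ vanishes, so $\J uue=\J uuv=u\circ_v u^{*_v}=v$. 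Thus $\J uue=v$ is a tripotent with $\J uue\le e$, and Proposition~\ref{P:charact len} gives $u\le_n e$. The delicate points are recognizing that $\sim_2$ upgrades $u$ to a \emph{unitary} of $E_2(v)$ and the bookkeeping that makes the Peirce-$1$ contribution disappear.

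For $(i)\Rightarrow(iii)$ I would take $w=u+(e-v)$ with $v=\J uue$. Since $u\in E_2(v)$ and $v':=e-v\in E_0(v)$, the Peirce rules give $u\perp v'$, so $w$ is a tripotent with $u\le w$. To see $w\sim_2 e$ I would verify $w\le_2 e$ and $e\le_2 w$ separately: the former holds because both summands of $w$ lie in $E_2(e)$, and the latter by the direct computation $\J wwe=\J uue+\J{v'}{v'}e=v+v'=e$, where the two cross terms vanish by $u\perp v'$. This route avoids invoking the coincidence of all three Peirce spaces of $u$ and $v$.

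Finally, for $(iii)\Leftrightarrow(iv)$ the forward direction is immediate: $u\le w$ gives $u\le_n w$ (since $u\in E_2(w)$ yields $u=\J wwu$, and $u=\J uuw$ is a tripotent by Proposition~\ref{P:order char}), while $e\sim_2 w$ gives $w\le_n e$ by Proposition~\ref{p:simn=sim2}, whence $u\le_{n,t}e$. For $(iv)\Rightarrow(iii)$ I would induct on the length of an $\le_n$-chain $u\le_n t_1\le_n\cdots\le_n t_m=e$: the one-step case is $(i)\Rightarrow(iii)$, and in the inductive step I apply the hypothesis to $t_1\le_{n,t}e$ to get $w'$ with $e\sim_2 w'$ and $t_1\le w'$, push $u\le_n t_1$ through $(i)\Leftrightarrow(ii)$ to obtain $v\le t_1\le w'$ with $u\sim_2 v$, and then apply $(ii)\Rightarrow(iii)$ relative to $w'$ to produce $w\sim_2 w'\sim_2 e$ with $u\le w$. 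The failure of $(iii)\Rightarrow(ii)$ is already witnessed by Example~\ref{ex:len not transitive}$(c)$: there, with $e$ the identity matrix, the unitary $w=\begin{pmatrix}0&-1\\-1&0\end{pmatrix}$ satisfies $e\sim_2 w$ and the partial isometry $u=\begin{pmatrix}0&-1\\0&0\end{pmatrix}$ satisfies $u\le w$, yet $\J uue=\frac12 e$ is not a tripotent, so $u\not\le_n e$ and hence, by $(i)\Leftrightarrow(ii)$, statement $(ii)$ fails.
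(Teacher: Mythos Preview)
Your proof is correct and follows essentially the same route as the paper: the same witness $v=\J uue$ for $(i)\Rightarrow(ii)$, the same $w=u+(e-v)$ for $(ii)\Rightarrow(iii)$, the same induction for $(iv)\Rightarrow(iii)$, and the same counterexample. The only cosmetic differences are that the paper gets $\J uue=\J uuv=v$ in one line (since $u\sim_2 v$ means $v\in E_2(u)$) rather than via the unitary-in-$E_2(v)$ detour, verifies $w\sim_2 e$ via $L(w,w)=L(e-v,e-v)+L(u,u)=L(e-v,e-v)+L(v,v)=L(e,e)$ rather than by computing $\J wwe$, and in the inductive step shows $u\le_n w'$ directly from $\J uu{w'}=\J uu{t_1}$ (Peirce arithmetic, since $t_1=P_2(t_1)w'$) instead of routing through $(ii)$; none of these changes the substance.
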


\begin{proof}
$(i)\Rightarrow(ii)$ This follows from Lemma~\ref{L:len implies sim2}, it is enough to take $v=\J uue$.

$(ii)\Rightarrow(i)$  Assume that $u\sim_2 v$ and $v\le e$. Then $u\in E_2(v)\subset E_2(e)$ and, moreover,
$$\J uue=\J uuv =v.$$
This gives the desired conclusion.

$(ii)\Rightarrow(iii)$ Since $v\le e$, we get that $e-v$ is a tripotent orthogonal to $v$. Since $u\sim_2 v$, the Peirce projections of $u$ and $v$ coincide. Thus, $u$ is orthogonal to $e-v$. It follows that $w=e-v+u$ is a tripotent satisfying $u\le w$. Moreover, $w\sim_2 e$ as
$$L(w,w)=L(e-v,e-v)+L(u,u)=L(e-v,e-v)+L(v,v)=L(e,e),$$ where we applied that $u\sim_2 v \Rightarrow L(v,v) = L(u,u)$ (cf. \cite[Proposition 2.3]{Finite} or \cite[Proposition 6.6]{hamhalter2019mwnc}).

A counterexample to $(iii)\Rightarrow(i)$ was given in Example~\ref{ex:len not transitive}$(c)$.

$(iii)\Rightarrow(iv)$ This is obvious.

$(iv)\Rightarrow(iii)$ Assume that $u\le_{n,t}e$. Then there are tripotents
$$e=v_0,v_1,\dots,v_k=u$$
such that
$v_j\le_n v_{j-1}$ for $j=1,\dots,k$. We will prove the statement by induction on $k$. The case $k=1$ follows from the implication $(i)\Rightarrow(iii)$. Assume that $k>1$ and the statement holds for $k-1$. Then there is a tripotent $w'\in E$ such that $w'\sim_2 e$ and $v_{k-1}\le w'$. Note that then $u=v_k\le_n w'$. Indeed, $u\in E_2(v_{k-1})\subset E_2(w')$ and $v_{k-1}=P_2(v_{k-1})(w')$. So, by Peirce arithmetic
$$\J uu{w'}=\J uu{v_{k-1}}$$
is a tripotent. Thus, using again the implication $(i)\Rightarrow(iii)$ we get a tripotent $w$ with $w\sim_2 w'$ and $u\le w$. Then $w\sim_2e$ and the proof is complete.
\end{proof}

\subsection{Overall comparison of the relations} 

In this subsection we compare the above-defined relations and collect examples distingushing them. We start by the following proposition collecting the implications among the relations.

\begin{prop}\label{P:order implications}
Let $E$ be a JB$^*$-triple and $e,u$ be two tripotents in $E$. Then
$$\begin{array}{cccccccccc}
u\le e & \Rightarrow & u\le_r e& \Rightarrow & u \le_c e   & & &  & \\
 && \Downarrow&& \Downarrow && &&    \\
     && u \le_h e &\Rightarrow&u\le_{hc}e &\Rightarrow & u\le_n e &&  \\
      && \Downarrow&& \Downarrow&& \Downarrow &&   \\
     & & u \le_{h,t} e & \Rightarrow & u \le_{hc,t} e & \Rightarrow &u\le_{n,t} e &\Rightarrow&u\le_2 e 
\end{array}
$$
Moreover, if $E$ is a JB$^*$-algebra and $e,u\in E$ are projections,
then all the considered relations are equivalent.
\end{prop}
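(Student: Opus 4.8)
The plan is to verify each arrow of the diagram separately; most are formal, and only two carry genuine content. First I would dispose of the routine arrows. The implications $u\le e\Rightarrow u\le_r e$ and $u\le_r e\Rightarrow u\le_c e$ hold because $1$ and $-1$ are complex units, $u\le_h e\Rightarrow u\le_{hc}e$ is the case $\alpha=1$ in the definition of $\le_{hc}$, and $u\le_n e\Rightarrow u\le_2 e$ holds since $u\le_n e$ forces $u\in E_2(e)$. The three vertical arrows into the bottom row hold because every relation is contained in its transitive hull, while the two bottom horizontal arrows $u\le_{h,t}e\Rightarrow u\le_{hc,t}e\Rightarrow u\le_{n,t}e$ follow from monotonicity of the transitive hull, using that $\le_h$ is contained in $\le_{hc}$ and (once the key arrow below is proved) that $\le_{hc}$ is contained in $\le_n$. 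Finally $u\le_{n,t}e\Rightarrow u\le_2 e$ follows because $\le_n$ already implies $\le_2$, which is transitive and so absorbs the transitive hull.

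Next I would treat the two vertical arrows from the top row. If $u\le_r e$, then $u$ or $-u$ is a projection in $E_2(e)$ by Proposition~\ref{P:order char}$(vii)$, so in both cases $u$ is self-adjoint in $E_2(e)$ and $u\le_h e$ by Proposition~\ref{P:charact leh}; applying this to $\alpha u$ for the complex unit $\alpha$ witnessing $u\le_c e$ gives $u\le_{hc}e$. The one computational single-step arrow is $u\le_{hc}e\Rightarrow u\le_n e$. Using Proposition~\ref{P:hc-charact}$(a)$ I would fix a complex unit $\beta$ with $h:=\beta u\le_h e$, so $h$ is a self-adjoint tripotent of $E_2(e)$ and, by Proposition~\ref{P:charact leh}, $h=p-q$ for mutually orthogonal projections $p,q\in E_2(e)$. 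Since $u=\overline{\beta}\,h$ and the triple product is conjugate-linear in its middle variable, $\J uue=|\beta|^2\J hhe=\J hhe=h\circ_e h^{*_e}=(p-q)\circ_e(p-q)=p+q$, a projection and hence a tripotent; together with $u\in E_2(e)$ this gives $u\le_n e$ via Proposition~\ref{P:charact len}.

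For the ``moreover'' clause it suffices, by the diagram, to prove $u\le_2 e\Rightarrow u\le e$ for projections $e,u$, since this single edge forces all the relations to coincide with $\le$. I would compute inside the JB$^*$-algebra: for a projection $e$, formula \eqref{eq:triple product in JB*-algebra} gives $\J eex=e\circ x$, so $u\le_2 e$ (that is, $\J eeu=u$) amounts to $e\circ u=u$; then, using $u^*=u$, $u\circ u=u$ and $e\circ u=u$, the same formula yields $\J uue=u$, whence $u\le e$ by Proposition~\ref{P:order char}. I expect the main obstacle to be simply locating the content of the statement: the scheme is almost entirely formal, and the only two steps needing an actual computation are $u\le_{hc}e\Rightarrow u\le_n e$, where one must verify that $\J uue$ is a \emph{tripotent} and not just a member of $E_2(e)$, and the collapsing edge $\le_2\Rightarrow\le$, where one must correctly pass between the ambient Jordan product and the triple product on $E_2(e)$.
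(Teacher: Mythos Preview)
Your proposal is correct and follows essentially the same approach as the paper: the routine arrows are dismissed as formal, the step $\le_{hc}\Rightarrow\le_n$ is handled by decomposing $\beta u=p-q$ and computing $\J uue=p+q$, and the ``moreover'' clause is obtained from $\J eeu=e\circ u=u$. The only cosmetic difference is that the paper concludes the final step directly from ``$e\circ u=u$ means $u\le e$'' (the standard projection order), whereas you additionally verify $\J uue=u$ to invoke Proposition~\ref{P:order char}$(iii)$; both are equally valid.
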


\begin{proof}
The two implications in the first line follow immediately from definitions.

$u\le_r e\Rightarrow u\le_h e$: Compare Proposition~\ref{P:charact ler}$(a)$ and Proposition~\ref{P:charact leh}$(iii)$.

$u\le_c e\Rightarrow u\le_{hc} e$ Compare Proposition~\ref{P:charact lec}$(a)$ and Proposition~\ref{P:hc-charact}$(a)(iii)$.

$u\le_h e\Rightarrow u\le_{hc} e$: This is trivial.

$u\le_{hc} e\Rightarrow u\le_n e$: Assume $u\le_{hc}e$. By definition and Proposition~\ref{P:charact leh}$(iii)$ there are two mutually orthogonal projections $p,q\in E_2(e)$ and a complex unit $\alpha$ such that $u=\alpha(p-q)$. Using Proposition~\ref{P:charact len}$(iii)$ we deduce that $u\le_n e$.

The downward implications from the second line to the third line are obvious. 

The first two implications on the third line follow from the definitions using the implications on the second line.

$u\le_{n,t} e\Rightarrow u\le_{2} e$: Using Proposition~\ref{P:charact len} we see that $u\le_n e\Rightarrow u\le_2 e$. The transitivity of $\le_2$ completes the argument.

Now assume that $E$ is a JB$^*$-algebra and $e,u\in E$ are projections such that $u\le_2 e$. It means, that
$$u=\J eeu=(e\circ e^*)\circ u+e\circ(e^*\circ u)-(e\circ u)\circ e^*=e\circ u.$$
But this means exactly that $u\le e.$
\end{proof}

The next proposition collects the implication between the symmetric versions of the relations.

\begin{prop}\label{P:sim implications}
Let $E$ be a JB$^*$-triple and $e,u$ be two tripotents in $E$. Then
$$\begin{array}{ccccccccc}
u= e & \Rightarrow & u\sim_r e& \Rightarrow & u \sim_c e   &&& &  \\
 && \Downarrow&& \Downarrow && &&   \\
     && u \sim_h e &\Rightarrow&u\sim_{hc}e &\Rightarrow & u\sim_n e &&  \\
      && \Downarrow&& \Downarrow&& \Updownarrow &&  \\
     && u \sim_{h,t} e & \Rightarrow & u \sim_{hc,t} e & \Rightarrow &u\sim_{n,t} e &\Leftrightarrow&u\sim_2 e  
\end{array}
$$
Moreover, if $E$ is a JB$^*$-algebra and $e,u\in E$ are projections,
then all the considered relations are equivalent.
\end{prop}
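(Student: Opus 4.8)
The plan is to deduce essentially the whole diagram from the already-established diagram for the non-symmetric relations in Proposition~\ref{P:order implications}, via one elementary observation, and then to treat separately the two equivalences in the bottom-right corner and the ``moreover'' clause.

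First I would record a general principle: if $R$ and $S$ are reflexive relations on the tripotents of $E$ such that $u\mathbin{R}e$ implies $u\mathbin{S}e$ for all tripotents $u,e$, then the corresponding symmetric relations inherit the implication, that is, $u\sim_R e$ implies $u\sim_S e$. This is immediate, since $u\sim_R e$ means $u\mathbin{R}e$ and $e\mathbin{R}u$, which then give $u\mathbin{S}e$ and $e\mathbin{S}u$. Feeding every arrow of Proposition~\ref{P:order implications} into this principle yields at once all the single-headed arrows of the present diagram: the two arrows of the top line, the four downward vertical arrows, the arrows $\sim_h\Rightarrow\sim_{hc}\Rightarrow\sim_n$ of the middle line, and the arrows $\sim_{h,t}\Rightarrow\sim_{hc,t}\Rightarrow\sim_{n,t}$ of the bottom line. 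The only subtlety to keep in mind is that the symmetric relation associated with the partial order $\le$ is plain equality, because $\le$ is antisymmetric; this is exactly the entry $u=e$ in the top-left corner.

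It remains to justify the two equivalences on the bottom-right, namely $\sim_n\Leftrightarrow\sim_{n,t}\Leftrightarrow\sim_2$. The forward chain $\sim_n\Rightarrow\sim_{n,t}\Rightarrow\sim_2$ is already delivered by the principle above applied to $\le_n\Rightarrow\le_{n,t}\Rightarrow\le_2$. To close the loop I would simply invoke Proposition~\ref{p:simn=sim2}, which gives $\sim_2\Rightarrow\sim_n$ outright; together with the forward chain this forces all three relations to coincide. (Equivalently one may quote the remark recorded just before Lemma~\ref{L:factor len} to the effect that $\sim_{n,t}=\sim_2$.)

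Finally, for the ``moreover'' clause I would appeal to the corresponding clause of Proposition~\ref{P:order implications}: when $E$ is a JB$^*$-algebra and $u,e$ are projections, all the non-symmetric relations coincide, so in particular $u\le_2 e$ forces $u\le e$. Hence $u\sim_2 e$ forces $u=e$, and the entire chain of symmetric relations collapses, $u=e\Rightarrow\dots\Rightarrow u\sim_2 e\Rightarrow u=e$, so all of them are equivalent. I do not anticipate any real obstacle: the argument is a mechanical transfer from Proposition~\ref{P:order implications}, and the only points needing a moment's attention are the identification of the top corner with equality and the closing of the $\sim_n$/$\sim_{n,t}$/$\sim_2$ loop via Proposition~\ref{p:simn=sim2}.
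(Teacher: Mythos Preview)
Your proof is correct and follows exactly the paper's approach: the paper's proof is the single line ``This follows from Proposition~\ref{P:order implications} and Proposition~\ref{p:simn=sim2},'' and your argument is a careful unpacking of precisely that, including the observation that antisymmetry of $\le$ makes its symmetric version plain equality and the use of the ``moreover'' clause of Proposition~\ref{P:order implications} for the projection case.
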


\begin{proof}
This follows from Proposition~\ref{P:order implications} and Proposition~\ref{p:simn=sim2}.
\end{proof}

The next proposition collects implications among the strict versions of the relations.

\begin{prop}\label{P:strict order implications}
Let $E$ be a JB$^*$-triple and $e,u$ be two tripotents in $E$. Then
$$\begin{array}{ccccccccc}
u< e & \Rightarrow & u<_r e& \Rightarrow & u <_c e   &&&&   \\
     && \Downarrow&& \Downarrow &&&&   \\
     && u <_h e & \Rightarrow & u<_{hc}e &\Rightarrow& u<_n e &&\\
      && \Downarrow&& \Downarrow && \Downarrow &&  \\
     && u <_{h,t} e & \Rightarrow& u<_{hc,t}e &\Rightarrow & u<_{n,t} e &\Rightarrow&u<_2 e.  
\end{array}
$$
\end{prop}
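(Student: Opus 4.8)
The plan is to reduce every strict implication in the diagram to the corresponding non-strict one in Proposition~\ref{P:order implications}, by exhibiting a single common description of the symmetric parts of all the relations involved. The core claim is that for \emph{every} relation $\le_\bullet$ appearing in the statement one has
\begin{equation*}
u\sim_\bullet e\quad\Longleftrightarrow\quad u\le_\bullet e\ \text{ and }\ u\sim_2 e.\tag{$\ast$}
\end{equation*}
For the relations $\le_h,\le_{hc},\le_n,\le_{h,t},\le_{hc,t},\le_{n,t},\le_2$ this is nothing but the equivalence $(i)\Leftrightarrow(ii)$ in Propositions~\ref{P:charact simh}, \ref{P:hc-charact}$(b)$, \ref{P:simht-char} and \ref{P:hct-charact}$(b)$, together with the identity $\sim_n\,=\,\sim_2$ from Proposition~\ref{p:simn=sim2}, the coincidence of $\sim_{n,t}$ with $\sim_2$ noted above, and the tautology for $\le_2$. (For $\le_n$ and $\le_{n,t}$ note that $u\sim_2 e$ already forces $u\le_n e$, resp. $u\le_{n,t} e$, so the conjunction on the right of $(\ast)$ reduces to $u\sim_2 e$.)

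The remaining three relations $\le,\le_r,\le_c$ rest on the following elementary fact: \emph{if $u\le e$ and $u\sim_2 e$, then $u=e$}. Indeed, $u\le e$ gives that $e-u$ is a tripotent orthogonal to $u$, so $e-u\in E_0(u)$, while $u\sim_2 e$ gives $e\in E_2(u)$, whence $e-u\in E_2(u)$; thus $e-u\in E_2(u)\cap E_0(u)=\{0\}$. This yields $(\ast)$ for the partial order $\le$, whose symmetric part is equality. For $\le_r$ and $\le_c$ one combines this with the observation that $\alpha u\sim_2 u$ for every complex unit $\alpha$ (since $L(\alpha u,\alpha u)=|\alpha|^2L(u,u)=L(u,u)$, the Peirce decompositions of $u$ and $\alpha u$ coincide): if $u\le_r e$ and $u\sim_2 e$, then $u\le e$ or $-u\le e$, and in either case the fact above forces $u=e$ or $u=-e$, i.e. $u\sim_r e$ by Proposition~\ref{P:charact ler}$(b)$; similarly, if $u\le_c e$ and $u\sim_2 e$, then $\alpha u\le e$ for some complex unit $\alpha$ with $\alpha u\sim_2 e$, so $\alpha u=e$, i.e. $u=\overline{\alpha}e$, giving $u\sim_c e$ by Proposition~\ref{P:charact lec}$(b)$. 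The converse implications in $(\ast)$ for $\le_r$ and $\le_c$ are immediate.

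With $(\ast)$ in hand the proof is purely formal. For any relation $\le_\bullet$ in the statement, $u<_\bullet e$ means $u\le_\bullet e$ and $\neg(u\sim_\bullet e)$, so $(\ast)$ rewrites this as
\[
u<_\bullet e\quad\Longleftrightarrow\quad u\le_\bullet e\ \text{ and }\ u\not\sim_2 e.
\]
Now each arrow in the diagram connects two relations $\le_\bullet$ and $\le_{\bullet'}$ for which Proposition~\ref{P:order implications} gives $u\le_\bullet e\Rightarrow u\le_{\bullet'} e$. Hence $u<_\bullet e$ yields $u\le_{\bullet'} e$ together with the unchanged condition $u\not\sim_2 e$, that is $u<_{\bullet'} e$; this establishes every implication in the diagram at once. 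The only genuinely new input is the elementary fact for the standard order isolated above — which I expect to be the sole real step — while everything else is bookkeeping resting on the previously established characterizations of the symmetric relations.
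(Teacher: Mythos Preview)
Your proof is correct and follows essentially the same route as the paper's: the paper's proof invokes Proposition~\ref{P:order implications} together with the single observation that for each relation $\le_\kappa$ one has $u\le_\kappa e$ and $u\sim_2 e\Rightarrow u\sim_\kappa e$, referring to the earlier characterizations. You have simply spelled out this observation in full, including the elementary verification for $\le,\le_r,\le_c$ that the paper leaves to the reader.
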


\begin{proof}
This follows from Proposition~\ref{P:order implications} using the fact that for each of the relations $\le_\kappa$ we have that $u\le_\kappa e$ and $u\sim_2 e$ implies $u\sim_\kappa e$ (see the respective characterizations above).
\end{proof}

Next we collect examples showing that no more implications are valid in general.

\begin{example2}\label{ex:distinguishing relations} 
$(a)$ Assume that $E$ is a JB$^*$-triple and $e\in E$ is a nonzero tripotent.
Then
 \begin{itemize}
    \item $-e\sim_r e$, but $e$ and $-e$ are incomparable with respect to $\le$;
    \item $ie\sim_c e$, but $e$ and $ie$ are incomparable both with respect to $\le_r$ and with respect to $\le_h$.
\end{itemize}

Further, assume that $u,v\in E$ are two orthogonal nonzero tripotents. Then:
\begin{itemize}
  \item $-u<_r u+v$, but $-u$ and $u+v$ are incomparable with respect to $\le$;
    \item $iu<_c u+v$, bud $iu$ and $u+v$ are incomparable both with respect to $\le_r$ and with respect to $\le_h$.
    \end{itemize}
    
$(b)$ Assume $E=\ce$. Then $\le_h$ coincides with $\le_r$. Since $\le_r$ is transitive, it coincides with $\le_{h,t}$. Finally, $i\sim_c 1$, but $i$ and $1$ are incomparable with respect to $\le_r$, hence also with respect to $\le_{h,t}$.

$(c)$ Assume $E=\ce\oplus_\infty\ce$. Then $(u_1,u_2)\le_h(e_1,e_2)$ if and only if $u_1\le_r e_1$ and $u_2\le_r e_2$. Since $\le_r$ is transitive, it follows that $\le_h$ is transitive as well, hence it coincides with $\le_{h,t}$. Finally, $(i,0)<_c (1,1)$, but $(i,0)$ and $(1,1)$ are incomparable with respect to $\le_{h,t}$.

$(d)$ Assume that $E$ is a JB$^*$-triple and $u,v\in E$ are two orthogonal nonzero tripotents. Then:
\begin{itemize}
 \item   $u-v\sim_h u+v$, but the tripotents $u-v$ and $u+v$ are incomparable with respect to $\le_c$;
 \item $i(u-v)\sim_{hc} u+v$, but the tripotents $i(u-v)$ and $u+v$ are incomparable both with respect to $\le_h$ and with respect to $\le_c$. 
 \item $u+iv\sim_n u+v$, but the tripotents $u+iv$ and $u+v$ are incomparable with respect to $\le_{hc}$.
\end{itemize} 

Assume moreover that $u,v,w\in E$ are three mutually orthogonal nonzero tripotents. Then:

\begin{itemize}
 \item   $u-v<_h u+v+w$, but the tripotents $u-v$ and $u+v+w$ are incomparable with respect to $\le_c$;
 \item $i(u-v)<_{hc} u+v+w$, but the tripotents $i(u-v)$ and $u+v+w$ are incomparable both with respect to $\le_h$ and with respect to $\le_c$. 
 \item $u+iv<_n u+v+w$, but the tripotents $u+iv$ and $u+v+w$ are incomparable  with respect to $\le_{hc}$.
\end{itemize} 

$(e)$ Assume $E=M_2$. Let $e,v\in E$ be as in Example~\ref{ex:leh not transitive}. Then $e\sim_{h,t} v$, but $e$ and $v$ are incomparable with respect to $\le_h$.
 
Moreover, in $M_3$ we have
$$\begin{pmatrix} v & 0 \\ 0 & 0\end{pmatrix} <_{h,t}\begin{pmatrix} e & 0 \\ 0 & 1\end{pmatrix}$$
but these tripotents are incomparable with respect to $\le_h$.

$(f)$ Assume $E=M_2$. Let $e,v\in E$ be as in Example~\ref{ex:len not transitive}$(c)$. Then $v<_{h,t}e$, but $e$ and $v$ are incomparable with respect to $\le_n$.

$(g)$ Let $E=B(\ell_2)$ and $s$ be the forward shift. Then $s<_2 1$, but $s$ and $1$ are incomparable with respect to $\le_{n,t}$.

Indeed, the initial projection of $s$ is $1$, let $p$ denote the final projection. The formula for $p$ is 
$$p(\xi_1,\xi_2,\xi_3,\dots)=(0,\xi_2,\xi_3,\dots),\quad \xi=(\xi_1,\xi_2,\xi_3,\dots)\in\ell_2.$$
Thus clearly $s<_2 1$, hence we easily get $1\not\le_{n,t}s$. We will show that
$s\not\le_{n,t}1$. We will proceed by contradiction. Assume that
$s\le_{n,t}1$. By Lemma~\ref{L:factor len} we deduce that there is a tripotent $w$ such that $s\le w\sim_2 1$. I.e., $w$ is a unitary operator and, by Proposition~\ref{P:order char} we get
$$s=\J ssw=\frac12(ss^*w+ws^*s)=\frac12(pw+w)=\frac{1+p}{2}w.$$
Since $w$ is a unitary operator, we deduce that $\frac12(1+p)$ is a partial isometry, a contradiction.
\end{example2}

The following proposition shows that in case of commutative C$^*$-algebras some of the relations coincide. The proof is easy, so it is omitted.

\begin{prop}\label{P:orders special cases} Let $E$ be a JB$^*$-triple and $e,u$ be two tripotents in $E$.
\begin{enumerate}[$(a)$]
    \item If $E=\ce$, then
    $$\begin{array}{ccccccccc}
u\le e & \Rightarrow & u\le_r e& \Rightarrow & u \le_c e   &&& &  \\
     && \Updownarrow && \Updownarrow && && \\
     && u \le_h e & \Rightarrow &u\le_{hc} e&\Leftrightarrow& u\le_n e &&\\
      && \Updownarrow&& \Updownarrow && \Updownarrow &&   \\
     && u \le_{h,t} e & \Rightarrow&u\le_{hc,t} e&\Leftrightarrow & u\le_{n,t} e &\Leftrightarrow&u\le_2 e.  
\end{array}
$$
Moreover, if $u\ne 0$, then 
\begin{enumerate}[$(i)$]
    \item $u\le e \Leftrightarrow u=e$;
    \item $u\le_r e \Leftrightarrow u=\pm e$;
    \item $u\le_c e\Leftrightarrow u\sim_c e \Leftrightarrow e\ne0$.
\end{enumerate}

\item If $E$ is an abelian C$^*$-algebra, then
$$\begin{array}{ccccccccc}
u\le e & \Rightarrow & u\le_r e& \Rightarrow & u \le_c e   &&& &  \\
     && \Downarrow&& \Downarrow && &&  \\
     && u \le_h e & \Rightarrow & u\le_{hc}e&\Rightarrow& u\le_n e && \\
      && \Updownarrow&& \Updownarrow &&  \Updownarrow && \\
     && u \le_{h,t} e & \Rightarrow & u\le_{hc,t}e&\Rightarrow & u\le_{n,t} e &\Leftrightarrow&u\le_2 e.  
\end{array}
$$
Moreover, if $E=C_0(\Omega)$ for a locally compact Hausdorff space $\Omega$, then
\begin{enumerate}[$(i)$]
    \item $u\le e \Leftrightarrow u=e\cdot\chi_U$ for a clopen set $U\subset\Omega$;
    \item $u\le_r e \Leftrightarrow u=\pm e\cdot\chi_U$ for a clopen set $U\subset\Omega$;
    \item $u\le_c e\Leftrightarrow u=\alpha e\cdot \chi_U$ for a clopen set $U\subset\Omega$ and a complex unit $\alpha$;
    \item $u\le_h e\Leftrightarrow u= e\cdot(\chi_U-\chi_V)$ for a pair of disjoint clopen sets $U,V\subset\Omega$;
    \item $u\le_{hc} e\Leftrightarrow u= \alpha e\cdot(\chi_U-\chi_V)$ for a pair of disjoint clopen sets $U,V\subset\Omega$ and a complex unit $\alpha$;
    \item $u\le_n e\Leftrightarrow \{\omega\in\Omega\setsep u(\omega)=0\}\supset\{\omega\in\Omega\setsep e(\omega)=0\}.$
\end{enumerate}
Hence, no other implications hold in general.
\end{enumerate}
\end{prop}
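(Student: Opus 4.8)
The plan is to reduce everything to a pointwise computation, exploiting that in the commutative setting the triple product is simply $\J xyz=x\bar y z$ and tripotents are characterized pointwise. First I would record the description of tripotents: in $\ce$ the equation $\J uuu=|u|^2u=u$ forces $u=0$ or $|u|=1$, and in $C_0(\Omega)$ it forces $|u(\omega)|\in\{0,1\}$ for each $\omega$, so that $\{\omega\setsep u(\omega)\ne0\}$ is clopen and $u$ is a modulus-one continuous function there (vanishing elsewhere). In both cases, for a nonzero tripotent $e$ one computes $\J eex=|e|^2x$, so $E_2(e)=\{x\setsep |e|^2x=x\}$ equals $\ce$ (resp. $C_0(\{\omega\setsep e(\omega)\ne0\})$), a commutative unital C$^*$-algebra with unit $e$, product $x\circ_e y=x\bar e y$ and involution $x^{*_e}=e^2\bar x$.

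For part $(a)$ I would compute each relation on $\ce$ directly. The decisive observation is that two nonzero elements of $\ce$ can never be orthogonal, since $L(a,b)=0$ would force $a\bar b=0$; hence for $u\ne0$, $u\le e$ holds only when $e-u=0$, giving $(i)$. Consequently $u\le_r e\iff u=\pm e$, which is $(ii)$, and these two relations coincide with $\le_h$ because self-adjointness in $E_2(e)$, i.e. $u=e^2\bar u$, is exactly $u^2=e^2$, i.e. $u=\pm e$. On the other hand $u\le_c e$ holds for all nonzero $u,e$ (take $\alpha=e\bar u$), giving $(iii)$; the same triviality makes $\le_c,\le_{hc},\le_n$ and $\le_2$ all coincide, since each merely asserts existence of a unit multiple lying below $e$ (or membership in $E_2(e)=\ce$), which is automatic. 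Finally $\le_r$ and $\le_c$ are already transitive by Propositions~\ref{P:charact ler} and~\ref{P:charact lec}, so their transitive hulls add nothing, yielding the vertical equivalences.

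For part $(b)$ I would upgrade these computations fiberwise. Writing $S=\{\omega\setsep e(\omega)\ne0\}$, membership $u\in E_2(e)$ amounts to $\{u\ne0\}\subseteq S$, so the analysis of $(a)$ applies on $S$ while $u$ vanishes off $S$. Condition $(i)$ reads $u=|u|^2e$, which forces $u(\omega)=e(\omega)$ wherever $u(\omega)\ne0$; as $u/e$ is then continuous and $\{0,1\}$-valued on $S$, its support is clopen, giving $u=e\cdot\chi_U$ and proving $(i)$, whence $(ii)$ and $(iii)$ follow by inserting a sign or a unit. For $(iv)$ self-adjointness gives $u^2=e^2$ pointwise, so $u/e\in\{+1,-1\}$ on $\{u\ne0\}$; local constancy produces disjoint clopen $U,V$ with $u=e\cdot(\chi_U-\chi_V)$, and $(v)$ follows by a unit multiple. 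For $(vi)$ I would invoke Proposition~\ref{P:charact len}: since $\J uue=|u|^2e=e\cdot\chi_{\{u\ne0\}}$ is automatically a tripotent dominated by $e$, the only constraint is $u\in E_2(e)$, i.e. $\{e=0\}\subseteq\{u=0\}$.

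It remains to pin down the diagram and transitivity in the abelian case. The pointwise characterizations show that $u\le_h e$ (resp. $u\le_{hc}e$, resp. $u\le_n e$) holds if and only if $u(\omega)\le_r e(\omega)$ (resp. $\le_c$, resp. $\le_n$) in $\ce$ at every $\omega$, with the relevant splitting sets automatically clopen; since each fiber relation is transitive and finite Boolean combinations of clopen sets are clopen, $\le_h,\le_{hc},\le_n$ are themselves transitive, so they coincide with their transitive hulls, and $\le_n$ coincides with $\le_2$ because both reduce to the inclusion $\{e=0\}\subseteq\{u=0\}$. The remaining plain implications are inherited from Proposition~\ref{P:order implications}. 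The one genuinely non-formal point is the assertion that \emph{no other} implications hold; this I would settle by exhibiting, on a two- or three-point space $\Omega$, explicit functions separating the relations exactly as in Example~\ref{ex:distinguishing relations} — the main (though still routine) obstacle being to choose $\Omega$ large enough that every failed implication is witnessed simultaneously.
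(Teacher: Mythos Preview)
Your approach is correct and is essentially what the paper intends; the paper itself omits the proof entirely, remarking only that it is easy. Your pointwise reduction via the explicit triple product $\J xyz=x\bar y z$ and the resulting characterizations $(i)$--$(vi)$ are exactly the right way to unpack this.

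One small imprecision: your sentence claiming that $u\le_{hc}e$ holds if and only if $u(\omega)\le_c e(\omega)$ pointwise is not quite right. Pointwise $\le_c$ in $\ce$ just says $u(\omega)$ is some unit multiple of $e(\omega)$ (or zero), which is the same as pointwise $\le_2$; it does not force the multiple to be globally constant. The correct statement is that $u\le_{hc}e$ requires a \emph{single} complex unit $\alpha$ with $\alpha u\le_h e$, exactly as in your characterization $(v)$. This does not affect your conclusion that $\le_{hc}$ is transitive: composing $u=\alpha v(\chi_{U_1}-\chi_{V_1})$ with $v=\beta w(\chi_{U_2}-\chi_{V_2})$ gives $u=\alpha\beta\, w(\chi_A-\chi_B)$ for suitable disjoint clopen $A,B$, so transitivity follows directly from the explicit form rather than from a pointwise reduction. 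With this adjustment your argument goes through.
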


\subsection{Relations in different triples}

In this subsection we collect results on behaviour of the above-defined relations with respect to subtriples and direct sums.

We start by the following easy observation.

\begin{remark}\label{rem:relations in subtriples} If $B$ is a JB$^*$-subtriple  of a JB$^*$-triple $E$ and $u,e\in B$ are two tripotents, then it easily follows from the definitions or from the respective characterization that
$uRe$ in $B$ if and only if $uRe$ in $E$ whenever 
 $R$ is one of the relations from the list
$$\le,\le_r,\le_c,\le_h,\le_{hc},\le_n,\le_2.$$

If $R$ is one of the relations $\le_{h,t}$, $\le_{hc,t}$, $\le_{n,t}$, then $uRe$ in $B$ implies $uRe$ in $E$. The converse fails in general as witnessed by the following example.
\end{remark}

\begin{example2}\label{ex:lent in subtriple} 
$(a)$ Let $E=M_2(B(\ell^2))$ and $B\subset E$ be defined by
$$B=\left\{ \begin{pmatrix}
T & 0 \\ 0 & \lambda I
\end{pmatrix}\setsep T\in B(\ell^2),\lambda\in\ce\right\}.$$
Then $E$ is a von Neumann algebra and $B$ is its von Neumann subalgebra. Their common unit is
$$1=\begin{pmatrix}
I&0\\0&I
\end{pmatrix}.$$
Let $S\in B(\ell^2)$ denote the forward shift and $P$ denote the projection on the one-dimensional subspace of $\ell^2$ generated by the first canonical vector. Observe that $S^*$ is the backward shift, $S^*S=I$ and $SS^*=I-P$.

Let
$$u=\begin{pmatrix}
S&0\\0&0
\end{pmatrix}\in B\subset E.$$
Then $u$ is a tripotent in $B$ (hence in $E$). We claim that
$u\le_{n,t} 1$ in $E$ but $u\not\le_{n,t}1$ in $B$.

To see the first statement observe that
$$v=\begin{pmatrix}
S&P\\0&S^*
\end{pmatrix}$$
is a unitary element in $E$ and $u\le v$. Indeed,
$$vv^*=\begin{pmatrix}
S&P\\0&S^*
\end{pmatrix}\begin{pmatrix}
S^*&0\\P&S
\end{pmatrix}=\begin{pmatrix}
SS^*+P&PS\\S^*P&S^*S
\end{pmatrix}=\begin{pmatrix}
I&0\\0&I
\end{pmatrix}=1$$
and
$$v^*v=\begin{pmatrix}
S^*&0\\P&S
\end{pmatrix}\begin{pmatrix}
S&P\\0&S^*
\end{pmatrix}=\begin{pmatrix}
S^*S&S^*P\\PS&P+SS^*
\end{pmatrix}=\begin{pmatrix}
I&0\\0&I
\end{pmatrix}=1.$$
Moreover,
$$\J uvu = uv^*u = \begin{pmatrix}
S&0\\0&0
\end{pmatrix}\begin{pmatrix}
S^*&0\\P&S
\end{pmatrix}\begin{pmatrix}
S&0\\0&0
\end{pmatrix}=\begin{pmatrix}
SS^*S & 0\\0&0
\end{pmatrix}=u.
$$
Thus $v\sim_2 1$ and $u\le v$, so $u\le_{n,t} 1$ in $E$ by Lemma~\ref{L:factor len}.

The second statement will be proved by contradiction. Assume $u\le_{n,t}1$ in $B$. By Lemma~\ref{L:factor len} we get a tripotent $w\in B$ with $w\sim_2 1$ and $u\le w$. Since $w\in B$, we have
$$w=\begin{pmatrix}
T & 0 \\0 & \lambda I 
\end{pmatrix}$$
for some $T\in B(\ell^2)$ and $\lambda\in\ce$. The assumption $w\sim_2 1$ means that $w$ is a unitary element of $B$. Hence $\abs{\lambda}=1$ and $T$ is a unitary element of $B(\ell^2)$. Finally, the relation $u\le w$ means that
$$u=\J uwu=uw^*u= \begin{pmatrix}
S&0\\0&0
\end{pmatrix}\begin{pmatrix}
T^*&0\\0&\overline{\lambda}I
\end{pmatrix}\begin{pmatrix}
S&0\\0&0
\end{pmatrix}=\begin{pmatrix}
ST^*S & 0\\0&0
\end{pmatrix},$$
hence
$S=ST^*S$. It follows that
$$I=S^*S=S^*ST^*S=T^*S.$$
Multiplying by $T$ from the left we deduce that $T=S$, thus $S$ is unitary, which is a contradiction.

$(b)$
Let $E=M_2$ and let $B\subset E$ be formed by diagonal matrices. 
Let 
$$e=\begin{pmatrix} 1 & 0 \\ 0 & 1 \end{pmatrix},\quad 
v=\begin{pmatrix} i & 0 \\ 0 & -i \end{pmatrix}.$$
By Example~\ref{ex:leh not transitive} we know that $e\sim_{h,t} v$ in $E$. However, $e$ and $v$ are incomparable for $\le_{h,t}$ in $B$ (by Proposition~\ref{P:orders special cases}$(b)$).

$(c)$ Let $E=M_3$ and let  $B\subset E$ be formed by diagonal matrices. Let

$$e=\begin{pmatrix} 1 & 0 & 0 \\ 0 & 1 & 0\\0&0&1 \end{pmatrix},\quad 
v=\begin{pmatrix} 1&0&0\\0& i & 0 \\0& 0 & -i \end{pmatrix}.$$
Similarly as in Example~\ref{ex:leh not transitive} we show that $e\sim_{h,t} v$ in $E$. However, $e$ and $v$ are incomparable for $\le_{hc,t}$ in $B$ (by Proposition~\ref{P:orders special cases}$(b)$).
\end{example2}

\begin{prop}\label{P:direct sum}
Let $(E_j)_{j\in J}$ be a family of JB$^*$-triples and let $E=\bigoplus_{j\in J}^\infty E_j$ be their $\ell_\infty$-sum.
Let $u=(u_j)_{j\in J}$ and $e=(e_j)_{j\in J}$ be two tripotents in $E$. Then the following assertions are valid.
\begin{enumerate}[$(a)$]
    \item If $R$ is any of the above-defined relations and $uRe$, then $u_jRe_j$ for each $j\in J$.
    \item If $R$ is any of the relations from the list
    $$\le,\le_h,\sim_h,\le_n,\le_{n,t},\le_2,\sim_2,$$
    then $uRe$ if and only if  $u_jRe_j$ for each $j\in J$.
    \item If $J$ is a finite set, then $u\le_{h,t} e$ if and only if  $u_j\le_{h,t}e_j$ for each $j\in J$.
\end{enumerate}
\end{prop}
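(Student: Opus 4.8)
The plan is to reduce everything to the single structural fact that in the $\ell_\infty$-sum $E=\bigoplus_{j\in J}^\infty E_j$ all the triple-theoretic data are computed coordinatewise. Concretely, the triple product satisfies $\J xyz=\left(\J{x_j}{y_j}{z_j}\right)_{j\in J}$, so by the formulas \eqref{eq description of Peirce projections} every Peirce projection acts coordinatewise, and a tuple $x=(x_j)_{j\in J}$ is a tripotent if and only if each $x_j$ is a tripotent in $E_j$. I would also record at the outset the elementary boundedness remark that any family $(x_j)_{j\in J}$ of tripotents assembles to an element of $E$, since each tripotent has norm in $\{0,1\}$ and hence $\sup_j\norm{x_j}\le 1$. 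These observations drive all three parts.

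For part $(a)$ the pattern is uniform across the relations. Each of $\le$, $\le_h$, $\le_n$, $\le_2$ is defined by triple-product equalities together with the requirement that certain expressions be tripotents (for $\le$ I would use the characterization $u=\J ueu$ from Proposition~\ref{P:order char}), so coordinatewise evaluation immediately yields the corresponding relation between $u_j$ and $e_j$. For the relations carrying an existential witness, namely $\le_r$, $\le_c$, $\le_{hc}$, the \emph{same} complex unit $\alpha$ works in every coordinate; and for the transitive hulls $\le_{h,t}$, $\le_{hc,t}$, $\le_{n,t}$ one simply restricts a witnessing chain $e=v_0,\dots,v_k=u$ to its $j$-th coordinates, which is again a witnessing chain by the coordinatewise behaviour of the one-step relations.

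For part $(b)$, the converse for the primitive relations $\le$, $\le_h$, $\le_n$, $\le_2$ (and hence $\sim_h$, $\sim_2$) is equally formal: the defining equalities hold in $E$ as soon as they hold in each $E_j$, and an expression such as $\J uue=\left(\J{u_j}{u_j}{e_j}\right)_{j\in J}$ is a tripotent in $E$ precisely when each coordinate is one. The only genuinely non-formal case is $\le_{n,t}$, where a coordinatewise chain need not have bounded length; here I would bypass chains entirely and invoke the single-step factorization of Lemma~\ref{L:factor len}. Assuming $u_j\le_{n,t}e_j$ for every $j$, that lemma supplies tripotents $w_j$ with $u_j\le w_j$ and $e_j\sim_2 w_j$; the tuple $w=(w_j)_{j\in J}$ lies in $E$ by the boundedness remark, satisfies $u\le w$ and $w\sim_2 e$ by the already-established coordinatewise equivalences for $\le$ and $\sim_2$, and hence $u\le_{n,t}e$ by Lemma~\ref{L:factor len} again.

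Part $(c)$ is where finiteness of $J$ is essential, and this is the step I expect to be the main obstacle. Given $u_j\le_{h,t}e_j$ for each of the finitely many $j$, I would choose for each $j$ a chain $e_j=v_{0,j},\dots,v_{k_j,j}=u_j$ with $v_{i,j}\le_h v_{i-1,j}$. The lengths $k_j$ may differ, but since $\le_h$ is reflexive I can pad each chain by repeating its final term $u_j$ up to the common length $K=\max_{j\in J}k_j$, which is finite exactly because $J$ is. Setting $v_i=(v_{i,j})_{j\in J}$ for $0\le i\le K$ gives tripotents in $E$ (boundedness again) with $v_0=e$, $v_K=u$, and $v_i\le_h v_{i-1}$ by the coordinatewise characterization of $\le_h$ from part $(b)$, whence $u\le_{h,t}e$. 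For infinite $J$ the lengths $k_j$ can be unbounded, so no single finite chain exists; this is precisely why the equivalence is asserted only for finite $J$, and the same obstruction explains the absence of an analogous clause for $\le_{hc,t}$.
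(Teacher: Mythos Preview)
Your proof is correct and matches the paper's approach exactly: parts $(a)$ and $(c)$ are dismissed as obvious/easy, and the only non-formal step is the $\le_{n,t}$ case of $(b)$, handled via Lemma~\ref{L:factor len} just as you do. One small correction to your closing commentary: the absence of an analogous clause for $\le_{hc,t}$ in $(c)$ is not due to unbounded chain lengths but to the complex units possibly differing across coordinates, so the equivalence fails already for $|J|=2$ (cf.\ Example~\ref{ex:direct sum}).
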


\begin{proof}
$(a)$ This is obvious.

$(b)$ This is obvious except for the case of $\le_{n,t}$. This case follows from Lemma~\ref{L:factor len}.

$(c)$ This is easy.
\end{proof}

For relations $\le_r,\le_c,\le_{hc},\le_{hc,t}$ the equivalence from assertion $(b)$ fails even if $J$ is finite. It is witnessed by the following example which is an easy consequence of Proposition~\ref{P:orders special cases}.

\begin{example2}\label{ex:direct sum} Let $E=\ce\oplus_\infty \ce$. Then:
\begin{enumerate}[$(a)$]
    \item $-1\sim_r 1$ in $\ce$, but $(-1,1)$ and $(1,1)$ are incomparable with respect to $\le_c$ (and, a fortiori, with respect to $\le_r$).
    \item $i\sim_c 1$ in $\ce$, but $(i,1)$ and $(1,1)$ are incomparable with respect to $\le_{hc,t}$ (and, a fortiori, with respect to $\le_{hc}$ and $\le_c$).
\end{enumerate}
\end{example2}

Note that it is not clear whether assertion $(c)$ of Proposition~\ref{P:direct sum} holds without assumption of finiteness of $J$.

\begin{prop}\label{P:C0(T,E)}
Let $E$ be a JB$^*$-triple and let $T$ be a Hausdorff locally compact space. Consider the JB$^*$-triple $C_0(T,E)$ (with supremum norm and pointwise triple product). Let $u,e\in C_0(T,E)$ be two tripotents. Then:
\begin{enumerate}[$(a)$]
    \item If $R$ is any of the above-defined relations and $uRe$, then $u(t)Re(t)$ for each $t\in T$.
    \item If $R$ is any of the relations from the list
    $$\le,\le_h,\sim_h,\le_n,\le_2,\sim_2,$$
    then $uRe$ if and only if  $u(t)Re(t)$ for each $t\in T$.
\end{enumerate}
\end{prop}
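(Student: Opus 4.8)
The plan is to reduce everything to the fact that the whole structure of $C_0(T,E)$ is evaluated pointwise: the triple product satisfies $\J fgh(t)=\J{f(t)}{f(t)}{f(t)}$\,---\,more precisely $\J fgh(t)=\J{f(t)}{g(t)}{h(t)}$\,---\,for every $t\in T$, and $C_0(T,E)$ is a JB$^*$-triple, hence closed under this product. The first thing I would record is that a function $f\in C_0(T,E)$ is a tripotent if and only if $f(t)$ is a tripotent in $E$ for each $t$; this is immediate from $\J fff(t)=\J{f(t)}{f(t)}{f(t)}$. In particular, saying that $u,e\in C_0(T,E)$ are tripotents is the same as saying each $u(t),e(t)$ is a tripotent, so all the pointwise assertions are meaningful.

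For part $(a)$ I would simply evaluate the defining conditions at a point. Each basic relation among $\le,\le_r,\le_c,\le_h,\le_{hc},\le_n,\le_2$ is defined through triple-product identities\,---\,such as $u=\J ueu$ for $\le$, $u=\J eeu$ for $\le_2$, $u=\J eue$ for $\le_h$, together with a requirement that some triple product be a tripotent in the case of $\le_n$\,---\,possibly preceded by a \emph{global} complex unit $\alpha$ in the cases of $\le_r,\le_c,\le_{hc}$. Since the product is pointwise and the scalar $\alpha$ is the same at every $t$, evaluating any such identity at $t$ yields the corresponding relation between $u(t)$ and $e(t)$. For the transitive hulls $\le_{h,t},\le_{hc,t},\le_{n,t}$ a witnessing chain $e=v_0,\dots,v_k=u$ evaluates at each point to a chain $e(t)=v_0(t),\dots,v_k(t)=u(t)$, which is exactly the desired conclusion; the symmetric versions follow by running the argument in both directions.

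For part $(b)$ the forward implication is part $(a)$, so I would concentrate on the converse, where the decisive point is that every relation in the list $\le,\le_h,\sim_h,\le_n,\le_2,\sim_2$ is characterized by purely pointwise-checkable data. For $\le$, $\le_2$ and $\le_h$ I would invoke the identities $u=\J ueu$, $u=\J eeu$ and $u=\J eue$, respectively: if these hold at every $t$, the pointwise nature of the product upgrades them to genuine identities of functions, giving $uRe$ in $C_0(T,E)$. For $\le_n$ I would use Proposition~\ref{P:charact len}: from $u(t)\le_n e(t)$ we obtain $u=\J eeu$ as before, and moreover each value $\J uue(t)=\J{u(t)}{u(t)}{e(t)}$ is a tripotent; since $\J uue\in C_0(T,E)$ has tripotent values, the first observation shows $\J uue$ is a tripotent, whence $u\le_n e$. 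The relations $\sim_h$ and $\sim_2$ then follow by applying the one-sided results in both directions.

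The only genuine subtlety\,---\,and the reason the list in $(b)$ is exactly this one\,---\,is the deliberate absence of $\le_r,\le_c,\le_{hc}$ and of the transitive hulls $\le_{h,t},\le_{hc,t},\le_{n,t}$. For the former, the pointwise relation may hold only with a complex unit $\alpha$ that is forced to vary with $t$ (e.g. $u(t)\le e(t)$ on one part of $T$ and $-u(t)\le e(t)$ on another), so no global scalar need exist; for the latter, reconstructing a witnessing chain from the pointwise chains would require the intermediate tripotents to depend continuously on $t$, and no such continuous selection is available in general. This is precisely where the $C_0(T,E)$ case diverges from the $\ell_\infty$-sum of Proposition~\ref{P:direct sum}, in which the summands are glued with no continuity constraint. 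I therefore expect no serious computation beyond recording these characterizations; the real content is the correct delineation of which relations localize pointwise.
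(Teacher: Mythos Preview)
Your argument is correct and matches the paper's approach exactly: the paper's proof reads simply ``This is obvious,'' and what you have written is a faithful unpacking of why it is obvious, namely that every relation in the list is defined by pointwise-checkable triple-product identities (with no global scalar and no chain to reconstruct). Your closing remarks on why $\le_r,\le_c,\le_{hc}$ and the transitive hulls are excluded are also accurate and anticipate the paper's subsequent commentary.
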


\begin{proof} This is obvious.
\end{proof}

Note that Example~\ref{ex:direct sum} shows that assertion $(b)$ of Proposition~\ref{P:C0(T,E)} fails for relations $\le_r,\le_c,\le_{hc},\le_{hc,t}$ -- even if $T$ is a two-point set.

On the other hand, it seems not to be clear whether this assertion  holds for $\le_{h,t}$ and $\le_{n,t}$.

Some of the relations are transitive and some are not. However, we have the following partial transitivity.

\begin{prop}\label{p:partial transitivity}
Let $E$ be a JB$^*$-triple and $R$ be any of the relations
$$\le,\le_r,\le_c,\le_h,\le_{hc},\le_n,\le_2.$$
Assume $u,v,e\in E$ are three tripotents such that $v\le e$. 

Then 
$$uRv \Leftrightarrow uRe \mbox{ and }u\le_2v.$$

If $R$ is one of the relations
$$\le_{h,t},\le_{hc,t},\le_{n,t},$$
then
$$uRv \Rightarrow uRe \mbox{ and }u\le_2v,$$
but the converse implication fails in general (for example in $E=B(\ell_2)$).
\end{prop}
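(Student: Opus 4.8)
The plan is to treat the two groups of relations by quite different means. For the (transitive) relations $\le,\le_r,\le_c,\le_h,\le_{hc},\le_n,\le_2$ I would reduce the whole equivalence to a single refined Peirce identity, whereas for the three transitive hulls the forward implication is a one-line chain argument and the failure of the converse is settled by one explicit example in $B(\ell_2)$.

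For the first group, observe that $v\le e$ forces $e-v$ to be a tripotent in $E_0(v)$ and $E_2(v)\subseteq E_2(e)$. The only nontrivial ingredient is the refined Peirce relation
$$\J{E_2(v)}{E_2(v)}{E_0(v)}=\{0\}.$$
I would deduce it from the Jordan identity: for $a,b\in E_2(v)$ and $c\in E_0(v)$ write $a=\J vva$ and expand $\J cb{\J vva}$ by axiom $(a)$; the two outer summands contain $\J cbv$ and $\J bcv$, which vanish by the standard rule $\J{E_2(v)}{E_0(v)}E=\J{E_0(v)}{E_2(v)}E=\{0\}$, so the expansion collapses to $\J vv{\J cba}$. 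Hence the left side, equal to $\J cba=\J abc$, lies in $E_2(v)$; since also $\J abc\in E_0(v)$ by the grading rule, it is $0$. Decomposing $e=v+(e-v)$ and discarding the mixed terms (which vanish by this relation and by the standard rule) then yields, for every $x\in E_2(v)$, the identities $\J exe=\J vxv$ and $\J xxe=\J xxv$; equivalently, $E_2(v)$ carries the same product and involution whether read off from $e$ or from $v$.

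These identities make each equivalence mechanical. Recall $u\le v\Leftrightarrow u=\J uuv$, $u\le_h v\Leftrightarrow u=\J vuv$, $u\le_2 v\Leftrightarrow u\in E_2(v)$, and $u\le_n v\Leftrightarrow u\in E_2(v)$ with $\J uuv$ a tripotent, while $\le_r,\le_c,\le_{hc}$ are obtained from $\le$ and $\le_h$ by replacing $u$ with $\pm u$ or $\alpha u$ (which does not affect $E_2(v)$-membership). In each case $uRv$ already entails $u\le_2 v$ (all these relations imply $\le_2$), so $u\in E_2(v)$, and the identities $\J exe=\J vxv$, $\J xxe=\J xxv$ convert the defining relation with $v$ into the one with $e$, giving $uRe$; conversely, assuming $uRe$ and $u\le_2 v$ one runs the same identities backwards. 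Going through the seven relations in turn yields $uRv\Leftrightarrow(uRe\text{ and }u\le_2 v)$.

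For the hulls $R\in\{\le_{h,t},\le_{hc,t},\le_{n,t}\}$ the forward implication is easy: $uRv$ gives $u\le_2 v$, and $v\le e$ gives $v\le_h e$, hence $v\le_{hc}e$ and $v\le_n e$ by Proposition~\ref{P:order implications}, so prepending this step to a chain for $uRv$ produces one for $uRe$. For the converse I would use a single example settling all three relations. Work in $B(H)$ with $H=\ell_2(\zet)$ and basis $(e_n)_{n\in\zet}$; let $w$ be the bilateral shift $we_n=e_{n+1}$, let $v,q$ be the projections onto $\ov{\span}\{e_n\setsep n\le0\}$ and $\ov{\span}\{e_n\setsep n\le-1\}$, and put $u=wq$. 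Then $u$ has initial projection $q\le v$ and final projection $wqw^*=v$, so $u\le_2 v$ and $u\le w$. The flip $Je_n=e_{-n}$ and $S_1=wJ$ are symmetries with $w=S_1 J$, whence $1\sim_h S_1\sim_h w$ and $w\sim_{h,t}1$; by Lemma~\ref{L:factor leht} we obtain $u\le_{h,t}1$, and a fortiori $u\le_{hc,t}1$ and $u\le_{n,t}1$. Inside the corner $E_2(v)\cong B(vH)$, however, $u$ has initial defect $\operatorname{rank}(v-q)=1$ and final defect $\operatorname{rank}(v-wqw^*)=0$, so no unitary of $E_2(v)$ extends it; by Lemma~\ref{L:factor len} this forces $u\not\le_{n,t}v$, hence also $u\not\le_{h,t}v$ and $u\not\le_{hc,t}v$. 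Thus for each of the three relations $uRe$ and $u\le_2 v$ hold while $uRv$ fails.

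The main obstacle lies in exactly these two places: for the first group, establishing the refined relation $\J{E_2(v)}{E_2(v)}{E_0(v)}=\{0\}$ (everything else being bookkeeping once $E_2(v)$ is recognized as a genuine JB$^*$-subalgebra of $E_2(e)$ with its own unit); and for the hulls, engineering one triple that lies below a product of symmetries, so that $u\le_{h,t}1$, yet has mismatched defects inside the corner of $v$, so that $u\not\le_{n,t}v$. The bilateral shift furnishes both features at once.
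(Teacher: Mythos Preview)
Your argument is correct. A small verbal slip: you call $\le,\le_r,\le_c,\le_h,\le_{hc},\le_n,\le_2$ ``the (transitive) relations'', but of course $\le_h,\le_{hc},\le_n$ are not transitive; fortunately your proof never uses transitivity for them, so this is only a labeling issue.

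For the first block your approach is essentially the paper's, phrased more explicitly. The paper invokes Proposition~\ref{P:order char}$(viii)$ (that $E_2(v)$ is a JB$^*$-subalgebra of $E_2(e)$) and then reads off each equivalence in terms of projections/self-adjointness/normality in the two algebras. You instead derive the concrete identities $\J exe=\J vxv$ and $\J xxe=\J xxv$ for $x\in E_2(v)$ directly from the Peirce rules, after first proving $\J{E_2(v)}{E_2(v)}{E_0(v)}=\{0\}$ via the Jordan identity. These identities are exactly what makes $E_2(v)$ a subalgebra of $E_2(e)$, so the two routes coincide in substance; yours is simply more self-contained.

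Your counterexample is genuinely different from the paper's and has a real advantage. The paper works in $M_2(B(\ell_2))$ with the unilateral shift, citing Examples~\ref{ex:lent in subtriple}$(a)$ and~\ref{ex:distinguishing relations}$(g)$ for $\le_{n,t}$, and then makes a \emph{forward reference} to Section~\ref{sec:vN} (products of symmetries in properly infinite algebras) to upgrade to $u\le_{h,t}e$. Your bilateral-shift example in $B(\ell_2(\zet))$ avoids this: the factorization $w=(wJ)J$ into two explicit symmetries gives $w\sim_{h,t}1$ immediately, so $u\le w\sim_{h,t}1$ yields $u\le_{h,t}1$ via Lemma~\ref{L:factor leht} without any forward dependence. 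The failure of $u\le_{n,t}v$ is then the same defect-index obstruction as in the paper (no unitary $w'\in E_2(v)$ can satisfy $u\le w'$, since this would force $w'qw'^{*}=v$ with $\operatorname{rank}(v-q)=1$). Thus a single example handles all three hull relations at once, and is entirely self-contained.
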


\begin{proof} Note that $uRv$ implies $u\le_2v$. So, we may assume without loss of generality that $u\le_2 v$, i.e., $u\in E_2(v)$.
 Further, the assumption $v\le e$ means that $E_2(v)$ is a JB$^*$-subalgebra of $E_2(e)$ (see Proposition~\ref{P:order char}$(viii)$).
 
 Now we distinguish the individual cases:
 
 We have
 $$u\le v\Leftrightarrow u\mbox{ is a projection in }E_2(v)\Leftrightarrow u\mbox{ is a projection in }E_2(e)\Leftrightarrow u\le e.$$
 
 The cases of relations $\le_r$ and $\le_c$ then easily follow.

Further,
$$u\le_h v\Leftrightarrow u\mbox{ is self-adjoint in }E_2(v)\Leftrightarrow u\mbox{ is self-adjoint in }E_2(e)\Leftrightarrow u\le_h e.$$
 
The case of $\le_{hc}$ then easily follows.

The case of $\le_n$ follows from the equalities
$$\J uuv=u\circ_v u^{*_v}=u\circ_e u^{*_e}=\J uue.$$

The case of $\le_2$ is trivial as both $u\le_2 v$ and $u\le_2e$ hold.
 
If $R$ is one of the relations $\le_{h,t}$, $\le_{hc,t}$, $\le_{n,t}$, then $R$ is transitive and $v\le e$ implies $vRe$. Hence the implication `$\Rightarrow$' holds in these cases.

It remains to show that the implication `$\Leftarrow$' fails for these three relations. To this end we use some of the examples above.

Assume $E=M_2(B(\ell_2))$ (which is $*$-isomorphic to $B(\ell_2)$). Let $I$ be the unit in $B(\ell_2)$ and $S\in B(\ell_2)$ be the forward shift. Set
$$u=\begin{pmatrix} S & 0\\ 0&0\end{pmatrix}, 
v=\begin{pmatrix} I & 0\\ 0&0\end{pmatrix},
e=\begin{pmatrix} I & 0\\ 0&I\end{pmatrix}.$$
Then clearly $v\le e$ and $u\le_2 v$. Moreover, in Example~\ref{ex:lent in subtriple}$(a)$ it is proved that $u\le_{n,t}e$.
However, $u\not\le_{n,t}v$ by Example~\ref{ex:distinguishing relations}$(g)$.
So, this example witnesses failure of `$\Leftarrow$' for $\le_{n,t}$. 

To conclude that `$\Leftarrow$' fails also for $\le_{h,t}$ and $\le_{hc,t}$ it is enough to check that we have in fact $u\le_{h,t}e$. This folows from the results of Section~\ref{sec:vN} below. Indeed, there is a unitary element $w\in E$ such that $u\le w$. (This follows from the proof of Example~\ref{ex:lent in subtriple}$(a)$  or, alternatively, from Lemma~\ref{L:factor len}.) Since $E$ is a properly infinite von Neumann algebra and $e$ is its unit, the combination of Proposition~\ref{P:products of symmetries}$(i)$ and Proposition~\ref{P:le1 C*}$(f)$ below shows that $w\le_{h,t}e$ and hence $u\le_{h,t}e$. 
\end{proof}

It should be noted that all relations studied in this section are clearly preserved by triple homomorphisms (in one direction).

\section{Relations in JBW$^*$-triples -- auxilliary results}\label{sec:JBW*}

In the sequel we investigate the above-defined relations in JBW$^*$-triples using  known representations of JBW$^*$-triples. 
This section has an auxilliary character -- we recall the representation theorem and collect some auxilliary tools to study JBW$^*$-triples and JBW$^*$-algebras.

We  start by recalling the notion of finiteness from \cite{Finite}. Let $M$ be a JBW$^*$-triple. A tripotent $e\in M$ is \emph{finite} if any complete tripotent in $M_2(e)$ is already unitary in $M_2(e)$.
The JBW$^*$-triple $M$ itself is finite if any tripotent in $M$ is finite.

The next proposition is a new characterization of finite JBW$^*$-triples in terms of the coincidence of two of the relations studied in this note. %as witnessed by the following easy proposition.

\begin{prop}\label{P:le2=lent in finite}
Let $M$ be a JBW$^*$-triple. Then $M$ is finite if and only if the relations $\le_{n,t}$  and $\le_2$ coincide in $M$.
\end{prop}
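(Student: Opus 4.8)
The plan is to prove both implications by connecting the notion of finiteness of a tripotent with the factorization of $\le_{n,t}$ provided by Lemma~\ref{L:factor len}. The crucial observation is that, by Lemma~\ref{L:factor len}, for two tripotents $u,e$ we have $u\le_{n,t}e$ if and only if there is a tripotent $w$ with $e\sim_2 w$ and $u\le w$. On the other hand, $u\le_2 e$ means simply $u\in E_2(e)$ (equivalently $u\sim_2$ to some tripotent below $e$ need not hold in general). So the statement reduces to understanding, given $u\le_2 e$, when one can always enlarge $e$ to a $\sim_2$-equivalent tripotent $w$ dominating $u$ in the order $\le$. I expect the heart of the matter to be a local reduction to the Peirce-2 algebra $M_2(e)$, which is a JBW$^*$-algebra, together with the characterization of finiteness.

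\textbf{Proof of the `only if' direction.} Assume $M$ is finite. Since $\le_{n,t}$ is always finer than $\le_2$ (Proposition~\ref{P:order implications}), it suffices to show $u\le_2 e\Rightarrow u\le_{n,t}e$. So suppose $u\in E_2(e)$. First I would pass to the JBW$^*$-algebra $B=M_2(e)$ with unit $e$; by Remark~\ref{rem:relations in subtriples} it is enough to prove $u\le_{n,t}e$ inside $B$. Here $u$ is a tripotent in the unital JBW$^*$-algebra $B$. The idea is to show that $u$ extends to a unitary of $B$, i.e.\ there is a unitary tripotent $w$ of $B$ (equivalently $w\sim_2 e$) with $u\le w$; then Lemma~\ref{L:factor len} gives $u\le_{n,t}e$. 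To produce $w$, I would take a complete tripotent $\tilde w$ of $B$ dominating $u$ in the order $\le$, whose existence follows from a Zorn/maximality argument in the JBW$^*$-triple $B$ (complete tripotents are extreme points of the unit ball and exist in abundance; one enlarges $u$ to a maximal tripotent). Finiteness of $e$ (which is inherited: $e$ finite in $M$ means every complete tripotent of $M_2(e)$ is unitary in $M_2(e)$) then forces $\tilde w$ to be unitary in $B$, i.e.\ $\tilde w\sim_2 e$. Taking $w=\tilde w$ completes this direction.

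\textbf{Proof of the `if' direction.} Conversely, assume $\le_{n,t}$ and $\le_2$ coincide in $M$; I would prove $M$ is finite by contradiction. Suppose some tripotent $e\in M$ is not finite. Then by definition there is a complete tripotent $u$ in $M_2(e)$ which is not unitary in $M_2(e)$. Completeness gives $(M_2(e))_0(u)=\{0\}$, while non-unitarity gives $(M_2(e))_2(u)\ne M_2(e)$, so $u<_2 e$ (strictly) inside $B=M_2(e)$, and in particular $u\le_2 e$. By the assumed coincidence of the relations we would have $u\le_{n,t}e$, so by Lemma~\ref{L:factor len} there is a tripotent $w$ with $u\le w\sim_2 e$. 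But $u\le w$ together with completeness of $u$ in $B=M_2(e)$ forces $w=u$ (a tripotent strictly above a complete tripotent would have nonzero Peirce-$0$ part, contradicting completeness); yet $w\sim_2 e$ while $u\not\sim_2 e$, a contradiction. Hence every tripotent of $M$ is finite.

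\textbf{Expected main obstacle.} The technically delicate point is the existence, in the `only if' direction, of a complete tripotent of $B=M_2(e)$ dominating $u$ with respect to $\le$, and correctly invoking the definition of finiteness to upgrade it to a unitary. One must be careful that finiteness of $e$ is exactly the statement that complete tripotents of $M_2(e)$ are unitary there, and that completing $u$ yields a tripotent still comparable to $u$ via the genuine order $\le$ (not merely $\le_2$); this is where the structure of JBW$^*$-algebras and the abundance of extreme points of the unit ball must be used carefully. The `if' direction is comparatively routine once Lemma~\ref{L:factor len} is in hand, the only care being the argument that a tripotent above a complete tripotent must coincide with it.
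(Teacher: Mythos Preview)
Your proposal is correct and takes essentially the same approach as the paper's proof: both directions hinge on Lemma~\ref{L:factor len} (the equivalence $(iii)\Leftrightarrow(iv)$), and the `if' direction is argued identically. The only cosmetic difference is in the `only if' direction: the paper simply cites \cite[Lemma 3.2(b)]{Finite} to produce a tripotent $v$ with $u\le v\sim_2 e$, whereas you unpack that lemma by extending $u$ to a complete tripotent of $M_2(e)$ and then invoking finiteness of $e$ to upgrade it to a unitary---which is exactly how that cited lemma is proved.
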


\begin{proof}
Assume $M$ is finite. Let $u,e\in M$ be two tripotents such that $u\le_2 e$, i.e., $u\in M_2(e)$. By \cite[Lemma 3.2(b)]{Finite} there is a tripotent $v\in M$ with $v\sim_2 e$ and $u\le v$. Hence, Lemma~\ref{L:factor len} (the equivalence $(iii)\Leftrightarrow(iv)$) shows that $u\le_{n,t}e$.

Conversely, assume that the relations $\le_{n,t}$  and $\le_2$ coincide in $M$. Let $e\in M$ be a tripotent and let $u\in M_2(e)$ be a tripotent complete in $M_2(e)$. Then $u\le_2 e$, hence by the assumption we deduce $u\le_{n,t}e$. By Lemma~\ref{L:factor len} (the equivalence $(iii)\Leftrightarrow(iv)$) there is a tripotent $v\in M$ with $u\le v\sim_2 e$. By completeness of $u$ we deduce that $u=v$, therefore $u\sim_2 e$. Thus $u$ is a unitary element of $M_2(e)$, which completes the proof of finiteness.
\end{proof}

\subsection{Representation of JBW$^*$-triples}

We continue by recalling the representation of JBW$^*$-triples.
By \cite{horn1987classification,horn1988classification} any JBW$^*$-triple $M$ may be represented in the form
\begin{equation}\label{eq:representation of JBW* triples}
 \left(\bigoplus_{j\in J}^{\ell_\infty}A_j\overline{\otimes}C_j\right)\oplus^{\ell_\infty}H(W,\alpha)\oplus^{\ell_\infty}pV,    
\end{equation}
where the $A_j$'s are abelian von Neumann algebras, the $C_j$'s are Cartan factors, $W$ and $V$ are continuous von Neumann algebras, $p\in V$ is a projection, $\alpha$ is a linear involution on $W$ commuting with $*$ and $H(W,\alpha)=\{x\in W\setsep\alpha(x)=x\}$. 

We will group and analyze the individual summands similarly as in \cite{Finite}. This will be done in the subsequent sections. Here we only consider commutative von Neumann algebras and the tensor product in a special case.

 It follows from \cite[Theorem 6.4.1]{DualC(K)} (see also \cite[Theorem III.1.18]{Tak}) that any abelian von Neumann algebra may be represented as the direct sum of spaces of the form $L^\infty(\mu)$, where
  \begin{equation}\label{eq:CK=Linfty}\begin{aligned}
  \mu&\mbox{ is a Radon probability normal measure supported by a }\\ &\mbox{  hyper-Stonean compact space $\Omega$ and $C(\Omega)=L^\infty(\mu)$.}
  \end{aligned}
 \end{equation}
 
 Recall that $\mu$ is normal if it is order-continuous as a functional on $C(\Omega)$ (see, e.g., \cite[Definition 4.7.1]{DualC(K)}) and that $\Omega$ is hyper-Stonean if it is Stonean (i.e., extremally disconnected) and the union of supports of normal measures is dense in $\Omega$ (cf. \cite[Definition 5.1.1]{DualC(K)}). In case of \eqref{eq:CK=Linfty} the situation is easier -- we assume that the support of $\mu$ is whole $\Omega$, hence $\Omega$ is automatically hyper-Stonean as soon as it is extremally disconnected.
 
  The equality $C(\Omega)=L^\infty(\mu)$ means that the canonical inclusion of $C(\Omega)$ into $L^\infty(\mu)$ is a surjective isometry. This equality follows from the previous assumptions by \cite[Corollary 4.7.6]{DualC(K)}, but we include it in \eqref{eq:CK=Linfty} as it is essentially all we really use below. 
Thus, it is enough to consider the case when the $A_j$'s are the individual summands of the form $L^\infty(\mu)$
where $\mu$ satisfies \eqref{eq:CK=Linfty}.
Even in this case the description of the tensor product is not so easy. But it is simpler in case the respective $C_j$ is reflexive or even finite-dimensional.

\begin{lemma}\label{L:LinftyC=CKC}
Let $A=L^\infty(\mu)$ where $\mu$ satisfies \eqref{eq:CK=Linfty} and let $C$ be a reflexive Cartan factor. Then 
\begin{enumerate}[$(i)$]
    \item $A\overline{\otimes}C$ is canonically isomorphic to $L^\infty(\mu,C)$.
    \item Consider the canonical inclusion of $C(\Omega,C)$ into $L^\infty(\mu,C)$. It is an isometric embedding whose range is the closure of the space of simple measurable functions.    \item If $\dim C<\infty$, then $L^\infty(\mu,C)=C(\Omega,C)$, i.e., the canonical inclusion of $C(\Omega,C)$ into $L^\infty(\mu,C)$ is a surjective isometry.
\end{enumerate}
\end{lemma}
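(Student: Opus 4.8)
The three assertions are of decreasing generality, so I would prove them in that order and reserve the functional-analytic heavy lifting for $(i)$. For $(i)$ the plan is to identify preduals. Since $C$ is reflexive it is itself a dual space, say $C=(C_*)^*$, and both $C$ and $C_*$ enjoy the Radon--Nikod\'ym property (RNP). On the one hand $A_*=L^1(\mu)$, and the predual of $A\overline{\otimes}C$ is the corresponding projective tensor product $A_*\widehat{\otimes}C_*$; for a \emph{commutative} $A=L^\infty(\mu)$ this collapses to the Bochner space $L^1(\mu)\widehat{\otimes}C_*=L^1(\mu,C_*)$. On the other hand, since $C=(C_*)^*$ has the RNP, the classical duality $L^1(\mu,C_*)^*=L^\infty(\mu,C)$ holds with strongly measurable representatives. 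Composing, $A\overline{\otimes}C=(L^1(\mu,C_*))^*=L^\infty(\mu,C)$. I would then verify that the resulting isometry is the \emph{canonical} one, i.e. that it sends an elementary tensor $a\otimes c$ to the function $t\mapsto a(t)c$: since the algebraic tensor product $A\odot C$ is weak$^*$-dense and the triple product is separately weak$^*$-continuous, agreement on elementary tensors (where both sides equal $a_1a_2^*a_3\otimes\J{c_1}{c_2}{c_3}$) forces the map to be a triple isomorphism, not merely a linear isometry.

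For $(ii)$ I would first note that every $f\in C(\Omega,C)$ is strongly measurable and bounded, because its range $f(\Omega)$ is compact (hence separable) and norm-bounded; thus $C(\Omega,C)\subseteq L^\infty(\mu,C)$ makes sense. The inclusion is isometric because $t\mapsto\|f(t)\|$ is continuous and $\mu$ has full support, so $\esssup_t\|f(t)\|=\sup_t\|f(t)\|$. Being isometric with complete domain, the image is closed, so it suffices to identify it with the closure of the simple functions. In one direction, a simple function $\sum_i\chi_{E_i}c_i$ has each $\chi_{E_i}$ an idempotent of $L^\infty(\mu)=C(\Omega)$; since $\Omega$ is Stonean these idempotents are characteristic functions of clopen sets $U_i$, which (comparing with $\sum_i\chi_{E_i}=1$) partition $\Omega$, so $\sum_i\chi_{U_i}c_i$ is continuous and agrees a.e. with the given simple function. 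Hence every simple function lies in the image of $C(\Omega,C)$. In the other direction, any $f\in C(\Omega,C)$ is a uniform limit of simple functions: cover the compact set $f(\Omega)$ by finitely many balls of radius $\varepsilon$, pull the corresponding Borel partition back through $f$, and pick a value in each piece. Therefore the image of $C(\Omega,C)$ equals the closure of the simple functions.

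For $(iii)$, with $n=\dim C<\infty$ I fix a basis $e_1,\dots,e_n$ of $C$; the coordinate functionals are continuous, so any $f\in L^\infty(\mu,C)$ writes as $f=\sum_{k=1}^n f_k e_k$ with $f_k\in L^\infty(\mu)$. Scalar simple functions are dense in $L^\infty(\mu)$, so approximating each $f_k$ and recombining shows that $C$-valued simple functions are dense in $L^\infty(\mu,C)$. By $(ii)$ this closure is precisely the image of $C(\Omega,C)$, whence $L^\infty(\mu,C)=C(\Omega,C)$.

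I expect the main obstacle to be $(i)$: pinning down which tensor norm computes the predual of $A\overline{\otimes}C$ and verifying that the commutativity of $A$ reduces it to the Bochner space $L^1(\mu,C_*)$, together with confirming the duality $L^1(\mu,C_*)^*=L^\infty(\mu,C)$ in the (possibly nonseparable) reflexive setting via the RNP. Once this Banach-space identification is secured, upgrading it to a triple isomorphism and proving $(ii)$ and $(iii)$ are comparatively routine topology and measure theory, exploiting the full support of $\mu$ and the extremal disconnectedness of $\Omega$.
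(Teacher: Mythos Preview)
Your proofs of $(ii)$ and $(iii)$ are correct and match the paper's argument essentially line for line: isometry from full support, uniform approximation of continuous functions by simples via compactness of the range, and the converse via the fact that every measurable set agrees with a clopen set up to a null set (this is exactly how the paper phrases it, rather than invoking idempotents of $C(\Omega)$, but the content is identical); for $(iii)$ the paper simply says simple functions are dense when $\dim C<\infty$, and your basis argument is a concrete way to see that.

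For $(i)$ the paper gives no argument at all --- it just cites \cite[Lemma 1.2]{Finite} and remarks that this is the only place where the Cartan-factor hypothesis is used. Your predual-identification route (via $L^1(\mu)\widehat{\otimes}C_*\cong L^1(\mu,C_*)$ and the RNP duality $L^1(\mu,C_*)^*\cong L^\infty(\mu,C)$) is a standard and correct way to establish such an isomorphism at the Banach-space level. The one point you should be careful about is circularity: the symbol $A\overline{\otimes}C$ here is not the von Neumann tensor product of algebras (since $C$ need not be a von Neumann algebra), but Horn's weak$^*$-closed tensor product of JBW$^*$-triples, so you must check that its predual really is $A_*\widehat{\otimes}C_*$ under that definition rather than assuming it. Once that is in place, your verification on elementary tensors and weak$^*$-density argument for upgrading to a triple isomorphism is exactly right.
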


\begin{proof}
Assertion $(i)$ is proved for example in \cite[Lemma 1.2]{Finite}. It is the only assertion where it is used that $C$ is a Cartan factor.

$(ii)$ Since $\mu$ is supported by $\Omega$, the canonical inclusion is an isometric embedding. Since the range of any $f\in C(\Omega,C)$ is compact, we easily deduce that it may be uniformly approximated by simple measurable functions. Conversely, it follows from \eqref{eq:CK=Linfty} that for any measurable set $B\subset \Omega$ there is a clopen set $G$ such that the symmetric difference has zero measure.
Thus any simple measurable function is almost everywhere equal to a continuous function, which proves the converse inclusion.

$(iii)$ This follows easily from $(ii)$ as in this case simple measurable functions are dense in $L^\infty(\mu,C)$.

\end{proof}

\subsection{Some specific tools for JBW$^*$-algebras}

JBW$^*$-algebras may be viewed as JBW$^*$-triples having a unitary element, in which one of the unitary elements is fixed and plays the role of a unit. Quite often, the choice of the unit is rather natural. In this subsection we collect some tools which may simplify description of the above-defined relations in JBW$^*$-algebra.

Along this subsection, assume that $M$ is a JBW$^*$-algebra and denote by $1$ its unit. The algebraic operations are connected with the triple product by the following known identities (cf. \eqref{eq:algebra from unitary} and \eqref{eq:triple product in JB*-algebra}):
$$\begin{gathered}
a\circ b=\J a1b,\quad a^*=\J 1a1\quad      \mbox{ for }a,b\in M,\\
\J abc=(a\circ b^*)\circ c + a\circ(b^*\circ c)- (a\circ c)\circ b^*\quad \mbox{ for }a,b,c\in M.
\end{gathered}$$

\begin{lemma}\label{L:sqrt of unitary}
Let $u\in M$ be a unitary element. Then there is a unitary element $v\in M$ such that $v^2=u$.
\end{lemma}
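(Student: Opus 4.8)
The plan is to reduce the statement to the (essentially classical) case of an abelian von Neumann algebra, where a measurable branch of the square root is available. First I would show that the weak$^*$-closed JB$^*$-subalgebra $N\subseteq M$ generated by $u$ and $1$ is associative, hence an abelian von Neumann algebra. To this end, note that $N$ is generated by the two self-adjoint elements $a=\frac12(u+u^*)$ and $b=\frac1{2i}(u-u^*)$, so by the Shirshov--Cohn theorem the norm-closed subalgebra they generate is special; that is, it embeds isometrically as a JC$^*$-subalgebra of some C$^*$-algebra $A$. Inside $A$ we have $\|u\|=1$ and, since $u$ is unitary, $u\circ u^*=1$, i.e. $uu^*+u^*u=2\cdot1$; combined with $uu^*\le1$ and $u^*u\le1$ (both are positive of norm at most $\|u\|^2=1$) this forces $uu^*=u^*u=1$, so $u$ is a genuine unitary, in particular a normal element, of $A$. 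Consequently $u,u^*,1$ generate an abelian C$^*$-subalgebra $C^*(u)\subseteq A$ on which the Jordan product coincides with the associative product, whence the subalgebra generated by $u$ is associative; passing to the weak$^*$-closure preserves commutativity, so $N$ is an abelian von Neumann algebra.

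Once $N$ is identified as an abelian von Neumann algebra, I would represent it as $L^\infty(\mu)$ as in \eqref{eq:CK=Linfty}, so that $u$ becomes a measurable function with $|u|=1$ $\mu$-almost everywhere. Fixing a Borel branch $\arg\colon\TT\to[0,2\pi)$ and setting $h=\arg(u)$ yields a self-adjoint element $h\in N$ with $u=\exp(ih)$. I would then define
$$v=\exp\!\left(\tfrac{i}{2}h\right)\in N\subseteq M,$$
computed by Borel functional calculus in $N$. By construction $v^2=\exp(ih)=u$, while $v\circ v^*=1$ and $v^2\circ v^*=v$, so $v$ is unitary in $N$. Since $N$ is a unital JB$^*$-subalgebra of $M$, these identities hold verbatim in $M$, so $v$ is the required unitary square root of $u$.

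The main obstacle is the first step: verifying that a unitary element generates an associative subalgebra. The reduction to a C$^*$-envelope via Shirshov--Cohn and the deduction that $u$ is a genuine unitary there is the heart of the argument; alternatively one could invoke the known fact (in the spirit of \cite[Lemma 2.4.5, Theorem 3.2.2 and Remark 3.2.3]{hanche1984jordan} used already above) that a normal element of a JB$^*$-algebra generates an associative subalgebra. A minor point to check is that unitarity transfers between $N$ and $M$, which is immediate because the Jordan inverse is characterized by identities that are stable under passing to a unital JB$^*$-subalgebra.
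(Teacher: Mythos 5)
Your proof is correct and follows essentially the same route as the paper: embed the JB$^*$-subalgebra generated by $u$ into a C$^*$-algebra, observe that $u\circ u^*=1$ together with $uu^*\le 1$ and $u^*u\le 1$ forces $u$ to be a genuine unitary there, conclude that the weak$^*$-closed subalgebra generated by $u$ is an abelian von Neumann algebra, and extract the square root by functional calculus. The paper cites a lemma asserting that $N$ is a JC$^*$-algebra and quotes Kadison--Ringrose for the final step, whereas you justify these via Shirshov--Cohn and an explicit Borel branch of the argument, but the argument is the same.
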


\begin{proof}
Let $N$ denote the  JB$^*$-subalgebra of $M$ generated by $u$. Then $N$ contains both $u^*$ and $1$. Moreover, $N$ is a JC$^*$-algebra, i.e., it is a JB$^*$-subalgebra of some C$^*$-algebra $A$ (see, e.g., \cite[Lemma 3.1]{hamhalter2019mwnc}). We may assume without loss of generality that $1$ (the unit of $M$) is also the unit of $A$. Then $u$ is unitary in $A$ as well (as $u\sim_2 1$). But this means that $u$ commutes with $u^*$ in $A$. We deduce that $N$ is associative, hence $\wscl{N}$ is associative as well. It follows that $\wscl{N}$ is a commutative von Neumann algebra, hence we may find a square root of $u$ (cf. \cite[Theorem 5.2.5]{KR1}).
\end{proof}

Our next lemma gathers some conclusions which can be deduced from \cite[Theorem 4.2.28~$(vii)$ and Theorem 4.1.3$(iv)$]{Cabrera-Rodriguez-vol1} and the definition of unitary element in a unital JB$^*$-algebra, here we present an alternative argument based on the triple structure.

\begin{lemma}\label{L:shift to 1}
Let $u\in M$ be a unitary element. Let $v\in M$ be a unitary element such that $v^2=u$. For $x\in M$ set
$$\Phi(x)=\J v{x^*}v.$$
Then the following assertions are true:
\begin{enumerate}[$(i)$]
    \item $\Phi$ is a triple automorphism of $M$ such that $\Phi(1)=u$.
    \item The inverse of $\Phi$ is given by
    $$\Phi^{-1}(x)=\J{v^*}{x^*}{v^*}, \quad x\in M$$
    and satisfies $\Phi^{-1}(u)=1$.
    \item Let $v\in M$ be any tripotent and let $R$ be any of the above-defined relations. Then
    $$ v\,R\,u \Leftrightarrow \Phi^{-1}(v)\, R\, 1.$$
\end{enumerate}
\end{lemma}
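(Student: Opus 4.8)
The plan is to recognize $\Phi$ as a composition of two conjugate-linear triple automorphisms of $M$; once this is in hand, parts~(ii) and (iii) follow almost formally. In particular, part~(iii) will be reduced to the remark, recorded at the end of Section~\ref{sec:relations}, that each of the relations in question is preserved by triple homomorphisms.

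For part~(i) the starting observation is that, since $x^*=\J 1x1$ and the triple product is conjugate-linear in the middle variable,
\[
\Phi(x)=\J v{x^*}v=Q(v)(x^*),
\]
so $\Phi$ is the composition of the two conjugate-linear maps $x\mapsto x^*$ and $Q(v)$; in particular $\Phi$ is linear. I would then check that each factor is a conjugate-linear triple automorphism. For the involution $*$ with unit $1$ this is a direct computation from \eqref{eq:triple product in JB*-algebra}: using that $*$ is a period-two algebra involution and that the Jordan product is commutative one verifies $\J{x^*}{y^*}{z^*}=(\J xyz)^*$. For $Q(v)$ I would use that $v$ is unitary, so $M=M_2(v)$ and hence $Q(v)^2=P_2(v)=\mathrm{Id}_M$; moreover, by \eqref{eq:algebra from unitary}, $Q(v)$ is precisely the involution $x\mapsto x^{*_v}=\J vxv$ of the unital JB$^*$-algebra $(M,\circ_v,{*_v})$ with unit $v$, whose induced triple product is again the original one, so the same computation yields $\J{Q(v)x}{Q(v)y}{Q(v)z}=Q(v)\J xyz$. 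Consequently $\Phi$, being a composition of two conjugate-linear triple automorphisms, is a (linear) triple automorphism. Finally $\Phi(1)=\J v{1^*}v=\J v1v=v\circ v=v^2=u$, since $1^*=1$.

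For part~(ii) I would set $\Psi(x)=\J{v^*}{x^*}{v^*}$ and verify directly that $\Psi=\Phi^{-1}$. Using that $*$ is a conjugate-linear triple automorphism gives $\bigl(\Psi(x)\bigr)^{*}=\bigl(\J{v^*}{x^*}{v^*}\bigr)^{*}=\J v{x}v=Q(v)x$, whence $\Phi\bigl(\Psi(x)\bigr)=Q(v)\bigl(\Psi(x)^{*}\bigr)=Q(v)^2x=x$; the symmetric computation (with the roles of $v$ and $v^*$ interchanged, using $(v^*)^*=v$ and $Q(v^*)^2=\mathrm{Id}_M$) gives $\Psi\circ\Phi=\mathrm{Id}_M$, so $\Phi^{-1}=\Psi$, and then $\Phi^{-1}(u)=\Phi^{-1}(\Phi(1))=1$. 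For part~(iii), where the symbol $v$ now denotes an arbitrary tripotent (distinct from the fixed square root), I note that $\Phi^{-1}$ and its inverse $\Phi$ are both triple homomorphisms, so every relation $R$ is preserved by $\Phi^{-1}$ in both directions; combined with $\Phi^{-1}(u)=1$ this yields $v\,R\,u\Leftrightarrow\Phi^{-1}(v)\,R\,\Phi^{-1}(u)=\Phi^{-1}(v)\,R\,1$.

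The main obstacle is the verification in part~(i) that $Q(v)$ is a conjugate-linear triple automorphism of $M$; everything else is formal once this is available. Its cleanest justification uses the identification of $Q(v)$ with the ${*_v}$-involution together with the fact that the triple product reconstructed from $(M,\circ_v,{*_v})$ coincides with the original, or, alternatively, the structural identity $Q(v)\J x{Q(v)y}z=\J{Q(v)x}y{Q(v)z}$ combined with $Q(v)^2=\mathrm{Id}_M$.
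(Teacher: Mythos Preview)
Your proof is correct and complete, but takes a genuinely different route from the paper's own argument. The paper establishes that $\Phi$ is a triple automorphism \emph{analytically}: it first checks directly that the map $x\mapsto\J{v^*}{x^*}{v^*}$ is a two-sided inverse of $\Phi$ (via $Q(v^*)Q(v)=P_2(v^*)=\mathrm{Id}$ and the symmetric computation), and then observes that both $\Phi$ and $\Phi^{-1}$ are contractive because $\norm{v}=1$, so $\Phi$ is a surjective linear isometry and hence, by Kaup's theorem, a triple automorphism. Your approach is instead purely algebraic: you factor $\Phi=Q(v)\circ(*)$ and argue that each factor is a conjugate-linear triple automorphism, the involution $*$ via the formula~\eqref{eq:triple product in JB*-algebra}, and $Q(v)$ either via its identification with the involution $^{*_v}$ of $(M,\circ_v,{}^{*_v})$ or via the structural identity $Q(v)\J x{Q(v)y}z=\J{Q(v)x}{y}{Q(v)z}$ together with $Q(v)^2=\mathrm{Id}$. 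Both arguments are short; yours avoids invoking Kaup's theorem but relies on the (standard) compatibility between the original triple product and the one reconstructed from $(\circ_v,{}^{*_v})$, whereas the paper's argument sidesteps any algebraic identities beyond $P_2(v)=\mathrm{Id}$ at the cost of a deeper external input. Parts~(ii) and~(iii) are handled the same way in both.
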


\begin{proof}
The mapping $\Phi$ is clearly a linear mapping of $M$ into $M$.  Moreover,
$$\J{v^*}{\J v{x^*}v^*}{v^*}=\J{v^*}{\J{v^*}x{v^*}}{v^*}
=P_2(v^*)(x)=x$$
for $x\in M$. The same works with the roles of $v$ and $v^*$ interchanged, hence $\Phi$ is a bijection and its inverse is given by assertion $(ii)$.

Since $\norm{v}=1$, we deduce that $\norm{\Phi(x)}\le \norm{x}$ and  $\norm{\Phi^{-1}(x)}\le \norm{x}$ for each $x\in M$ (cf. \eqref{eq triple product is contractive}). It follows that $\Phi$ is a surjective isometry, so it is a triple automorphism (see \cite[Proposition 5.5]{kaup1983riemann}).

Clearly $\Phi(1)=v^2=u$, hence $\Phi^{-1}(u)=1$. This proves assertions $(i)$ and $(ii)$.

Assertion $(iii)$ follows from $(i)$ and $(ii)$ as triple automorphisms preserve all of the defined relations.
\end{proof}

So, it follows from the previous two lemmata that in order to describe the relations $vRu$ in case $u$ is unitary it is enough to describe it when $u=1$. We will use this phenomenon below several times.
If $M$ is moreover finite, we may go even further as witnessed by the following lemma.

\begin{lemma}\label{L:shift to projection}
Let $M$ be a finite JBW$^*$-algebra and let $u\in M$ be any tripotent.
\begin{enumerate}[$(i)$]
    \item There is a unitary element $\widetilde{u}\in M$ such that $u\le\widetilde{u}$.
    \item Let $\Phi$ be a triple automorphism of $M$ provided by Lemmata ~\ref{L:sqrt of unitary} and \ref{L:shift to 1} (for the unitary  $\widetilde{u}$).          Then $\Phi^{-1}(u)$ is a projection.
    \item Let $v\in M$ be any tripotent and let $R$ be any of the above-defined relations. Then
    $$ v\,R\,u \Leftrightarrow \Phi^{-1}(v)\, R\, \Phi^{-1}(u).$$
\end{enumerate}
\end{lemma}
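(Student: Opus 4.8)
The plan is to isolate part $(i)$ as the only place where finiteness really enters, and then to deduce parts $(ii)$ and $(iii)$ formally from the properties of the automorphism $\Phi$ recorded in Lemmata~\ref{L:sqrt of unitary} and \ref{L:shift to 1}. So I would first prove $(i)$, then build $\Phi$ out of the resulting unitary, and finally read off $(ii)$ and $(iii)$.

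For $(i)$, the key remark is that since $M$ is a JBW$^*$-algebra with unit $1$, the unit is a unitary tripotent and hence $M=M_2(1)$; consequently every tripotent $u\in M$ satisfies $u\le_2 1$ trivially. Now finiteness of $M$ allows one to dilate $u$ to a unitary element exactly as in the proof of Proposition~\ref{P:le2=lent in finite}: by \cite[Lemma 3.2(b)]{Finite} applied to $u\le_2 1$ there is a tripotent $\widetilde u\in M$ with $u\le\widetilde u$ and $\widetilde u\sim_2 1$. The condition $\widetilde u\sim_2 1$ means $M_2(\widetilde u)=M_2(1)=M$, i.e.\ $\widetilde u$ is unitary, which is precisely what $(i)$ asserts. (Alternatively, one may complete $u$ to a complete tripotent by adjoining a complete tripotent of the JBW$^*$-triple $M_0(u)$, and invoke finiteness to conclude that this complete tripotent of $M=M_2(1)$ is unitary.)

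Given such a unitary $\widetilde u$, I would choose a unitary square root $\widetilde v$ with $\widetilde v^2=\widetilde u$ via Lemma~\ref{L:sqrt of unitary} and set $\Phi(x)=\J{\widetilde v}{x^*}{\widetilde v}$, so that by Lemma~\ref{L:shift to 1} the map $\Phi$ is a triple automorphism with $\Phi(1)=\widetilde u$ and $\Phi^{-1}(\widetilde u)=1$. For $(ii)$ I would simply apply the equivalence in Lemma~\ref{L:shift to 1}$(iii)$ to the tripotent $u$ with $R$ equal to the standard order $\le$: since $u\le\widetilde u$ by $(i)$, this yields $\Phi^{-1}(u)\le 1$, and $\Phi^{-1}(u)\le 1$ means precisely that $\Phi^{-1}(u)$ is a projection in the JB$^*$-algebra $M_2(1)=M$ by Proposition~\ref{P:order char}$(vii)$. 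Thus $\Phi^{-1}(u)$ is a projection.

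Finally $(iii)$ is purely formal: $\Phi$, and hence $\Phi^{-1}$, is a triple automorphism of $M$, and every relation in our list is defined solely through the triple product and the property of being a tripotent. Consequently $\Phi^{-1}$ preserves each such relation in both directions, so $v\,R\,u\Leftrightarrow\Phi^{-1}(v)\,R\,\Phi^{-1}(u)$. The only genuine obstacle is $(i)$; everything else is bookkeeping with the automorphism $\Phi$. The role of finiteness is exactly to guarantee the unitary dilation $\widetilde u$ of $u$: without it a general tripotent need not sit below any unitary element (as the forward shift in Example~\ref{ex:distinguishing relations}$(g)$ shows), which is why the reduction to a projection through $\Phi^{-1}$ cannot be expected in the non-finite case.
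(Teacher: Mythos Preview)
Your proof is correct and follows essentially the same route as the paper: for $(i)$ the paper simply cites \cite[Lemma 3.2$(d)$]{Finite} (you obtain the same conclusion via \cite[Lemma 3.2$(b)$]{Finite} applied to $u\le_2 1$, which is an equivalent formulation); for $(ii)$ and $(iii)$ the paper argues exactly as you do, observing that $\Phi^{-1}(u)\le\Phi^{-1}(\widetilde u)=1$ and that triple automorphisms preserve all the relations in question.
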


\begin{proof}
Assertion $(i)$ is proved in \cite[Lemma 3.2$(d)$]{Finite}. If $\Phi$ is a triple automorphism provided by Lemma~\ref{L:shift to 1}, then
$$\Phi^{-1}(u)\le\Phi^{-1}(\widetilde{u})=1,$$
hence $\Phi^{-1}(u)$ is a projection. Assertion $(iii)$ follows from the fact that a triple automorphism preserves all the above-defined relations.
\end{proof}

The key point in the above lemma is that to describe the relation $v\,R\,u$ in a finite JBW$^*$-algebra it is enough to understand it in case $u$ is a projection.

\subsection{Some tools for finite-dimensional Cartan factors}

In this subsection we collect several results which will later help to understand the relations in finite-dimensional Cartan factors and also for the respective tensor products used in the representation of JBW$^*$-triples. These tools are based mainly on high homogeneity of Cartan factors. At the moment we deal with Cartan factors in an abstract ways by referring to appropriate abstract results. Applications to concrete types of Cartan factors will be given in the subsequent sections.

So, let $C$ be a fixed finite-dimensional Cartan factor. 
It's rank, i.e., the maximal cardinality of an orthogonal family of nonzero tripotents, is necessarily finite. Let $m$ denote the rank of $C$.

The following result follows from \cite[\S 5]{loos1977bounded} (alternatively, from \cite{BunceChu92} or from \cite{kaup1997real}).

\begin{lemma} 
The following assertions are valid:
\begin{enumerate}[(a)]
    \item Let $u\in C$ be a nonzero tripotent. Then $u$ may be expressed as the sum of an orthogonal family of minimal tripotents. The cardinality of such a family is uniquely determined.
    \item Any maximal orthogonal family of minimal tripotents in $C$ has cardinality $m = $rank$(C)$.
\end{enumerate}
\end{lemma}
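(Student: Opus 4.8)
The plan is to reduce assertion $(a)$ to the finite-dimensional spectral theory of the Jordan algebra $C_2(u)$, and then to build assertion $(b)$ on top of $(a)$ together with the homogeneity of $C$.

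For $(a)$, recall that $C_2(u)$ is a finite-dimensional unital JB$^*$-algebra with unit $u$ (by \eqref{eq:algebra from unitary}). In such an algebra the unit decomposes as a finite sum $u=p_1+\dots+p_r$ of mutually orthogonal minimal projections, by the finite-dimensional spectral theory of JB$^*$-algebras (cf.\ \cite{hanche1984jordan}). Each $p_i$ satisfies $p_i\le u$, so by Proposition~\ref{P:order char} it is a tripotent of $C$ lying in $C_2(u)$, and the $p_i$ are mutually orthogonal in $C$. The key point is that each $p_i$ is in fact \emph{minimal in $C$}: since $p_i\le u$ one has $C_2(p_i)=(C_2(u))_2(p_i)$, and the right-hand side equals $\ce p_i$ precisely because $p_i$ is a minimal projection of $C_2(u)$. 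This yields the desired decomposition. For uniqueness of the cardinality I would observe that any orthogonal family of minimal tripotents of $C$ summing to $u$ consists of minimal projections of $C_2(u)$: each member $e$ satisfies $e\le u$ (as $u-e$ is a tripotent orthogonal to $e$), hence is a projection of $C_2(u)$, and minimality in $C$ forces minimality in $C_2(u)$ by the same Peirce computation. Thus every such family is a Jordan frame of $C_2(u)$, and all Jordan frames of a fixed finite-dimensional JB$^*$-algebra share a common cardinality (its capacity); hence $r$ is uniquely determined.

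For $(b)$, let $\{e_1,\dots,e_k\}$ be a maximal orthogonal family of minimal tripotents and put $u=e_1+\dots+e_k$. The inequality $k\le m$ is immediate, being an orthogonal family of nonzero tripotents. To produce such a family of size exactly $m$, I would start from an orthogonal family $\{f_1,\dots,f_m\}$ of nonzero tripotents realising the rank, decompose each $f_j$ by $(a)$ into $r_j\ge 1$ minimal tripotents, and note that pieces coming from distinct $f_j$'s are orthogonal, since the $f_j$ are orthogonal and the pieces lie in the mutually orthogonal Peirce-$2$ spaces $C_2(f_j)$. This gives an orthogonal family of $\sum_j r_j$ minimal tripotents; maximality of the rank forces $\sum_j r_j\le m$, whence each $r_j=1$ and we obtain a frame of size $m$.

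It remains to see that $k$ is forced to equal $m$, which is where the real difficulty lies. First I would check that maximality of $\{e_1,\dots,e_k\}$ makes $u$ complete, i.e.\ $C_0(u)=\{0\}$: otherwise $C_0(u)$ is a nonzero finite-dimensional subtriple and thus contains a minimal tripotent $e$ of $C_0(u)$; since $e\perp u$ one has $C_2(e)\subseteq C_0(u)$ and $C_2(e)$ computed in $C$ agrees with its computation in $C_0(u)$, so $e$ is minimal in $C$ and orthogonal to every $e_i$, contradicting maximality. The genuinely hard step is then to conclude that every complete tripotent — equivalently every maximal orthogonal family of minimal tripotents — has length exactly $m$. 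This is the structural input I would import from \cite[\S5]{loos1977bounded} (or \cite{BunceChu92,kaup1997real}): the triple automorphism group of a Cartan factor acts transitively on its frames, so all frames share the cardinality of the size-$m$ frame constructed above; equivalently, $C_2(u)$ has rank $m$ whenever $u$ is complete. I expect this equicardinality of frames, which rests on an exchange argument for the joint Peirce decomposition together with the homogeneity of $C$, to be the main obstacle, the remaining steps being elementary Peirce calculus.
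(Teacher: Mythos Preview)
Your proposal is correct. The paper itself gives no proof of this lemma: it simply records that the statement follows from \cite[\S5]{loos1977bounded} (or \cite{BunceChu92,kaup1997real}). Your write-up therefore supplies strictly more than the paper does---an elementary reduction of $(a)$ to the Jordan-frame decomposition of the finite-dimensional JB$^*$-algebra $C_2(u)$, and for $(b)$ a clean separation of the easy parts (existence of a size-$m$ frame, completeness of the sum of a maximal family) from the genuinely structural input (equicardinality of frames), which you correctly source to the same references the paper cites.
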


The unique cardinality from $(a)$ is called \emph{the rank of }$u$. Any of the families from $(b)$ is called a \emph{frame} in $C$ (see \cite[\S 5]{loos1977bounded}). (Frames and rank are considered also in infinite-dimensional Cartan factors, but then the definition of frame is more complicated. Contrary to what could be expected from the setting of C$^*$-algebras, where finite rank implies finite dimension, there exist infinite dimensional Cartan factors with finite rank.)

\begin{lemma}\label{L:frames}
\begin{enumerate}[$(i)$]
    \item Let $u_1,\dots,u_m$ and $v_1,\dots,v_m$ be two frames of $C$. Then there is a triple automorphism of $C$ which maps $u_j$ to $v_j$ for $j\in\{1,\dots,m\}$.
    \item Let $u,v\in C$ be two tripotents with same rank. Then there is a triple automorphism of $C$ mapping $u$ to $v$.
\end{enumerate}
\end{lemma}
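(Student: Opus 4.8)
The plan is to derive part $(ii)$ from part $(i)$, so that the whole statement is reduced to the homogeneity of $\operatorname{Aut}(C)$ on frames, which is the genuine content. Throughout I would use that, by Kaup's theorem (see \cite[Proposition 5.5]{kaup1983riemann}), the triple automorphisms of $C$ are exactly the surjective linear isometries, so $\operatorname{Aut}(C)$ is a compact group, together with two inputs from the frame theory of \cite[\S 5]{loos1977bounded}: that $\operatorname{Aut}(C)$ acts transitively on the set of minimal tripotents (here the factor property of $C$ is essential), and that for a minimal tripotent $e$ the Peirce-$0$ subspace $C_0(e)$ is again a Cartan factor, of rank $m-1$, whose minimal tripotents are precisely the minimal tripotents of $C$ orthogonal to $e$.

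To reduce $(ii)$ to $(i)$, suppose $u,v$ are tripotents of the same rank $r$. By the preceding lemma I may write $u=u_1+\dots+u_r$ and $v=v_1+\dots+v_r$ as sums of orthogonal minimal tripotents, and each of these orthogonal families extends to a frame $u_1,\dots,u_m$, respectively $v_1,\dots,v_m$, of $C$. An automorphism $\Phi$ provided by $(i)$ then sends $u_j$ to $v_j$ for all $j$, whence $\Phi(u)=\sum_{j=1}^r v_j=v$ by linearity.

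For $(i)$ I would argue by induction on $m$. The case $m=1$ is exactly transitivity on minimal tripotents. For the inductive step, first choose $\Psi\in\operatorname{Aut}(C)$ with $\Psi(u_1)=v_1$. Since automorphisms preserve orthogonality, $\Psi(u_2),\dots,\Psi(u_m)$ and $v_2,\dots,v_m$ are two frames of the rank-$(m-1)$ Cartan factor $C_0(v_1)$. Applying the inductive hypothesis inside $C_0(v_1)$ produces a triple automorphism of $C_0(v_1)$ carrying $\Psi(u_j)$ to $v_j$ for $j\ge 2$; realizing this automorphism as the restriction of some element of the isotropy group $\{g\in\operatorname{Aut}(C):g(v_1)=v_1\}$ and composing with $\Psi$ yields the required $\Phi$.

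The hard part is precisely this last realization: one must know that the isotropy subgroup of $v_1$ restricts onto a subgroup of $\operatorname{Aut}(C_0(v_1))$ that is still transitive on frames, equivalently that automorphisms of the Peirce-$0$ factor can be extended to $C$ while fixing $v_1$. This compatibility of $\operatorname{Aut}(C)$ with the Peirce filtration is the delicate structural point and is exactly what the frame theory of \cite[\S 5]{loos1977bounded} supplies; alternatively, one could bypass the abstract induction and verify $(i)$ by hand in each of the six types of Cartan factors, where frames and the matching automorphisms are explicit.
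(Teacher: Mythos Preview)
Your reduction of $(ii)$ to $(i)$ is exactly the paper's argument: decompose $u$ and $v$ into orthogonal minimal tripotents, extend both families to frames, and apply $(i)$. For $(i)$ the paper does less than you: it simply cites \cite[Theorem 5.9 and Corollary 5.12]{loos1977bounded} (and \cite[Proposition 5.8]{kaup1997real} for the infinite-dimensional case) without sketching any argument. Your inductive outline is a reasonable gloss on how such a proof goes, and you correctly isolate the genuine difficulty --- lifting an automorphism of $C_0(v_1)$ to an automorphism of $C$ fixing $v_1$ --- as the point where one must invoke the structure theory (or the type-by-type verification). So the approaches coincide; you have just unpacked the citation a little further than the paper does.
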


\begin{proof}
Assertion $(i)$ is proved in \cite[Theorem 5.9 and Corollary 5.12]{loos1977bounded} (see also \cite[Proposition 5.8]{kaup1997real} for a proof in the infinite dimensional case). To prove $(ii)$ note that $u=u_1+\dots+u_k$ and $v=v_1+\dots+v_k$, where $k$ is the rank of these tripotents and $u_1,\dots,u_k$ and $v_1,\dots,v_k$ are two orthogonal families of minimal tripotents. These two families may be extended to frames and then $(i)$ may be applied.
\end{proof}

Let $\operatorname{Iso}$ denote the set of all triple automorphisms of $C$ considered as a subset of the space of linear operators on $C$.
 Further, let $\U$ denote the set of all nonzero tripotents in $C$ and $\U_j$ its subset formed by all  tripotents of rank $j$ (for $j\in\{1,\dots,m\}$)
 
 The next lemma is a parametrized version of the preceding one.
 
\begin{lemma}\label{L:Cartan parametrization}
\begin{enumerate}[$(i)$]
    \item The sets $\operatorname{Iso}$, $\U$, $\U_1$,\dots $\U_{m}$ are compact.
    \item Let $j\in\{1,\dots,m\}$ and $u\in \U_{j}$. Then there is a Borel mapping $\Psi_u:\U_{j}\to \operatorname{Iso}$ such that $$\Psi_u(v)(v)= u,\mbox{ for all } v\in \U_{j}.$$
\end{enumerate}
\end{lemma}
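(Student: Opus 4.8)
The plan is to handle the two assertions separately: the compactness statements in $(i)$ will follow from Kaup's theorem together with Lemma~\ref{L:frames}, and the Borel selection in $(ii)$ will be extracted from a standard measurable selection theorem. First I would establish that $\operatorname{Iso}$ is compact. By Kaup's theorem (\cite[Proposition 5.5]{kaup1983riemann}) a linear bijection between JB$^*$-triples is a triple isomorphism if and only if it is a surjective isometry; hence $\operatorname{Iso}$ coincides with the set of surjective linear isometries of $C$. As $C$ is finite-dimensional, every linear isometry is automatically bijective, so $\operatorname{Iso}=\{T\in L(C)\setsep \norm{Tx}=\norm{x}\text{ for all }x\in C\}$. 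This set is bounded (each such $T$ satisfies $\norm{T}=1$) and closed (a pointwise limit of linear isometries is again a linear isometry), hence compact.

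Next, for $\U$ I would use that any nonzero tripotent $u$ satisfies $\norm{u}=\norm{u}^3$ by axiom $(c)$, which forces $\norm{u}=1$; thus $\U$ is the intersection of the closed set $\{u\setsep\J uuu=u\}$ with the unit sphere, so it is closed and bounded, hence compact. For $\U_j$ the key is Lemma~\ref{L:frames}$(ii)$: fixing any $u_0\in\U_j$ (for instance the sum of $j$ members of a frame), every $v\in\U_j$ has the same rank $j$ as $u_0$ and is therefore the image of $u_0$ under some element of $\operatorname{Iso}$. Consequently $\U_j=\{T(u_0)\setsep T\in\operatorname{Iso}\}$ is the continuous image of the compact set $\operatorname{Iso}$ under the evaluation map $T\mapsto T(u_0)$, hence compact. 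As a byproduct $\U=\bigcup_{j=1}^m\U_j$ is a finite union of compacta.

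For $(ii)$, fix $j$ and $u\in\U_j$ and consider
$$R=\{(T,v)\in\operatorname{Iso}\times\U_j\setsep T(v)=u\}.$$
Since the condition $T(v)=u$ is closed and $\operatorname{Iso}\times\U_j$ is compact by $(i)$, the set $R$ is compact. Let $\pi\colon R\to\U_j$ be the projection $(T,v)\mapsto v$; it is continuous, and it is surjective because Lemma~\ref{L:frames}$(ii)$ provides, for each $v\in\U_j$, a triple automorphism carrying $v$ to $u$. A Borel right inverse $\tau$ of $\pi$ then yields the desired map upon setting $\Psi_u(v)$ equal to the first coordinate of $\tau(v)$: indeed $\pi(\tau(v))=v$ means precisely $\Psi_u(v)(v)=u$, and $\Psi_u$ is Borel, being the composition of the Borel map $\tau$ with a continuous coordinate projection.

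The main obstacle is thus producing this Borel section of the continuous surjection $\pi$ between compact metric spaces. Here I would invoke the Kuratowski--Ryll-Nardzewski measurable selection theorem applied to the correspondence $v\mapsto\pi^{-1}(v)$. This correspondence has nonempty compact values, and it is weakly Borel measurable: for every closed $F\subseteq R$ the set $\{v\setsep\pi^{-1}(v)\cap F\neq\emptyset\}=\pi(F)$ is compact, being the continuous image of a compact set, hence closed, and measurability against open sets follows by writing an open set as a countable increasing union of closed ones. The selection theorem then furnishes the required Borel map $\tau$, completing the construction of $\Psi_u$.
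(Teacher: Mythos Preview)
Your proof is correct and follows essentially the same approach as the paper: Kaup's theorem plus finite-dimensionality for the compactness of $\operatorname{Iso}$, Lemma~\ref{L:frames}$(ii)$ for the $\U_j$, and Kuratowski--Ryll-Nardzewski for the Borel selection. The only minor difference is that in $(ii)$ the paper takes a Borel section $F$ of the surjection $T\mapsto T(u)$ from $\operatorname{Iso}$ onto $\U_j$ and then sets $\Psi_u(v)=F(v)^{-1}$, whereas you parametrize the relation $T(v)=u$ directly and thereby avoid the final inversion step.
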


\begin{proof}
$(i)$
It is clear that $\U$ is a closed bounded set, hence it is compact as $C$ has finite dimension. Further, by Kaup's theorem  $\operatorname{Iso}$ is precisely the set of all surjective linear isometries on $C$. Linear isometries form a closed bounded set. Since $C$ has finite dimension, any linear isometry is necessarily surjective and $\operatorname{Iso}$ is compact.

Further, fix any $j\in\{1,\dots,m\}$ and $u\in \U_{j}$. Then the mapping
$$T\mapsto T(u)$$
is a continuous map from $\operatorname{Iso}$ to $\U_j$. Moreover, its range is the whole $\U_{j}$ by Lemma~\ref{L:frames}$(ii)$,
 hence we deduce that this set is compact.

$(ii)$ Fix $u\in \U_{j}$. As mentioned in the proof of assertion $(i)$, the mapping
$$T\mapsto T(u)$$
 is a continuous map of the compact metric space $\operatorname{Iso}$ onto the compact metric space $\U_{j}$. By a consequence of the Kuratowski--Ryll-Nardzewski selection theorem
 (see, e.g., \cite[Theorem on p. 403]{Ku-RN})
 there is a Borel selection $F$ of the inverse. It is enough to set
$$\Psi_u(v)=F(v)^{-1},\quad v\in \U_j.$$
\end{proof}

We continue by a lemma stating that tripotents may be diagonalized in a Borel measurable way. 

\begin{lemma}\label{L:cartan diagonalization} Let $u\in C$ be a nonzero tripotent. Fix a decomposition
$$u=u_1+\dots+u_k,$$
where $u_1,\dots,u_k$ are mutually orthogonal minimal tripotents. Set
$$\U_u=\{v\in\U\setsep v\sim_2 u\},$$
i.e., $\U_u$ is the set of all unitary elements of $C_2(u)$.
Then there is a Borel mapping $\Theta:\U_{u}\to \operatorname{Iso}$ such that for each $v\in\U_u$ we have:
\begin{enumerate}[$(i)$]
    \item $\Theta(v)(u)=u$;
    \item $\Theta(v)(v)$ is a linear combination of $u_1,\dots,u_k$.
\end{enumerate}
\end{lemma}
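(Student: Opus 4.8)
The plan is to reduce the statement to two ingredients: a pointwise diagonalization of each $v\in\U_u$ by an automorphism fixing $u$, and then a Borel selection argument that mirrors the one already carried out in Lemma~\ref{L:Cartan parametrization}$(ii)$.

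First I would establish the pointwise existence, i.e. that for each $v\in\U_u$ there is $T\in\operatorname{Iso}$ with $T(u)=u$ and $T(v)\in\span\{u_1,\dots,u_k\}$. Since $v$ is a unitary element of the unital JB$^*$-algebra $C_2(u)$, the JB$^*$-subalgebra generated by $v$ and the unit $u$ is associative (exactly as in the proof of Lemma~\ref{L:sqrt of unitary}), hence a finite-dimensional commutative C$^*$-algebra. Its spectral decomposition yields mutually orthogonal projections $p_1,\dots,p_r\in C_2(u)$ with $\sum_i p_i=u$ and complex units $\lambda_1,\dots,\lambda_r$ such that $v=\sum_i\lambda_i p_i$. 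Refining each $p_i$ into minimal orthogonal tripotents and relabelling, I obtain mutually orthogonal minimal tripotents $f_1,\dots,f_k$ (necessarily $k=\operatorname{rank}(u)$ by the uniqueness in part $(a)$ of the frame lemma) with $\sum_j f_j=u$ and complex units $\mu_j$ such that $v=\sum_j\mu_j f_j$.

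Next I would pass from these two orthogonal families to frames of $C$ with a common tail. Extend $u_1,\dots,u_k$ to a frame $u_1,\dots,u_k,u_{k+1},\dots,u_m$ of $C$; the added tripotents $u_{k+1},\dots,u_m$ are orthogonal to $u=\sum_j u_j$, hence lie in $C_0(u)$ and so are orthogonal to each $f_j\in C_2(u)$. Because $\sum_j f_j=u$, the family $f_1,\dots,f_k,u_{k+1},\dots,u_m$ consists of $m$ mutually orthogonal minimal tripotents and is therefore a frame of $C$ (a family of cardinality $m$ is maximal). By Lemma~\ref{L:frames}$(i)$ there is $T_0\in\operatorname{Iso}$ with $T_0(f_j)=u_j$ for $j\le k$ and $T_0(u_i)=u_i$ for $i>k$. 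Then $T_0(u)=\sum_j u_j=u$ and $T_0(v)=\sum_j\mu_j u_j\in\span\{u_1,\dots,u_k\}$, which gives the pointwise existence.

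Finally I would produce the Borel map by measurable selection, repeating the scheme of Lemma~\ref{L:Cartan parametrization}$(ii)$. Since $v\sim_2 u$ is equivalent to the closed condition $L(v,v)=L(u,u)$ and $v\mapsto L(v,v)$ is continuous, $\U_u$ is closed in the compact set $\U$, hence compact. Put
$$G=\{(v,T)\in\U_u\times\operatorname{Iso} : T(u)=u \text{ and } T(v)\in\span\{u_1,\dots,u_k\}\}.$$
As $(v,T)\mapsto T(u)$ and $(v,T)\mapsto T(v)$ are continuous and $\span\{u_1,\dots,u_k\}$ is closed, $G$ is a closed, hence compact, subset of $\U_u\times\operatorname{Iso}$, and the projection $\pi\colon G\to\U_u$, $(v,T)\mapsto v$, is a continuous surjection (surjectivity being the pointwise existence just proved). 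By the consequence of the Kuratowski--Ryll-Nardzewski selection theorem used in Lemma~\ref{L:Cartan parametrization}$(ii)$ (see \cite[Theorem on p. 403]{Ku-RN}), $\pi$ admits a Borel selection $s\colon\U_u\to G$; composing with the continuous coordinate projection onto $\operatorname{Iso}$ yields a Borel map $\Theta$ with $(v,\Theta(v))\in G$, which is precisely $(i)$ and $(ii)$. The main obstacle here is the pointwise step: making sure the spectral projections of $v$ refine to an orthogonal family of minimal tripotents summing to $u$, and that this family together with the prescribed $u_1,\dots,u_k$ extends to two frames of $C$ sharing a common tail so that Lemma~\ref{L:frames}$(i)$ applies; once this is in hand, the measurability is a routine reprise of the earlier selection argument.
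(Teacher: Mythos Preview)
Your proof is correct and follows essentially the same route as the paper: establish pointwise existence via the spectral decomposition of $v$ in $C_2(u)$ together with Lemma~\ref{L:frames}, then apply the Kuratowski--Ryll-Nardzewski selection theorem to the compact set $G$. The only cosmetic differences are that the paper cites \cite[Proposition~2.2(b)]{cartan6} directly for the decomposition of $v$ into minimal projections (whereas you derive it from the commutative subalgebra argument and then refine), and the paper invokes Lemma~\ref{L:frames} without spelling out the common-tail extension to frames that you make explicit.
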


\begin{proof} Note that
$$\U_u=\{v\in\U\setsep \J vvu=u \ \&\ \J uuv=v\},$$
so $\U_u$ is compact. Further, let
$$G=\{(T,v)\in \operatorname{Iso}\times \U_u\setsep T(u)=u\ \&\ T(v)\in\span\{u_1,\dots,u_k\}\}.$$
Then $G$ is compact and, moreover, for each $v\in \U_u$ the set
$$G(v)=\{T\in\operatorname{Iso}\setsep (T,v)\in G\}$$
is nonempty. Namely, since $v$ is a unitary in the finite-dimensional JB$^*$-algebra $C_2(u)$, Proposition 2.2$(b)$ in \cite{cartan6} assures the existence of mutually orthogonal minimal projections $q_1, \ldots, q_n$ in $C_2(u)$ and $\alpha_1, \ldots, \alpha_n\in \mathbb{T}$ such that $v = \sum_i \alpha_i q_i$. Clearly, $u = \sum_i q_i.$  Lemma~\ref{L:frames} implies the existence of a triple automorphism $T$ mapping $q_j$ to $u_j$ for $j \in \{1,\ldots, n\}.$ Consequently, $T(u) = u$ and $T(v) = \sum_i \alpha_i u_i$, that is, $T\in G(v).$ Thus $G$ has a Borel measurable selection by a consequence to the Kuratowski--Ryll-Nardzewski theorem (see, e.g., \cite[Theorem on p. 403]{Ku-RN}).
\end{proof}

\begin{lemma}\label{L:Cartan vector parametrization}
Let $\mu$ be a probability measure satisfying \eqref{eq:CK=Linfty}. Fix tripotents $u_1,$ $\dots,$ $u_m$ $\in$ $C$ such that for each $j$ the rank of $u_j$ equals $j$ and $m=\operatorname{rank}(C)$.

Let $\e\in L^\infty(\mu,C)=C(\Omega,C)$ be a tripotent.

\begin{enumerate}[$(i)$]
\item For each $j\in\{0,\dots,m\}$ set
$$\Omega_j=\{\omega\in\Omega\setsep \mbox{the rank of $\e(\omega)$ is }j\}.$$
Then each $\Omega_j$ is a clopen subset of $\Omega$.
    \item For each $j\in\{1,\dots,m\}$ let $\Psi_j: \U_{j}\to \operatorname{Iso}$ be a mapping provided by Lemma~\ref{L:Cartan parametrization}$(ii)$ for $u=u_j$. Given $\x\in C(\Omega,C)$ define
    $$\Psi(\x)(\omega)=\begin{cases} \Psi_j(\e(\omega))(\x(\omega)),& \omega\in \Omega_j, j\in\{1,\dots,n\},\\ \x(\omega), & \omega\in \Omega_0.
    \end{cases}$$
    Then $\Psi(\x)$ is a bounded Borel measurable mapping on $\Omega$ with values in $C$, hence it is $\mu$-almost everywhere equal to a continuous mapping.
    \item $\Psi$ is a triple automorphism of $C(\Omega,C)$ such that
    $$\Psi(\e)(\omega)=\begin{cases} u_j & \omega\in \Omega_j, j\in\{1,\dots,m\},\\
    0 &\omega\in\Omega_0.
    \end{cases}$$
\end{enumerate}
\end{lemma}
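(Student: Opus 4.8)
The plan is to prove the three assertions in order, the first being the genuine crux and the other two following by pointwise bookkeeping together with the identification $L^\infty(\mu,C)=C(\Omega,C)$ from Lemma~\ref{L:LinftyC=CKC}$(iii)$.

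For $(i)$, the key point is that the rank of a tripotent is locally constant on the set $\U\cup\{0\}$ of all tripotents of $C$. First I would note that, since the triple product on $C(\Omega,C)$ is pointwise, $\e(\omega)$ is a tripotent of $C$ for every $\omega\in\Omega$; and since a nonzero tripotent $w$ satisfies $\norm{w}=\norm{\J www}=\norm{w}^3$ by axiom $(c)$, every nonzero tripotent has norm one, so $0$ is an isolated point of $\U\cup\{0\}$. By Lemma~\ref{L:Cartan parametrization}$(i)$ the sets $\U_1,\dots,\U_m$ are compact and they partition $\U$; being compact they are closed, and each $\U_j$ is also open in $\U\cup\{0\}$ because its complement $\{0\}\cup\bigcup_{i\ne j}\U_i$ is a finite union of closed sets. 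Thus each $\U_j$, and likewise $\{0\}$, is relatively clopen in $\U\cup\{0\}$. Since $\omega\mapsto\e(\omega)$ is continuous from $\Omega$ into $\U\cup\{0\}$ and $\Omega_j$ is precisely the preimage of $\U_j$ (of $\{0\}$ when $j=0$), every $\Omega_j$ is clopen.

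For $(ii)$, I would check boundedness and Borel measurability separately. Boundedness is immediate: on each $\Omega_j$ with $j\ge1$ the operator $\Psi_j(\e(\omega))$ lies in $\operatorname{Iso}$, so it is a linear isometry and $\norm{\Psi(\x)(\omega)}=\norm{\x(\omega)}\le\norm{\x}$, while on $\Omega_0$ the map is the identity. For measurability, on each of the finitely many clopen sets $\Omega_j$ the map $\omega\mapsto\e(\omega)$ is continuous, $\Psi_j$ is Borel by Lemma~\ref{L:Cartan parametrization}$(ii)$, and the evaluation $\operatorname{Iso}\times C\ni(T,y)\mapsto T(y)\in C$ is jointly continuous; composing with the continuous $\omega\mapsto\x(\omega)$ shows that $\Psi(\x)$ is Borel on each $\Omega_j$, hence Borel on $\Omega$. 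A bounded Borel function is $\mu$-measurable, so it represents an element of $L^\infty(\mu,C)=C(\Omega,C)$ and is therefore $\mu$-almost everywhere equal to a continuous function.

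For $(iii)$, linearity of $\Psi$ is clear since each block $\Psi_j(\e(\omega))$ and the identity are linear. Because every $\Psi_j(\e(\omega))$ is a triple automorphism of $C$ and the triple product on $C(\Omega,C)$ is pointwise, $\Psi$ preserves the triple product pointwise, hence as an operator on $C(\Omega,C)$; and $\norm{\Psi(\x)}\le\norm{\x}$ from $(ii)$. To see that $\Psi$ is bijective I would exhibit a pointwise inverse built from $\Psi_j(\e(\omega))^{-1}$: the inversion map $T\mapsto T^{-1}$ is continuous on the compact set $\operatorname{Iso}$, so the argument of $(ii)$ again produces a well-defined element of $C(\Omega,C)$, and it cancels $\Psi$ pointwise. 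Thus $\Psi$ is a surjective linear isometry, i.e.\ a triple automorphism by Kaup's theorem (\cite[Proposition 5.5]{kaup1983riemann}). Finally, the stated formula for $\Psi(\e)$ is just the defining property $\Psi_j(v)(v)=u_j$ of Lemma~\ref{L:Cartan parametrization}$(ii)$ evaluated at $v=\e(\omega)\in\U_j$ for $\omega\in\Omega_j$, together with $\e(\omega)=0$ on $\Omega_0$. The hard part will be $(i)$: the whole construction hinges on the rank being locally constant so that $\Psi$ may be defined block by block over clopen sets; everything else is routine once one has the Borel selections $\Psi_j$, the continuity of evaluation and inversion on the compact group $\operatorname{Iso}$, and the identification $L^\infty(\mu,C)=C(\Omega,C)$ that upgrades bounded Borel maps to almost everywhere continuous ones.
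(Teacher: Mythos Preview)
Your proof is correct and follows essentially the same route as the paper: compactness of the $\U_j$ from Lemma~\ref{L:Cartan parametrization}$(i)$ together with continuity of $\e$ for part $(i)$, the composition of the Borel selection $\Psi_j$ with the continuous evaluation map for part $(ii)$, and the pointwise inverse $\Psi_j(\e(\omega))^{-1}$ for part $(iii)$. Two minor remarks: the paper observes directly that $\Psi$ is a triple homomorphism because it acts pointwise by triple automorphisms, so your appeal to Kaup's theorem is not needed (though it is harmless); and your claim that $(i)$ is the ``genuine crux'' is a slight overstatement --- once one knows the $\U_j$ are compact and disjoint, $(i)$ is a one-line continuity argument, and the substance of the lemma really lies in the measurable selection of automorphisms already packaged in Lemma~\ref{L:Cartan parametrization}.
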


\begin{proof}
Assertion $(i)$ follows from Lemma~\ref{L:Cartan parametrization}$(i)$ and the continuity of $\e$.

$(ii)$ Clearly $\Psi(\x)$ is a function defined on $\Omega$ with values in $C$. Moreover, for each $\omega\in\Omega$ we have $\norm{\Psi(\x)(\omega)}=\norm{\x(\omega)}$ (as $\Psi_j(\e(\omega))$ is a triple automorphism  and hence an isometry of $C$). We deduce that $\Psi(\x)$ is bounded.

We continue by proving Borel measurability of $\Psi(\x)$. It is enough to prove it for its restriction to any $\Omega_j$. $\Psi(x)$ is obviously continuous  on $\Omega_0$.
Fix $j\ge 1$. Since $\e$ is continuous and the mapping $\Psi_j$ is Borel measurable, we deduce that $\omega\mapsto \Psi_j(\e(\omega))$ is a Borel measurable mapping of $\Omega_j$ into $\operatorname{Iso}$ (it is just the composition $\Psi_j\circ\e$). 
Further, the mapping $(T,x)\mapsto T(x)$ is a continuous mapping of $\operatorname{Iso}\times C$ into $C$, hence $(T,\omega)\mapsto T(\x(\omega))$ is a continuous mapping of $\operatorname{Iso}\times \Omega$ into $C$.  It follows that the mappings
$$\omega\mapsto \Psi_j(\e(\omega))(\x(\omega))$$
is Borel measurable on $\Omega_j$.

It follows that $\Psi(\x)$ is indeed a bounded Borel measurable mapping.
By Lemma~\ref{L:LinftyC=CKC} we deduce that $\Psi(\x)$ is $\mu$-almost everywhere equal to a continuous mapping.

$(iii)$ It follows from $(ii)$ that $\Psi$ maps $C(\Omega_j,C)$ into $C(\Omega_j,C)$. By the very definition it then follows that $\Psi$ is a triple homomorphism. But clearly $\Psi$ is onto and its inverse is
 $$\Psi^{-1}(\x) (\omega)=\begin{cases} \Psi_j(\e(\omega))^{-1}(\x(\omega)),& \omega\in \Omega_j, j\in\{1,\dots,n\},\\ \x(\omega), & \omega\in \Omega_0.
    \end{cases}$$
(similarly as in $(ii)$ we see that this mapping also maps $C(\Omega,C)$ into $C(\Omega,C)$). So, $\Psi$ is a triple automorphism.

Moreover, by construction we see that the required equality holds almost everywhere, in particular on a dense set. Since it is an equality of two continuous functions, the equality holds everywhere.
\end{proof}

\begin{lemma}\label{L:Cartan vector diagonalization}
Let $\mu$ be a probability measure satisfying \eqref{eq:CK=Linfty}. Let  $0=e_0,e_1,\dots,e_n\in C$ be fixed tripotents such that no two distinct out of them are $\sim_2$-equivalent. Assume that for each $j\in\{1,\dots,n\}$ we have
$$e_j=e_j^1+\dots+e_j^{k_j},$$
where $e_j^1,\dots,e_j^{k_j}$ are mutually orthogonal minimal tripotents in $C$ (hence $k_j$ is the rank of $e_j$). Let $\uu\in L^\infty(\mu,C)=C(\Omega,C)$ such that
$$\forall\omega\in\Omega,\,\exists j\in\{0,\dots,n\}\colon \uu(\omega)\sim_2 e_j.$$
\begin{enumerate}[$(i)$]
\item For each $j\in\{0,\dots,n\}$ set
$$\Omega_j=\{\omega\in\Omega\setsep \uu(\omega)\sim_2 e_j\}.$$
Then each $\Omega_j$ is a clopen subset of $\Omega$.
    \item For each $j\in\{1,\dots,n\}$ let $\Theta_j$ be a mapping provided by Lemma~\ref{L:cartan diagonalization} for $u=e_j$ and the decomposition $e_j=e_j^1+\dots+e_j^{k_j}$. Given $\x\in C(\Omega,C)$ define
    $$\Theta(\x)(\omega)=\begin{cases} \Theta_j(\uu(\omega))(\x(\omega)),& \omega\in \Omega_j, j\in\{1,\dots,n\},\\ \x(\omega), & \omega\in \Omega_0.
    \end{cases}$$
    Then $\Theta(\x)$ is a bounded Borel measurable mapping on $\Omega$ with values in $C$, hence it is $\mu$-almost everywhere equal to a continuous mapping.
    \item $\Theta$ is a triple automorphism of $C(\Omega,C)$ such that
    \begin{enumerate}[$(a)$]
        \item $\Theta(f\cdot e_j)=f\cdot e_j$ whenever $j\in\{1,\dots,n\}$ and $f\in C(\Omega)$ is a function which is zero outside $\Omega_j$.
        \item $\Theta(\uu)(\omega)$ is a linear combination of $e_j^1,\dots,e_j^{k_j}$
 whenever $\omega\in\Omega_j$, $j\in\{1,\dots,n\}$.    \end{enumerate}
   \end{enumerate}
\end{lemma}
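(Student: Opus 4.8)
The plan is to follow closely the template of Lemma~\ref{L:Cartan vector parametrization}, replacing the partition of $\Omega$ by the rank of $\e(\omega)$ with a partition by the $\sim_2$-class of $\uu(\omega)$, and replacing the parametrizing maps $\Psi_j$ by the diagonalizing maps $\Theta_j$ supplied by Lemma~\ref{L:cartan diagonalization}. All three assertions then reduce to pointwise statements about the single Cartan factor $C$, glued together in a Borel measurable way.

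First I would establish $(i)$. The key observation is that for each fixed $j$ the set $\{v\in C\setsep v\sim_2 e_j\}$ is closed, since (as already noted in the proof of Lemma~\ref{L:cartan diagonalization}) it is cut out by the continuous equations $\J vvv=v$, $\J vv{e_j}=e_j$ and $\J{e_j}{e_j}v=v$. As $\uu$ is continuous, $\Omega_j$ is the preimage of a closed set, hence closed; for $j=0$ this is simply the zero set of $\uu$. Because $\sim_2$ is an equivalence relation and no two distinct $e_i,e_j$ are $\sim_2$-equivalent, the sets $\Omega_0,\dots,\Omega_n$ are pairwise disjoint, and by hypothesis they cover $\Omega$. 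A finite disjoint cover of $\Omega$ by closed sets forces each member to be open as well (its complement being a finite union of closed sets), so each $\Omega_j$ is clopen.

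For $(ii)$ I would argue exactly as in Lemma~\ref{L:Cartan vector parametrization}$(ii)$. For $\omega\in\Omega_j$ with $j\ge1$ we have $\uu(\omega)\in\U_{e_j}$, so $\Theta_j(\uu(\omega))\in\operatorname{Iso}$ is well defined and is an isometry of $C$; hence $\norm{\Theta(\x)(\omega)}=\norm{\x(\omega)}$ and $\Theta(\x)$ is bounded. Measurability is checked on each clopen piece: on $\Omega_0$ the map equals $\x$, while on $\Omega_j$ the map $\omega\mapsto\Theta_j(\uu(\omega))$ is Borel (the composition of the Borel map $\Theta_j$ with the continuous $\uu$) and the evaluation $(T,x)\mapsto T(x)$ on $\operatorname{Iso}\times C$ is continuous, so $\omega\mapsto\Theta_j(\uu(\omega))(\x(\omega))$ is Borel. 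Thus $\Theta(\x)$ is bounded and Borel, and Lemma~\ref{L:LinftyC=CKC} yields that it agrees $\mu$-a.e.\ with a continuous function, i.e.\ defines an element of $C(\Omega,C)=L^\infty(\mu,C)$.

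Finally, for $(iii)$, the inverse of $\Theta$ is given by the same formula with $\Theta_j(\uu(\omega))^{-1}$ in place of $\Theta_j(\uu(\omega))$, which again lands in $C(\Omega,C)$ by the argument of $(ii)$; since $\Theta$ acts pointwise by triple automorphisms of $C$ it is a triple homomorphism, and being bijective it is a triple automorphism. Property $(a)$ follows pointwise: on $\Omega_j$ one uses linearity together with $\Theta_j(v)(e_j)=e_j$ from Lemma~\ref{L:cartan diagonalization}$(i)$, while off $\Omega_j$ the function $f$ vanishes so both sides are $0$. Property $(b)$ is immediate from Lemma~\ref{L:cartan diagonalization}$(ii)$, since for $\omega\in\Omega_j$ one has $\Theta(\uu)(\omega)=\Theta_j(\uu(\omega))(\uu(\omega))\in\span\{e_j^1,\dots,e_j^{k_j}\}$. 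The only point requiring genuine care, beyond transcribing the previous proof, is the closedness of the $\sim_2$-classes underlying $(i)$; everything else is a routine gluing of pointwise data, so I expect no serious obstacle.
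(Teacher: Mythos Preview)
Your proposal is correct and follows essentially the same approach as the paper's proof, which is itself very terse: the paper simply notes that the $\sim_2$-classes are closed (giving $(i)$) and then refers back to the arguments of Lemma~\ref{L:Cartan vector parametrization}$(ii)$ and $(iii)$, adapted using the properties of Lemma~\ref{L:cartan diagonalization}. Your write-up supplies exactly those adaptations in explicit detail, with no deviation in strategy.
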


\begin{proof}
$(i)$ Note that $\{u\in C\setsep u=\J uuu \ \&\ u\sim_2 e_j\}$ is a closed set, hence due to continuity of $\uu$ we deduce that the sets $\Omega_j$ are closed. Since they are disjoint and cover $\Omega$, they are necessarily clopen.

Assertion $(ii)$ may be proved by copying the argument from Lemma~\ref{L:Cartan vector parametrization}$(ii)$.

Assertion $(iii)$ may be proved by a slight modification of the argument from Lemma~\ref{L:Cartan vector parametrization}$(iii)$, we just use properties provided by Lemma~\ref{L:cartan diagonalization}.
\end{proof}

\section{Relations in von Neumann algebras and their right ideals}\label{sec:vN}

In this section we investigate the relations in JBW$^*$-triples of the form $M=pV$ where $V$ is a von Neumann algebra and $p\in V$ is a projection. It covers not only the summand $pV$ from \eqref{eq:representation of JBW* triples} (which corresponds to the case of continuous $V$) but also the summands of the form $A\overline{\otimes}C$ where $C$ is a Cartan factor of type $1$ (this corresponds to the case of type $I$ von Neumann algebra $V$, see \cite[p. 43]{hamhalter2019mwnc} for an explanation).

So, let us fix a von Neumann algebra $V$ and a projection $p\in V$. Set $M=pV$. It covers also the case $p=1$, i.e., if $M$ itself is a von Neumann algebra.

\subsection{General description of the relations}\label{sec:vN general}

In this subsection we collect basic characterizations of the relations in the language of C$^*$-algebras. 
We start by the following easy observation.

\begin{obs}\label{obs:shift to pipf}
Let $u,e\in M$ be two tripotents. Let $R$ be any of the above-defined relations. Then
$$u R e  \hbox{ in $M$ } \Leftrightarrow e^*u R p_i(e) \hbox{ in $V$ } \Leftrightarrow ue^* R p_f(e) \hbox{ in $M$ or in $V$}.$$
\end{obs}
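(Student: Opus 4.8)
The plan is to reduce the relation $uRe$, in which $e$ is an \emph{arbitrary} tripotent, to the same relation tested against a \emph{projection}, by transporting everything through the two multiplication maps $x\mapsto xe^*$ and $x\mapsto e^*x$.

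First I would record a structural fact that lets me move freely between $M$ and $V$. Since $e\in M=pV$ we have $pe=e$, hence $p_f(e)=ee^*\le p$, and therefore
$$V_2(e)=ee^*Ve^*e\subseteq pV=M,\qquad V_2(p_f(e))=p_f(e)Vp_f(e)\subseteq pV=M.$$
Consequently $M_2(e)=V_2(e)$ and $M_2(p_f(e))=V_2(p_f(e))$. Because every relation $R$ in the list implies $\le_2$ (Proposition~\ref{P:order implications}) and $\le_2$ is transitive, any chain $e=v_0,v_1,\dots,v_k$ witnessing a transitive-hull relation with top element $e$ satisfies $v_j\le_2 e$, i.e.\ $v_j\in V_2(e)$, for every $j$; the same holds with $e$ replaced by $p_f(e)$ or $p_i(e)$. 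Together with Remark~\ref{rem:relations in subtriples}, this shows that for \emph{every} relation $R$ (transitive hulls included), $uRe$ holds in $M$ iff it holds in $V$ iff it holds in the subtriple $V_2(e)$; and the relations against $p_f(e)$ (resp.\ $p_i(e)$) may be computed inside $V_2(p_f(e))=p_f(e)Vp_f(e)$ (resp.\ $V_2(p_i(e))=p_i(e)Vp_i(e)$).

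Next I would introduce $\Phi(x)=xe^*$ and $\Psi(x)=e^*x$ on $V_2(e)$ and verify that they are triple isomorphisms
$$\Phi\colon V_2(e)\to p_f(e)Vp_f(e),\qquad \Psi\colon V_2(e)\to p_i(e)Vp_i(e),$$
satisfying $\Phi(e)=ee^*=p_f(e)$, $\Phi(u)=ue^*$, and $\Psi(e)=e^*e=p_i(e)$, $\Psi(u)=e^*u$. The verification is a direct computation: every $x\in V_2(e)=ee^*Ve^*e$ satisfies $ee^*x=x=xe^*e$, whence for $x,y,z\in V_2(e)$
$$\J{xe^*}{ye^*}{ze^*}=\tfrac12\bigl(x(e^*e)y^*z+z(e^*e)y^*x\bigr)e^*=\J xyz\,e^*,$$
and symmetrically $\J{e^*x}{e^*y}{e^*z}=e^*\,\J xyz$; bijectivity is clear since $\Phi^{-1}(w)=we$ and $\Psi^{-1}(w)=ew$. (Equivalently, these are the canonical $*$-isomorphisms of $V_2(e)$, viewed as a unital C$^*$-algebra with unit $e$, onto the C$^*$-algebras $p_f(e)Vp_f(e)$ and $p_i(e)Vp_i(e)$.)

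Finally, since triple isomorphisms preserve each of the relations under study (as noted at the end of Section~\ref{sec:relations}), applying $\Phi$ and $\Psi$ gives
$$uRe\text{ in }V_2(e)\iff ue^*\,R\,p_f(e)\text{ in }p_f(e)Vp_f(e)\iff e^*u\,R\,p_i(e)\text{ in }p_i(e)Vp_i(e),$$
and combining this with the first paragraph yields the asserted three-fold equivalence (the $p_f(e)$-statement valid in $M$ or in $V$, the $p_i(e)$-statement in $V$). I expect the only genuinely delicate point to be the bookkeeping of the transitive hulls in the first paragraph — ensuring the defining chains never leave the Peirce-$2$ subspace, so that the relations transfer intact among $M$, $V$, and the three Peirce-$2$ subtriples; the isomorphism computations themselves are routine.
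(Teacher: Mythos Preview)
Your proof is correct and follows essentially the same approach as the paper: both reduce everything to the Peirce-$2$ subspace $M_2(e)=V_2(e)$ and then transport the relations via the triple isomorphisms $x\mapsto e^*x$ onto $V_2(p_i(e))$ and $x\mapsto xe^*$ onto $M_2(p_f(e))=V_2(p_f(e))$. The paper's proof is terser and simply asserts that ``the validity of $uRe$ depends only on the JB$^*$-triple structure of $M_2(e)$''; you spell out the one point this glosses over, namely that for the transitive-hull relations the witnessing chains automatically remain in $M_2(e)$ because each step implies $\le_2$ and $\le_2$ is transitive.
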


\begin{proof}
Since $uRe$ implies $u\in M_2(e)$, the validity of $uRe$ depends only on the JB$^*$-triple structure of $M_2(e)$. So, it is enough to observe that the mapping $x\mapsto e^*x$ is a triple-isomorphism of $M_2(e)$ onto $V_2(p_i(e))$ and  $x\mapsto xe^*$ is a  triple-isomorphism of $M_2(e)$ onto $M_2(p_f(e))=V_2(p_f(e))$.
\end{proof}

It follows that the key step to understand the relations in this kind of JBW$^*$-triples is to characterize the validity of $uR1$ in a unital C$^*$-algebra. It is the content of the following proposition.

\begin{prop}\label{P:le1 C*}
 Let $A$ be a unital C$^*$-algebra and let $u\in A$ be a tripotent. Then we have the following
\begin{enumerate}[$(a)$]
    \item $u\le 1$ if and only if $u$ is a projection;
    \item $u\le_r 1$ if and only if $u$ or $-u$ is a projection;
    \item $u\le_c 1$ if and only if $u=\alpha p$, where $p$ is a projection and $\alpha$ is a complex unit.
    \item $u\le_h 1$ if and only if $u$ is self-adjoint;
    \item $u\sim_h 1$ if and only if $u$ is a symmetry;
    \item $u\sim_{h,t}1$ if and only if $u$ is a finite product of symmetries;
    \item $u\le_{h,t}1$ if and only if there are symmetries   $v_1,v_2,\dots,v_m\in M$ and a projection $p\in M$ such that $u=pv_1v_2\dots v_m$;
    \item $u\le_{hc}1$ if and only if $u$ is a scalar mutliple of a self-adjoint operator;
    \item $u\sim_{hc}1$ if and only if $u$ is a scalar multiple of a symmetry;
    \item $u\sim_{hc,t}1$ if and only if there are symmetries  $v_1,v_2,\dots,v_m\in A$ and a complex unit $\alpha$ such that $u=\alpha v_1v_2\dots v_m$;
    \item $u\le_{hc,t}1$ if and only if there are symmetries  $v_1,v_2,\dots,v_m\in A$, a projection $p\in M$ and a complex unit $\alpha$ such that $u=\alpha pv_1v_2\dots v_m$;
    \item $u\le_n 1$ if and only if $u$ is normal (i.e., $u^*u=uu^*$);
    \item $u\le_{n,t}1$ if and only if $u=pv$ for a projection $p\in A$ and a unitary element $v\in A$.
    \item $u\sim_2 1$ if and only if $u$ is a unitary element (i.e., $u^*u=uu^*=1$).
\end{enumerate}
\end{prop}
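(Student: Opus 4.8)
The plan is to work in the fixed unital C$^*$-algebra $A$ with $e=1$, so that $E_2(1)=A$, and to obtain each item by specializing the abstract characterizations of Section~\ref{sec:relations}. The computational engine is the triple product of $A$ evaluated at the unit: for a tripotent $u$ one has $\J{1}{u}{1}=u^*$, $\J{1}{1}{u}=u$, and $\J{u}{u}{1}=\frac12(uu^*+u^*u)=\frac12(p_f(u)+p_i(u))$. Since the projections (resp.\ self-adjoint elements) of the JB$^*$-algebra $A$ are exactly the usual ones, items $(a)$, $(b)$, $(c)$ are immediate from Proposition~\ref{P:order char}$(vii)$, Proposition~\ref{P:charact ler}$(a)$ and Proposition~\ref{P:charact lec}$(a)$; item $(d)$ from Proposition~\ref{P:charact leh}; item $(h)$ from Proposition~\ref{P:hc-charact}$(a)$; and item $(l)$ from Proposition~\ref{P:charact len} together with the computation of $\J{u}{u}{1}$ above, recalling (as on page~\pageref{eq first motivation for leqn}) that the arithmetic mean of two projections is a partial isometry precisely when they coincide, i.e.\ when $uu^*=u^*u$.

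For item $(n)$ note that $u\le_2 1$ is automatic, while $1\le_2 u$ means $1\in E_2(u)$, i.e.\ $\J{u}{u}{1}=\frac12(uu^*+u^*u)=1$; as $uu^*$ and $u^*u$ are projections dominated by $1$, their average equals $1$ only if both equal $1$, so $u\sim_2 1$ iff $u$ is unitary. I would then isolate the elementary building block used repeatedly below: if $v$ is unitary, then $u\le v$ iff $uv^*$ is a projection $p$, i.e.\ iff $u=pv$. This follows from Observation~\ref{obs:shift to pipf} (shifting by $v^*$) together with item $(a)$, since $p_f(v)=vv^*=1$.

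The real content lies in the symmetry descriptions $(e)$--$(g)$, $(i)$--$(k)$, $(m)$. First, $(e)$: by Proposition~\ref{P:charact simh} (equivalence $(i)\Leftrightarrow(ii)$) and items $(d)$, $(n)$, the relation $u\sim_h 1$ means $u$ is both self-adjoint and unitary, i.e.\ a symmetry. The key step is the identity governing $\sim_h$ on unitaries: for unitaries $v,w$ one has $v\sim_h w$ iff $v=sw$ for a symmetry $s$. To prove it I would use Proposition~\ref{P:charact simh} to write $w=a+b$, $v=a-b$ with $a,b$ orthogonal tripotents, and compute $vw^*=aa^*-bb^*$; since $a+b$ is unitary, $aa^*+bb^*=1$, whence $vw^*=2aa^*-1$ is a symmetry. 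Conversely, given a symmetry $s=2p-1$, the elements $a=pw$ and $b=(1-p)w$ are orthogonal tripotents (using the standard characterization that partial isometries $a,b$ are triple-orthogonal iff $ab^*=a^*b=0$) with $w=a+b$ and $sw=a-b$. Chaining this identity along a $\sim_h$-path and invoking Proposition~\ref{P:simht-char} yields $(f)$: $u\sim_{h,t}1$ iff $u$ is a finite product of symmetries.

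The remaining items then follow formally. Item $(g)$ comes from Lemma~\ref{L:factor leht} (a tripotent $v$ with $u\le v\sim_{h,t}1$), combined with $(f)$ and the building block $u=pv$; item $(m)$ comes from Lemma~\ref{L:factor len} (equivalence $(iii)\Leftrightarrow(iv)$) together with $(n)$ and the same building block. The scaling relations $(i)$, $(j)$, $(k)$ reduce to $(e)$, $(f)$, $(g)$ respectively via Proposition~\ref{P:hc-charact}$(b)$ and Proposition~\ref{P:hct-charact}, absorbing the complex unit. The main obstacle throughout is precisely the orthogonality bookkeeping in the $\sim_h$-identity — verifying that $a=pw$, $b=(1-p)w$ are orthogonal tripotents summing correctly and that $vw^*=2aa^*-1$; once this identity is in hand, every other item is a specialization of the abstract machinery already established.
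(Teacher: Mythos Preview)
Your proof is correct and follows essentially the same architecture as the paper's: items $(a)$--$(d)$, $(h)$, $(l)$, $(n)$ by specialization of the abstract characterizations, then $(e)$ by combining $(d)$ and $(n)$, the key symmetry identity for $(f)$, and the remaining items by the factorization lemmata and the $hc$-reductions. The only substantive difference is in the proof of the identity ``for unitaries $v,w$, $v\sim_h w$ iff $vw^*$ is a symmetry'': you go through the decomposition $w=a+b$, $v=a-b$ from Proposition~\ref{P:charact simh} and compute $vw^*=aa^*-bb^*=2aa^*-1$, whereas the paper argues directly that $v\sim_h w\Leftrightarrow v=wv^*w\Leftrightarrow w^*v=v^*w\Leftrightarrow (v^*w)^*=v^*w$, which is a one-line computation and avoids the orthogonality bookkeeping you flagged as the main obstacle. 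For $(g)$ and $(m)$ the paper cites \cite[Proposition 4.6]{Finite} for the fact that $u\le v$ (with $v$ unitary) iff $u=pv$ for a projection $p$, while you derive it yourself via Observation~\ref{obs:shift to pipf}; both are fine.
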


\begin{proof}
Assertions $(a)-(d)$ follow easily from definitions.

$(n)$ By the very definition $u\sim_2 1$ if and only if $A_2(u)=A_2(1)=A$. This exactly means that $u$ is a unitary tripotent, which is known to be equivalent to $u^*u=uu^*=1$. 

$(e)$ $u\sim_h 1$ means that $u\sim_2 1$ and $u\le_h 1$ (see Proposition~\ref{P:charact simh}). By $(d)$ and $(n)$ this takes place if and only if $u$ is a self-adjoint unitary element. But this is exactly the definition of a symmetry ($u=u^*$, $u^2=1$).

$(f)$ This follows by induction from the following observation. If 
$v,w\in A$ are two unitary elements, then $v\sim_h w$ if and only if $v^*w$ is a symmetry. Indeed, since automatically $v\sim_2 w$, we deduce
$$v\sim_h w\Leftrightarrow v\le_h w\Leftrightarrow v=wv^*w\Leftrightarrow w^*v=v^*w\Leftrightarrow (v^*w)^*=v^*w.$$ 

$(g)$ This follows from Lemma~\ref{L:factor leht} using $(f)$ and \cite[Proposition 4.6]{Finite}. (The quoted proposition is formulated for von Neumann algebras, but the same proof works also for C$^*$-algebras.)

Assertions $(h)-(k)$ follow easily from Proposition~\ref{P:hc-charact} and Proposition~\ref{P:hct-charact} using $(d)-(g)$.

$(l)$ Assume that $u\le_n 1$. By the definition it means that $\J uu1=\frac12(uu^*+u^*u)$ is a tripotent. Since this element is positive, it is a projection. Hence, the computations from Lemma~\ref{L:len implies sim2} show that $uu^*=u^*u$.

Conversely, if $uu^*=u^*u$, then $\J uu1=uu^*=p_i(u)$, so it is a projection. Hence $u\le_n 1$.

$(m)$ This follows from Lemma~\ref{L:factor len} and \cite[Proposition 4.6]{Finite}.
\end{proof}

Combining Proposition~\ref{P:le1 C*} with Observation~\ref{obs:shift to pipf} we get the following proposition.

\begin{prop}\label{P:all orders vN}
 Let $e,u\in M= p V$ be two tripotents.
 Let $r=p_f(e)$ and $q=p_i(e)$.
 Then we have the following:
\begin{enumerate}[$(a)$]
    \item $u\le_h e$ $\Leftrightarrow$ $u=ev$ for a self-adjoint $v\in V_2(q)$ $\Leftrightarrow$ $u=ve$ for a self-adjoint $v\in M_2(r)$;
    \item $u\sim_h e$ $\Leftrightarrow$ $u=ev$ for a symmetry $v\in V_2(q)$ $\Leftrightarrow$ $u=ve$ for a symmetry $v\in M_2(r)$;
    \item $u\sim_{h,t} e$ $\Leftrightarrow$ $u=ev$ where $v$ is a finite product of symmetries in $ V_2(q)$ $\Leftrightarrow$ $u=ve$ where $v$ is a finite product of symmetries in $M_2(r)$;
    \item $u\le_{h,t}e$ if and only if there are symmetries   $v_1,v_2,\dots,v_m\in V_2(q)$ and a projection $q'\le q$ such that $u=eq'v_1v_2\dots v_m$;
    \item $u\le_{hc}e$ $\Leftrightarrow$ $u=\alpha ev$ for a self-adjoint $v\in V_2(q)$ and a complex unit $\alpha$ $\Leftrightarrow$ $u=\alpha ve$ for a self-adjoint $v\in M_2(r)$ and a complex unit $\alpha$;
    \item $u\sim_{hc}e$ $\Leftrightarrow$ $u=\alpha ev$ for a symmetry $v\in V_2(q)$ and a complex unit $\alpha$ $\Leftrightarrow$ $u=\alpha ve$ for a symmetry $v\in M_2(r)$ and a complex unit $\alpha$;
    \item $u\sim_{hc,t}e$ if and only if there are symmetries  $v_1,v_2,\dots,v_m\in V_2(q)$ and a complex unit $\alpha$ such that $u=\alpha e v_1v_2\dots v_m$;
    \item $u\le_{hc,t}e$ if and only if there are symmetries  $v_1,v_2,\dots,v_m\in V_2(q)$, a projection $q'\le q$ and a complex unit $\alpha$ such that $u=\alpha eq'v_1v_2\dots v_m$;
    \item $u\le_n e$ $\Leftrightarrow$ $u=ev$ for a normal operator $v\in V_2(q)$ $\Leftrightarrow$ $u=ve$ for a normal operator $v\in M_2(r)$;
    \item $u\le_{n,t}e$ if and only if $u=eq'v$ for a projection $q'\le q$ and a unitary operator $v\in V_2(q)$.
\end{enumerate}
\end{prop}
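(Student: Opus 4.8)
The plan is to deduce the whole proposition from Observation~\ref{obs:shift to pipf} together with Proposition~\ref{P:le1 C*}, which already settles the corresponding relations against the unit $1$ in an arbitrary unital C$^*$-algebra. First I would note that every relation $R$ in the list forces $u\in M_2(e)$ (each $R$ implies $\le_2$), so the validity of $uRe$ depends only on the triple structure of $M_2(e)$; this is precisely the hypothesis under which Observation~\ref{obs:shift to pipf} operates.

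Set $q=p_i(e)=e^*e$ and $r=p_f(e)=ee^*$. By Observation~\ref{obs:shift to pipf}, $uRe$ in $M$ is equivalent to $e^*u\,R\,q$ computed in $V$ through the triple isomorphism $x\mapsto e^*x$ of $M_2(e)$ onto $V_2(q)=qVq$, and equally to $ue^*\,R\,r$ through $x\mapsto xe^*$ onto $V_2(r)=rVr=M_2(r)$ (the last equality because $e\in pV$ gives $r\le p$, so this corner lies inside $M$). Since $q$ and $r$ are projections, $qVq$ and $rVr$ are \emph{unital} C$^*$-algebras with units $q$ and $r$ respectively, and Proposition~\ref{P:le1 C*} applies to each of them verbatim, with the symbol $1$ read as $q$, resp.\ $r$. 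The bridge back to $u$ is the elementary identity $e(e^*u)=u$ for $u\in M_2(e)$ (which follows from $u=ee^*ue^*e$ together with $r^2=r$ and $q^2=q$), showing that the inverse of $x\mapsto e^*x$ is $y\mapsto ey$; dually $(ue^*)e=u$, so $x\mapsto xe^*$ is inverted by $y\mapsto ye$. Thus a description of $v:=e^*u$ inside $qVq$ converts directly into the advertised description $u=ev$, and symmetrically $u=ve$ with $v:=ue^*$.

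With this dictionary each item is a one-line substitution of the matching clause of Proposition~\ref{P:le1 C*}: $(a)$ from $(d)$, $(b)$ from $(e)$, $(c)$ from $(f)$, $(d)$ from $(g)$, $(e)$ from $(h)$, $(f)$ from $(i)$, $(g)$ from $(j)$, $(h)$ from $(k)$, $(i)$ from $(l)$, and $(j)$ from $(m)$. For example, in $(i)$ the clause $(l)$ gives that $e^*u$ is normal in $V_2(q)$, whence $u=ev$ with $v=e^*u$ normal; and in $(d)$ the clause $(g)$ gives $e^*u=q'v_1\cdots v_m$ with $q'\le q$ a projection and $v_1,\dots,v_m$ symmetries of $qVq$, so that $u=e\,e^*u=eq'v_1\cdots v_m$, exactly as stated. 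The transitive-hull items $(c),(d),(g),(h),(j)$ are the only ones requiring attention, since there the output of Proposition~\ref{P:le1 C*} is a product of symmetries (times a subprojection or a scalar) that must be kept inside the fixed corner $qVq$.

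The main point to verify --- and the only genuine, if mild, obstacle --- is that the transitive hulls $\le_{h,t},\le_{hc,t},\le_{n,t}$ are transported faithfully by the triple isomorphisms $x\mapsto e^*x$ and $x\mapsto xe^*$. This is automatic: as recorded at the close of Section~\ref{sec:relations}, every relation studied there is preserved by triple homomorphisms, hence every one is preserved in both directions by a triple isomorphism, and so is its transitive hull. Granting this, the proof reduces to the routine bookkeeping of applying the correct part of Proposition~\ref{P:le1 C*} in the correct corner.
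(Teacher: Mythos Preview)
Your proposal is correct and follows essentially the same approach as the paper: both reduce everything to Proposition~\ref{P:le1 C*} via the triple isomorphisms $x\mapsto e^*x$ and $x\mapsto xe^*$ of Observation~\ref{obs:shift to pipf}, then read off each clause in the appropriate corner algebra. Your write-up is more explicit than the paper's (which only spells out the first equivalence of $(a)$ and leaves the rest to the reader), and your remark that the transitive-hull relations are preserved by triple isomorphisms is a reasonable point to make explicit, though in the paper it is taken as implicit in the statement of Observation~\ref{obs:shift to pipf}.
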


\begin{proof}
This follows from Proposition~\ref{P:le1 C*} using Observation~\ref{obs:shift to pipf}. Let us give the proof for the first equivalence in $(a)$. The remaining cases follow in the same way.

Observation~\ref{obs:shift to pipf} shows that $u\le_h e$ if and only if $e^*u\le_h q=p_i(e)$. Since $q$ is the unit of $V_2(q)$, Proposition~\ref{P:le1 C*}$(d)$ shows that $e^*u\le_h q$ if and only if it is self-adjoint (in $V_2(q)$). Since
$u=pu=ee^*u$, the equivalence follows.
\end{proof}

We continue by pointing out the role of finiteness.

\begin{prop}\label{P:le0=lent finite vN}
If $p$ is a finite projection, then the triple $M=pV$ is a finite JBW$^*$-triple 
and hence  the relations  $\le_2$ and $\le_{n,t}$ coincide in $M$.
\end{prop}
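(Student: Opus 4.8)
The plan is to prove that $M=pV$ is a finite JBW$^*$-triple; the coincidence of $\le_2$ and $\le_{n,t}$ is then immediate from Proposition~\ref{P:le2=lent in finite}. By the definition of finiteness I must show that an arbitrary tripotent $e\in M$ is finite, i.e.\ that every tripotent which is complete in $M_2(e)$ is already unitary in $M_2(e)$.

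First I would identify the Peirce-$2$ subspace. A tripotent $e\in M=pV$ is a partial isometry in $V$ whose final projection $r:=p_f(e)=ee^*$ satisfies $r\le p$ and whose initial projection is $q:=p_i(e)=e^*e$. Since $r\le p$ we have $rVq\subseteq pV=M$, and a short computation with $L(e,e)x=\tfrac12(ee^*x+xe^*e)$ gives $M_2(e)=rVq$. As in Observation~\ref{obs:shift to pipf}, the map $x\mapsto xe^*$ is a $*$-isomorphism of the unital JB$^*$-algebra $M_2(e)$ (whose unit is $e$) onto the reduced von Neumann algebra $rVr$, carrying complete tripotents to complete tripotents and unitaries to unitaries. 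Because $p$ is finite and $r\le p$, the subprojection $r$ is finite as well, so $rVr$ is a finite von Neumann algebra.

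It then remains to verify the following lemma: in a finite von Neumann algebra $B$ every complete tripotent is unitary. Transporting this back through the isomorphism shows that every complete tripotent of $M_2(e)$ is unitary, so $e$ is finite, and, $e$ being arbitrary, $M$ is finite. To prove the lemma, let $w$ be a complete tripotent in $B$, so that $B_0(w)=(1_B-ww^*)\,B\,(1_B-w^*w)=\{0\}$. Writing $a=1_B-w^*w$ and $b=1_B-ww^*$, the partial isometry $w$ itself witnesses $w^*w\sim ww^*$, and the characteristic complementation property of finite von Neumann algebras yields $a\sim b$. Any partial isometry $v$ implementing this equivalence then satisfies $v=bva\in bBa=\{0\}$, forcing $a=b=0$, i.e.\ $w^*w=ww^*=1_B$.

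The main obstacle is precisely this last lemma: the passage from completeness (a condition on the Peirce-$0$ space, i.e.\ maximality of the partial isometry) to unitarity rests on the fact that $s\sim t$ implies $1-s\sim 1-t$ in a finite von Neumann algebra, which is the algebraic heart of finiteness. Everything else --- the identification $M_2(e)=rVq\cong rVr$ and the inheritance of finiteness by the subprojection $r\le p$ --- is routine von Neumann algebra bookkeeping.
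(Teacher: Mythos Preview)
Your proof is correct. The paper's own proof is extremely brief: it cites \cite[Proposition 4.19]{Finite} for the finiteness of $M=pV$ and then invokes Proposition~\ref{P:le2=lent in finite}, exactly as you do for the second step. Your contribution is to supply a self-contained argument for the finiteness rather than citing the external reference --- identifying $M_2(e)\cong rVr$ via $x\mapsto xe^*$ (which is precisely the content of Observation~\ref{obs:shift to pipf}), noting that $r\le p$ inherits finiteness, and then using the complementation property of Murray--von Neumann equivalence in finite von Neumann algebras to show that a complete partial isometry must be unitary. This is presumably close to what the cited proposition in \cite{Finite} does, so the approaches are essentially the same; yours is simply more explicit.
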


\begin{proof}
The finiteness of $M$ follows from \cite[Proposition 4.19]{Finite}. The coincidence of $\le_2$ and $\le_{n,t}$ follows from Proposition~\ref{P:le2=lent in finite}.
\end{proof}

\subsection{Products of symmetries and the length of the chains of $\sim_h$}

By Proposition~\ref{P:le1 C*}$(f)$ the relation $\sim_{h,t}$ is closely related to products of symmetries. In this subsection we investigate this feature in more detail, it turns out to be related to the types of von Neumann algebras. There are several known results  on expressing unitary elements using products of symmetries which we collect in the following proposition.

\begin{prop}\label{P:products of symmetries} Let $V$ be a von Neumann algebra.
\begin{enumerate}[$(i)$]
    \item  \cite[Corollary]{Fillmore} If $V$ is properly infinite any unitary element in $V$ is the product of at most 4 symmetries in $V$; 
    \item \cite[Proof of Th{\'e}or\`{e}me 1$(i)\Rightarrow (ii)$ Deuxi\`{e}me cas]{Broise67} If $V$  is of type II$_1$ any unitary in $V$ is the product of at most 16 symmetries in $V$; 
    \item \cite[Theorem 3]{radjavi} Assume $V=M_n$, the algebra of $n\times n$ matrices. Then any unitary matrix in $V$ with determinant $\pm1$ is the product of at most four symmetries. Hence, any unitary matrix is a scalar multiple of a product of at most four symmetries;
    \item Assume $V$ is of type $I$. Then any unitary element in $V$ is the product of at most four symmetries and a central unitary operator.
\end{enumerate}
\end{prop}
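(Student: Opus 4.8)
Parts $(i)$, $(ii)$ and $(iii)$ are quoted from the literature, so only assertion $(iv)$ requires an argument. The plan is to reduce to homogeneous summands and then apply $(iii)$ fiberwise together with a measurable selection. First I would use the structure theory of type $I$ von Neumann algebras (see, e.g., \cite{Tak}) to write $V=V_{\mathrm{fin}}\oplus V_{\mathrm{inf}}$ as the $\ell_\infty$-sum of its finite and its properly infinite part, and to decompose $V_{\mathrm{fin}}=\bigoplus_{n<\infty} M_n(Z_n)$ into homogeneous type $I_n$ summands, where each $Z_n$ is an abelian von Neumann algebra and $M_n(Z_n)\cong M_n\overline{\otimes}Z_n$. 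The key point is that both ``being a product of at most four symmetries'' and ``being a central unitary'' are preserved under $\ell_\infty$-sums: a tuple of symmetries is a symmetry, a tuple of central unitaries is a central unitary, and the bound four does not grow with the number of summands. Hence it suffices to treat each homogeneous summand separately. For $V_{\mathrm{inf}}$ (which is properly infinite) assertion $(i)$ already gives the decomposition with trivial central part, padding with copies of the identity symmetry $1$ (which is itself a symmetry) to reach exactly four factors.

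So the real work is confined to a single finite homogeneous summand $M_n(Z_n)$. Representing $Z_n$ as a direct sum of spaces $L^\infty(\mu)$ with $\mu$ satisfying \eqref{eq:CK=Linfty}, and applying Lemma~\ref{L:LinftyC=CKC}$(iii)$ to the reflexive Cartan factor $C=M_n$, I may identify $M_n(Z_n)$ with $C(\Omega,M_n)=L^\infty(\mu,M_n)$. A unitary $u$ in this algebra is then a continuous field $\omega\mapsto u(\omega)$ of unitaries in the compact group $U(n)$. By assertion $(iii)$ each $u(\omega)$ equals a complex unit times a product of at most four symmetries of $M_n$; padding with the symmetry $1$ we may take exactly four factors. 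Consequently the map
$$\pi\colon \TT\times S^4\to U(n),\qquad \pi(\lambda,s_1,s_2,s_3,s_4)=\lambda\, s_1 s_2 s_3 s_4,$$
where $S=\{s\in M_n\setsep s^*=s,\ s^2=1\}$ is the compact set of symmetries, is a continuous surjection between compact metric spaces.

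Then I would invoke the measurable-selection consequence of the Kuratowski--Ryll-Nardzewski theorem already used in Section~\ref{sec:JBW*} (see \cite[Theorem on p. 403]{Ku-RN}) to obtain a Borel right inverse $\sigma$ of $\pi$. Composing with the continuous map $u$ yields Borel fields $\lambda(\cdot),s_1(\cdot),\dots,s_4(\cdot)$; since $M_n$ is finite-dimensional, Lemma~\ref{L:LinftyC=CKC} guarantees that each of them agrees $\mu$-almost everywhere with a continuous field, hence defines an element of $M_n(Z_n)$. Setting $z=\lambda(\cdot)\,1_n$, which is a unitary in the center $Z_n\cdot 1_n$ and thus a central unitary, and keeping the symmetries $s_i$, we obtain $u=z\,s_1 s_2 s_3 s_4$ fiberwise, hence in $M_n(Z_n)$. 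Reassembling the summands completes the proof.

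The main obstacle is precisely this selection step: assertion $(iii)$ is purely pointwise, and the nontrivial task is to convert the family of fiberwise factorizations into genuine symmetries and a central unitary of the algebra. Working over the compact parameter spaces $\TT\times S^4$ and $U(n)$, and exploiting the finite-dimensionality of $M_n$ through Lemma~\ref{L:LinftyC=CKC}, is exactly what makes the Kuratowski--Ryll-Nardzewski machinery applicable, in the same spirit as Lemmas~\ref{L:Cartan parametrization} and \ref{L:cartan diagonalization}.
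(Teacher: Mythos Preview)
Your treatment of $(iv)$ is correct but takes a different route from the paper. The paper first extracts an $n$-th root of $\det\f(\cdot)$ to produce the central unitary $\g$, then uses Pearcy's theorem \cite{pearcy1963} to simultaneously diagonalize the remaining unitary $\h$ (with $\det\h(\omega)=1$) by a single unitary in $C(\Omega,M_n)$, and finally runs Radjavi's explicit construction from \cite{radjavi} on the diagonal entries, now viewed as continuous $\TT$-valued functions. Your approach is more abstract: you package the entire factorization as a continuous surjection $\pi\colon\TT\times S^4\to U(n)$ and invoke Kuratowski--Ryll-Nardzewski once to select all five pieces at the same time. Both work; yours is cleaner and avoids the diagonalization step, while the paper's version is more constructive and yields explicit symmetries. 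One small point you gloss over: after replacing the Borel fields $\lambda,s_1,\dots,s_4$ by continuous representatives agreeing $\mu$-a.e., you should note that the continuous versions still take values in the closed sets $\TT$ and $S$ (since they do so on a dense set), and that the identity $u=z\,s_1s_2s_3s_4$, holding a.e.\ between continuous functions, then holds everywhere.

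There is, however, a genuine gap in your handling of $(ii)$. The cited result of Broise is stated only for \emph{factors} of type~II$_1$; the paper explicitly supplies a proof for general type~II$_1$ von Neumann algebras. The extension is not automatic: one needs the center-valued trace (via \cite[Theorem~8.4.3]{KR2} and \cite[Corollary~3.14]{kadison-diagonal}) to obtain the halving and equivalence properties that Broise's argument uses, and then to carry out an inductive telescoping construction producing the $16$ symmetries. Simply citing Broise does not cover the non-factor case, so your proposal is incomplete on this point.
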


\begin{proof}
Assertions $(i)$ and $(iii)$ are proved in the quoted papers.

Assertion $(ii)$ is proved in \cite{Broise67} in case $V$ is a factor. We present a proof of the general case which uses the ideas of \cite{Broise67}, simultaneously it is similar to the cases $(i)$ and $(iii)$.

Let $V$ be a von Neumann algebra of type $II_1$. Denote by $T$ the standard center-valued trace (see \cite[Theorem 8.2.8]{KR2}). Note that by \cite[Theorem 8.4.3]{KR2} we have
\begin{equation}\label{eq:equivalence of projections}
\forall p,q\in V\mbox{ projections}\colon p\sim q \Leftrightarrow T(p)=T(q).    
\end{equation}
Here $\sim$ is the Murray-von Neumann equivalence, i.e., $p\sim q$ in $V$ if there is a partial isometry $u\in V$ with $p_i(u)=p$ and $p_f(u)=q$.
 
By \cite[Corollary 3.14]{kadison-diagonal} we get the following (by $Z(V)$ we denote the center of $V$):
\begin{equation}\label{eq:smaller projection}
  \begin{gathered}
   \mbox{For each maximal abelian von Neumann subalgebra } W\subset V,
   \\ \mbox{each projection } p\in W \hbox{ and each }  h\in Z(V) \hbox{ with } 0\le h\le T(p),\\ \hbox{ there exists a projection } q\in W \mbox{ satisfying  }q\le p\mbox{ and }  T(q)=h. 
   \end{gathered}
\end{equation} 

We now easily deduce that
\begin{equation}\label{eq:halving}
\begin{gathered}\hbox{ For each normal element } x\in V  \hbox{ and each projection } p\in V \hbox{ with } px=xp \\
\hbox{there exists a projection } q\in V \hbox{ satisfying } q\le p, qx=xq \hbox{ and } q\sim p-q. 
       \end{gathered}
\end{equation}

Indeed, let $W$ be a maximal abelian von Neumann subalgebra of $V$ containing $p$ and $x$. By \eqref{eq:smaller projection} we get a projection $q\in W$ such that $q\le p$ and $T(q)=\frac12T(p)$. Thus by \eqref{eq:equivalence of projections}
we deduce $q\sim p-q$.

Now we are ready to make the construction itself.  Let $u\in V$ be any unitary element.
By \eqref{eq:halving} there is projection $p_0\in V$ commuting with $u$ such that $p_0\sim 1-p_0$. This enables us to express $u$ as a diagonal matrix
$$u=\begin{pmatrix}
u_1 &  0\\ 0 & u_2
\end{pmatrix},$$
where $u_1=p_0u=p_0up_0$ and $u_2=(1-p_0)u=(1-p_0)u(1-p_0)$. Hence we may express $u$ as the product of two unitary operators of the form
$$u=\begin{pmatrix}
u_1 &  0\\ 0 & 1
\end{pmatrix}\cdot \begin{pmatrix}
1 &  0\\ 0 & u_2
\end{pmatrix}=(up_0+1-p_0)(p_0+(1-p_0)u).$$
To prove that $u$ is the product of 16 symmetries, it is enough to prove that each of the two factors is the product of 8 symmetries. By the symmetry of these two cases it is enough to prove the statement for the element 
$$up_0+1-p_0=\begin{pmatrix}
u_1 &  0\\ 0 & 1
\end{pmatrix}.$$
By  \eqref{eq:smaller projection} we get a sequence $(p_n)$ of mutually orthogonal projections in $V$ such that $1-p_0=\sum_{n=1}^\infty p_n$ and $T(p_n)=\frac{1}{2^{n+1}}$ for $n\in\en$. 

The rest of the proof consists in a certain inductive construction. We first present the key induction step, then we use it to construct building blocks of four unitary elements each of them is a product of two symmetries.

\smallskip

\noindent{\tt The key induction step:}

\smallskip

Assume that $n\in\en\cup\{0\}$ is fixed and $w\in V$ is a partial isometry such that $w^*w=ww^*=p_n$ (i.e., $p_i(w)=p_f(w)=p_n$).
We will construct three partial isometries $a(w),b(w),c(w)$ with some suitable properties.

Firstly, by \eqref{eq:halving} there are two mutually orthogonal projections $p_{n,1},p_{n,2}\in V$ commuting with $w$ such that $p_{n,1}+p_{n,2}=p_n$ and $p_{n,1}\sim p_{n,2}$. Moreover, by \eqref{eq:equivalence of projections} we have $p_{n,1}\sim p_{n+1}$.

Fix partial isometries $z_1,z_2\in V$ such that $p_i(z_1)=p_{n,1}$, $p_f(z_1)=p_i(z_2)=p_{n,2}$, $p_f(z_2)=p_{n+1}$.

We will work in the von Neumann algebra $(p_{n,1}+p_{n,2}+p_{n+1})V(p_{n,1}+p_{n,2}+p_{n+1})$ -- we may represent its elements by $3\times 3$ matrices. In particular, we have
$$w=\begin{pmatrix}
w_1 & 0 & 0\\ 0 & w_2 & 0 \\ 0 & 0 & 0
\end{pmatrix},$$
where $w_1=p_{n,1}w$ and $w_2=p_{n,2}w$.

The idea is to imitate the method use for complex matrices in \cite{radjavi}. Informally speaking, we put
$$a(w)=\begin{pmatrix}
w_1 & 0 & 0\\ 0 & w_1^* & 0 \\ 0 & 0 & 0
\end{pmatrix},
b(w)=\begin{pmatrix}
1 & 0 & 0\\ 0 & w_1w_2 & 0 \\ 0 & 0 & w_2^*w_1^*
\end{pmatrix},
c(w)=\begin{pmatrix}
0 & 0 & 0\\ 0 & 0 & 0 \\ 0 & 0 & w_1w_2
\end{pmatrix},$$
observe that $a(w)b(w)=w$ and express $a(w)$ and $b(w)$ as products of self-adjoint partial isometries by
$$a(w)=\begin{pmatrix}
0& w_1 & 0\\  w_1^* &0 & 0 \\ 0 & 0 & 0
\end{pmatrix}\begin{pmatrix}
0 & 1 & 0\\ 1 & 0 & 0 \\ 0 & 0 & 0
\end{pmatrix}\mbox{ and }b(w)=\begin{pmatrix}
1 & 0 & 0\\ 0 & 0& w_1w_2  \\ 0 &  w_2^*w_1^*&0
\end{pmatrix}\begin{pmatrix}
1 & 0 & 0\\ 0 & 0 & 1 \\ 0 & 1 & 0
\end{pmatrix}.$$
These formulas, even though intuitive, are not formally correct as they tacitly use the transition partial isometries $z_1,z_2$. 
In a formally correct way 
we set
$$a(w)=\begin{pmatrix}
w_1 & 0 & 0\\ 0 & z_1 w^* z_1^* & 0 \\ 0 & 0 & 0
\end{pmatrix},
b(w)=\begin{pmatrix}
p_{n,1} & 0 & 0\\ 0 & z_1 w z_1^* w & 0 \\ 0 & 0 & z_2 w^* z_1 w^* z_1^* z_2^*
\end{pmatrix},$$ and 
$$c(w)=\begin{pmatrix}
0 & 0 & 0\\ 0 & 0 & 0 \\ 0 & 0 & z_2 z_1 w z_1^* w z_2^*
\end{pmatrix}.$$
These formulae mean
$$\begin{aligned}
a(w)&=p_{n,1}w + z_1w^*z_1^*,\\
b(w)&=p_{n,1} +  z_1wz_1^*wp_{n,2} + z_2w^*z_1w^*p_{n,2}z_1^*z_2^*=p_{n,1} +  z_1wz_1^*w + z_2w^*z_1w^*z_1^*z_2^*,\\
c(w)&=z_2z_1p_{n,2}wz_1^*wz_2^* =z_2z_1wz_1^*wz_2^*.
\end{aligned}$$
Then we have
$$\begin{gathered}
a(w)^*a(w)=a(w)a(w)^*=p_n, b(w)^*b(w)=b(w)b(w)^*=p_n+p_{n+1},
a(w)b(w)=w, \\
c(w)^*c(w)=c(w)c(w)^*=b(w)c(w)=c(w)b(w)=p_{n+1}. 
\end{gathered}$$

Moreover, we have

$$\begin{aligned} b(w)&=\begin{pmatrix}
p_{n,1} & 0 & 0\\ 0 & 0& z_1 w z_1^* w z_2^*  \\ 0 &  z_2 w^* z_1 w^* z_1^* &0
\end{pmatrix}\begin{pmatrix}
p_{n,1} & 0 & 0\\ 0 & 0 & z_2^* \\ 0 & z_2 & 0
\end{pmatrix} \\
&=b(w)(p_{n,1}+z_2+z_2^*)\cdot (p_{n,1}+z_2+z_2^*),\end{aligned}
$$
and so $a(w)$ and $b(w)$ are expressed as products of two self-adjoint partial isometries.

\smallskip

\noindent{\tt Construction of the building blocks:}

Set
$$\begin{gathered}
a_0=a(u_1), b_0=b(u_1), c_0=c(u_1),\\
a_n=a(c_{n-1}), b_n= b(c_{n-1}), c_n=c(c_{n-1}) \mbox{ for }n\in\en.
\end{gathered}$$

By an easy induction we get
\begin{equation}
             a_n^*a_n=a_n a_n^*=p_n, b_n^*b_n=b_n b_n^*=p_n+p_{n+1},  c_n^*c_n=c_n c_n^*=p_{n+1}
    \end{equation}
for $n\in\en\cup\{0\}$,
\begin{equation}
    a_0b_0=u_1 \mbox{ and }a_nb_n=c_{n-1}\mbox{ for }n\in\en,
\end{equation}
and
\begin{equation}
    b_nc_n=c_nb_n=p_{n+1}\mbox{ for }n\in\en\cup\{0\}.
\end{equation}

\smallskip

\noindent{\tt Adding the blocks and the final argument:}

\smallskip

We  set

$$\begin{gathered}
v_1=\sum_{n=0}^\infty a_{2n}+\sum_{n=0}^\infty p_{2n+1},\quad
 v_2=\sum_{n=0}^\infty b_{2n},
 \\v_3=\sum_{n=0}^\infty p_{2n}+\sum_{n=0}^\infty a_{2n+1},\quad
 v_4=p_0+\sum_{n=0}^\infty b_{2n+1}.
\end{gathered}$$

Since we add mutually orthogonal normal partial isometries, the sums are well defined. 

Moreover, $v_1,v_2,v_3,v_4$ are clearly unitary elements.

We claim that $p_0u+1-p_0=v_1v_2v_3v_4$. Indeed,
$$\begin{aligned}
p_0v_1v_2v_3v_4&= p_0 a_0 b_0 v_3v_4= p_0 u_1 v_3v_4 = p_0 u_1 (p_0 + a_1) v_4 \\ &= p_0 u_1 p_0 v_4 + p_0 u_1 a_1 v_4 = p_0 u_1  + p_0 u_1 p_0 a_1 v_4 =p_0 u_1=u_1
\end{aligned}$$
by the first step of the construction. For $n\ge 0$ we have
$$p_{2n+1} v_1 v_2 v_3 v_4 =b_{2n} a_{2n+1} b_{2n+1} =b_{2n} c_{2n} = p_{2n+1}$$
and 
$$p_{2n+2} v_1 v_2 v_3 v_4 =a_{2n+2} b_{2n+2} b_{2n+1}  =c_{2n+1} b_{2n+1} = p_{2n+2}.$$

Moreover, since each of the elements $a_n$ and $b_n$ is the product of two self-adjoint partial isometries --which can be summed thanks to the orthogonality of the corresponding summands --, we deduce that each of the four elements $v_1,\dots,v_4$ is the product of two symmetries. Hence, $p_0u+1-p_0$ is the product of 8 symmetries and the proof is completed.

Assertion $(iv)$ is a more precise formulation of the final remark in \cite{Fillmore}. Let us explain it. First, $V$ is either finite or it can be expressed as a direct sum of a finite von Neumann algebra and a properly infinite one (see \cite[Proposition 6.3.7]{KR2}). For the properly infinite summand we may use assertion $(i)$. So, assume that $V$ is finite. Then $V$ is a direct sum of von Neumann algebras of the form $L^\infty(\mu,M_n)=C(\Omega,M_n)$, where $\mu$ is a probability measure satisfying \eqref{eq:CK=Linfty} and $n\in\en$ (use \cite[Theorem V.1.27]{Tak} and Lemma~\ref{L:LinftyC=CKC}). The center of $C(\Omega,M_n)$ equals 
$$\{\f\in C(\Omega,M_n)\setsep \f(t) \mbox{ is a scalar multiple of the unit matrix for each }t\in\Omega\}.$$
Fix a unitary $\f\in C(\Omega, M_n)$. Then $\abs{\det\f(t)}=1$ for $t\in\Omega$.
There is a Borel measurable function $g_0:\Omega\to\TT$ such that $g_0(t)^n=\det\f(t)$ for $t\in\Omega$. By \eqref{eq:CK=Linfty} there is $g\in C(\Omega)$ which equals to $g_0$ $\mu$-a.e. Then $g(t)^n=\det\f(t)$ for $\mu$-a.a. $t\in\Omega$. Since both sides are continuous, the equality holds for each $t\in\Omega$.

Set $\h(t)=\overline{g(t)}\f(t)$. Then $\det\h(t)=1$ for $t\in\Omega$. Moreover, $\f$ can be expressed as $\f=\g\h$, where $\g$ belongs to the center ($\g(t)=g(t)\cdot\boldsymbol1$, where $\boldsymbol1$ is the unit matrix).

Since the C$^*$-subalgebra  of $C(\Omega,M_n)$ generated by $\h$ is abelian, \cite[Theorem 1]{pearcy1963} shows that there is a unitary element $\uu\in C(\Omega,M_n)$ such that $\uu\h\uu^*$ is diagonal. Repeating the proof of \cite[Theorem 3]{radjavi}
(with functions in place of scalars on the diagonal), we deduce that $\uu\h\uu^*$ is a product of at most four symmetries. Hence so is $\h$ and the proof is completed.
\end{proof}

It should be commented that a von Neumann algebra $V$ satisfies the so-called \emph{unitary factorization property} (i.e. each unitary in $V$ is a finite product of symmetries in $V$) if and only if the type I finite part of $V$ vanishes (cf. \cite[Proposition]{Bickchentaev2004}).

The previous proposition has some consequences for the order type relations in type $1$ Cartan factors.

\begin{prop}\label{P:coincidence in B(H,K)}
Let $H,K$ be Hilbert spaces. Then the following statements hold:
\begin{enumerate}[$(a)$]
    \item The relations $\le_{n,t}$ and $\le_{hc,t}$ coincide in $B(H,K)$. In particular, the relations $\sim_2$ and $\sim_{hc,t}$ coincide.
    \item If $H$ (or $K$) is finite-dimensional, then the relations
    $\le_2$, $\le_{n,t}$ and $\le_{hc,t}$ coincide in $B(H,K)$.
    \item If $u,v\in B(H,K)$ are two tripotents with $u\sim_{h,t}v$, 
    then there are tripotents $v_1,v_2,v_3\in B(H,K)$ such that
    $$u\sim_h v_3\sim_h v_2\sim_h v_1\sim_h v.$$
\end{enumerate}
\end{prop}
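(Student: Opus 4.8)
The three parts share one device, so I would set it up once and reuse it. Regard $B(H,K)$ as a subtriple of the von Neumann algebra $B(H\oplus K)$. For a tripotent (partial isometry) $e\in B(H,K)$ with initial space $H_1=p_i(e)H$, the map $x\mapsto e^*x$ is a triple isomorphism of $E_2(e)$ onto the C$^*$-algebra $B(H_1)$ carrying $e$ to the unit $1$; this is the argument behind Observation~\ref{obs:shift to pipf}. Since every relation considered is intrinsic to $E_2(e)$, is preserved by triple isomorphisms, and (for the transitive hulls) passes from a subtriple to the whole space by Remark~\ref{rem:relations in subtriples}, any question about $uRe$ with $u\in E_2(e)$ reduces to the corresponding question about a tripotent and the unit in $B(H_1)$, where Proposition~\ref{P:le1 C*} and the products-of-symmetries results of Proposition~\ref{P:products of symmetries} apply. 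Note that $B(H_1)$ is properly infinite when $\dim H_1=\infty$ and equals $M_n$ when $\dim H_1=n<\infty$.

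For $(a)$, the implication $\le_{hc,t}\Rightarrow\le_{n,t}$ is part of Proposition~\ref{P:order implications}, so only the converse needs proof. Assuming $u\le_{n,t}e$, Lemma~\ref{L:factor len} produces a tripotent $w$ with $u\le w\sim_2 e$; then $u,w\in E_2(e)$, and $x\mapsto e^*x$ carries them to $\tilde u\le\tilde w$ in $B(H_1)$ with $\tilde w$ unitary. The decisive step is to check $\tilde w\sim_{hc,t}1$: if $\dim H_1=\infty$ then $\tilde w$ is a product of at most four symmetries by Proposition~\ref{P:products of symmetries}$(i)$, and if $B(H_1)=M_n$ then $\tilde w$ is a scalar multiple of such a product by Proposition~\ref{P:products of symmetries}$(iii)$; in either case Proposition~\ref{P:le1 C*}$(j)$ gives $\tilde w\sim_{hc,t}1$. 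As $\tilde u\le\tilde w$ yields $\tilde u\le_{hc,t}\tilde w$, transitivity gives $\tilde u\le_{hc,t}1$, and transporting back gives $u\le_{hc,t}e$. Since $\sim_2$ coincides with $\sim_{n,t}$, the equality $\le_{n,t}=\le_{hc,t}$ also forces $\sim_{hc,t}=\sim_2$. Part $(b)$ then follows by a finiteness argument: if, say, $\dim K<\infty$, every partial isometry $e\in B(H,K)$ has rank $r=\dim(p_f(e)K)\le\dim K$, so $E_2(e)\cong M_r$ is finite-dimensional; every complete tripotent of such an algebra is unitary, so $e$ is finite. Hence every tripotent of $B(H,K)$ is finite, $B(H,K)$ is a finite JBW$^*$-triple, and Proposition~\ref{P:le2=lent in finite} gives $\le_2=\le_{n,t}$, which together with $(a)$ yields $\le_2=\le_{n,t}=\le_{hc,t}$.

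For $(c)$, I would first use Proposition~\ref{P:simht-char} to note that $u\sim_{h,t}v$ implies $u\sim_2 v$, so that $u$, $v$, and the entire connecting chain lie in $N:=E_2(u)=E_2(v)$. Transporting via $x\mapsto u^*x$ sends $u$ to $1$ and $v$ to a unitary $V\in B(H_1)$, and by Proposition~\ref{P:le1 C*}$(f)$ the hypothesis says exactly that $V$ is a finite product of symmetries. It remains to bound the number of factors by four: for $\dim H_1=\infty$ this is automatic from Proposition~\ref{P:products of symmetries}$(i)$, while for $B(H_1)=M_n$ the fact that $V$ is a product of symmetries forces $\det V=\pm1$, whence Proposition~\ref{P:products of symmetries}$(iii)$ again gives at most four factors. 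Writing $V=s_1s_2s_3s_4$ (padding with the identity, itself a symmetry, if fewer factors occur) and setting $x_i=s_1\cdots s_i$, each consecutive pair satisfies $x_{i-1}^*x_i=s_i$, hence $x_{i-1}\sim_h x_i$ as in the proof of Proposition~\ref{P:le1 C*}$(f)$; transporting $1=x_0\sim_h x_1\sim_h x_2\sim_h x_3\sim_h x_4=V$ back to $N\subseteq B(H,K)$ yields tripotents $v_3,v_2,v_1$ with $u\sim_h v_3\sim_h v_2\sim_h v_1\sim_h v$.

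The step I expect to require the most care is the finite-dimensional case. A unitary of $M_n$ need not be a product of symmetries, since its determinant can be an arbitrary unimodular scalar; this is exactly why the extra scalar built into $\le_{hc,t}$ is indispensable in $(a)$, and why in $(c)$ the uniform bound of four factors is available only because the standing hypothesis $u\sim_{h,t}v$ already forces $\det V=\pm1$. Once this is handled, the remainder is the routine transfer of the relations across the triple isomorphism $x\mapsto e^*x$, together with the dichotomy that the localizing algebra $B(H_1)$ is either properly infinite or a full matrix algebra.
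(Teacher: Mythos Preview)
Your proof is correct and follows essentially the same approach as the paper's. Both reduce to the type~I factor $B(H_1)$ via the triple isomorphism $x\mapsto e^*x$ (the paper's Observation~\ref{obs:shift to pipf}), invoke the products-of-symmetries results of Proposition~\ref{P:products of symmetries}, use finiteness for $(b)$, and in $(c)$ handle the finite-dimensional case by observing that $u\sim_{h,t}v$ forces $\det(v^*u)=\pm1$; the only cosmetic difference is that the paper first establishes the $\sim_2=\sim_{hc,t}$ statement and then deduces $\le_{n,t}=\le_{hc,t}$ via Lemmata~\ref{L:factor len} and~\ref{L:factor leht}, whereas you prove the $\le$-version directly.
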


\begin{proof} Set $M=B(H,K) = p V$ with $V=B(H)$, where we can assume that $p$ is the orthogonal projection of $H$ onto $K$. 

$(a)$ Let us start by the `in particular' case.
 Assume $u\sim_2 v$. Then $u\in M_2(v)$, hence by Observation~\ref{obs:shift to pipf} we get $v^*u\sim_2 p_i(v)$,
 i.e., $v^*u$ is a unitary element of the von Neumann algebra $V_2(p_i(v))$. Then there are symmetries $w_1,\dots,w_4\in V_2(p_i(v))$ and a complex unit $\alpha$ with $v^*u=\alpha w_1w_2w_3w_4$. (Indeed, if $p_i(v)$ has finite rank, we use Proposition~\ref{P:products of symmetries}$(iii)$; if $p_i(v)$ has infinite rank, we use Proposition~\ref{P:products of symmetries}$(i)$, in this case $\alpha=1$.) It follows that
 $$1\sim_h w_4\sim_h w_3w_4\sim_h w_2w_3w_4\sim_{hc}
\alpha w_1w_2 w_3w_4=v^*u.$$
Another use of Observation~\ref{obs:shift to pipf} yields
$$v\sim_h vw_4\sim_h vw_3w_4\sim_h vw_2w_3w_4\sim_{hc}
\alpha vw_1w_2 w_3w_4=u,$$
hence $u\sim_{hc,t}v$.

Now the coincidence of $\le_{n,t}$ and $\le_{hc,t}$ follows from Lemma~\ref{L:factor len}, Lemma~\ref{L:factor leht} and Proposition~\ref{P:hc-charact}.
 
$(b)$ This follows from $(a)$ and Proposition~\ref{P:le0=lent finite vN}.

$(c)$ Assume $u\sim_{h,t}v$. Then also $u\sim_2v$, so we can proceed similarly as in $(a)$ to get $w_1,\dots,w_4$ and $\alpha$. The only difference is that in this case we can achieve $\alpha=1$. If $p_i(v)$ has infinite rank, we use Proposition~\ref{P:products of symmetries}$(i)$. If $p_i(v)$ has finite rank, then using Proposition~\ref{P:all orders vN}$(c)$ we see that $\det(v^*u)=\pm1$ (if we consider $v^*u\in V_2(p_i(v))\cong M_n$, where $n$ is the rank of $p_i(v)$), so we can use Proposition~\ref{P:products of symmetries}$(iii)$.
 \end{proof}

\begin{prop}\label{P:coincidence properly infinite} Assume that $M=pV,$ where $V$ is a von Neumann algebra and $e,u\in M$ are tripotents such that the projections $q=p_i(e)$ and $r=p_f(e)$ are properly infinite. Then the following statements hold.
\begin{enumerate}[$(i)$]
    \item If $u\sim_2 e$, then there are tripotents $v_1,v_2,v_3\in M$ such that
    $$u\sim_h v_3\sim_h v_2\sim_h v_1\sim_h e;$$
    \item $u\le_{n,t} e \Leftrightarrow u\le_{hc,t} e\Leftrightarrow u\le_{h,t} e$.
\end{enumerate}
\end{prop}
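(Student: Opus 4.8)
The plan is to reduce everything to the unital von Neumann algebra $V_2(q)=qVq$ and to exploit the decomposition of unitaries into products of symmetries that is available in the properly infinite setting.

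For assertion $(i)$, I would first invoke Observation~\ref{obs:shift to pipf}: since $u\sim_2 e$, the element $e^*u$ is a unitary element of $V_2(q)$, and the map $x\mapsto ex$ is a triple isomorphism of $V_2(q)$ onto $M_2(e)$ carrying $q$ to $e$ and $e^*u$ to $u$. As $q$ is properly infinite, $V_2(q)$ is a properly infinite von Neumann algebra, so Proposition~\ref{P:products of symmetries}$(i)$ furnishes symmetries $w_1,w_2,w_3,w_4\in V_2(q)$ with $e^*u=w_1w_2w_3w_4$ (no scalar is needed here, exactly as in the infinite-rank case of Proposition~\ref{P:coincidence in B(H,K)}$(c)$).

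Next I would build, inside $V_2(q)$, the chain
$$q\sim_h w_4\sim_h w_3w_4\sim_h w_2w_3w_4\sim_h w_1w_2w_3w_4=e^*u.$$
Each link is checked by the criterion recorded in the proof of Proposition~\ref{P:le1 C*}$(f)$: for unitaries $a,b$ one has $a\sim_h b$ precisely when $a^*b$ is a symmetry. For consecutive partial products $a=w_{j+1}\cdots w_4$ and $b=w_jw_{j+1}\cdots w_4$ the element $a^*b=w_4\cdots w_{j+1}w_jw_{j+1}\cdots w_4$ is palindromic, hence self-adjoint, and squares to $q$ (using $w_i^2=q$ repeatedly), so it is a symmetry. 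Transporting this chain by the triple isomorphism $x\mapsto ex$ gives $e\sim_h ew_4\sim_h ew_3w_4\sim_h ew_2w_3w_4\sim_h u$; since $\sim_h$ is symmetric, setting $v_1=ew_4$, $v_2=ew_3w_4$, $v_3=ew_2w_3w_4$ yields $u\sim_h v_3\sim_h v_2\sim_h v_1\sim_h e$, as required.

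For assertion $(ii)$, the implications $u\le_{h,t}e\Rightarrow u\le_{hc,t}e\Rightarrow u\le_{n,t}e$ always hold by Proposition~\ref{P:order implications}, so it suffices to close the loop with $u\le_{n,t}e\Rightarrow u\le_{h,t}e$. Assuming $u\le_{n,t}e$, Lemma~\ref{L:factor len} (equivalence $(iii)\Leftrightarrow(iv)$) produces a tripotent $w$ with $u\le w$ and $w\sim_2 e$. Applying assertion $(i)$ to the pair $(w,e)$ gives $w\sim_{h,t}e$, and then Lemma~\ref{L:factor leht} delivers $u\le_{h,t}e$. The only point requiring the standing hypothesis is the passage from $u\sim_2 e$ to a four-term product of symmetries, which is exactly where proper infiniteness of $q$ (equivalently of $V_2(q)$) enters through Proposition~\ref{P:products of symmetries}$(i)$; beyond this the argument is essentially bookkeeping parallel to Proposition~\ref{P:coincidence in B(H,K)}$(c)$, with $B(H)$ replaced by the general properly infinite algebra $V_2(q)$, so I do not anticipate a genuine obstacle.
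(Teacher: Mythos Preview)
Your proposal is correct and follows essentially the same route as the paper. The only cosmetic difference is that you work on the initial-projection side (with $e^*u$ in $V_2(q)$) while the paper passes to $ue^*$ in $M_2(p_f(e))$; both are covered by Observation~\ref{obs:shift to pipf}, and the remaining reduction to Proposition~\ref{P:products of symmetries}$(i)$ together with Lemmata~\ref{L:factor len} and~\ref{L:factor leht} is identical.
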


\begin{proof} 
$(i)$ Assume $u\sim_2v$. Then $uv^*$ is a unitary element of $M_2(p_f(v))$, so it is a product of four symmetries in $M_2(p_f(v))$ (by Propostion~\ref{P:products of symmetries}$(i)$). We conclude similarly as in Proposition~\ref{P:coincidence in B(H,K)}$(i)$.

$(ii)$ This follows from $(i)$, Lemma~\ref{L:factor len} and Lemma~\ref{L:factor leht}.
\end{proof}

\begin{prop}\label{p coincidence pVp type I infinitite}
Let $V$ be a von Neumann algebra and let $p\in V$ be a projection. Assume that $pVp$ is a type I von Neumann algebra.

If $u,v\in M=pV$ are two tripotents with $u\sim_{h,t}v$, 
    then there are tripotents $v_1,v_2,v_3\in M$ such that
    $$u\sim_h v_3\sim_h v_2\sim_h v_1\sim_h v.$$
\end{prop}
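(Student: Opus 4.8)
The plan is to mirror the arguments of Proposition~\ref{P:coincidence in B(H,K)}$(c)$ and Proposition~\ref{P:coincidence properly infinite}$(i)$, reducing the statement to a bound on the number of symmetries needed to factor a unitary. First I would normalize: set $r=p_f(v)$. Since $v\in M=pV$ we have $r=vv^*\le p$, so $V_2(r)=rVr$ is a corner of $pVp$ and hence is itself a type $I$ von Neumann algebra. Put $w=uv^*$. Using $u\sim_{h,t}v$ (which forces $u\sim_2 v$, so $p_i(u)=p_i(v)$ and $p_f(u)=p_f(v)=r$) one checks $w$ is a unitary element of $rVr$, and Observation~\ref{obs:shift to pipf} gives $w\sim_{h,t}r$. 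By Proposition~\ref{P:le1 C*}$(f)$, applied in the unital algebra $rVr$, this means precisely that $w$ is a finite product of symmetries in $rVr$.

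The heart of the proof is the claim that, in a type $I$ von Neumann algebra, a unitary which is a finite product of symmetries is already a product of at most four symmetries. To prove it I would use the central decomposition $rVr=N_\infty\oplus^{\ell_\infty}N_f$ into a properly infinite and a finite type $I$ part (as in the proof of Proposition~\ref{P:products of symmetries}$(iv)$, cf. \cite[Proposition 6.3.7]{KR2}), and factor $w=w_\infty\oplus w_f$ accordingly. On $N_\infty$ Proposition~\ref{P:products of symmetries}$(i)$ gives at most four symmetries with no extra hypothesis. On the finite part I would write $N_f=\bigoplus_n C(\Omega_n,M_n)$ (using \cite[Theorem V.1.27]{Tak} and Lemma~\ref{L:LinftyC=CKC}); on each homogeneous summand $w_f$ is still a finite product of symmetries, and since every symmetry $s$ satisfies $\det s(t)\in\{-1,1\}$, the continuous function $t\mapsto\det w_f(t)$ takes values in $\{-1,1\}$. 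The functional version of Radjavi's theorem --- exactly the technique already used in the proof of Proposition~\ref{P:products of symmetries}$(iv)$ (diagonalise $w_f$ by Pearcy's theorem and run Radjavi's construction with functions on the diagonal) --- then yields a factorization into at most four symmetries on each $C(\Omega_n,M_n)$; direct sums of symmetries being symmetries, this assembles to at most four symmetries on $N_f$ and hence on all of $rVr$.

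Finally I would build the chain. Writing $w=s_1s_2s_3s_4$ (padding with the unit $r$, itself a symmetry, if fewer factors occur) I set $x_0=v$ and $x_k=s_{5-k}x_{k-1}$ for $k=1,2,3,4$, so that $x_4=s_1s_2s_3s_4v=wv=u$ and $x_k=(s_{5-k}\cdots s_4)\,v$. Each $x_k$ is a tripotent of $M$ with $p_f(x_k)=r$ and $x_k\sim_2 v$, because $s_{5-k}\cdots s_4$ is unitary in $rVr$; moreover consecutive ones satisfy $x_k=s_{5-k}x_{k-1}$ with $s_{5-k}$ a symmetry in $M_2(p_f(x_{k-1}))=M_2(r)=rVr$, so Proposition~\ref{P:all orders vN}$(b)$ gives $x_k\sim_h x_{k-1}$. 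Setting $v_3=x_3$, $v_2=x_2$, $v_1=x_1$ produces the required chain $u\sim_h v_3\sim_h v_2\sim_h v_1\sim_h v$.

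I expect the main obstacle to be the finite type $I$ case of the key claim, specifically the functional version of Radjavi's theorem when $\det w_f(t)=-1$ on part of $\Omega_n$: one must run the diagonalisation-plus-Radjavi argument with $\det=\pm1$ rather than $\det=1$ and verify that the count stays at four. This can be arranged by splitting $\Omega_n$ into the clopen sets $\{\det w_f=1\}$ and $\{\det w_f=-1\}$ and applying the construction separately on each, using that Radjavi's theorem (Proposition~\ref{P:products of symmetries}$(iii)$) already covers determinant $\pm1$ and that the generated abelian C$^*$-subalgebra can be diagonalised continuously.
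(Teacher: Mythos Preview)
Your proposal is correct and follows essentially the same route as the paper's proof: reduce via Observation~\ref{obs:shift to pipf} to a unitary that is a finite product of symmetries in a type~I corner, split off the properly infinite part (handled by Proposition~\ref{P:products of symmetries}$(i)$), decompose the finite part into summands $C(\Omega_n,M_n)$, and use the functional version of Radjavi's construction together with the fact that $\det w_f(t)\in\{-1,1\}$ to get four symmetries. The only cosmetic differences are that you work with $r=p_f(v)\le p$ (so $rVr$ is visibly a corner of $pVp$) whereas the paper passes to $p_i(v)$, and your treatment of the $\det=\pm1$ issue---splitting $\Omega_n$ into the clopen sets $\{\det=1\}$ and $\{\det=-1\}$ and running Radjavi's construction separately---is a clean alternative to the paper's device of absorbing the central $\pm\boldsymbol1$ factor into one of the symmetries.
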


\begin{proof}
 If $p_i(v)$ is properly infinite, the assertion follows from Proposition~\ref{P:coincidence properly infinite}. So, it is enough to assume that $p_i(v)$ is finite. Further, using Observation~\ref{obs:shift to pipf} we may restrict to the case when $M=V$ is a finite von Neumann algebra of type I and $v=1$.
 
Such a von Neumann algebra is a direct sum of von Neumann algebras of the form $L^\infty(\mu,M_n)=C(\Omega,M_n)$ where $\mu$ is a probability measure satisfying \eqref{eq:CK=Linfty} and $n\in\en$. Hence, it is enough to prove the result for the individual summands.

So, assume that $u\sim_{h,t}v=1$. It follows that $u(t)\sim_{h,t}1$ for each  $t\in\Omega$, hence $\det u(t)=\pm1$ for $t\in\Omega$. If we now apply the construction from the proof of Proposition~\ref{P:products of symmetries}$(iii)$ to $u$ in place of $\f$, we get that $\g(t)=\pm\boldsymbol1$ for $t\in\Omega$. It follows that the product of $\g$ with a symmetry is again a symmetry. Thus $u$ is a product of four symmetries which completes the proof.
 \end{proof}

When in the proof leading to Proposition \ref{P:coincidence in B(H,K)} we replace Proposition \ref{P:products of symmetries}$(i)$ and $(iii)$ with Proposition \ref{P:products of symmetries}$(ii)$ we get the following conclusion. 

\begin{prop}\label{p pVp type II fin}
Let $V$ be a von Neumann algebra and let $p\in V$ be a projection. Assume that $pVp$ is a type $II_1$ von Neumann algebra. Then the following statements hold:\begin{enumerate}[$(i)$]\item If $u,v\in M=pV$ are two tripotents with $u\sim_{2}v$, then there are tripotents $v_1,v_2,\ldots, v_{15}\in M$ such that $$u\sim_h v_{15}\sim_h \ldots \sim_h v_2\sim_h v_1\sim_h v;$$
\item The relations $\le_2$, $\le_{n,t}$, $\le_{h,t}$ and $\le_{hc,t}$ coincide in $p V$. In particular, the relations $\sim_2$ and $\sim_{h,t}$ coincide.
\end{enumerate}
\end{prop}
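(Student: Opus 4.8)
The plan is to mirror the proofs of Proposition~\ref{P:coincidence in B(H,K)} and Proposition~\ref{P:coincidence properly infinite}, simply replacing the symmetry-factorization bounds of Proposition~\ref{P:products of symmetries}$(i)$ and $(iii)$ by the bound $16$ furnished by Proposition~\ref{P:products of symmetries}$(ii)$. As in those proofs, Observation~\ref{obs:shift to pipf} reduces everything to factoring a unitary element of a corner of $pVp$ into symmetries and reading off a chain of $\sim_h$-steps.

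For assertion $(i)$ I would start from $u\sim_2 v$ in $M=pV$ and put $r=p_f(v)=vv^*\le p$. By Observation~\ref{obs:shift to pipf} the map $x\mapsto xv^*$ is a triple-isomorphism of $M_2(v)$ onto $M_2(r)=rVr$ carrying $v$ to $r$ and $u$ to the unitary element $uv^*$ of $rVr$. Since $r\le p$ and $pVp$ is of type $II_1$, the reduced algebra $rVr=r(pVp)r$ is again of type $II_1$, so Proposition~\ref{P:products of symmetries}$(ii)$ provides symmetries $w_1,\dots,w_m\in rVr$ with $m\le16$ and $uv^*=w_1\cdots w_m$. Forming the partial products $s_0=r$ and $s_j=w_{m-j+1}\cdots w_m$, each $s_{j-1}^*s_j=s_{j-1}^*w_{m-j+1}s_{j-1}$ is a unitary conjugate of the symmetry $w_{m-j+1}$, hence a symmetry, so $s_{j-1}\sim_h s_j$ by the observation used in the proof of Proposition~\ref{P:le1 C*}$(f)$. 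Transporting back by the inverse isomorphism $y\mapsto yv$ (which preserves $\sim_h$ and sends $s_0$ to $v$ and $s_m$ to $u$) would give
$$v\sim_h s_1v\sim_h\cdots\sim_h s_mv=u,$$
and taking $v_j=s_jv$, padding with repetitions via reflexivity of $\sim_h$ if $m<16$, yields the required tripotents $v_1,\dots,v_{15}$.

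For assertion $(ii)$ I would combine the always-valid implications from Proposition~\ref{P:order implications},
$$u\le_{h,t}e\ \Rightarrow\ u\le_{hc,t}e\ \Rightarrow\ u\le_{n,t}e\ \Rightarrow\ u\le_2 e,$$
with the observation that $pVp$ of type $II_1$ forces $p$ to be finite, so Proposition~\ref{P:le0=lent finite vN} already identifies $\le_2$ with $\le_{n,t}$ in $M$. It then remains to establish $\le_2\Rightarrow\le_{h,t}$: given $u\le_2 e$ (equivalently $u\le_{n,t}e$), Lemma~\ref{L:factor len} yields a tripotent $w$ with $u\le w$ and $w\sim_2 e$; assertion $(i)$ upgrades $w\sim_2 e$ to $w\sim_{h,t}e$, and Lemma~\ref{L:factor leht} then delivers $u\le_{h,t}e$. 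This closes the cycle of implications, so all four relations coincide, and the coincidence $\sim_2\,=\,\sim_{h,t}$ follows at once (the symmetric versions agree, or directly from $(i)$).

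The only step I expect to require genuine care is checking that the corner $rVr$ with $r=p_f(v)\le p$ is again of type $II_1$, which is what lets Proposition~\ref{P:products of symmetries}$(ii)$ apply; this is the standard fact that type $II_1$ passes to nonzero reductions of a von Neumann algebra. Beyond that the argument is a faithful transcription of the earlier ones, the only structural changes being the bound $16$ in place of $4$ and the absence of any scalar factor, so that the chains are built from pure $\sim_h$-links with no intervening $\sim_{hc}$-step.
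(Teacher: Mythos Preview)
Your proposal is correct and follows essentially the same route the paper indicates: reduce via Observation~\ref{obs:shift to pipf} to a unitary in a corner, invoke Proposition~\ref{P:products of symmetries}$(ii)$ for the factorization into at most $16$ symmetries, read off the $\sim_h$-chain from partial products, and for $(ii)$ close the loop via finiteness (Proposition~\ref{P:le0=lent finite vN}) together with Lemmata~\ref{L:factor len} and~\ref{L:factor leht}. Your choice to work with $r=p_f(v)\le p$ rather than $p_i(v)$ is slightly cleaner here, since it makes $rVr\subset pVp$ immediate and the type~$II_1$ hypothesis applies directly without passing through the Murray--von Neumann equivalence $p_i(v)\sim p_f(v)$.
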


Let us summarize the results of this subsection:

\begin{cor}\label{cor:pV} Let $M=pV$, where $V$ is a von Neumann algebra and $p\in V$ is a projection. Then the following assertions hold:
\begin{enumerate}[$(1)$]
    \item To describe $\sim_{h,t}$ chains of $\sim_h$ of length $16$ are enough. To describe $\le_{h,t}$ chains of $\le_h$ of length $17$ are enough. 
    \item Assume $V$ contains no direct summand of type $II$. Then to describe $\sim_{h,t}$ chains of $\sim_h$ of length $4$ are enough. To describe $\le_{h,t}$ chains of $\le_h$ of length $5$ are enough.
    \item Assume $V$ is continuous.
    Then the relations $\sim_2$ and $\sim_{h,t}$ coincide in $M$. Hence, the relations $\le_{n,t}$ and $\le_{h,t}$ coincide in $M$. If $V$ is moreover finite (i.e., of type $II_1$), the relations $\le_2$ and $\le_{h,t}$ coincide.  
    \item Assume $M=B(H,K)$ (i.e., $M$ is a Cartan factor of type $1$). Then the relations $\sim_2$ and $\sim_{hc,t}$ coincide in $M$. Hence, the relations $\le_{n,t}$ and $\le_{hc,t}$ coincide in $M$. If $\dim H<\infty$ or $\dim K<\infty$, even the relations $\le_2$ and $\le_{hc,t}$ coincide. 
\end{enumerate}
\end{cor}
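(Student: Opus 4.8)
The plan is to read each assertion as an assembly of the three structural propositions of this subsection, organised around a single reduction. For any pair of tripotents $u,e\in M=pV$ I would first apply Observation~\ref{obs:shift to pipf} to replace the relation $u\,R\,e$ in $M$ by the relation $e^*u\,R\,q$ in the unital von Neumann algebra $V_2(q)=qVq$, where $q=p_i(e)$. Since $x\mapsto e^*x$ is a triple isomorphism of $M_2(e)$ onto $V_2(q)$ and triple isomorphisms preserve $\sim_h$ and $\le_h$, a $\sim_h$- or $\le_h$-chain of a given length on one side transfers to a chain of the same length on the other. By Proposition~\ref{P:le1 C*}$(f)$ the relation $w\sim_{h,t}q$ in $qVq$ means exactly that $w$ is a finite product of symmetries, so describing $\sim_{h,t}$ reduces to bounding the number of symmetries required; and Lemma~\ref{L:factor leht} rewrites $u\le_{h,t}e$ as ``$u\le v\sim_{h,t}e$ for some tripotent $v$'', which costs precisely one extra $\le_h$-step (using $u\le v\Rightarrow u\le_h v$ and $v\sim_h w\Rightarrow v\le_h w$). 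This explains the uniform pattern $17=16+1$ and $5=4+1$.

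For assertion $(1)$ I would decompose $qVq$ centrally as the $\ell_\infty$-sum of its properly infinite part, its type~$II_1$ part, and its finite type~$I$ part. If $w\sim_{h,t}q$ then by Proposition~\ref{P:direct sum}$(a)$ the same holds in each summand, and there Proposition~\ref{P:coincidence properly infinite}$(i)$, Proposition~\ref{p pVp type II fin}$(i)$ and Proposition~\ref{p coincidence pVp type I infinitite} (each applied with the unit as reference tripotent) furnish $\sim_h$-chains of lengths $4$, $16$ and $4$ respectively. Using reflexivity of $\sim_h$ I would pad every chain up to length $16$ and then glue the summands coordinatewise via Proposition~\ref{P:direct sum}$(b)$ (the list there includes $\sim_h$), obtaining one $\sim_h$-chain of length $16$; one more $\le_h$-step then gives length $17$ for $\le_{h,t}$. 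The step that needs genuine care is the finite type~$I$ summand, where not every unitary is a product of symmetries, so here one must really use the hypothesis $w\sim_{h,t}q$ (equivalently, that the pointwise determinants are $\pm1$) rather than merely $w\sim_2 q$: this is exactly what Proposition~\ref{p coincidence pVp type I infinitite} supplies, uniformly in the matrix sizes. Assertion $(2)$ is the same argument with the type~$II_1$ summand absent (a corner of a type~$I$-plus-type~$III$ algebra has no type~$II$ part), so the uniform bound drops from $16$ to $4$, and from $17$ to $5$ for $\le_{h,t}$.

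For assertion $(3)$, if $V$ is continuous then so is the corner $qVq$, which therefore has no finite type~$I$ part; hence by Proposition~\ref{P:products of symmetries}$(i)$ and $(ii)$ (applied to the properly infinite and type~$II_1$ central summands) \emph{every} unitary of $qVq$ is a finite product of symmetries. Thus $w\sim_2 q$ already forces $w\sim_{h,t}q$, and together with the always-valid implication $\sim_{h,t}\Rightarrow\sim_2$ this shows that $\sim_2$ and $\sim_{h,t}$ coincide in $M$. The coincidence $\le_{n,t}=\le_{h,t}$ is then formal: Lemma~\ref{L:factor len} characterises $u\le_{n,t}e$ by the existence of $w$ with $u\le w\sim_2 e$, Lemma~\ref{L:factor leht} characterises $u\le_{h,t}e$ by the existence of $v$ with $u\le v\sim_{h,t}e$, and these conditions agree once $\sim_2$ equals $\sim_{h,t}$. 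Finally, if $V$ is of type~$II_1$ then $p$ is a finite projection, so $M=pV$ is a finite JBW$^*$-triple and $\le_2=\le_{n,t}$ by Proposition~\ref{P:le0=lent finite vN}; combined with $\le_{n,t}=\le_{h,t}$ this yields $\le_2=\le_{h,t}$.

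Assertion $(4)$ needs no new work: writing $M=B(H,K)=pV$ with $V=B(H)$ and $p$ the projection onto $K$, the coincidences $\sim_2=\sim_{hc,t}$ and $\le_{n,t}=\le_{hc,t}$ are precisely Proposition~\ref{P:coincidence in B(H,K)}$(a)$, while the strengthening to $\le_2=\le_{n,t}=\le_{hc,t}$ when $\dim H<\infty$ or $\dim K<\infty$ is Proposition~\ref{P:coincidence in B(H,K)}$(b)$. I expect the only real obstacle throughout to be the bookkeeping in $(1)$: keeping the per-summand chain lengths uniformly bounded so that the direct-sum gluing produces a single finite chain, and correctly isolating the finite type~$I$ part, where the symmetry factorisation is available only under $\sim_{h,t}$ and not under $\sim_2$ alone.
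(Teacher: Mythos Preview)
Your argument is correct and matches the paper's intent: the corollary is presented there as a summary of the subsection without an explicit proof, and you have supplied precisely the assembly the authors leave implicit. The reduction via Observation~\ref{obs:shift to pipf} to the unital corner $qVq$, the central type decomposition, the per-summand invocation of Propositions~\ref{P:coincidence properly infinite}, \ref{p pVp type II fin} and~\ref{p coincidence pVp type I infinitite}, the padding and gluing through Proposition~\ref{P:direct sum}$(b)$, and the ``$+1$'' passage from $\sim_{h,t}$ to $\le_{h,t}$ via Lemma~\ref{L:factor leht} are all sound and are exactly the ingredients the paper has set up. Your explicit warning that in the finite type~$I$ summand one must use the hypothesis $\sim_{h,t}$ (not merely $\sim_2$) is the one genuinely delicate point, and you handle it correctly by appealing to Proposition~\ref{p coincidence pVp type I infinitite} rather than to Proposition~\ref{P:products of symmetries}$(iv)$.
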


\section{Symmetric and antisymmetric parts of von Neumann algebras}

In this section we address triples of the form $A\overline{\otimes}C$ where $A$ is an abelian von Neumann algebra and $C$ is a Cartan factor of type $2$ or $3$.
These spaces are thoroughly studied in \cite[Sections 5.3--5.5]{Finite}.

\subsection{Basic setting and notation}
We will assume that $A=L^\infty(\mu)$ for a probability measure $\mu$ (satisfying \eqref{eq:CK=Linfty}). Further, let $H=\ell_2(\Gamma)$ be a Hilbert space with a fixed orthonormal basis.
Then $A\overline{\otimes}B(H)$, the von Neumann tensor product, can be represented as a von Neumann sub-algebra in $B(L^2(\mu,H))$, for a description see \cite[Lemma 5.12]{Finite}.

Furhter, for any $\xi\in H$ we denote by $\overline{\xi}$ its canonical coordinatewise conjugation. If $\f\in L^2(\mu,H)$, we denote by $\overline{\f}$ the canonical pointwise conjugation.

For $x\in B(H)$ we define the transpose by
$$x^t(\xi)=\overline{x^*\overline{\xi}},\quad \xi \in H.$$
The representing matrix of $x^t$ with respect to the canonical orthonormal basis is the transpose of the representing matrix of $x$.

Similarly we may define for $T\in B(L^2(\mu,H))$ its transpose by
$$T^t\f=\overline{T^*\overline{\f}},\quad \f\in L^2(\mu,H).$$

Then 
$$B(H)_s=\{x\in B(H)\setsep x^t=x\}\mbox{ and }B(H)_a=\{x\in B(H)\setsep x^t=-x\}$$
are Cartan factors of types $3$ and $2$, respectively. They are formed by operators with symmetric (for type $3$) or antisymmetric (for type $2$) representing matrix with respect to the canonical orthonormal basis.

Moreover, the triples we address are
$$\begin{aligned}
A\overline{\otimes}B(H)_s&=(A\overline{\otimes}B(H))_s=\{T\in A\overline{\otimes}B(H)\setsep T^t=T\},\\
A\overline{\otimes}B(H)_a&=(A\overline{\otimes}B(H))_a=\{T\in A\overline{\otimes}B(H)\setsep T^t=-T\}.\end{aligned}$$

Observe that both $A\overline{\otimes}B(H)_s$ and $A\overline{\otimes}B(H)_a$ are a weak$^*$-closed subtriples of the von Neumann algebra $A\overline{\otimes}B(H)$, thus to describe relations
$$\le,\le_2,\le_n,\le_{hc},\le_h,\sim_2,\sim_{hc},\sim_h$$
we may use their description in the surrounding von Neumann algebra provided by Proposition~\ref{P:all orders vN}. A small drawback is that these characterizations are not completely internal. Anyway, we will not repeat them, we will only point out their internal forms if available. We will mostly focus on relations 
$$\le_{h,t},\le_{hc,t},\sim_{h,t},\sim_{hc,t}.$$

We may further introduce a canonical conjugation on $B(H)$ and on $B(L^2(\mu,H))$ by
$$\overline{x}(\xi)=\overline{x(\overline{\xi})},\quad \xi\in H, x\in B(H)$$
and, similarly,
$$\overline{T}\f=\overline{T\overline{\f}},\quad \f\in L^2(\mu,H), T\in B(L^2(\mu,H)).$$
The representing matrix of $\overline{x}$ with respect to the canonical orthonormal basis is the complex conjugate of the representing matrix of $x$ (entry by entry).

In the rest of this section let $M$ stand for the von Neumann algebra $A\overline{\otimes}B(H)$, $M_s$ for the triple $A\overline{\otimes}B(H)_s$ and $M_a$  for the triple $A\overline{\otimes}B(H)_a$.

\subsection{The symmetric case}

$A\overline{\otimes}B(H)_s$ is not only a subtriple, but even a weak$^*$-closed Jordan $*$-subalgebra of $A\overline{\otimes}B(H)$ containing the unit. Therefore also a large part of Proposition~\ref{P:le1 C*} may be applied.

We start by the following remark.

\begin{remark}\label{rem:AotimesB(H)s is finite}
By \cite[Proposition 5.20]{Finite} the triple $A\overline{\otimes}B(H)_s$ is a finite JBW$^*$-algebra. Hence, by Proposition~\ref{P:le2=lent in finite} the relations $\le_2$ and $\le_{n,t}$ coincide.
\end{remark}

Since $M_s$ is a finite JBW$^*$-algebra, by Lemmata~\ref{L:shift to 1} and~\ref{L:shift to projection} it is enough to analyze the relations $U\,R\,V$ ($U,V\in M_s$) only in case $V=1$ and, more generally, in case $V$ is a projection. To analyze the case $V=1$ we may use Proposition~\ref{P:le1 C*} and its small modification. We will do it in one lemma which translates some notions used in Proposition~\ref{P:le1 C*} to our case and in one proposition describing the relations defined by transitive hulls.

\begin{lemma}\label{L:AotimesB(H)s-translations} Let $U\in M_s$.
\begin{enumerate}[(i)]
    \item $U$ is self-adjoint if and only if $U=\overline{U}$.  In case $M=B(H)$ (i.e., $A=\ce$) this means that the representing matrix of $U$ is symmetric and has real entries.
    \item $U$ is a projection if and only if $\overline{U}=U=U^2$.
    \item $U$ is a symmetry if and only if $U=\overline{U}$ and $U^2=1$.
    \item $U$ is a normal element in  $M$ if and only if $\overline{U}U=U\overline{U}$ in $A\overline{\otimes}B(H)$. In case $M=B(H)$ this means that the representing matrix of $\overline{U}U$  has real entries.
    \item $U$ is a unitary element of $M_s$ if and only if $\overline{U}U=1$ (equivalently $U\overline{U}=1$) in  $A\overline{\otimes}B(H)$.
\end{enumerate}

\end{lemma}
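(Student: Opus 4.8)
The plan is to reduce the entire lemma to the single identity $U^*=\overline{U}$, valid for every $U\in M_s$. This identity is immediate from the definitions: comparing the formula $U^t\f=\overline{U^*\overline{\f}}$ defining the transpose with the formula $\overline{U^*}\,\f=\overline{U^*\overline{\f}}$ defining the conjugation shows that $U^t=\overline{U^*}$ for every $U\in M$. Since $U\in M_s$ means $U^t=U$, and the conjugation is an involution, applying it to $\overline{U^*}=U$ yields $U^*=\overline{U}$. Thus on $M_s$ the involution $*$ and the conjugation $\overline{\,\cdot\,}$ coincide. Beyond this I will only use that the conjugation is involutive, multiplicative, $*$-preserving and fixes $1$.

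With this identity in hand, assertions $(i)$--$(iii)$ become direct substitutions. For $(i)$, self-adjointness $U^*=U$ turns into $\overline{U}=U$; in the case $M=B(H)$ the membership $U\in B(H)_s$ already encodes the symmetry of the representing matrix, and $U=\overline{U}$ is exactly the reality of its entries. Assertion $(ii)$ combines $(i)$ with the idempotent condition $U^2=U$, and $(iii)$ combines $(i)$ with $U^2=1$, so neither requires further work.

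For the unitary case $(v)$, I would first recall that $U$ is a unitary element of the unital JB$^*$-algebra $M_s$ if and only if $U\sim_2 1$ in $M_s$, which by Remark~\ref{rem:relations in subtriples} is the same as $U\sim_2 1$ in the ambient von Neumann algebra $M$, i.e.\ $U^*U=UU^*=1$. Substituting $U^*=\overline{U}$ turns this into $\overline{U}U=U\overline{U}=1$. The point worth isolating is that the single equation $\overline{U}U=1$ already suffices: applying the conjugation to it gives $U\overline{U}=1$, so each of $\overline{U}U=1$ and $U\overline{U}=1$ implies the other, and together they yield that $U$ is unitary in $M$. This is the one genuinely non-formal step, and I expect it to be the main (if mild) obstacle, since a mere isometry in $M$ need not be a co-isometry; here the symmetric structure does real work, forcing $U^*U=1$ to entail $UU^*=1$.

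Finally, assertion $(iv)$ is again a substitution: normality $U^*U=UU^*$ becomes $\overline{U}U=U\overline{U}$. For the refinement when $M=B(H)$, I would note first that $\overline{U}U$ is automatically self-adjoint, since $(\overline{U}U)^*=U^*\,\overline{U}^*=\overline{U}\,\overline{U^*}=\overline{U}\,U$ using $U^*=\overline{U}$ and $\overline{U^*}=U$. Next, $U\overline{U}=\overline{\,\overline{U}U\,}$ because the conjugation is multiplicative and involutive. Hence normality $\overline{U}U=U\overline{U}$ is equivalent to $\overline{U}U=\overline{\,\overline{U}U\,}$, i.e.\ to $\overline{U}U$ being fixed by the conjugation; since in $B(H)$ the conjugation acts entrywise on representing matrices, this says precisely that the representing matrix of $\overline{U}U$ has real entries. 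All remaining verifications are the routine substitutions above together with the elementary bookkeeping of the conjugation's algebraic properties.
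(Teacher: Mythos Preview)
Your proof is correct and follows essentially the same approach as the paper: both reduce everything to the identity $U^*=\overline{U}$ for $U\in M_s$, then handle $(i)$--$(iii)$ by direct substitution and use the key observation $\overline{U}U=\overline{U\overline{U}}$ (equivalently, that conjugation swaps $\overline{U}U$ and $U\overline{U}$) for $(iv)$ and $(v)$. Your write-up is somewhat more explicit in justifying $U^t=\overline{U^*}$ and in invoking Remark~\ref{rem:relations in subtriples} for the passage between unitarity in $M_s$ and in $M$, but the substance is identical.
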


\begin{proof}
Observe that for $U\in M_s$ we have $U^*=\overline{U}$. Now assertions $(i)-(iii)$ follow easily.

$(iv)$ The first part follows from the definition of normal elements using the previous paragraph. The special  case follows from the equality 
$\overline{U}U=\overline{U\overline{U}}$.

$(v)$ $U$ is unitary in $M_s$ if and only if $U$ is unitary in $M$, i.e., $U^*U=UU^*=1$. It means that
$\overline{U}U=U\overline{U}=1$. But the two equalities are equivalent as $\overline{U}U=\overline{U\overline{U}}$ and $\overline{1}=1$
\end{proof}

\begin{prop}\label{P:le1 AotimesB(H)s}
 Let $U\in M_s$ be a tripotent. Then we have the following
\begin{enumerate}[$(a)$]
      \item $U\sim_{h,t}1$ if and only if $U=V_1V_2\cdots V_k$, where $V_1,V_2,\dots,V_k$ are symmetries in $M$ and, moreover,
    $$V_1,V_1V_2,V_1V_2V_3,\dots,V_1V_2\cdots V_k\in M_s.$$
    \item $U\le_{h,t}1$ if and only if there are symmetries   $V_1,V_2,\dots,V_k\in M$ satisfying the assumptions of $(a)$ and a projection $P\in M$ such that 
    $$U=PV_1V_2\cdots V_m=V_1V_2\cdots V_mP^t.$$
      \item $U\sim_{hc,t}1$ if and only if there are symmetries  $V_1,V_2,\dots,V_k\in M$ satisfying the assumptions of $(a)$ and a complex unit $\alpha$ such that 
    $$U=\alpha V_1V_2\cdots V_k.$$
    \item $U\le_{hc,t}1$ if and only if there are symmetries   $V_1,V_2,\dots, V_k\in M$ satisfying the assumptions of $(a)$, a projection $P\in M$ and a complex unit $\alpha$ such that 
    $$U=\alpha PV_1V_2\cdots V_m=\alpha V_1V_2\cdots V_mP^t.$$
       \item $U\le_{n,t}1$ holds always.
    \end{enumerate}
\end{prop}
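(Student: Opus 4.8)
The plan is to derive all five items from the characterizations already available in the surrounding von Neumann algebra $M$ together with the structural facts about $M_s$, keeping careful track of the point that the transitive hulls $\sim_{h,t},\le_{h,t},\sim_{hc,t},\le_{hc,t}$ must be computed \emph{inside} $M_s$, while the symmetries $V_j$ are allowed to live in the larger algebra $M$. The quickest item is $(e)$: since $M_s$ is a unital JBW$^*$-algebra, every element lies in $(M_s)_2(1)$, so any tripotent $U$ satisfies $U\le_2 1$; as $M_s$ is finite by Remark~\ref{rem:AotimesB(H)s is finite}, Proposition~\ref{P:le2=lent in finite} says that $\le_2$ and $\le_{n,t}$ coincide in $M_s$, whence $U\le_{n,t}1$ for every tripotent $U$.

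The heart of the matter is item $(a)$, from which $(b)$--$(d)$ follow by standard reductions. By Proposition~\ref{P:simht-char} the relation $U\sim_{h,t}1$ (in $M_s$) is witnessed by a chain $U=W_1\sim_h W_2\sim_h\cdots\sim_h W_k=1$ of tripotents in $M_s$. Since $\sim_h$ implies $\sim_2$ and the chain ends at $1$, every $W_i$ is $\sim_2$-equivalent to $1$, i.e. a unitary element of $M_s$ and hence of $M$. I would then invoke the observation in the proof of Proposition~\ref{P:le1 C*}$(f)$ --- for unitaries $v,w$ one has $v\sim_h w$ iff $v^*w$ is a symmetry (the relation $\sim_h$ being the same whether computed in $M_s$ or in $M$, by Remark~\ref{rem:relations in subtriples}) --- to set $V_j:=W_{k-j+1}^*W_{k-j}$, a symmetry in $M$. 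Because each $W_i$ is unitary, the partial products telescope: $V_1\cdots V_m=W_k^*W_{k-m}=W_{k-m}\in M_s$ and $V_1\cdots V_{k-1}=W_1=U$, which gives the desired factorization with all partial products in $M_s$. Conversely, given symmetries $V_j\in M$ whose partial products $W_j:=V_1\cdots V_j$ all lie in $M_s$, each $W_j$ is unitary (a product of symmetries) and $W_{j-1}^*W_j=V_j$ is a symmetry (the telescoping now uses $V_i^2=1$), so the $W_j$ form a $\sim_h$-chain in $M_s$ from $1$ to $U$, i.e. $U\sim_{h,t}1$.

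For $(b)$ I would use Lemma~\ref{L:factor leht}: $U\le_{h,t}1$ iff there is a tripotent $W\in M_s$ with $U\le W\sim_{h,t}1$. Factor $W=V_1\cdots V_m$ as in $(a)$ and write $U=PW$ with $P=UU^*$ the final projection --- the standard description of $U\le W$ for unitary $W$. The second equality $U=V_1\cdots V_m P^t$ is the essential extra point and it comes for free from symmetry of $U$: applying the (anti-multiplicative) transpose and using $W^t=W$ gives $U=U^t=(PW)^t=W^tP^t=WP^t$. The converse is immediate from Lemma~\ref{L:factor leht} and $(a)$, noting that if $U=PW$ is a tripotent then $P=UU^*$, hence $U\le W$. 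Items $(c)$ and $(d)$ are obtained by multiplying by a complex unit: Proposition~\ref{P:hct-charact} reduces $U\sim_{hc,t}1$ (resp. $U\le_{hc,t}1$) to $\beta U\sim_{h,t}1$ (resp. $\beta U\le_{h,t}1$) for some complex unit $\beta$; since $M_s$ is invariant under scalar multiplication, $\beta U\in M_s$ and parts $(a)$/$(b)$ apply, the scalar $\alpha=\overline{\beta}$ being absorbed in front of the product.

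The main obstacle I anticipate is bookkeeping rather than a single hard estimate: one must consistently separate the ambient relation in $M$ (where the factors $V_j$ and the projection $P$ live, and where Proposition~\ref{P:le1 C*} applies) from the relation in $M_s$ (where the transitive hulls are taken), and verify at each step that the relevant partial products and intermediate tripotents genuinely lie in $M_s$. The cleanest way to guarantee this is precisely the chain description of $\sim_{h,t}$ in $(a)$ and the transpose identity $U=WP^t$ in $(b)$, both of which hinge on the identities $U^*=\overline{U}$ and $U^t=U$ for $U\in M_s$ recorded in Lemma~\ref{L:AotimesB(H)s-translations}.
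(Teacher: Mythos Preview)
Your proof is correct and follows essentially the same route as the paper: item $(e)$ via finiteness of $M_s$, item $(a)$ by unfolding the $\sim_h$-chain into a product of symmetries exactly as in the proof of Proposition~\ref{P:le1 C*}$(f)$, item $(b)$ via Lemma~\ref{L:factor leht} together with the transpose identity $U=U^t=W^tP^t=WP^t$, and items $(c)$,$(d)$ by Proposition~\ref{P:hct-charact}. The only cosmetic difference is that where you justify $U=PW$ by the explicit formula $P=UU^*$, the paper simply cites \cite[Proposition 4.6]{Finite}; both are equivalent here since $W$ is unitary.
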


\begin{proof}
 Assertion $(a)$ can be proved following the proof of Propostition~\ref{P:le1 C*}$(f)$. 
 
 Assertion $(c)$ follows from $(a)$ using Proposition~\ref{P:hct-charact}.
 
 Assertion $(e)$ follows from Remark~\ref{rem:AotimesB(H)s is finite}.

$(b)$ By Lemma~\ref{L:factor leht} $U\le_{h,t}1$ if and only if there is some tripotent $W\in M_s$ with $U\le W$ and $W\sim_{h,t}1$.
By \cite[Proposition 4.6]{Finite} $U\le W$  if and only if there is a projection $P\in M$ such that $U=PW$. Moreover, 
$$PW=U=U^t=(PW)^t=W^t P^t=WP^t.$$

Hence the assertion follows easily from $(a)$.

Assertion $(d)$ follows from $(b)$ using Proposition~\ref{P:hct-charact}.
\end{proof}

The next proposition describes relations $U\,R\,P$ where $P$ is a projection.

\begin{prop}\label{P:leP AotimesB(H)s}
Let $P\in M_s$ be a projection and $U\in M_s$ be a tripotent. Then the following assertions hold.
\begin{enumerate}[$(a)$]
    \item If $R$ is any of the relations
    $$\le,\le_r,\le_c,\le_h,\le_{hc},\le_n,$$
    then
    $$U\,R\,P \Leftrightarrow U\,R\,1 \mbox{ and }U\le_2 P.$$
    \item $U\le_{n,t} P\Leftrightarrow U\le_2 P\Leftrightarrow \overline{U}U \le P\mbox{ in M}\Leftrightarrow U\overline{U} \le P\mbox{ in M}$.
    \item $U\sim_2P\Leftrightarrow \overline{U}U=P\Leftrightarrow U\overline{U}=P$.
    \item Let $\kappa\in\{r,c,h,hc\}$. Then
    $$U\sim_\kappa P\Leftrightarrow U\sim_2 P\mbox{ and }U\le_\kappa 1.$$
    \item $U\sim_{h,t} P$ if and only if there are $V_1,\dots,V_k\in M$ such that
    \begin{enumerate}[$(i)$]
        \item $V_j=V_j^*$ and $V_j^2=P$ for $j=1,\dots,k$;
        \item $V_1,V_1V_2,\dots,V_1\cdots V_k\in M_s$;
        \item $U=V_1V_2\cdots V_k$.
    \end{enumerate}
 \item $U\le_{h,t}P$ if and only if there are $V_1,\dots,V_k\in M$ satisfying conditions $(i)$ and $(ii)$ from assertion $(e)$ and a projection $Q\in M$ such that $Q\le P$ and
 $$U=QV_1V_2\cdots V_k=V_1V_2\cdots V_kQ^t.$$
 \end{enumerate}

\end{prop}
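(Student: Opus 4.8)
The plan is to deal with $(a)$–$(d)$ by reducing to earlier machinery and to concentrate the real effort on $(e)$ and $(f)$. For $(a)$, the point is that $1-P$ is a projection orthogonal to $P$, so $P\le 1$ in $M_s$; hence Proposition~\ref{p:partial transitivity} applied with $v=P$, $e=1$ and $u=U$ gives at once $U\,R\,P\Leftrightarrow(U\,R\,1\text{ and }U\le_2 P)$ for every $R$ in the list, and by Remark~\ref{rem:relations in subtriples} it is immaterial whether these relations are read in $M_s$ or in $M$.

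For $(b)$ and $(c)$ I would first record, via Lemma~\ref{L:AotimesB(H)s-translations}, that $U^*=\overline U$, so $\overline U U$ and $U\overline U$ are exactly the initial and final projections of $U$ in $M$, and $U\le_2 P$ means $\overline U U\le P$ together with $U\overline U\le P$; the coincidence $\le_{n,t}=\le_2$ here is Remark~\ref{rem:AotimesB(H)s is finite}. The only real content is that the two projection inequalities are equivalent, which I would get from the transpose: it is a $*$-preserving linear anti-automorphism of $M$ fixing $P$ (since $P\in M_s$), and $(\overline U U)^t=U^t(\overline U)^t=U\overline U$ because $U^t=U$ and $(\overline U)^t=\overline{U^t}=\overline U$; thus $\overline U U\le P\Leftrightarrow U\overline U\le P$, proving $(b)$, and likewise $\overline U U=P\Leftrightarrow U\overline U=P$, which combined with the observation that $U\sim_2 P$ means $U$ is a unitary of $M_2(P)=PMP$ gives $(c)$. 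For $(d)$ I would use $(a)$ together with the general principle (already exploited for Proposition~\ref{P:strict order implications}) that, for $\kappa\in\{r,c,h,hc\}$, $U\le_\kappa P$ and $U\sim_2 P$ imply $U\sim_\kappa P$: the forward direction is immediate, and conversely $U\sim_2 P$ gives $U\le_2 P$, which with $U\le_\kappa 1$ yields $U\le_\kappa P$ by $(a)$, and then the principle gives $U\sim_\kappa P$.

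For $(e)$ --- the heart of the proof --- I would follow the pattern of Proposition~\ref{P:le1 C*}$(f)$ with $1$ replaced by $P$ and all computations carried out in the corner $PMP$. By Proposition~\ref{P:simht-char}, $U\sim_{h,t}P$ means there is a chain $P=U_0\sim_h U_1\sim_h\cdots\sim_h U_k=U$ of tripotents, necessarily all lying in $M_s$ and all $\sim_2 P$, i.e.\ unitaries of $PMP$. Combining Observation~\ref{obs:shift to pipf} with Proposition~\ref{P:le1 C*}$(e)$ yields the key fact: for unitaries $v,w$ of $PMP$ one has $v\sim_h w$ if and only if $v^*w$ is a symmetry of $PMP$ (self-adjoint with square $P$). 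For the ``only if'' direction I would set $V_j=U_{j-1}^*U_j$; each $V_j$ is then such a symmetry, and a telescoping argument (using $U_iU_i^*=U_i^*U_i=P$ and $U_0=P$) gives $V_1\cdots V_j=U_j$, which is $(ii)$ and, for $j=k$, $(iii)$. For the ``if'' direction, given the $V_j$ I would put $U_j=V_1\cdots V_j$, observe these are unitaries of $PMP$ lying in $M_s$ by $(ii)$, compute $U_{j-1}^*U_j=V_j$ by the same telescoping, and invoke the key fact to obtain $U_{j-1}\sim_h U_j$, whence $U\sim_{h,t}P$. I expect the main obstacle to be precisely the bookkeeping here: the individual $V_j$ generally fail to lie in $M_s$ (one computes $V_j^t=U_j\overline{U_{j-1}}\ne V_j$ in general), so the symmetry condition $(i)$ must be verified in $M$ while the symmetric-subtriple constraint $(ii)$ is imposed only on the partial products $V_1\cdots V_j=U_j$.

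Finally, for $(f)$ I would combine $(e)$ with Lemma~\ref{L:factor leht} and \cite[Proposition 4.6]{Finite}: $U\le_{h,t}P$ holds iff $U\le W\sim_{h,t}P$ for some tripotent $W\in M_s$. Writing $W=V_1\cdots V_k$ as in $(e)$ and $U=QW$ for a projection $Q\in M$ (from the quoted result), the relation $U\le_2 P$ forces $Q=p_f(U)\le P$; applying the transpose to $U=QW$ and using $U^t=U$, $W^t=W$ gives $U=WQ^t$, so $U=QV_1\cdots V_k=V_1\cdots V_kQ^t$, exactly as required.
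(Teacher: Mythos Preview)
Your proposal is correct and follows essentially the same route as the paper. The paper's proof is terser---it simply cites Proposition~\ref{p:partial transitivity} for $(a)$, uses the conjugation identity $\overline{\overline{U}U}=U\overline{U}$ (equivalent to your transpose computation) together with $\overline{P}=P$ for $(b)$ and $(c)$, refers to Propositions~\ref{P:charact simh} and~\ref{P:hc-charact}$(b)$ for $(d)$, and for $(e)$ and $(f)$ invokes the proof of Proposition~\ref{P:le1 C*}$(f)$ in the corner $PMP$ together with Lemma~\ref{L:factor leht} and \cite[Proposition~4.6]{Finite}---but the underlying arguments are the ones you spell out.
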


\begin{proof}
 $(a)$ This follows from Proposition~\ref{p:partial transitivity}.
 
 Before proceeding observe that
 \begin{equation}\label{eq:up}
  U^*=\overline{U},\  \overline{\overline{U}U}=U\overline{U}\mbox{ and }   \overline{P}=P,
 \end{equation} where the third equality follows from  Lemma~\ref{L:AotimesB(H)s-translations}$(ii)$.
 
 $(b)$ The first equivalence follows from Remark~\ref{rem:AotimesB(H)s is finite}. To show the second equivalence recall that $U\le_2 P$ if and only if $U\in M_2(P)$, which takes place if and only if both $U^*U\le P$ and $UU^*\le P$.
 It remains to use \eqref{eq:up}.

 $(c)$ $U\sim_2 P$ means that $U$ is unitary in $M_2(P)$, i.e., $U^*U=UU^*=P$.
 It remains to use \eqref{eq:up}.
 
 $(d)$ The first two cases are easy, the second two follow from Propositions~\ref{P:charact simh} and~\ref{P:hc-charact}$(b)$.
 
 $(e)$ This follows from the proof of Proposition~\ref{P:le1 C*}$(f)$ as $P$ is the unit of the C$^*$-algebra $M_2(P)$.
 
 $(f)$ This follows from $(e)$ using Lemma~\ref{L:factor leht} and \cite[Proposition 4.6]{Finite}. 
\end{proof}

 Note that assertions $(a)$ and $(d)$ from the previous proposition are abstract and hold in any triple. But we formulate them here because in combination with the respective assertions of Proposition~\ref{P:le1 C*} and Lemma~\ref{L:AotimesB(H)s-translations} they provide a concrete description.
 
 \begin{example2}\label{ex:M2s examples}
 \begin{enumerate}[$(a)$]
     \item Example~\ref{ex:leh not transitive} shows that the relations $\le_h$ and $\sim_h$ are not transitive in $M_s$, as the tripotents in that example are symmetric matrices.
     \item Let
     $$u=\begin{pmatrix} \frac{1}{\sqrt{2}} &  \frac{i}{\sqrt{2}} \\  \frac{i}{\sqrt{2}} &  \frac{1}{\sqrt{2}}\end{pmatrix}, \quad v=\begin{pmatrix} 1 & 0 \\ 0 & -1 \end{pmatrix},\quad e=\begin{pmatrix} 1 & 0 \\ 0 & 1 \end{pmatrix}.$$
     Then $u,v,e$ are symmetric unitary matrices such that
     $u\sim_h v\sim_h e$ but $u$ and $e$ are incomparable with respect to $\le_{hc}$.
     
     Indeed, $e$ is the unit and $v$ is a symmetry, so $v\sim_h u$. Futher,
     $$\J vuv=vu^*v=\begin{pmatrix} 1 & 0 \\ 0 & -1 \end{pmatrix}\begin{pmatrix} \frac{1}{\sqrt{2}} &  -\frac{i}{\sqrt{2}} \\  -\frac{i}{\sqrt{2}} &  \frac{1}{\sqrt{2}}\end{pmatrix}\begin{pmatrix} 1 & 0 \\ 0 & -1 \end{pmatrix}=\begin{pmatrix} \frac{1}{\sqrt{2}} &  \frac{i}{\sqrt{2}} \\  \frac{i}{\sqrt{2}} &  \frac{1}{\sqrt{2}}\end{pmatrix}=u,$$
     hence $u\le_h v$. Since $u\sim_2 v$, we deduce that $u\sim_h v$.
     
     Moreover, $u$ is not a scalar multiple of a self-adjoint matrix, thus $u\not\le_{hc}e$. Since $u$ is unitary and hence $u\sim_2e$, we deduce that $u$ and $e$ are incomparable with respect to $\le_{hc}$.
     
     It follows that the relations $\le_{hc}$ and $\sim_{hc}$ are not transitive on $M_s$.
     
     \item The relation $\le_{n}$ is not transitive on $M_s$. Recall that $\le_{n,t}$ coincide with $\le_2$, in particular, each tripotent $u$ satisfies $u\le_{n,t}1$. However, there are tripotents in $M_s$ which are not normal operators, for example
     $$u=\begin{pmatrix}\frac12 & \frac i2\\\frac i2& -\frac12 \end{pmatrix}.$$
 \end{enumerate}
 \end{example2}

\subsection{The finite-dimensional case -- symmetric matrices}

This subsection is devoted to the analysis of $(M_n)_s$, symmetric $n\times n$ matrices, and of the respective tensor product $A\overline{\otimes}(M_n)_s$. A key role is played by the determinant, so we start by a technical lemma on behavior of determinants. The results are important for $n\ge2$ as $(M_1)_s$ is isomorphic to $\ce$, but they work for $n=1$ as well.

\begin{lemma}\label{L:det in Mns} Let $n
\in\en$.
\begin{enumerate}[$(a)$]
    \item Let $u\in (M_n)_s$ be unitary. Then
    $$u=\alpha_1p_1+\dots+\alpha_np_n,$$
    where $p_1,\dots,p_n$ are mutually orthogonal minimal projections in $(M_n)_s$ and $\alpha_1,\dots,\alpha_n$ are complex units.
    \item Let $u\in (M_n)_s$ be unitary. Fix a decomposition from $(a)$. Then
    $$\det u=\alpha_1\cdots\alpha_n.$$
    \item Let $T$ be a Jordan $*$-automorphism of $(M_n)_s$. Then $\det T(u)=\det u$ for each unitary $u\in (M_n)_s$.
    \item Let $T$ be a  triple automorphism of $(M_n)_s$. Then $\det T(u)=\det T(1)\cdot \det u$ for each unitary $u\in (M_n)_s$.
\end{enumerate}
\end{lemma}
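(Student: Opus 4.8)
The plan is to reduce everything to elementary linear algebra for symmetric matrices, treating $(M_n)_s$ as a finite-dimensional (hence weak$^*$-closed, so JBW$^*$-) subalgebra of $M_n$, with unit the identity matrix and JB$^*$-involution $x^*=\ov x$ (as in Lemma~\ref{L:AotimesB(H)s-translations}). A unitary $u\in(M_n)_s$ is then precisely a symmetric matrix with $\ov u\,u=1$, i.e. a symmetric unitary matrix. For $(a)$ I would write $u=A+iB$ with $A,B$ real symmetric; the identity $\ov u\,u=1$ unwinds to $A^2+B^2=1$ and $AB=BA$, so $A$ and $B$ are simultaneously diagonalized by a single real orthogonal matrix $O$. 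This gives $u=O\Lambda O^t$ with $\Lambda=\operatorname{diag}(\alpha_1,\dots,\alpha_n)$ and $|\alpha_j|=1$. Setting $p_j=OE_{jj}O^t$, where $E_{jj}$ are the standard matrix units, produces mutually orthogonal minimal projections of $(M_n)_s$ with $u=\sum_j\alpha_j p_j$. (Alternatively, $(a)$ is just the spectral decomposition of a unitary in a finite-dimensional JB$^*$-algebra, cf. the use of \cite{cartan6} in Lemma~\ref{L:cartan diagonalization}.)

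For $(b)$ the key observation is that a minimal projection of $(M_n)_s$ is exactly a projection of matrix rank one: a real symmetric matrix projection of rank $\ge 2$ splits as a sum of two nonzero orthogonal symmetric subprojections, hence cannot be minimal. Thus in any decomposition $u=\alpha_1 p_1+\dots+\alpha_n p_n$ from $(a)$ the $p_j$ are $n$ mutually orthogonal rank-one projections in $M_n$, so $\sum_j p_j$ is a rank-$n$ projection, i.e. the identity, and $\alpha_1,\dots,\alpha_n$ are exactly the eigenvalues of $u$ listed with multiplicity; therefore $\det u=\alpha_1\cdots\alpha_n$, independently of the chosen decomposition. For $(c)$, a Jordan $*$-automorphism $T$ is linear and preserves the Jordan product and the involution, hence the triple product as well; consequently it sends projections to projections, minimal to minimal, and preserves orthogonality. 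Thus $T(u)=\sum_j\alpha_j T(p_j)$ is again a decomposition of the type produced in $(a)$, and $(b)$ applied to $T(u)$ gives $\det T(u)=\alpha_1\cdots\alpha_n=\det u$.

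For $(d)$, set $w=T(1)$; since $T$ is a surjective triple automorphism and $1$ is unitary, $w$ is a unitary element of $(M_n)_s$. By Lemma~\ref{L:sqrt of unitary} choose a unitary $v\in(M_n)_s$ (so $v$ is symmetric) with $v^2=w$, and let $\Phi(x)=\J v{x^*}v$ be the triple automorphism of Lemma~\ref{L:shift to 1}, which satisfies $\Phi(1)=w$. Using the triple product of $M_n$ together with $x^*=\ov x$ and the fact that the conjugate transpose of $\ov x$ equals $x^t=x$, one computes $\Phi(x)=vxv$; hence $\det\Phi(y)=(\det v)^2\det y=\det(v^2)\det y=\det w\cdot\det y$ for every $y$. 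Now $S:=\Phi^{-1}\circ T$ is a triple automorphism with $S(1)=\Phi^{-1}(w)=1$; since $a\circ b=\J a1b$ and $a^*=\J1a1$, a unital triple automorphism automatically preserves $\circ$ and $*$, so $S$ is a Jordan $*$-automorphism and $(c)$ gives $\det S(u)=\det u$. Combining, $\det T(u)=\det\Phi(S(u))=\det w\cdot\det S(u)=\det T(1)\cdot\det u$.

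I expect the main obstacle to lie in the bookkeeping of $(d)$: correctly matching the two involutions (the C$^*$-adjoint entering the triple product of $M_n$ and the JB$^*$-involution $x^*=\ov x$) so as to identify $\Phi$ with the explicit map $x\mapsto vxv$, and checking that $\Phi^{-1}\circ T$ is genuinely unital so that the reduction to $(c)$ applies. Parts $(a)$--$(c)$ are then comparatively routine, the only real subtlety being the characterization of the minimal projections of $(M_n)_s$ as the rank-one matrix projections, which is what makes the determinant formula in $(b)$ work for an arbitrary decomposition.
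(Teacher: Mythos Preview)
Your proof is correct. Parts $(a)$--$(c)$ follow essentially the same line as the paper, with your treatment of $(a)$ being a concrete real-symmetric-matrix diagonalization in place of the paper's citation of the abstract spectral decomposition, and $(b)$ resting on the same fact (minimal projections of $(M_n)_s$ have matrix rank one).

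The genuine difference is in $(d)$. The paper argues entirely inside $M_n$: from the decomposition $u=\sum_j\alpha_j p_j$ it passes to $T(u)=\sum_j\alpha_j T(p_j)$, then multiplies by $(T(1))^{-1}$ in $M_n$ to turn the minimal tripotents $T(p_j)\le T(1)$ into minimal projections in $M_n$, and reads off $\det((T(1))^{-1}T(u))=\alpha_1\cdots\alpha_n$ directly. Your route stays within the triple-automorphism framework: you factor $T=\Phi\circ S$ with $\Phi(x)=\J v{x^*}v=vxv$ (for $v$ a symmetric square root of $T(1)$) and $S$ unital, then invoke $(c)$ for $S$ and the multiplicativity of $\det$ for $\Phi$. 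Both arguments are short; the paper's is a bit more direct but briefly leaves $(M_n)_s$, while yours exploits Lemmata~\ref{L:sqrt of unitary} and~\ref{L:shift to 1} to reduce to the Jordan $*$-case without ever forming $(T(1))^{-1}$. The paper also notes, as a further alternative, that $(c)$ and $(d)$ follow immediately from the explicit description of triple automorphisms of $(M_n)_s$ as $x\mapsto wxw^t$ for unitary $w\in M_n$.
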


\begin{proof}
Assertion $(a)$ follows from the spectral decomposition (cf. \cite[Proposition 2.2 (b)]{cartan6}) using the fact that $(M_n)_s$ is a finite-dimensional JB$^*$-algebra of rank $n$.

$(b)$ The formula from $(a)$ is also a spectral decomposition in $M_n$. Since minimal projections in $(M_n)_s$ are minimal also in $M_n$, it follows that $\alpha_1,\dots,\alpha_n$ are exactly the eigenvalues of $u$, each one counted with its multiplicity. Their product is exactly $\det u$.

Assertion $(c)$ follows from $(b)$ as Jordan $*$-automorphisms are linear and map minimal projections to minimal projections.

$(d)$ Fix a decomposition of $u$ from $(a)$. Then
$$T(u)=\alpha_1T(p_1)+\dots+\alpha_nT(p_n)$$
and $T(p_j)$ are mutually orthogonal minimal tripotents with $T(p_j)\le T(1)$. Since $T(1)$ is unitary in $(M_n)_s$ and hence also in $M_n$, $x\mapsto (T(1))^{-1}x$ is a triple automorphism of $M_n$. We get
$$(T(1))^{-1}T(u)=\alpha_1(T(1))^{-1}T(p_1)+\dots+\alpha_n(T(1))^{-1}T(p_n)$$
and $(T(1))^{-1}T(p_j)$ are minimal projections in $M_n$. So,
$$\det u=\alpha_1\cdots\alpha_n=\det ((T(1))^{-1}T(u))=(\det T(1))^{-1}\cdot\det T(u),$$
which completes the proof.
\end{proof}

Note that assertions $(c)$ and $(d)$ of the preceding lemma may be alternatively proved using  a result from \cite[page 199, case $III_n$]{kaup1997real} which says that any triple automorphism on $(M_n)_s$ is of the form $T(x) = u x u^t$ for some unitary $u\in M_n$.

We continue by characterizing relation $\sim_{h,t}$ for unitary elements in $(M_n)_s$.

\begin{prop}\label{P:simht1 in symmetric matrices}
Let $n\in\en$. Let $u,e\in (M_n)_s$ be two unitary elements. Then $u\sim_{h,t} e$ if and only if $\det u=\pm\det e$. Moreover, the respective chain of $\sim_h$ has length at most $2n-1$ (at most $2n-2$ in case $\det u=\det e$).
\end{prop}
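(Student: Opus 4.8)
The plan is to prove the two implications separately, with the ``only if'' direction resting on a determinant computation and the ``if'' direction reducing to an explicit construction in diagonal form.

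For the \emph{only if} part I would argue directly, without any reduction. By Proposition~\ref{P:simht-char} the relation $\sim_{h,t}$ is the transitive hull of $\sim_h$, so $u\sim_{h,t}e$ yields a chain $u=w_0\sim_h w_1\sim_h\dots\sim_h w_k=e$. Since $\sim_h$ implies $\sim_2$ (Proposition~\ref{P:charact simh}) and $e\sim_2 1$, every $w_i$ is unitary in $(M_n)_s$, hence unitary in $M_n$ (Remark~\ref{rem:relations in subtriples}). By the computation in the proof of Proposition~\ref{P:le1 C*}$(f)$, for unitaries $w_{i-1}\sim_h w_i$ the element $w_{i-1}^\ast w_i$ is a symmetry of $M_n$, so $\det(w_{i-1}^\ast w_i)=\pm1$; as $w_i=w_{i-1}(w_{i-1}^\ast w_i)$ this gives $\det w_i=\pm\det w_{i-1}$, and telescoping yields $\det u=\pm\det e$.

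For the \emph{if} part I would first normalize. Since $(M_n)_s$ is a finite-dimensional unital JB$^*$-algebra, Lemmata~\ref{L:sqrt of unitary} and~\ref{L:shift to 1} provide a triple automorphism $\Phi$ of $(M_n)_s$ with $\Phi(1)=e$; it preserves $\sim_{h,t}$ together with chain lengths, and by Lemma~\ref{L:det in Mns}$(d)$ rescales determinants by the fixed factor $\det\Phi^{-1}(1)=(\det e)^{-1}$. Hence $\det u=\pm\det e$ becomes $\det\Phi^{-1}(u)=\pm1$, and we may assume $e=1$ with $\det u=\pm1$. Next I diagonalize: by Lemma~\ref{L:det in Mns}$(a)$ write $u=\alpha_1p_1+\dots+\alpha_np_n$ with $(p_i)$ a frame of minimal symmetric projections, and apply the triple automorphism of Lemma~\ref{L:frames}$(i)$ carrying $(p_i)$ to the standard matrix units $(E_{ii})$. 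It fixes $1=\sum_i p_i$, so by Lemma~\ref{L:det in Mns}$(d)$ it preserves the determinant and again the chain length. Thus it suffices to treat $u=\mathrm{diag}(\beta_1,\dots,\beta_n)$ with $\prod_i\beta_i=\det u=\pm1$.

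The crux is then a two-step building block together with an induction on $n$. Choosing a complex unit $c$ with $c^2=\beta_1\beta_2$, one checks that
\[
\mathrm{diag}(\beta_1,\beta_2)\ \sim_h\ \begin{pmatrix}0&c\\c&0\end{pmatrix}\ \sim_h\ \mathrm{diag}(1,\beta_1\beta_2),
\]
where both connecting matrices $\mathrm{diag}(\bar\beta_1,\bar\beta_2)\left(\begin{smallmatrix}0&c\\c&0\end{smallmatrix}\right)$ and $\left(\begin{smallmatrix}0&\bar c\\\bar c&0\end{smallmatrix}\right)\mathrm{diag}(1,\beta_1\beta_2)$ are symmetries of $M_2$ and the middle matrix is symmetric (the identity $c^2=\beta_1\beta_2$ is exactly what forces self-adjointness). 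Embedding this on the coordinates $\{1,2\}$ and leaving the others fixed gives a two-step $\sim_h$ chain inside $(M_n)_s$ from $\mathrm{diag}(\beta_1,\dots,\beta_n)$ to $\mathrm{diag}(1,\beta_1\beta_2,\beta_3,\dots,\beta_n)$; its first coordinate is now $1$, and the block on coordinates $2,\dots,n$ is a diagonal symmetric unitary whose determinant is still $\det u$. Applying the inductive hypothesis to this $(M_{n-1})_s$-block connects it to the identity, and I would bottom out at $n=1$, where $\mathrm{diag}(\pm1)$ needs $0$ resp.\ $1$ steps. The step counts satisfy $f(n,+1)=2+f(n-1,+1)$ and $f(n,-1)=2+f(n-1,-1)$ with $f(1,+1)=0,\ f(1,-1)=1$, giving $f(n,+1)=2n-2$ and $f(n,-1)=2n-1$, as claimed. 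The main obstacle is precisely this building block: verifying that $c^2=\beta_1\beta_2$ makes both intermediate matrices symmetries while the partial products stay symmetric, and the bookkeeping that the two symmetries used have determinants multiplying to $+1$ (so the sign of the block determinant is preserved at each reduction), which is what makes the length depend only on $\det u$.
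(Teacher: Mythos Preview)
Your proof is correct and follows essentially the same approach as the paper: reduce to $e=1$ via Lemma~\ref{L:shift to 1} and Lemma~\ref{L:det in Mns}$(d)$, diagonalize via Lemma~\ref{L:frames}$(i)$, and then induct by collapsing the first two diagonal entries using a length-two $\sim_h$ chain through an off-diagonal symmetric unitary. Your building block $\mathrm{diag}(\beta_1,\beta_2)\sim_h\bigl(\begin{smallmatrix}0&c\\c&0\end{smallmatrix}\bigr)\sim_h\mathrm{diag}(1,\beta_1\beta_2)$ with $c^2=\beta_1\beta_2$ is a mild generalization of the paper's choice $c=1$ (applied after first writing $\mathrm{diag}(\alpha,\bar\alpha)$), but the structure and the length recursion are identical.
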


\begin{proof}
Assume $u\sim_{h,t}e$ in $(M_n)_s$. Then $u\sim_{h,t}e$ in $M_n$ as well, so $\det u=\pm\det e$ by Proposition~\ref{P:all orders vN}$(c)$ (note that a symmetry has determinant $\pm1$).

Conversely, assume $\det u=\pm\det e$. By  Lemma~\ref{L:shift to 1} there is a triple automorphism $T$ of $(M_n)_s$ with $T(e)=1$. By Lemma~\ref{L:det in Mns}$(d)$ we deduce that $
\det T(u)=\pm1$.
Thus, we may without loss of generality assume that $e=1$.

So, assume that $\det u=\pm 1$. Fix a decomposition of $u$ from Lemma~\ref{L:det in Mns}$(a)$. Then $p_1,\dots,p_n$ is a frame in $(M_n)_s$ and $p_1+\dots+p_n=1$. By applying Lemma~\ref{L:frames}$(i)$ to this frame and the canonical frame formed by diagonal matrices with exactly one $1$ on the diagonal (completed by zeros), we get a triple automorphism $S$ of $(M_n)_s$ such that $S(1)=1$ and $S(u)$ is a diagonal matrix. Then $S$ is even Jordan $*$-automorphism, hence $\det S(u)=\det u=\pm1$ (cf. Lemma \ref{L:det in Mns}$(c)$).
As a consequence we may assume without loss of generality that $u$ is a diagonal matrix.

Let us proceed by induction on $n$. 
For $n=1$ we have $(M_n)_s=\ce$, hence we even have $u\sim_r 1$, hence $u\sim_h1$. The next step is $n=2$. So, assume that $n=2$ and $u$ is a diagonal matrix with $\det u=\pm1$. It means that there is a complex unit $\alpha$ such that
$$u=\begin{pmatrix} \alpha & 0 \\ 0 &\overline{\alpha}\end{pmatrix} \mbox{\quad or\quad}u=\begin{pmatrix} \alpha & 0 \\ 0 &-\overline{\alpha}.\end{pmatrix}$$
In the first case we get
$$\begin{pmatrix} \alpha & 0 \\ 0 &\overline{\alpha}\end{pmatrix}\sim_h \begin{pmatrix} 0 & 1 \\ 1 &0\end{pmatrix}\sim_h 1.$$
To establish the second case it is enough to observe that
$$ \begin{pmatrix} \alpha & 0 \\ 0 &-\overline{\alpha}\end{pmatrix}\sim_h\begin{pmatrix} \alpha & 0 \\ 0 &\overline{\alpha}\end{pmatrix}$$

Next assume that the statement holds for $n$. Let us prove it for $n+1$. Let 
$$u=\begin{pmatrix}\alpha_1&0&0&\dots& 0 \\ 0&\alpha_2&0&\dots&0 \\ 0&0&\alpha_3&\dots&0
\\ \vdots&\vdots&\vdots&\ddots&\vdots \\ 0&0&0&\dots&\alpha_{n+1}\end{pmatrix},$$
where $\alpha_1,\dots,\alpha_{n+1}$ are complex units with $\alpha_1\cdots\alpha_{n+1}=\pm1$. By the case $n=2$ we see that
$$\begin{pmatrix}\alpha_1&0&0&\dots& 0 \\ 0&\alpha_2&0&\dots&0 \\ 0&0&\alpha_3&\dots&0
\\ \vdots&\vdots&\vdots&\ddots&\vdots \\ 0&0&0&\dots&\alpha_{n+1}\end{pmatrix}\sim_{h,t}
\begin{pmatrix}1&0&0&\dots& 0 \\ 0&\alpha_1\alpha_2&0&\dots&0 \\ 0&0&\alpha_3&\dots&0
\\ \vdots&\vdots&\vdots&\ddots&\vdots \\ 0&0&0&\dots&\alpha_{n+1}\end{pmatrix}$$
and the respective chain has length $2$. 

Now we may apply the induction hypothesis to the matrix formed by omitting the first row and the first column to show that
$$\begin{pmatrix}1&0&0&\dots& 0 \\ 0&\alpha_1\alpha_2&0&\dots&0 \\ 0&0&\alpha_3&\dots&0
\\ \vdots&\vdots&\vdots&\ddots&\vdots \\ 0&0&0&\dots&\alpha_{n+1}\end{pmatrix}\sim_{h,t}1$$
with the chain of length at most $2n-1$.
\end{proof}

We now obtain the following result on relations $\le_{h,t}$ and $\le_{hc,t}$ in $(M_n)_s$

\begin{prop}\label{P:coincidence in Mns}
Let $n\in\en$.
\begin{enumerate}[$(a)$]
    \item The relations $\le_2$ and $\le_{hc,t}$ coincide in $(M_n)_s$. In particular, the  relations $\sim_2$ and $\sim_{hc,t}$ coincide in $(M_n)_s$.
    \item To describe $\sim_{h,t}$ (or $\sim_{hc,t}$) in $(M_n)_s$ chains of $\sim_h$ (or $\sim_{hc}$) of length at most $2n-1$ are enough.
    \item To describe $\le_{h,t}$ (or $\le_{hc,t}$) in $(M_n)_s$ chains of $\le_h$ (or $\le_{hc}$) of length at most $2n$ are enough.
\end{enumerate}
\end{prop}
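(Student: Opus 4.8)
The plan is to reduce all three assertions to the case of unitary (complete) tripotents, where Proposition~\ref{P:simht1 in symmetric matrices} applies, and then to read everything off from the determinant criterion together with the finiteness of $(M_n)_s$ recorded in Remark~\ref{rem:AotimesB(H)s is finite}. The heart of $(a)$ is the ``in particular'' statement $\sim_2\,=\,\sim_{hc,t}$, from which the rest is deduced. One implication is automatic, since $\sim_{hc,t}\Rightarrow\sim_2$ always holds. For the converse I take $u\sim_2 e$ and, using finiteness and Lemma~\ref{L:shift to projection}, apply a triple automorphism $\Phi$ of $(M_n)_s$ (which preserves every relation) to assume $e=P$ is a projection. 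Then $u$ is a unitary tripotent of the Peirce-$2$ algebra $E_2(P)$, and the key structural point is that, $P$ being a symmetric projection of some rank $k\le n$, the subalgebra $E_2(P)$ is triple-isomorphic to $(M_k)_s$ with $P$ as unit. Inside $(M_k)_s$ the determinant of the unitary $u$ is a complex unit, so I choose a complex unit $\alpha$ with $\alpha^k\det u=1$; then $\det(\alpha u)=1=\det P$, and Proposition~\ref{P:simht1 in symmetric matrices} yields $\alpha u\sim_{h,t}P$. By Proposition~\ref{P:hct-charact}$(b)$ this gives $u\sim_{hc,t}P$, and transporting back through $\Phi$ gives $u\sim_{hc,t}e$.

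For the main part of $(a)$ I use $\le_{hc,t}\Rightarrow\le_{n,t}\Rightarrow\le_2$ (Proposition~\ref{P:order implications}) for one inclusion. Conversely, if $u\le_2 e$, then finiteness gives $u\le_{n,t}e$, so Lemma~\ref{L:factor len} produces a tripotent $w$ with $u\le w$ and $w\sim_2 e$. The previous paragraph gives $w\sim_{hc,t}e$, while $u\le w$ forces $u\le_{hc}w$, hence $u\le_{hc,t}w$; transitivity of $\le_{hc,t}$ then gives $u\le_{hc,t}e$.

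The chain-length bounds $(b)$ and $(c)$ come from bookkeeping in these same arguments. For $\sim_{h,t}$ in $(b)$, given $u\sim_{h,t}e$ I reduce as above to $e=P$; any witnessing chain consists of tripotents $\sim_2$-equivalent to $P$, hence lying in $E_2(P)\cong(M_k)_s$, and $\sim_h$ restricts faithfully to subtriples by Remark~\ref{rem:relations in subtriples}, so $u\sim_{h,t}P$ already holds inside $(M_k)_s$; Proposition~\ref{P:simht1 in symmetric matrices} then furnishes a $\sim_h$-chain of length at most $2k-1\le 2n-1$. For $\sim_{hc,t}$ the chain produced in the ``in particular'' step has the form $\alpha u\sim_h w_2\sim_h\cdots\sim_h P$ of length $\le 2n-1$; since $\alpha u\sim_h w_2$ gives $u\sim_{hc}w_2$ directly, replacing the first vertex yields a $\sim_{hc}$-chain from $u$ of the same length. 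Finally, for $\le_{h,t}$ (resp.\ $\le_{hc,t}$) in $(c)$ I invoke Lemma~\ref{L:factor leht} (resp.\ the argument of $(a)$) to write the relation as $u\le v$ with $v\sim_{h,t}e$ (resp.\ $v\sim_{hc,t}e$); prepending the single step $u\le_h v$ (resp.\ $u\le_{hc}v$) to a chain of length $\le 2n-1$ gives a chain of length at most $2n$.

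The main obstacle I expect is making the reduction to the unitary case rigorous: one must justify cleanly that the Peirce-$2$ algebra of a projection in $(M_n)_s$ is again a symmetric-matrix factor $(M_k)_s$ (so that Proposition~\ref{P:simht1 in symmetric matrices} applies with the correct rank), and that passing to this subtriple neither alters the validity of the transitive-hull relations nor inflates the chain length beyond $2k-1\le 2n-1$. Both points rest on Remark~\ref{rem:relations in subtriples} (faithful restriction of $\sim_h$) and on the automorphisms of Lemmata~\ref{L:shift to 1} and~\ref{L:shift to projection}; the scalar adjustment normalizing the determinant is the only genuinely new computation, and it is elementary.
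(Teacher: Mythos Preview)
Your proposal is correct and follows essentially the same strategy as the paper: reduce $e$ to a projection, identify the Peirce-$2$ subspace with $(M_k)_s$, and invoke Proposition~\ref{P:simht1 in symmetric matrices}. The paper does the reduction more directly via Lemma~\ref{L:frames}$(ii)$ (sending $e$ to the diagonal matrix with $k$ ones, which makes the identification $((M_n)_s)_2(e)\cong(M_k)_s$ immediate), whereas you route through Lemma~\ref{L:shift to projection} and then assert the identification; you should note that this assertion itself still rests on Lemma~\ref{L:frames} (or an equivalent diagonalization of symmetric projections), so the paper's route is slightly cleaner. Your explicit determinant normalization by $\alpha$ is exactly what the paper is using implicitly when it says ``By Proposition~\ref{P:simht1 in symmetric matrices} we deduce that $u\sim_{hc,t}e$'', and your derivation of the $\le$-case via Lemma~\ref{L:factor len} is equivalent to the paper's direct appeal to finiteness.
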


\begin{proof}
Let $u,e\in (M_n)_s$ be two tripotents such that $u\sim_2 e$. Let $k$ denote the rank of $e$. By Lemma~\ref{L:frames}$(ii)$ there is a triple automorphism $T$ of $(M_n)_s$ such that $T(e)$ is a diagonal matrix with $k$ ones and $n-k$ zeros on the diagonal. So, without loss of generality we may assume that already $e$ is of that form. In this case $((M_n)_s)_2(e)$ is isomorphic to $(M_k)_s$. By Proposition~\ref{P:simht1 in symmetric matrices} we deduce that $u\sim_{hc,t}e$ and that a chain of $\sim_{hc}$ of length $2k-1$ is enough. Moreover,  if even $u\sim_{h,t}e$, a chain of $\sim_{h}$ of length $2k-1$ is enough. This completes the proof of assertion $(a)$ for $\sim_2$ and of assertion $(b)$.

To prove the remaining part of $(a)$ assume $u\le_2 e$. Since $(M_n)_s$ is finite (see Remark~\ref{rem:AotimesB(H)s is finite}), there is a tripotent $v\in (M_n)_s$ with $u\le v\sim_2 e$. By the already proved part we get $v\sim_{hc,t}e$. Thus $u\le_{hc,t}e$.

Assertion $(c)$ follows from $(b)$ using Lemma~\ref{L:factor leht} (for $\le_{h,t}$) and additionally Proposition~\ref{P:hct-charact}$(a)$ (for $\le_{hc,t}$).
\end{proof}

\begin{lemma}\label{L:leht selection}
Let $n\in\en$. Then
$$A=\{u\in (M_n)_s\setsep u\mbox{ is a tripotent such that }u\le_{h,t}\1\}$$
is a compact set. Moreover, there is a Borel measurable mapping $\Phi:A\to (M_n)_s$ such that for each $u\in A$ its image $\Phi(u)$ is a tripotent such that $u\le \Phi(u)\sim_{h,t}\1$.
\end{lemma}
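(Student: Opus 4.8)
The plan is to reformulate membership in $A$ using the factorization Lemma~\ref{L:factor leht} together with the description of $\sim_{h,t}$-equivalence to the unit from Proposition~\ref{P:simht1 in symmetric matrices}. First I would introduce the set
\[
\mathcal D=\{v\in (M_n)_s\setsep v\sim_{h,t}\1\}
\]
and observe that, since $v\sim_{h,t}\1$ forces $v\sim_2\1$ (i.e. $v$ is unitary), while conversely a unitary $v$ satisfies $v\sim_{h,t}\1$ exactly when $\det v=\pm1$, the set $\mathcal D$ is precisely the set of unitary elements of $(M_n)_s$ of determinant $\pm1$. By Lemma~\ref{L:factor leht}, a tripotent $u$ satisfies $u\le_{h,t}\1$ if and only if there is a tripotent $v$ with $u\le v\sim_{h,t}\1$, that is, $u\le v$ for some $v\in\mathcal D$. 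Hence $A$ is exactly the set of tripotents lying below some element of $\mathcal D$.

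To prove compactness I would consider the graph
\[
G=\{(u,v)\in\mathcal T\times\mathcal D\setsep u\le v\},
\]
where $\mathcal T$ denotes the set of all tripotents in the finite-dimensional space $(M_n)_s$. Using the equivalence $u\le v\iff u=\J uuv$ from Proposition~\ref{P:order char}, the condition defining $G$ is continuous, so $G$ is closed. Since $\mathcal T$ is compact (closed and bounded in finite dimension) and $\mathcal D$ is a closed subset of $\mathcal T$, cut out by the closed conditions that $v$ be unitary (which reads $\J vv\1=\1$) and $\det v\in\{1,-1\}$, the set $G$ is compact. Now $A$ is precisely the image of $G$ under the projection onto the first coordinate, hence it is compact as a continuous image of a compact set.

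For the selection I would invoke the same consequence of the Kuratowski--Ryll-Nardzewski theorem already used in Lemmata~\ref{L:Cartan parametrization} and~\ref{L:cartan diagonalization}. The set-valued map $u\mapsto G(u)=\{v\in\mathcal D\setsep u\le v\}$ has nonempty values for every $u\in A$ (by the reformulation above) and its compact graph $G$ sits inside the product of the compact metric spaces $A$ and $\mathcal D$; the cited theorem therefore produces a Borel measurable map $\Phi\colon A\to\mathcal D\subset(M_n)_s$ with $(u,\Phi(u))\in G$ for all $u$. By construction $\Phi(u)$ is a tripotent satisfying $u\le\Phi(u)$ and $\Phi(u)\in\mathcal D$, i.e. $\Phi(u)\sim_{h,t}\1$, which is exactly what is required. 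I expect no deep obstacle: the argument is essentially a packaging of results already established, and the only points needing care are checking that $G$ is genuinely closed (so that $A$ is compact and the selection theorem applies) and that the section $G(u)$ is nonempty precisely on $A$ --- both of which reduce to the equivalences of Lemma~\ref{L:factor leht} and Proposition~\ref{P:simht1 in symmetric matrices}.
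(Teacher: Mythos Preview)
Your proof is correct and follows essentially the same approach as the paper: both introduce the graph set of pairs $(u,v)$ with $u\le v\sim_{h,t}\1$, verify its compactness by expressing the defining conditions as closed (polynomial/continuous) equations---using Proposition~\ref{P:simht1 in symmetric matrices} to replace $v\sim_{h,t}\1$ by the closed condition $\det v=\pm1$---and then obtain $\Phi$ as a Borel selection via the Kuratowski--Ryll-Nardzewski theorem. The only cosmetic difference is that you use $u=\J uuv$ from Proposition~\ref{P:order char}$(iii)$ while the paper uses $u=\J uvu$ from $(ii)$.
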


\begin{proof}
By Lemma~\ref{L:factor leht} $u\le_{h,t}\1$ if and only if there is a tripotent $v$ such that $u\le v\sim_{h,t}\1$. We therefore consider the set
$$B=\{(u,v)\in (M_n)_s\times (M_n)_s\setsep u\le v\sim_{h,t}\1\}.$$
Then $A$ is the projection of $B$ on the first coordinate. We observe that 
$$B=\{(u,v)\in (M_n)_s
\setsep u=\J uuu, v=\J vvv, u=\J uvu, \det v=\pm1\},$$
so $B$ is compact. We deduce that $A$ is compact as well. Moreover, $\Phi$ may be found as a Borel measurable selection of the mapping
$$u\mapsto \{v\setsep (u,v)\in B\}$$
which exists by a consequence of Kuratowski-Ryll-Nardzewski theorem (see, e.g., \cite[Theorem on p. 403]{Ku-RN}).
\end{proof}

\begin{prop}\label{P:AotimesMn}
Let $n\in \en$ and let $M=L^\infty(\mu,M_n)$, where $\mu$ satisfies \eqref{eq:CK=Linfty}. In this case we have $M=C(\Omega,M_n)$ (cf. Lemma~\ref{L:LinftyC=CKC}).
\begin{enumerate}[$(i)$]
    \item Let $\uu\in M_s (=L^\infty(\mu,(M_n)_s)=C(\Omega,(M_n)_s)$ be a unitary element. Then there is $f\in C(\Omega,\TT)$ such that $\uu\sim_{h,t} f\cdot\1$.
    \item Let $\uu,\e\in M_s$ be two tripotents. Then $\uu\sim_{h,t}\e$ (in $M_s$) if and only if $\uu(\omega)\sim_{h,t}\e(\omega)$ (in $(M_n)_s$) for each $\omega\in\Omega$. Moreover, chains of $\sim_h$ of length at most $2n-1$ are enough.
    \item Let $\uu,\e\in M_s$ be two tripotents. Then $\uu\le_{h,t}\e$ (in $M_s$) if and only if $\uu(\omega)\le_{h,t}\e(\omega)$ (in $(M_n)_s$) for each $\omega\in\Omega$. Moreover, chains of $\le_h$ of length at most $2n$ are enough.
\end{enumerate}
\end{prop}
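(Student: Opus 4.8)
The plan is to prove the three assertions in the order $(ii)$, $(i)$, $(iii)$: part $(i)$ will be an immediate consequence of $(ii)$, and $(iii)$ will be handled by the same technique as $(ii)$. In all three statements the ``only if'' direction is immediate, since for each fixed $\omega\in\Omega$ the evaluation $\x\mapsto\x(\omega)$ is a triple homomorphism of $M_s=C(\Omega,(M_n)_s)$ onto $(M_n)_s$ and hence preserves every relation in one direction (Proposition~\ref{P:C0(T,E)}$(a)$). Thus the whole content lies in the ``if'' directions, and the common engine will be a measurable selection of pointwise chains followed by a passage from an almost-everywhere statement to an everywhere statement; this passage is legitimate because $\mu$ has full support and $C(\Omega)=L^\infty(\mu)$ by \eqref{eq:CK=Linfty}.

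For the nontrivial direction of $(ii)$ assume $\uu(\omega)\sim_{h,t}\e(\omega)$ in $(M_n)_s$ for every $\omega$. By Proposition~\ref{P:coincidence in Mns}$(b)$ each such relation is witnessed by a chain of $\sim_h$ of length at most $2n-1$; padding by repetition (reflexivity of $\sim_h$) we may take all chains of the same length $2n-1$. I would then consider the multifunction
$$\omega\mapsto G(\omega)=\left\{(x_1,\dots,x_{2n-1})\in((M_n)_s)^{2n-1}\setsep x_1=\e(\omega),\ x_{2n-1}=\uu(\omega),\ x_i=\J{x_i}{x_i}{x_i},\ x_{i+1}\sim_h x_i\right\}.$$
Each value is a nonempty compact subset of the finite-dimensional (hence Polish) space $((M_n)_s)^{2n-1}$, and the defining conditions depend measurably on $\omega$ through the continuous data $\uu,\e$, so the multifunction is Borel measurable. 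A consequence of the Kuratowski--Ryll-Nardzewski theorem (as already used in Lemma~\ref{L:leht selection}, see \cite{Ku-RN}) yields a Borel selection $\omega\mapsto(x_1(\omega),\dots,x_{2n-1}(\omega))$. Each $x_i$ is a bounded Borel map $\Omega\to(M_n)_s$, so by Lemma~\ref{L:LinftyC=CKC} it agrees $\mu$-almost everywhere with a continuous $\widetilde{x}_i\in C(\Omega,(M_n)_s)$. Since $\mu$ has full support, each of the closed conditions ``$\widetilde x_i$ is a tripotent'', ``$\widetilde x_{i+1}=\J{\widetilde x_i}{\widetilde x_{i+1}}{\widetilde x_i}$'', ``$\widetilde x_1=\e$'' and ``$\widetilde x_{2n-1}=\uu$'' holds on a dense set and is an equality of continuous functions, hence holds everywhere; as the triple product on $M_s$ is pointwise, these are identities in $M_s$. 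This produces the chain $\e=\widetilde x_1\sim_h\dots\sim_h\widetilde x_{2n-1}=\uu$ in $M_s$, whence $\uu\sim_{h,t}\e$ with a chain of length at most $2n-1$.

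For $(i)$, the map $d=\det\uu$ is continuous and $\TT$-valued. Composing it with a Borel right inverse of $z\mapsto z^n$ on $\TT$ gives a Borel $n$-th root of $d$, which by \eqref{eq:CK=Linfty} coincides $\mu$-a.e.\ with some $f\in C(\Omega)$; full support then forces $|f|\equiv1$ and $f^n=d$ everywhere. Hence for each $\omega$ we have $\det\uu(\omega)=f(\omega)^n=\det(f(\omega)\1)$, so $\uu(\omega)\sim_{h,t}f(\omega)\1$ in $(M_n)_s$ by Proposition~\ref{P:simht1 in symmetric matrices}; since $f\cdot\1$ is a (unitary) tripotent in $M_s$, applying the already-proved part $(ii)$ gives $\uu\sim_{h,t}f\cdot\1$ in $M_s$. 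For $(iii)$ the nontrivial direction is treated exactly as in $(ii)$, replacing $\sim_h$-chains by $\le_h$-chains: assuming $\uu(\omega)\le_{h,t}\e(\omega)$ for all $\omega$, Proposition~\ref{P:coincidence in Mns}$(c)$ provides pointwise chains of $\le_h$ of length at most $2n$, and the same selection-and-upgrade scheme yields a global chain $\e=\widetilde x_1,\dots,\widetilde x_{2n}=\uu$ with $\widetilde x_{i+1}\le_h\widetilde x_i$ everywhere, i.e.\ $\uu\le_{h,t}\e$ with a chain of length at most $2n$. (Alternatively, $(iii)$ can be deduced from $(ii)$ via Lemma~\ref{L:factor leht}, selecting a Borel, hence a.e.-continuous, tripotent $\W$ with $\uu\le\W\sim_{h,t}\e$ pointwise, in the spirit of Lemma~\ref{L:leht selection}.)

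The main obstacle is not any single algebraic identity but the uniform, measurable choice of the connecting chains together with the transition from ``almost everywhere'' to ``everywhere''. Concretely, one must verify that the chain-multifunctions have nonempty compact values and are Borel measurable, so that Kuratowski--Ryll-Nardzewski applies, and then exploit both the identity $C(\Omega)=L^\infty(\mu)$ and the full support of $\mu$ to upgrade the a.e.\ validity of the tripotent equations, the relations $\sim_h$ (resp.\ $\le_h$) between consecutive terms, and the boundary conditions, to validity at every point of $\Omega$. Everything else reduces to the pointwise results in $(M_n)_s$ recorded in Propositions~\ref{P:simht1 in symmetric matrices} and~\ref{P:coincidence in Mns}.
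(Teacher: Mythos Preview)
Your proof is correct and takes a genuinely different route from the paper's. The paper proceeds constructively: for $(ii)$ it first applies Lemma~\ref{L:Cartan vector parametrization} to replace $\e$ by a function with values in the finite set $\{0,e_1,\dots,e_n\}$ of standard projections, then applies Lemma~\ref{L:Cartan vector diagonalization} to make $\uu$ pointwise diagonal, and finally builds the $\sim_h$-chain explicitly by running the inductive recipe from the proof of Proposition~\ref{P:simht1 in symmetric matrices} with continuous function entries on the diagonal. Part $(i)$ is proved the same way (diagonalize, then extract an $n$-th root of the determinant), and $(iii)$ is obtained from $(ii)$ via Lemma~\ref{L:leht selection}, exactly the alternative you mention at the end. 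Your approach bypasses the explicit diagonalization entirely: you encode the existence of a chain as a closed condition and appeal to Kuratowski--Ryll-Nardzewski once to select the whole chain, then upgrade from a.e.\ to everywhere using \eqref{eq:CK=Linfty}. This is cleaner and more uniform, and it transparently explains why the pointwise bound $2n-1$ (resp.\ $2n$) passes to the global one. The paper's more hands-on construction, by contrast, makes the intermediate tripotents explicit and is tailored so that the same parametrize-then-diagonalize machinery can be reused verbatim for the antisymmetric and $C_6$ cases later on. One small point worth making explicit in your write-up: the selection is most safely carried out over the Polish space $((M_n)_s)^2$ (via the closed set of triples $(a,b,\text{chain})$) and then composed with the continuous map $\omega\mapsto(\e(\omega),\uu(\omega))$, rather than directly over the possibly non-metrizable $\Omega$; this matches how the paper invokes \cite{Ku-RN} and removes any doubt about measurability of the multifunction.
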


\begin{proof} Let $p_1,\dots,p_n\in M_n$ be the canonical diagonal projections with exactly one $1$ on the diagonal. Further, set $e_k=p_1+\dots+p_k$ for $k\in\{1,\dots,n\}$. (In particular, then $e_n=\1$, the unit matrix.)

$(i)$ Assume $\uu\in M_s$ is unitary. Then $\uu(\omega)$ is unitary in $(M_n)_s$ (i.e., $\uu(\omega)\sim_2\1$ in $(M_n)_s$) for each $\omega\in \Omega$. We apply Lemma~\ref{L:Cartan vector diagonalization} to $\uu$ and $\1=p_1+\dots+p_n$ and get the respective mapping $\Theta$. Then
$\Theta(\1)=\1$ and $\Theta(\uu)(\omega)$ is diagonal matrix for each $\omega\in\Omega$.
Set  
$$g(\omega)=\det\uu(\omega)=\det\Theta(\uu)(\omega),\quad\omega\in\Omega.$$
Note that the second equality follows from Lemma~\ref{L:det in Mns}, Lemma~\ref{L:Cartan vector diagonalization}$(ii)$ and $(iii)$ and the fact that $\Theta$ is unital. Then $g$
is a continuous function, so by the assumption \eqref{eq:CK=Linfty} there is $f\in C(\Omega,\TT)$ with $f^n=g$ (cf. the proof of Proposition~\ref{P:products of symmetries}$(iv)$).
Next, by applying the procedure from the proof of Lemma~\ref{P:simht1 in symmetric matrices} to diagonal matrices whose entries are continuous functions we deduce that $\overline{f}\cdot\Theta(\uu)\sim_{h,t}\1$. It follows that $\Theta(\uu)\sim_{h,t}f\cdot\1$, thus $\uu\sim_{h,t}f\cdot\1$ (as $\Theta$ is a triple automorphism and $\Theta(f\cdot\1)=f\cdot\1$ by Lemma~\ref{L:Cartan vector diagonalization}$(iii)$).

$(ii)$ The `only if' part is obvious. To prove the converse assume that 
$\uu(\omega)\sim_{h,t}\e(\omega)$ for each $\omega\in\Omega$. Apply Lemma~\ref{L:Cartan vector parametrization} to  $\e$ and $e_1,\dots,e_n$ in place of $u_1,\dots,u_m$. We get a mapping $\Psi$.

Then $\Psi(\e)$ attains only values $0,e_1,\dots,e_n$ and, by the properties of $\Psi$ described in the just quoted Lemma, $\Psi(\uu)(\omega)\sim_{h,t}\Psi(\e)(\omega)$ for $\omega\in\Omega$. Hence we may apply Lemma~\ref{L:Cartan vector diagonalization}
to $\Psi(\uu)$ in place of $\uu$, $e_k=p_1+\dots+p_k$ and $e_k^j = p_j$. We get a mapping $\Theta$.

Then $\Theta(\Psi(\e))=\Psi(\e)$, $\Theta(\Psi(\uu))(\omega)\sim_{h,t}\Psi(\e)(\omega)$ for $\omega\in\Omega$ and the values of $\Theta(\Psi(\uu))$ are diagonal matrices.

By applying the procedure from the proof of Lemma~\ref{P:simht1 in symmetric matrices} to diagonal matrices whose entries are continuous functions we deduce that $\Theta(\Psi(\uu))\sim_{h,t}\Psi(\uu)$ using a chain of $\sim_h$ of length at most $2n-1$. (We proceed separately on each of the clopen sets $\Omega_k$.)

Now we deduce that the same holds for $\uu$ and $\e$.

$(iii)$ The `only if' part is obvious. The statement of the length chains follows from $(ii)$ and Lemma~\ref{L:factor leht}. So, it remains to prove the `if' part.

To this end assume that $\uu(\omega)\le_{h,t}\e(\omega)$ for each $\omega\in\Omega$. Up to applying Lemma~\ref{L:Cartan vector parametrization} as in the proof of $(ii)$ we may assume that the values of $\e$ are only $0,e_1,\dots,e_n$. By application of Lemma~\ref{L:leht selection} on the clopen sets $\Omega_1,\dots,\Omega_n$ we find a tripotent $\vv\in L^\infty(\Omega,(M_n)_s)=C(\Omega,(M_n)_s)$
such that for each $\omega\in\Omega$ we have $\uu(\omega)\le\vv(\omega)\sim_{h,t}\e(\omega)$. Clearly $\uu\le \vv$ and by $(ii)$ we get $\vv\sim_{h,t}\e$. Thus $\uu\le_{h,t}\e$ by Lemma~\ref{L:factor leht}.
 \end{proof}

\begin{question}
\begin{enumerate}[$(1)$]
    \item How long chains of $\sim_h$ are necessary to describe $\sim_{h,t}$ in the JBW$^*$-triple $(M_n)_s$ (or $L^\infty(\mu,(M_n)_s)$)? Is the bound $2n-1$ optimal? Is there a uniform bound independent on $n$?
    \item How long chains of $\sim_h$ are necessary to describe $\sim_{h,t}$ in $B(H)_s$ for an infinite-dimensional $H$? Is there some bound?
    \item Let $H$ be infinite-dimensional and let $u\in B(H)_s$ be unitary. Is $u\sim_{h,t}1$ in $B(H)_s$?
    \item Do the relations $\le_2$ and $\le_{hc,t}$ coincide in $B(H)_s$ for an infinite-dimensional $H$? 
    \end{enumerate}
\end{question}

\subsection{The antisymmetric case}

The case of $M_a=A\overline{\otimes}B(H)_a$ is quite different. It is a subtriple of $M=A\overline{\otimes}B(H)$, but not a JB$^*$-subalgebra. It is closed under the involution, but not under the Jordan product  (in fact, $x\circ y\in M_s$ whenever $x,y\in M_a$) and, moreover, it does not contain the unit of $M$.

 But sometimes $M_a$ admits a structure of a JBW$^*$-algebra. It depends on the dimension of $H$. To avoid trivial cases we assume $\dim H\ge3$ as it is usual. Basic properties are summarized in the following remark.

\begin{remark}\label{rem:AotimesB(H)a}
\begin{enumerate}[$(1)$]
    \item Assume $\dim H<\infty$. By \cite[Proposition 5.26$(a)$]{Finite} $A\overline{\otimes}B(H)_a$ is isomorphic to $L^\infty(\mu,B(H)_a)$ and it is a finite JBW$^*$-triple. Hence, by Proposition~\ref{P:le2=lent in finite} the relations $\le_2$ and $\le_{n,t}$ coincide in $M_a$.
    \item If $\dim H$ is finite and odd, then $A\overline{\otimes}B(H)_a$ contains no unitary element. This is known, an easy proof is given in  \cite[Proposition 5.26$(c)$]{Finite}.
    \item If $\dim H$ is finite and even, then $A\overline{\otimes}B(H)_a$ contains unitary elements, so it admits a structure of JBW$^*$-algebra. This is known, an easy proof is given in  \cite[Proposition 5.26$(b)$]{Finite}.
    \item If $\dim H$ is infinite, then $A\overline{\otimes}B(H)_a$ contains unitary elements, hence it admits a structure of JBW$^*$-algebra. Moreover, this triple is not finite, there are complete non-unitary tripotents. These facts are proved in \cite[Proposition 5.27]{Finite}. It follows that the relations $\le_2$ and $\le_{n,t}$ do not coincide in this case.
\end{enumerate}
\end{remark}

As remarked above, the unit of $M$ does not belong to $M_a$, so -- even if it admits a structure of a JBW$^*$-algebra -- there is no natural unit to apply the reductions from Lemmata~\ref{L:shift to 1} or~\ref{L:shift to projection}.  Moreover, neither a diagonalization may be used, as diagonal operators are not antisymmetric. However, at least some reductions are possible.

\begin{lemma}\label{L:C2-shift}
Let $U\in M_a$ be a tripotent. Then there is a unitary element $V\in M_s$ such that $VUV$ is self-adjoint in $M$.
In this case $T\mapsto VTV$ is a triple automorphism of $M$ commuting with the transpose. In particular, it is a triple automorphism of $M_a$.

If $U$ is even a unitary, then $VUV$ is a symmetry in $M$
\end{lemma}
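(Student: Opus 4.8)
The plan is to isolate the two soft assertions---that $T\mapsto VTV$ is a transpose-commuting triple automorphism and the statement on symmetries---from the real content, which is the existence of the symmetric unitary $V$.

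First I would record the dictionary coming from the identity $x^*=\overline{x^t}$: for $U\in M_a$ one has $U^*=-\overline U$, while a unitary $V\in M_s$ satisfies $V^*=\overline V=V^{-1}$. Granting the existence of $V$, the remaining statements are direct computations. Since $V$ is unitary, $(VyV)^*=V^{-1}y^*V^{-1}$, so $\J{VxV}{VyV}{VzV}=\frac12\left(Vxy^*zV+Vzy^*xV\right)=V\J xyz V$; as $\Phi(T)=VTV$ is a bijection with inverse $T\mapsto V^{-1}TV^{-1}$, it is a triple automorphism of $M$. Because $V^t=V$ and the transpose reverses products, $\Phi(T)^t=V^tT^tV^t=VT^tV=\Phi(T^t)$, so $\Phi$ commutes with $t$; consequently $T\in M_a\iff\Phi(T)\in M_a$, i.e.\ $\Phi$ restricts to a triple automorphism of $M_a$. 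Finally, if $U$ is moreover a unitary of $M_a$, then $U\sim_2 e$ where $e$ is the (antisymmetric) unit of $M_a$; since $e$ is a unitary operator of $M$ this forces $U^*U=UU^*=1$, so $U$ is unitary in $M$ and $VUV$ is a self-adjoint unitary of $M$, that is, a symmetry.

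For the existence of $V$ I would first reduce to a cleaner equation. For a symmetric unitary $V$ we have $(VUV)^*=V^{-1}U^*V^{-1}$, so $VUV$ is self-adjoint if and only if $V^2UV^2=U^*$. Setting $S=V^2$, it therefore suffices to (i) find a symmetric unitary $S$ with $SUS=U^*$ and (ii) extract a symmetric unitary square root of $S$. Step (ii) is soft: $S$ and $S^*=\overline S$ are commuting symmetric unitaries, so they generate an abelian von Neumann subalgebra contained in $M_s$; choosing a Borel branch of the square root on the spectrum (a subset of the unit circle) yields a unitary $V$ with $V^2=S$ lying in that subalgebra, hence in $M_s$.

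The crux is step (i), namely that the antisymmetric partial isometries $U$ and $U^*$ are congruent through a symmetric unitary. I would read this off the conjugation: from $U^*=-\overline U$ one gets $p_f(U)=\overline{p_i(U)}$, and the antilinear isometry $C:=U^*J$ restricts to the initial space as an operator with $C^2=-p_i(U)$, i.e.\ a quaternionic structure there (with a parallel structure on the final space). This is precisely the data of a Takagi/Youla-type normal form, under which a symmetric unitary congruence carries $U$ onto a fixed self-adjoint antisymmetric partial isometry assembled from $2\times2$ blocks $\left(\begin{smallmatrix}0&i\\-i&0\end{smallmatrix}\right)$; reading off this congruence produces $S$ with $SUS=U^*$, and then (ii) finishes. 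The main obstacle is to perform this normal form uniformly in $M=A\overline{\otimes}B(H)$, where no finite-dimensional diagonalization is available. I expect to resolve it by passing to the representation $M=L^\infty(\mu,B(H))$, applying the factorization fibrewise on $B(H)_a$, and assembling a global symmetric unitary through a Borel measurable selection of the Takagi data, exactly along the lines of the Kuratowski--Ryll-Nardzewski arguments already used in Lemmas~\ref{L:cartan diagonalization} and~\ref{L:Cartan vector diagonalization}.
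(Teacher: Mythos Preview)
Your handling of the soft assertions is fine, and your reduction to finding a symmetric unitary $S$ with $SUS=U^*$, followed by a symmetric square root, is correct and matches in spirit what the paper does (the paper's $S$ is $\widetilde U^*$ and the square root is exactly Lemma~\ref{L:sqrt of unitary}).

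The gap is in your step (i). Your plan is to realise $M=A\overline\otimes B(H)$ as $L^\infty(\mu,B(H))$, apply a Takagi/Youla normal form fibrewise, and glue via a Borel selection as in Lemmata~\ref{L:cartan diagonalization} and~\ref{L:Cartan vector diagonalization}. Both ingredients break down when $H$ is infinite-dimensional. First, the identification $A\overline\otimes C\cong L^\infty(\mu,C)$ in Lemma~\ref{L:LinftyC=CKC} requires $C$ to be reflexive, which $B(H)$ is not; so you do not have the fibrewise picture to begin with. Second, the Kuratowski--Ryll-Nardzewski machinery used in those lemmata is applied to \emph{finite-dimensional} Cartan factors, where $\operatorname{Iso}$ and the set of tripotents are compact metric spaces; for infinite-dimensional $B(H)$ none of this compactness is available, so the selection argument you invoke does not go through.

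The paper avoids all of this with a much shorter, intrinsic argument. By \cite[Lemma~5.22]{Finite} one writes $U=W-W^t$ with $W\perp W^t$; then $W+W^t$ is a tripotent in $M_s$, and since $M_s$ is a \emph{finite} JBW$^*$-algebra (Remark~\ref{rem:AotimesB(H)s is finite}) it extends to a unitary $\widetilde U\in M_s$. Taking $V\in M_s$ unitary with $V^2=\widetilde U^{\,*}$ (Lemma~\ref{L:sqrt of unitary}) gives $V\widetilde UV=1$, so the triple automorphism $T\mapsto VTV$ sends $W+W^t$ to a projection; hence $VWV$ and $VW^tV$ are mutually orthogonal projections and $VUV=VWV-VW^tV$ is self-adjoint. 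No fibrewise analysis or selection theorem is needed: the finiteness of $M_s$ does all the work.
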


\begin{proof} By \cite[Lemma 5.22]{Finite} we have $U=W-W^t$ for a tripotent $W$ such that $W\perp W^t$. It follows that $W+W^t$ is tripotent in $M_s$.
Since $M_s$ is finite, there is a unitary element $\widetilde{U}\in M_s$ with $W+W^t\le \widetilde{U}$.
By Lemma~\ref{L:sqrt of unitary} there is a unitary element $V\in M_s$ commuting with $\widetilde{U}$ such that $V^2=\widetilde{U}^*$.

Then $T\mapsto VTV$ is a  triple automorphism of $M$ (cf. Lemma~\ref{L:shift to 1}). Moreover, it clearly commutes with the transpose and hence it maps $M_a$ onto $M_a$.

Note that $V\widetilde{U}V=1$, the unit of $M$. Hence $VWV$ and $VW^tV$ are mutually orthogonal projections. Then
$$VUV=VWV-VW^tV$$
is self-adjoint.

In case $U$ is unitary, $VUV$ is a self-adjoint unitary, i.e., a symmetry in $M$.
\end{proof}

Note that, in case $U$ is unitary, the element $S=VUV$ given by the previous lemma is an `antisymmetric symmetry'. This may sound strange, but there is no contradiction -- the word `antisymmetric' means that $S^t=-S$ while the word `symmetry' means that $S^*=S$ and $S^2=1$. 

The previous  lemma says that if $R$ is any of the above relations, to understand when  $URE$ it is enough to assume that $E$ is self-adjoint in $M$ (or even a symmetry if $E$ is unitary).

\begin{example2}\label{ex:c2dim3}
Assume that $\dim H=3$.
\begin{enumerate}[$(a)$]
    \item The rank of $B(H)_a$ is $1$, i.e., any nonzero tripotent in $B(H)_a$ is simultaneously complete and minimal (i.e., its Peirce-$2$ subspace is one-dimensional). Hence, the relations are characterized in the same way as in Proposition~\ref{P:orders special cases}$(a)$ except that in $(iii)$ the second equivalence should be omitted.
    \item If $M=L^\infty(\mu,B(H))$, then the characterizations from Proposition~\ref{P:orders special cases}$(b)$ hold except for condition $(vi)$ which is replaced by
    \begin{enumerate}
        \item[$(vi')$] $u\le_n e\Leftrightarrow \exists h\in L^\infty(\mu)\colon u=h\cdot e$. 
    \end{enumerate}
\end{enumerate}
\end{example2}

If $\dim H\ge 4$, then the structure of the relations is richer, in a sense at least as rich as in $B(K)$ where $K$ is a Hilbert space whose dimension is the (integer part) half of $\dim H$. This is precised in the following proposition. 

\begin{prop}\label{P:E=W-Wt}
Let $E\in M_a$ be a tripotent. Then $E=W-W^t$ where $W\in M$ is a tripotent and $W\perp W^t$.

Let $V\in M$ be any tripotent such that $V\le_2 W$. Then  $V\perp V^t$, $U=V-V^t$ is a tripotent in $M_a$ satisfying $U\le_2 E$.

Further, in this case, if $R$ is any of the above-defined relations, then 
$$V\;R\;W \mbox{ in }M \Rightarrow U\;R\;E\mbox{ in } M_a.$$
Moreover, if $R\in\{\le,\le_r,\sim_r,\le_c,\sim_c,\le_h,\sim_h,\le_{hc},\sim_{hc},\le_n,\le_2,\sim_2\}$, then
$$V\;R\;W \mbox{ in }M \Leftrightarrow U\;R\;E\mbox{ in } M_a.$$
\end{prop}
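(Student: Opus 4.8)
The plan is to reduce the whole statement to a single structural fact: the map $\Lambda\colon X\mapsto X-X^t$ is a triple isomorphism of $M_2(W)$ onto a subtriple $N$ of $M_a$ sending $W$ to $E$ and $V$ to $U$. The decomposition $E=W-W^t$ with $W\perp W^t$ is exactly \cite[Lemma~5.22]{Finite}, so I may take it as given and work with a fixed such $W$.

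First I would record two facts about the transpose. Since it satisfies $(x^*)^t=(x^t)^*$ and is an anti-automorphism for the associative product, a direct expansion gives $\{a,b,c\}^t=\{a^t,b^t,c^t\}$, so the transpose is a \emph{triple automorphism} of $M$ and therefore maps $M_2(W)$ onto $M_2(W^t)$. Writing $p=p_i(W)$ and $q=p_f(W)$, one has $M_2(W)=qMp$, and from the same identities one computes $p_i(W^t)=q^t$ and $p_f(W^t)=p^t$, so $M_2(W^t)=p^tMq^t$. The hypothesis $W\perp W^t$, i.e. $W(W^t)^*=0$ and $W^*W^t=0$, then yields $qp^t=0$ and $pq^t=0$: the initial and final projections of $W$ and of $W^t$ sit in orthogonal corners. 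In particular $V\in M_2(W)$ and $V^t\in M_2(W^t)$ satisfy $V\perp V^t$, so $U=V-V^t$ is a tripotent, $U^t=-U$ places it in $M_a$, and $U\in M_2(W)\oplus M_2(W^t)\subseteq M_2(E)$ gives $U\le_2E$; this already covers the second paragraph of the statement.

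The key and most delicate step is to verify that $\Lambda$ is a triple homomorphism on $M_2(W)$. Here I would expand $\{X-X^t,Y-Y^t,Z-Z^t\}$ into its eight terms and evaluate each in the corner descriptions $qMp$ and $p^tMq^t$. Every \emph{mixed} term (one in which not all three entries are transposed and not all three are untransposed) contains a vanishing product among $qp^t=0$, $pq^t=0$ and their adjoints, hence drops out; only $\{X,Y,Z\}$ and $-\{X^t,Y^t,Z^t\}=-\{X,Y,Z\}^t$ survive, so $\{\Lambda X,\Lambda Y,\Lambda Z\}=\Lambda(\{X,Y,Z\})$. Injectivity is immediate, since $\Lambda(X)=0$ forces $X=X^t\in qMp\cap p^tMq^t=\{0\}$. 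Thus $\Lambda$ is a triple isomorphism of $M_2(W)$ onto $N:=\Lambda(M_2(W))\subseteq M_a$ with $\Lambda(W)=E$ and $\Lambda(V)=U$.

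Finally I would handle the relations by bookkeeping across the four ambient triples $M$, $M_2(W)$, $N$, $M_a$. Each relation in the list implies $\le_2$, and $\le_2$ is transitive; hence for any of them (including those built as transitive hulls) the defining chain witnessing $V\,R\,W$ in $M$ lies entirely in $M_2(W)$, and by Remark~\ref{rem:relations in subtriples} the statement $V\,R\,W$ is the same whether read in $M$ or in $M_2(W)$. Transporting along the isomorphism $\Lambda$ turns this into $U\,R\,E$ in $N$. For the basic relations listed in the final display, Remark~\ref{rem:relations in subtriples} gives once more that $U\,R\,E$ in $N$ is equivalent to $U\,R\,E$ in $M_a$, yielding the asserted equivalence; for the transitive hulls only the passage from $N$ to the larger triple $M_a$ is available from Remark~\ref{rem:relations in subtriples}, which is precisely the one-directional implication claimed. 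The main obstacle is the orthogonality-driven cancellation establishing that $\Lambda$ is a triple homomorphism, together with keeping precise track of which of the four triples each relation is being evaluated in.
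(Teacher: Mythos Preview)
Your argument is correct and takes a genuinely different, more conceptual route than the paper. The paper first uses Lemma~\ref{L:C2-shift} to reduce to the case where $E$ is self-adjoint and $W$ is a projection in $M$, and then verifies each relation in the list one at a time by direct computation with Peirce arithmetic (expanding $\{V-V^t,V-V^t,W-W^t\}$, $\{W-W^t,V-V^t,W-W^t\}$, etc., and using $M_2(W^t)\subset M_0(W)$ to kill the cross-terms). You instead package all of this into the single observation that $\Lambda:X\mapsto X-X^t$ is a triple isomorphism of $M_2(W)$ onto a subtriple $N\subset M_a$, and then invoke Remark~\ref{rem:relations in subtriples} once, uniformly. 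The orthogonality-driven cancellation that you identify as the ``main obstacle'' is exactly the same mechanism the paper uses repeatedly in its case-by-case checks, but you do it once for a generic triple product rather than several times. Your handling of the transitive hulls is also slightly sharper than a bare appeal to Remark~\ref{rem:relations in subtriples}: you correctly observe that any chain witnessing $V\,R\,W$ in $M$ automatically lies in $M_2(W)$ (since every intermediate tripotent is $\le_2 W$), so that the passage $M\rightsquigarrow M_2(W)$ is available even for the transitive hulls, and the only genuinely one-directional step is $N\rightsquigarrow M_a$---which is precisely why the proposition only claims an implication there. What your approach buys is uniformity and a clear structural explanation of why the equivalence holds for the basic relations but only the implication for the hulls; what the paper's approach buys is explicitness, since each case displays concretely how the relation on $U,E$ unpacks into the corresponding relation on $V,W$.
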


\begin{proof}
The existence of $W$ follows from  \cite[Lemma 5.22]{Finite}.
By Lemma~\ref{L:C2-shift} (and its proof) we may assume that $E$ is self-adjoint in $M$ and $W$ is a projection in $M$. (This is not essential but it simplifies the arguments.)

Assume $V\le_2W$, i.e., $\J WWV=V$. Since the transpose defines a triple automorphism on $M$, $V^t$ is clearly a tripotent and, moreover,
$$\J {W^t}{W^t}{V^t}=(\J WWV)^t=V^t,$$
thus $V^t\le_2 W^t$. Now it is clear that $V^t\perp V$, $U=V-V^t$ is a tripotent in $M_a$ and $U\le_2 E$. 

We continue by proving the equivalences for the named relations.

\begin{description}
\item[$\le$] We have
$$V\le W \Leftrightarrow \J VVW=V \Leftrightarrow
(\J VVW)^t=V^t \Leftrightarrow
\J {V^t}{V^t}{W^t}=V^t \Leftrightarrow V^t\le W^t,$$
hence the implication $\Rightarrow$ is in this case obvious. To prove the converse assume $U\le E$. Then
$$\begin{aligned}
V-V^t&=\J{V-V^t}{V-V^t}{W-W^t}=\J VVW-\J{V^t}{V^t}{W^t},
\end{aligned}$$
where we used that $V\perp V^t$, $V\perp W^t$ and $V^t\perp W$. Moreover, it follows by the Peirce calculus that $\J VVW\in M_2(W)$ and $\J {V^t}{V^t}{W^t}\in M_2(W^t)\subset M_0(W)$. It follows that $V=\J VVW$, i.e., $V\le W$.
\item[$\le_r,\le_c$] These cases follow from the case `$\le$' together with the linearity of the transpose.
\item[$\sim_r,\sim_c$] If $V=\alpha W$, then $V^t=\alpha W^t$, hence $U=\alpha E$. Conversely, if $U=\alpha E$, i.e., $V-V^t=\alpha W-\alpha W^t$, thus $V=\alpha W$.
\item[$\le_h$] Recall that
$$V\le_h W\Leftrightarrow \J WVW=V,$$
hence the proof is completely analogous to the proof of the case `$\le$'.
\item[$\le_{hc}$] This case follows from the case `$\le_h$' together with linearity of the transpose.
\item[$\le_n$] Assume $V\le_n W$. Then $\J VVW$ is a tripotent in $M_2(W)$. Hence
$$\J{V^t}{V^t}{W^t}=(\J VVW)^t$$
is a tripotent in $M_2(W^t)\subset M_0(W)$. Hence $\J{V^t}{V^t}{W^t}\perp \J VVW$, thus
$\J VVW-\J {V^t}{V^t}{W^t}$ is a tripotent. Since
$$\J{V-V^t}{V-V^t}{W-W^t}=\J VVW-\J{V^t}{V^t}{W^t},$$
we deduce $E=V-V^t\le_n W-W^t=E$.

Conversely, assume that $U\le_n E$. Then
$$\J{V-V^t}{V-V^t}{W-W^t}=\J VVW-\J{V^t}{V^t}{W^t}$$
is a tripotent. Since $\J VVW\in M_2(W)$ and $\J{V^t}{V^t}{W^t}\in M_0(W)$, it follows easily that both $\J VVW$ and $\J{V^t}{V^t}{W^t}$ are tripotents. Hence $V\le_n W$.

\item[$\le_2,\sim_2$] The equivalence for $\le_2$ is trivial. Observe that, assuming $V\le_2 W$ (which we do assume from the beginning), we have
$$V\sim_2W\Leftrightarrow\J VVW=W,$$
hence the proof is completely analogous to the proof of the case `$\le$'.

\item[$\sim_h,\sim_{hc}$] These cases follow by combining the cases `$\le_h,\le_{hc}$' with the case `$\sim_2$'.
\end{description}

The remaining relations are transitive hulls of the respective relations, so the remaining implications follow from the ones already proved.
\end{proof}

\begin{example2}
\begin{enumerate}[$(a)$]
    \item We may use Example~\ref{ex:leh not transitive} and Proposition~\ref{P:E=W-Wt} to show that the relations $\le_h$ and $\sim_h$ are not transitive on $(M_4)_a$. It is enough to consider matrices
    $$e=\begin{pmatrix}
    0&0&1&0 \\ 0&0&0&1 \\-1&0&0&0\\0&-1&0&0
    \end{pmatrix}, u=\begin{pmatrix}
    0&0&0&-1 \\ 0&0&-1&0 \\0&1&0&0\\1&0&0&0
    \end{pmatrix},
    v=\begin{pmatrix}
    0&0&i&0 \\ 0&0&0&-i \\-i&0&0&0\\0&i&0&0
    \end{pmatrix}.$$
    \item Similarly we may use Example~\ref{ex:lehc not transitive} to show that $\le_{hc}$ and $\sim_{hc}$ are not transitive in $(M_4)_a$.
    \item In the same way we use Example~\ref{ex:len not transitive}$(c)$ to show that $\le_n$ is not transitive in $(M_4)_a$.
\end{enumerate}
\end{example2}

\subsection{The case of finite even dimension}

Let us now focus on $(M_{2n})_a$ for $n\ge2$. (The analysis below is valid for $n=1$, i.e., for $(M_2)_a$ as well, but this case is trivial as $(M_2)_a$ is isomorphic to $\ce$.)

We know that $(M_{2n})_a$ admits unitary elements, but not a canonical unit. However, it is isomorphic to the classical JB$^*$-algebra $H_n(\Ha_C)$ of hermitian $n\times n$ matrices of biquaternions (which was studied for example in \cite{cartan6}). Biquaternions are quaternions with complex coefficients (see, e.g., \cite[Section 3]{cartan6} or \cite[Section 6]{Finite}). We will use the following matrix representation (cf. \cite[(5) in Section 3.2]{cartan6} or \cite[Lemma 6.7(ii)]{Finite}).

$\Ha_C$ is the C$^*$-algebra $M_2$ of $2\times 2$ complex matrices with usual multiplication and involution $^*$ equipped moreover with a linear involution $^\inv$ defined by
$$\begin{pmatrix}
a&b\\c&d
\end{pmatrix}^\inv =\begin{pmatrix}
d&-b\\-c&a
\end{pmatrix}.$$
Then $^\inv$ is a linear involution (i.e., $(xy)^\inv=y^\inv x^\inv$ for $x,y\in \Ha_C$) commuting with $^*$ (i.e., $(x^*)^\inv=(x^\inv)^*$).
Hence, $M_n(\Ha_C)$ is the C$^*$-algebra of $n\times n$ matrices with entries in $\Ha_C$, which is canonically isomorphic to $M_{2n}$ (if $\x\in M_n(\Ha_C)$, we denote by $\widehat{\x}$ the corresponding element of $M_{2n}$). It is further equipped with the linear involution $^\inv$ -- if $\x=(x_{ij})\in M_n(\Ha_C)$, then $\x^\inv$ is the $n\times n$ matrix with $x_{ji}^\inv$ on place $ij$. Then
$$H_n(\Ha_C)=\{\x\in M_n(\Ha_C)\setsep \x^\inv=\x\}$$
is a JB$^*$-subalgebra of $M_n(\Ha_C)$.

\begin{lemma}\label{L:M2na=HnHC}
Let $n\in\en$, $n\ge 2$. Then
$$\uu=\begin{pmatrix}
0&1&\dots&0&0\\
-1&0& \dots &0&0 \\
\vdots&\vdots&\ddots&\vdots&\vdots \\
0&0&\dots&0&1\\
0&0&\dots&-1&0
\end{pmatrix}$$
is a unitary element of $(M_{2n})_a$ with $\det\uu=1$. 

Hence the operator $T:\x\mapsto \x\uu$ is a triple automorphism of $M_{2n}$.

Moreover, $T$ maps $(M_{2n})_a$ onto $H_n(\Ha_C)$ (more precisely, onto the image of $H_n(\Ha_C)$ under the mapping $\x\mapsto\widehat{\x}$).
\end{lemma}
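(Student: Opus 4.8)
The plan is to establish the three assertions in the order stated, the genuine content lying in the last one. First I would record the elementary facts about $\uu$. Each $2\times2$ diagonal block of $\uu$ equals $j=\begin{pmatrix}0&1\\-1&0\end{pmatrix}$, which is antisymmetric, so $\uu^t=-\uu$ and hence $\uu\in(M_{2n})_a$. As $\uu$ is real we have $\uu^*=\uu^t=-\uu$, and since $j^2=-I_2$ we get $\uu^2=-I_{2n}$; therefore $\uu\uu^*=\uu^*\uu=I_{2n}$, so $\uu$ is a unitary operator of $M_{2n}$. In particular $\{\uu,\uu,\uu\}=\uu$, and $\det\uu=(\det j)^n=1^n=1$ because $\uu$ is block diagonal.

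Next I would check that $\uu$ is a \emph{unitary element of the triple} $(M_{2n})_a$, i.e. that $((M_{2n})_a)_2(\uu)=(M_{2n})_a$. Since $\uu\uu^*=\uu^*\uu=I_{2n}$, the operator $L(\uu,\uu)x=\tfrac12(\uu\uu^*x+x\uu^*\uu)=x$ is the identity on all of $M_{2n}$, hence on the subtriple $(M_{2n})_a$; this gives $((M_{2n})_a)_2(\uu)=(M_{2n})_a$. Because $\uu$ is a unitary operator, the right multiplication $T\colon\x\mapsto\x\uu$ is a surjective linear isometry of $M_{2n}$ (with inverse $\x\mapsto\x\uu^*$), hence a triple automorphism by Kaup's theorem (\cite[Proposition 5.5]{kaup1983riemann}); alternatively one verifies directly that $\{\x\uu,\y\uu,\z\uu\}=\{\x,\y,\z\}\uu$ using $\uu^*\uu=I_{2n}$.

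The core step is to identify the image $T((M_{2n})_a)$. Under the identification $\x\mapsto\widehat{\x}$ the matrix $\uu$ corresponds to $\mathbf{J}=\mathrm{diag}(j,\dots,j)\in M_n(\Ha_C)$. The key algebraic observation is the identity $x^\inv=jx^tj^{-1}$ for every $x\in\Ha_C=M_2$ (a direct $2\times2$ computation; both $x^\inv$ and $jx^tj^{-1}$ equal the classical adjugate of $x$, using $j^{-1}=j^t=-j$). Consequently, conjugation by $\uu$ acts blockwise by $j(\cdot)^tj^{-1}$ together with the swap of block indices induced by the transpose, so that $\uu\,\y^t\,\uu^{-1}=\widehat{\Y^\inv}$ whenever $\y=\widehat{\Y}$ with $\Y\in M_n(\Ha_C)$. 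Now fix $\x\in M_{2n}$ and put $\y=T(\x)=\x\uu$, so $\x=\y\uu^{-1}$. Using $\uu^t=-\uu$ and $\uu^{-1}=-\uu$, a short manipulation shows that the antisymmetry condition $\x^t=-\x$ is equivalent to $\uu\,\y^t=\y\,\uu$, i.e. to $\y=\uu\,\y^t\,\uu^{-1}$. By the previous identity this reads $\widehat{\Y}=\widehat{\Y^\inv}$, i.e. $\Y=\Y^\inv$, which is exactly the condition $\Y\in H_n(\Ha_C)$. Since $T$ is bijective, this yields $T((M_{2n})_a)=\widehat{H_n(\Ha_C)}$, as claimed.

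The main obstacle I expect is purely bookkeeping: pinning down the identity $x^\inv=jx^tj^{-1}$ at the level of a single biquaternion and then verifying that conjugating the $2n\times2n$ matrix $\y^t$ by $\uu$ implements $^\inv$ blockwise (together with the swap $Y_{ij}\leftrightarrow Y_{ji}$ coming from the transpose). Once this blockwise description of $\uu(\cdot)^t\uu^{-1}$ is in hand, the equivalence of $\x^t=-\x$ with $\Y=\Y^\inv$ is immediate and no further difficulty arises.
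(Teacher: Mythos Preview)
Your proposal is correct and follows essentially the same route as the paper. Both arguments reduce the claim $T((M_{2n})_a)=\widehat{H_n(\Ha_C)}$ to a blockwise $2\times2$ computation; the paper writes out the entries of $(x_{ij}u)^\inv$ and $-x_{ij}^t u$ explicitly and compares, while you package the same calculation as the single identity $x^\inv=jx^tj^{-1}$ (the classical adjugate formula) and then read off $\uu\,\y^t\,\uu^{-1}=\widehat{\Y^\inv}$, which is a slightly cleaner organization of the same content.
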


\begin{proof}
Clearly $\uu$ is an antisymmetric matrix, hence $\uu\in (M_{2n})_a$. It is also clear that $\uu$ is a unitary matrix and $\det\uu=1$. It can be easily checked that $T$ is a triple automorphism of $M_{2n}$. The only thing to be checked is that $T((M_{2n})_a)=H_n(\Ha_C)$.

To this end we will represent elements of $M_{2n}$ as elements of $M_n(M_2)=M_n(\Ha_C)$. Then
$$\uu=\begin{pmatrix}
u&0&\dots&0\\
0&u&\dots&0\\
\vdots&\vdots&\ddots&\vdots\\
0&0&\dots&u
\end{pmatrix},\mbox{ where }u=\begin{pmatrix}
0&1\\-1&0
\end{pmatrix}.$$
Hence, if $\x=(x_{ij})_{1\le i,j\le n}\in M_n(M_2)$, then
$$T(\x)=\x\uu=(x_{ij}u)_{1\le i,j\le n}.$$
Assume that $\x\in (M_{2n})_a$. Then  $x_{ji}=-x_{ij}^t$ for $i,j\in\{1,\dots,n\}$. Assume that $x_{ij}=\begin{pmatrix}
a&b\\c&d
\end{pmatrix}$. Then
$$\begin{aligned}
(x_{ij}u)^\inv&=\begin{pmatrix}
-b&a\\-d&c
\end{pmatrix}^\inv=\begin{pmatrix}
c&-a\\d&-b
\end{pmatrix},\\
x_{ji}u&=-x_{ij}^tu=-\begin{pmatrix}
a&c\\b&d
\end{pmatrix}\begin{pmatrix}
0&1\\-1&0
\end{pmatrix}=-\begin{pmatrix}
-c&a\\-d&b
\end{pmatrix}=\begin{pmatrix}
c&-a\\d&-b
\end{pmatrix}, \end{aligned}$$
hence $x_{ji}u=(x_{ij}u)^\inv$. So, $T(\x)\in H_n(\Ha_C)$. 

Conversely, assume $T(\x)\in H_n(\Ha_C)$, i.e., $(x_{ij}u)^\inv=x_{ji}u$ for $i,j\in\{1,\dots,n\}$. Assume again that  $x_{ij}=\begin{pmatrix}
a&b\\c&d
\end{pmatrix}$. Then
$$x_{ji}=x_{ji}uu^*=(x_{ij}u)^\inv u^*=\begin{pmatrix}
c&-a\\d&-b
\end{pmatrix}\begin{pmatrix}
0&-1\\1&0
\end{pmatrix}=\begin{pmatrix}
-a&-c\\-b&-d
\end{pmatrix}=-x_{ij}^t,
$$
so $\x\in (M_{2n})_a$.

This completes the proof.
\end{proof}

\begin{lemma}\label{L:simht in HnHC}
Let $n\in\en$, $n\ge2$.
\begin{enumerate}[$(a)$]
    \item Let $\e\in H_n(\Ha_C)$ be a unitary element. Then 
    $$\e=\alpha_1 \p_1+\dots+\alpha_n \p_n,$$
    where $\p_1,\dots,\p_n$ are mutually orthogonal minimal projections in $H_n(\Ha_C)$ and $\alpha_1,\dots,\alpha_n$ are complex units. Moreover,
    $$\alpha_1\cdots\alpha_n=\dt_n\e,$$
    where $\dt_n$ is the determinant defined in \cite[Section 5]{cartan6}.
    \item Let $\e,\uu\in H_n(\Ha_C)$ be two unitary elements.
    Then 
    $$\uu=\alpha_1 \e_1+\dots+\alpha_n \e_n,$$
    where $\e_1,\dots,\e_n$ are mutually orthogonal minimal tripotents in $H_n(\Ha_C)$ satisfying $\e_j\le\e$ for each $j$ and $\alpha_1,\dots,\alpha_n$ are complex units. Moreover,
    $$\alpha_1\cdots\alpha_n=\dt_{n,\e}\uu,$$
    where $\dt_{n,\e}$ is the quantity defined in \cite[Section 5]{cartan6}.
   \item  Let $\e,\uu\in H_n(\Ha_C)$ be two unitary elements. Then $\uu\sim_{h,t}\e$ if and only if $\dt_n\uu=\pm\dt_n\e$. Moreover, chains of $\sim_h$ of length $2n-1$ are enough.   
\end{enumerate}
\end{lemma}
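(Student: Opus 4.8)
The plan is to treat $(a)$ and $(b)$ as pure spectral theory in the finite-dimensional JB$^*$-algebra $H_n(\Ha_C)$, and then to prove $(c)$ by transcribing the argument of Proposition~\ref{P:simht1 in symmetric matrices}, with the ordinary determinant replaced by the biquaternionic determinant $\dt_n$ of \cite[Section~5]{cartan6}. For $(a)$ I would invoke the spectral decomposition of a unitary in a finite-dimensional JB$^*$-algebra of rank $n$, namely \cite[Proposition~2.2(b)]{cartan6}, to obtain $\e=\alpha_1\p_1+\dots+\alpha_n\p_n$ with the $\p_j$ mutually orthogonal minimal projections and the $\alpha_j$ complex units; the identity $\alpha_1\cdots\alpha_n=\dt_n\e$ is then exactly the evaluation of $\dt_n$ on such a ``diagonal'' element, a property of $\dt_n$ recorded in \cite[Section~5]{cartan6} and the precise analogue of Lemma~\ref{L:det in Mns}$(b)$. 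For $(b)$ I would apply $(a)$ inside the unital JB$^*$-algebra obtained by taking $\e$ as the unit, equivalently transport the situation to the canonical unit via the triple automorphism $\Phi$ of Lemma~\ref{L:shift to 1}: the minimal projections of the $\e$-algebra are precisely the minimal tripotents below $\e$, so the decomposition of $\uu$ is the spectral decomposition relative to $\e$, and $\dt_{n,\e}$ is by definition the determinant computed in this $\e$-structure.

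For the ``only if'' part of $(c)$ I would show that a single step $\vv_j\sim_h\vv_{j+1}$ between unitaries multiplies $\dt_n$ by $\pm1$ and then telescope along a chain $\uu=\vv_1\sim_h\dots\sim_h\vv_k=\e$. Concretely, since both tripotents are unitary we have $\vv_j\sim_2\vv_{j+1}$, so $\vv_j\sim_h\vv_{j+1}$ means that $\vv_j$ is a symmetry in the $\vv_{j+1}$-structure (cf. Proposition~\ref{P:charact simh} and the JB$^*$-algebra viewpoint of Proposition~\ref{P:le1 C*}$(e)$); by $(a)$ its eigenvalues are $\pm1$, whence $\dt_{n,\vv_{j+1}}(\vv_j)=\pm1$. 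Combining this with the transformation law $\dt_n(T\x)=\dt_n(T\1)\cdot\dt_n\x$ for triple automorphisms of $H_n(\Ha_C)$ (the analogue of Lemma~\ref{L:det in Mns}$(d)$, provable from the description of such automorphisms in \cite{cartan6,kaup1997real}), applied to the shift $T=\Phi$ sending the unit to $\vv_{j+1}$, yields the chain rule $\dt_n\vv_j=\dt_{n,\vv_{j+1}}(\vv_j)\cdot\dt_n\vv_{j+1}=\pm\dt_n\vv_{j+1}$; telescoping gives $\dt_n\uu=\pm\dt_n\e$.

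For the ``if'' part I would first reduce to $\e=\1$ using Lemma~\ref{L:shift to 1} together with the same transformation law, so that the hypothesis becomes $\dt_n\uu=\pm1$; then, using $(a)$ and a frame transformation from Lemma~\ref{L:frames}$(i)$ realized by a Jordan $*$-automorphism fixing $\1$ and preserving $\dt_n$, I may assume $\uu$ is diagonal. Finally I would run an induction on $n$ exactly as in Proposition~\ref{P:simht1 in symmetric matrices}: the cases $n=1$ (where $H_1(\Ha_C)\cong\ce$ and $\dt_1\uu=\pm1$ forces $\uu=\pm\1$, hence $\uu\sim_h\1$) and $n=2$ are checked by hand, mirroring the $2\times2$ computation there, and the step $n\to n+1$ pairs the first two diagonal entries inside a copy of $H_2(\Ha_C)$ (a chain of $\sim_h$ of length $2$) and then applies the inductive hypothesis on the complementary rank-$n$ corner $\cong H_n(\Ha_C)$ (a chain of length $2n-1$), giving the announced bound $2n-1$.

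The main obstacle I anticipate is the ``only if'' direction, and precisely the need to control $\dt_n$ itself rather than its square. One cannot simply pass to the ambient $M_{2n}$ via Lemma~\ref{L:M2na=HnHC}: invoking Proposition~\ref{P:all orders vN}$(c)$ there (a symmetry has determinant $\pm1$) only produces $\det_{2n}\uu=\pm\det_{2n}\e$, and since $\det_{2n}=(\dt_n)^2$ on $H_n(\Ha_C)\cong(M_{2n})_a$ under that identification, this controls only $(\dt_n\uu)^2=\pm(\dt_n\e)^2$, which is strictly weaker than $\dt_n\uu=\pm\dt_n\e$. The sharper conclusion therefore forces one to work internally with the biquaternionic determinant and its multiplicative behaviour, and the delicate point will be to verify that the transformation law and the chain rule for $\dt_n$ from \cite[Section~5]{cartan6} hold in the relative ($\e$-unit) setting, so that each $\sim_h$ step can be certified to change $\dt_n$ only by a sign.
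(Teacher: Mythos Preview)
Your proposal is correct and follows essentially the same line as the paper, with two minor presentational differences worth noting.

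For the ``if'' part of $(c)$, after reducing to $\e=\1$ and diagonalizing, you propose to re-run the induction of Proposition~\ref{P:simht1 in symmetric matrices} inside $H_n(\Ha_C)$. The paper takes a shortcut: it observes that a diagonal element of $H_n(\Ha_C)$ whose diagonal blocks are complex multiples of the $2\times2$ identity lies in the JB$^*$-subalgebra
\[
E=\Bigl\{\x=(x_{ij})\in H_n(\Ha_C)\setsep \mbox{each }x_{ij}\mbox{ is a complex multiple of the unit }2\times2\mbox{ matrix}\Bigr\},
\]
which is canonically isomorphic to $(M_n)_s$, and then applies the already-proven Proposition~\ref{P:simht1 in symmetric matrices} directly. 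This avoids redoing the $n=2$ step and the induction; your version works too, since the off-diagonal symmetry you would need at the $n=2$ step is precisely an element of $E$.

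For the ``only if'' part, your argument via the transformation law is correct, and your final paragraph identifies the right obstacle. The paper handles this not by deriving a general transformation law for triple automorphisms but by invoking \cite[Proposition~5.3$(ii)$]{cartan6}, which directly gives the chain rule $\dt_n\uu=\dt_{n,\e}\uu\cdot\dt_n\e$; combined with the observation from Proposition~\ref{P:charact simh} that $\uu\sim_h\e$ forces the spectral values of $\uu$ in the $\e$-algebra to be $\pm1$ (hence $\dt_{n,\e}\uu=\pm1$), this yields the sign control you need without further work. So the ``delicate point'' you anticipate is already packaged in the cited result.
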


\begin{proof}
$(a)$ The existence of such a decomposition follows from the spectral theorem (cf. \cite[Lemma 2.2]{cartan6}) using the fact that $H_n(\Ha_C)$ has rank $n$. The same formula provides also the spectral decomposition of $\e$ in $M_n(\Ha_C)=M_{2n}$.

We claim that each $\p_j$ has rank $2$ in $M_{2n}$. Indeed, by \cite[Lemma 5.22]{Finite} any tripotent in $(M_{2n})_a$ has even rank in $M_{2n}$. Using the automorphism $T$ from Lemma~\ref{L:M2na=HnHC} we deduce that the same is true for tripotents in $H_n(\Ha_C)$. Since $M_{2n}$ has rank $2n$, necessarily the rank of each $p_j$ is $2$. The last identity now follows from \cite[Theorem 5.1$(ix)$]{cartan6}.

$(b)$ Let $S:M_{n}(\Ha_C)\to M_{n}(\Ha_C)$ be an operator provided by \cite[Lemma 5.2]{cartan6} (denoted there by $T$). Then $S\e=\1$ and $S\uu$ is a unitary element in $H_n(\Ha_C)$. Let 
$$S\uu=\alpha_1\p_1+\dots+\alpha_n\p_n$$ 
be the decomposition of $S\uu$ provided by $(a)$. Then
$$\uu=\alpha_1 S^{-1}(\p_1)+\dots+\alpha_n S^{-1}(\p_n).$$
is the required decomposition. The equality now follows from \cite[Proposition 5.3]{cartan6} and $(a)$.

$(c)$ Assume first $\uu\sim_h\e$. By Proposition~\ref{P:charact simh} we have $\uu=\vv_1-\vv_2$, where $\vv_1,\vv_2$ are two orthogonal tripotents with $\vv_1,\vv_2\le \e$. By decomposing $\vv_1$ and $\vv_2$ to minimal tripotents we see that $\dt_{n,\e}\uu=\pm1$. By \cite[Proposition 5.3$(ii)$]{cartan6} we deduce that $\dt_n\uu=\pm\dt_n\e$.
By induction we now see that $\uu\sim_{h,t}\e$ implies $\dt_n\uu=\pm\dt_n\e$.

Conversely, assume $\dt_n\uu=\pm\dt_n\e$. By \cite[Proposition 5.3$(ii)$]{cartan6} it means that $\dt_{n,\e}\uu=\pm1$.  So, fix a decomposition of $\uu$ as in $(b)$. Then $\alpha_1\cdots\alpha_n=\pm1$. Let $\p_1,\dots,\p_n$ be the canonical diagonal projections in $H_n(\Ha_C)$ having on the diagonal exactly once the unit matrix of order two. 

Then $\e_1,\dots,\e_n$ and $\p_1,\dots,\p_n$ are two frames in $H_n(\Ha_C)$, so
 Lemma~\ref{L:frames} provides a triple automorphism $S$ of $H_{n}(\Ha_C)$ such that $S(\e_j)=\p_j$ for each $j$. Then
 $S(\e)=\1$ and
$$S(\uu)=\alpha_1 \p_1+\dots+\alpha_n\p_n$$
so $S(\uu)$ is a diagonal matrix in $H_n(\Ha_C)$.
Set
$$E=\left\{\x=(x_{ij})\in H_n(\Ha_C)\setsep \forall i,j\in\{1,\dots,n\}\colon  \begin{array}{cc}
     x_{ij} \mbox{ is a complex multiple}  \\
     \mbox{of the unit matrix}
\end{array}\right\}$$
Then $E$ is a JB$^*$-subalgebra of $H_n(\Ha_C)$ canonically isomorphic to $(M_n)_s$. We have $S(\uu)\in E$ and the determinant of the respective $n\times n$ matrix is $\pm 1$. By Proposition~\ref{P:simht1 in symmetric matrices} we deduce that $S(\uu)\sim_{h,t}\1$ in $E\cong (M_n)_s$ and the respective chain of $\sim_h$ has length at most $2n-1$. Since $S$ is a triple automorphism of $H_n(\Ha_C)$, we deduce
$\uu\sim_{h,t}\e$ in $H_n(\Ha_C)$. The necessary length of chains of $\sim_h$ remains to be bounded by $2n-1$.
This completes the proof.
\end{proof}

\begin{prop}\label{P:simht in M2na}
Let $n\in\en$, $n\ge 2$. Let $\vv,\e\in (M_{2n})_a$ be two unitary elements. Then $\vv\sim_{h,t}\e$ if and only if $\det\vv=\det\e$. Moreover, chains of $\sim_h$ of length $2n-1$ are enough.
\end{prop}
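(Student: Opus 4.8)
The plan is to transport the whole question, via the triple automorphism $T$ of Lemma~\ref{L:M2na=HnHC}, from $(M_{2n})_a$ to the Jordan algebra $H_n(\Ha_C)$, where Lemma~\ref{L:simht in HnHC}$(c)$ already settles $\sim_{h,t}$ in terms of the biquaternionic determinant $\dt_n$. Since $T\colon\x\mapsto\x\uu$ restricts to a triple isomorphism of $(M_{2n})_a$ onto $H_n(\Ha_C)$, it preserves the relation $\sim_{h,t}$ together with the lengths of the relevant $\sim_h$-chains; hence $\vv\sim_{h,t}\e$ in $(M_{2n})_a$ if and only if $T(\vv)\sim_{h,t}T(\e)$ in $H_n(\Ha_C)$. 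As unitarity is a purely triple-theoretic property, $T(\vv)$ and $T(\e)$ are again unitary, so Lemma~\ref{L:simht in HnHC}$(c)$ applies and gives $T(\vv)\sim_{h,t}T(\e)$ if and only if $\dt_n T(\vv)=\pm\dt_n T(\e)$, with chains of $\sim_h$ of length at most $2n-1$. Everything then reduces to matching the condition $\dt_n T(\vv)=\pm\dt_n T(\e)$ with the desired condition $\det\vv=\det\e$.

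The key bridge is the identity $\det\q=(\dt_n\q)^2$ for every unitary $\q\in H_n(\Ha_C)$, where $\det$ denotes the ordinary determinant on $M_{2n}$. To establish it I would fix a spectral decomposition $\q=\alpha_1\p_1+\dots+\alpha_n\p_n$ as in Lemma~\ref{L:simht in HnHC}$(a)$, so that $\dt_n\q=\alpha_1\cdots\alpha_n$. As recorded in the proof of that lemma, each minimal projection $\p_j$ has rank $2$ in $M_{2n}$; therefore the same formula is simultaneously the spectral decomposition of $\q$ inside $M_{2n}$, with each eigenvalue $\alpha_j$ occurring with multiplicity $2$. Consequently $\det\q=\alpha_1^2\cdots\alpha_n^2=(\alpha_1\cdots\alpha_n)^2=(\dt_n\q)^2$.

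Finally I would combine this with the effect of $T$ on $\det$. Since $\det\uu=1$ by Lemma~\ref{L:M2na=HnHC}, multiplicativity of the ordinary determinant yields $\det T(\vv)=\det(\vv\uu)=\det\vv$ and likewise $\det T(\e)=\det\e$. Applying the bridge identity to the unitaries $T(\vv),T(\e)\in H_n(\Ha_C)$ gives $(\dt_n T(\vv))^2=\det\vv$ and $(\dt_n T(\e))^2=\det\e$. Hence $\det\vv=\det\e$ if and only if $(\dt_n T(\vv))^2=(\dt_n T(\e))^2$, i.e. if and only if $\dt_n T(\vv)=\pm\dt_n T(\e)$; by the first paragraph this is in turn equivalent to $\vv\sim_{h,t}\e$, realized by chains of $\sim_h$ of length at most $2n-1$. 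The squaring step is exactly what absorbs the ambiguous sign present in the $H_n(\Ha_C)$-criterion and produces the clean equality $\det\vv=\det\e$. The only genuinely computational point, and the step I expect to demand the most care, is the bridge identity $\det\q=(\dt_n\q)^2$: it rests on the rank-$2$ behaviour of minimal tripotents of $H_n(\Ha_C)$ inside $M_{2n}$, which is precisely what connects Cartan's formal determinant $\dt_n$ to the ordinary one.
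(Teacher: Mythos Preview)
Your proof is correct and follows essentially the same route as the paper: transport via the triple isomorphism $T$ of Lemma~\ref{L:M2na=HnHC}, invoke Lemma~\ref{L:simht in HnHC}$(c)$, and then convert the $\dt_n$-criterion into the $\det$-criterion using $\det\uu=1$ and the identity $(\dt_n\q)^2=\det\q$. The only cosmetic difference is that the paper obtains this last identity by citing \cite[Theorem~5.1$(vii)$]{cartan6}, whereas you derive it directly from the spectral decomposition together with the rank-$2$ property of minimal projections in $H_n(\Ha_C)$ (itself established in the proof of Lemma~\ref{L:simht in HnHC}$(a)$); your version is slightly more self-contained but otherwise identical.
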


\begin{proof}
Let $\uu$ and $T$ be as in Lemma~\ref{L:M2na=HnHC}. Then
\begin{multline*}
\vv\sim_{h,t}\e \mbox{ in }(M_{2n})_a \Leftrightarrow T(\vv)\sim_{h,t}T(\e) \mbox{ in }H_n(\Ha_C)
\\ \Leftrightarrow
\dt_n T(\vv)=\pm \dt_n T(\e) \Leftrightarrow
(\dt_n T(\vv))^2=(\dt_n T(\e))^2.\end{multline*}
The first equivalence follows from the fact that $T$ is a triple isomorphism of $(M_{2n})_a$ and $H_n(\Ha_C)$ (this follows from Lemma~\ref{L:M2na=HnHC}). The second equivalence follows from Lemma~\ref{L:simht in HnHC}$(c)$ an the third one is obvious.

We further have
$$(\dt_n T(\vv))^2=\det\widehat{T(\vv)}=\det (\vv\uu)=\det\vv\cdot\det\uu=\det\vv.$$
Indeed, the first equality follows from \cite[Theorem 5.1$(vii)$]{cartan6}, the second one from the definition of $T$. The third one is a consequence of the classical theorem on determinant of a product and the last one is valid as $\det\uu=1$. 

Similarly we get $(\dt_n T(\e))^2=\det\e$. 

This completes the proof of the equivalence $\vv\sim_{h,t}\e\Leftrightarrow\det\vv=\det\e$. The bound on the length of chains follows from Lemma~\ref{L:simht in HnHC}$(c)$.
\end{proof}

\begin{lemma}\label{L:leht selection a}
Let $n\in\en$, $n\ge2$. Let $\e\in (M_{2n})_a$ be a fixed unitary element. Then
$$A=\{\uu\in (M_{2n})_a\setsep \uu\mbox{ is a tripotent such that }\uu\le_{h,t}\e\}$$
is a compact set. Moreover, there is a Borel measurable mapping $\Phi:A\to (M_{2n})_a$ such that for each $\uu\in A$ its image $\Phi(\uu)$ is a tripotent such that $\uu\le \Phi(\uu)\sim_{h,t}\e$.
\end{lemma}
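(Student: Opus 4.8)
The plan is to follow the proof of Lemma~\ref{L:leht selection} verbatim in structure, simply replacing the determinant criterion for $\sim_{h,t}$ of symmetric matrices (Proposition~\ref{P:simht1 in symmetric matrices}) by its antisymmetric counterpart (Proposition~\ref{P:simht in M2na}). First I would invoke Lemma~\ref{L:factor leht}: a tripotent $\uu$ satisfies $\uu\le_{h,t}\e$ if and only if there is a tripotent $\vv$ with $\uu\le\vv$ and $\vv\sim_{h,t}\e$. Hence $A$ is exactly the image under the first-coordinate projection of the set
$$B=\{(\uu,\vv)\in (M_{2n})_a\times(M_{2n})_a\setsep \uu\le\vv\sim_{h,t}\e\}.$$

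The key step is to rewrite $B$ using only closed conditions. Since $\e$ is unitary, $((M_{2n})_a)_2(\e)$ is the whole space, so every tripotent $\vv$ automatically satisfies $\vv\le_2\e$; consequently $\vv\sim_2\e$ reduces to the single relation $\e\le_2\vv$, i.e.\ $\J\vv\vv\e=\e$, and this already forces $\vv$ to be unitary. For such a unitary $\vv$, Proposition~\ref{P:simht in M2na} gives $\vv\sim_{h,t}\e\iff\det\vv=\det\e$. Using Proposition~\ref{P:order char}$(ii)$ to write $\uu\le\vv$ as $\uu=\J\uu\vv\uu$, I obtain
$$B=\{(\uu,\vv)\setsep \uu=\J\uu\uu\uu,\ \vv=\J\vv\vv\vv,\ \uu=\J\uu\vv\uu,\ \J\vv\vv\e=\e,\ \det\vv=\det\e\}.$$
Each defining condition is an equality of continuous functions of $(\uu,\vv)$ — in particular $\det$ is a polynomial, hence continuous, on $M_{2n}\supset(M_{2n})_a$ — so $B$ is closed. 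Being moreover bounded (its points are pairs of tripotents, of norm at most one) in a finite-dimensional space, $B$ is compact, and therefore $A$, its continuous image under the first projection, is compact as well.

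For the selection I would argue exactly as in Lemma~\ref{L:leht selection}: the set-valued assignment $\uu\mapsto\{\vv\setsep(\uu,\vv)\in B\}$ is defined on the compact metric space $A$ with nonempty closed values in the compact metric space $(M_{2n})_a$, so a consequence of the Kuratowski--Ryll-Nardzewski theorem (\cite[Theorem on p.\ 403]{Ku-RN}) furnishes a Borel measurable selection $\Phi$. By the very definition of $B$, for each $\uu\in A$ the image $\Phi(\uu)$ is then a tripotent with $\uu\le\Phi(\uu)$ and $\Phi(\uu)\sim_{h,t}\e$, which is the desired conclusion.

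I do not expect a genuine obstacle here: all the substantive work (the factorization of $\le_{h,t}$ through $\le$ and $\sim_{h,t}$, and the determinant characterization of $\sim_{h,t}$) is already available. The only point requiring care is the verification that the conditions cutting out $B$ are closed, and in particular that unitarity of $\vv$ — which is needed to apply Proposition~\ref{P:simht in M2na} — is faithfully encoded by the closed condition $\J\vv\vv\e=\e$; note also that, in contrast with the symmetric case, the criterion is the plain equality $\det\vv=\det\e$ rather than $\det\vv=\pm\det\e$, so no additional disjunction is introduced.
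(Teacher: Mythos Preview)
Your proposal is correct and follows essentially the same approach as the paper, which simply says the proof is a slight modification of that of Lemma~\ref{L:leht selection}. One minor remark: the condition $\J\vv\vv\e=\e$ you include is actually redundant, since $\det\e\ne0$ (being a unitary matrix) and hence $\det\vv=\det\e$ already forces the tripotent $\vv$ to have full matrix rank and thus be unitary; but including it does no harm.
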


\begin{proof}
The proof may be done by a slight modification of the proof of Lemma~\ref{L:leht selection}.
\end{proof}

\subsection{The case of a general finite dimension}

Next we are going to apply the results from the previous subsection to analyze the relations in $(M_n)_a$ and $L^\infty(\mu,(M_n)_a)$ for general $n\ge 4$.

\begin{lemma}\label{L:Mna rank}
Let $n\in\en$, $n\ge 4$. 
\begin{enumerate}[$(i)$]
    \item The rank of $(M_n)_a$ equals $\lfloor \frac n2\rfloor$, the integer part of $\frac n2$.
    \item Let $\uu\in (M_n)_a$ be a tripotent of rank $k$. Then its Peirce-$2$ subspace is triple-isomorphic to $(M_{2k})_a$ and hence to $H_k(\Ha_C)$.
\end{enumerate}
\end{lemma}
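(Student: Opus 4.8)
The plan is to prove the two assertions separately, using throughout the homogeneity of the Cartan factor $(M_n)_a$ provided by Lemma~\ref{L:frames} together with the structural description of antisymmetric tripotents from \cite[Lemma 5.22]{Finite}, namely that every tripotent in $(M_n)_a$ has the form $W-W^t$ for some tripotent $W\in M_n$ with $W\perp W^t$. I also intend to invoke the identification $(M_{2k})_a\cong H_k(\Ha_C)$ of Lemma~\ref{L:M2na=HnHC} at the very end, so that once $(ii)$ is reduced to computing a Peirce-$2$ subspace the remaining identification is already available.

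For $(i)$ I would establish two matching inequalities. For the lower bound I would write down the explicit family of nonzero tripotents $u_\alpha = E_{2\alpha-1,2\alpha}-E_{2\alpha,2\alpha-1}$ for $\alpha=1,\dots,\lfloor n/2\rfloor$; each lies in $(M_n)_a$, and since they have pairwise disjoint supports they are mutually orthogonal, giving $\operatorname{rank}(M_n)_a\ge\lfloor n/2\rfloor$. For the upper bound, given any orthogonal family $u_1,\dots,u_m$ of nonzero tripotents I would apply \cite[Lemma 5.22]{Finite} to write $u_\alpha=W_\alpha-W_\alpha^t$ with $W_\alpha\neq 0$ and $W_\alpha\perp W_\alpha^t$. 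Since the transpose preserves the rank in $M_n$ and $W_\alpha\perp W_\alpha^t$, the final projection satisfies $p_f(u_\alpha)=p_f(W_\alpha)+p_f(W_\alpha^t)$, so $\operatorname{rank}p_f(u_\alpha)=2\operatorname{rank}(W_\alpha)\ge 2$. Orthogonality of the tripotents forces $p_f(u_1),\dots,p_f(u_m)$ to be mutually orthogonal projections in $M_n$, whence $2m\le\sum_\alpha\operatorname{rank}p_f(u_\alpha)\le n$, i.e. $m\le\lfloor n/2\rfloor$.

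For $(ii)$ I would first reduce to a normal form. By Lemma~\ref{L:frames}$(ii)$ the given tripotent $\uu$ of rank $k$ is carried onto the block tripotent $\uu_0=\sum_{\alpha=1}^k(E_{2\alpha-1,2\alpha}-E_{2\alpha,2\alpha-1})$ by some triple automorphism of $(M_n)_a$, and triple automorphisms preserve Peirce-$2$ subspaces, so it suffices to compute $((M_n)_a)_2(\uu_0)$. A direct calculation gives $\uu_0\uu_0^*=\uu_0^*\uu_0=p$, where $p=\sum_{i=1}^{2k}E_{ii}$, so $\uu_0$ is a unitary of $pM_np\cong M_{2k}$ and $(M_n)_2(\uu_0)=pM_np$. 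Because the transpose is a period-two triple automorphism of $M_n$ with $\uu_0^t=-\uu_0$, it commutes with $P_2(\uu_0)=P_2(-\uu_0)$ and hence leaves $(M_n)_a$ invariant, so
\[
((M_n)_a)_2(\uu_0)=(M_n)_2(\uu_0)\cap (M_n)_a=\{x\in pM_np : x^t=-x\},
\]
which is precisely the copy of $(M_{2k})_a$ sitting in the upper-left $2k\times 2k$ block. Applying Lemma~\ref{L:M2na=HnHC} then yields $((M_n)_a)_2(\uu)\cong(M_{2k})_a\cong H_k(\Ha_C)$.

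The matrix computations here are routine; the step I would single out as requiring genuine care is the interplay between $P_2(\uu_0)$ and the transpose. One must check both that $P_2(\uu_0)$ commutes with the transpose and that the transpose restricted to the block $pM_np$ is literally the ordinary transpose on $M_{2k}$, so that the fixed sign-$(-1)$ eigenspace is honestly $(M_{2k})_a$ rather than the antisymmetric part for some twisted involution. Both points follow from the observation that the transpose is a triple automorphism fixing $\uu_0$ up to sign, which is exactly what makes the intersection $(M_n)_2(\uu_0)\cap(M_n)_a$ collapse to the desired Cartan factor.
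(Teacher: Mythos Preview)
Your proof is correct and follows essentially the same route as the paper: construct the block tripotent $\uu_0=\sum_{\alpha=1}^{k}(E_{2\alpha-1,2\alpha}-E_{2\alpha,2\alpha-1})$ (the paper's $\e_k$), verify that its Peirce-$2$ subspace is $(M_{2k})_a$, and then invoke Lemma~\ref{L:frames} for the general tripotent of rank $k$; the paper does exactly this, only more tersely, declaring the Peirce-$2$ computation ``clear'' and $(i)$ ``obvious'' with a reference to \cite{loos1977bounded} and \cite{kaup1997real}. Your self-contained upper-bound argument for $(i)$ via \cite[Lemma 5.22]{Finite} (forcing each $p_f(u_\alpha)$ to have even $M_n$-rank) is a nice touch the paper does not spell out, but it is in the same spirit as the even-rank observation used elsewhere in the paper (e.g.\ in the proof of Lemma~\ref{L:simht in HnHC}$(a)$).
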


\begin{proof} For $k\le\lfloor\frac n2\rfloor$ let
$$\e_k=\begin{pmatrix}
0&1&0&0&\dots&0&0&\dots &0\\
-1&0&0&0&\dots&0&0&\dots &0\\
0&0&0&1&\dots&0&0&\dots &0\\
0&0&-1&0&\dots&0&0&\dots &0\\
\vdots&\vdots&\vdots&\vdots&\ddots&\vdots&\vdots&\vdots&\vdots\\
0&0&0&0&\dots&0&1&\dots &0\\
0&0&0&0&\dots&-1&0&\dots &0\\
\vdots&\vdots&\vdots&\vdots&\vdots&\vdots&\vdots&\ddots&\vdots\\
0&0&0&0&\dots&0&0&\dots &0
\end{pmatrix},$$
where the number of nonzero rows (or columns) is exactly $2k$. Then $\e_k$ is a tripotent of order $k$
(it is complete if $k=\lfloor\frac n2\rfloor$ and unitary if additionally $n$ is even). Clearly its Peirce-$2$ subspace is isomorphic to $(M_{2k})_a$, hence to $H_k(\Ha_C)$ (by Lemma~\ref{L:M2na=HnHC}).

Now $(i)$ is obvious and $(ii)$ follows by Lemma~\ref{L:frames}.
(Note that the statement $(i)$ is a well known fact, which can be found in \cite{loos1977bounded} in the finite-dimensional case or in \cite[Table 1 in page 210]{kaup1997real}.)
\end{proof}

\begin{prop}\label{P:coincidence in Mna}
Let $n\in\en$, $n\ge 4$.
\begin{enumerate}[$(a)$]
    \item The relations $\le_2$ and $\le_{hc,t}$ coincide in $(M_n)_a$. In particular, the  relations $\sim_2$ and $\sim_{hc,t}$ coincide in $(M_n)_a$.
    \item To describe $\sim_{h,t}$ (or $\sim_{hc,t}$) in $(M_n)_a$ chains of $\sim_h$ (or $\sim_{hc}$) of length at most $2\lfloor \frac n2\rfloor-1$  are enough .
    \item To describe $\le_{h,t}$ (or $\le_{hc,t}$) in $(M_n)_s$ chains of $\le_h$ (or $\le_{hc}$) of length at most $2\lfloor \frac n2\rfloor$ are enough.
\end{enumerate}
\end{prop}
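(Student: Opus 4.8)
The plan is to run, \emph{mutatis mutandis}, the argument of Proposition~\ref{P:coincidence in Mns}, feeding in the antisymmetric analogues of its ingredients. The three facts I would rely on are: the rank of $(M_n)_a$ is $\lfloor\frac n2\rfloor$ and the Peirce-$2$ subspace of a rank-$k$ tripotent is triple-isomorphic to $(M_{2k})_a\cong H_k(\Ha_C)$ (Lemma~\ref{L:Mna rank}); the determinant characterization of $\sim_{h,t}$ among unitaries of $(M_{2k})_a$, together with the chain bound $2k-1$ (Proposition~\ref{P:simht in M2na}); and the finiteness of $(M_n)_a$ (Remark~\ref{rem:AotimesB(H)a}$(1)$). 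As in the symmetric case, assertions $(a)$ and $(b)$ would be proved together, and $(c)$ would be deduced from $(b)$.

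First I would treat the case $u\sim_2 e$. Let $k\le\lfloor\frac n2\rfloor$ be the common rank of $u$ and $e$. Using Lemma~\ref{L:Mna rank}$(ii)$ (and, if convenient, normalizing $e$ to the canonical rank-$k$ tripotent via Lemma~\ref{L:frames}$(ii)$), I identify the Peirce-$2$ subspace $((M_n)_a)_2(e)$ with $(M_{2k})_a$; under this identification $u$ and $e$ become unitary elements. Since the relations $\sim_h,\sim_{h,t},\sim_{hc},\sim_{hc,t}$ are intrinsic to the subtriple (Remark~\ref{rem:relations in subtriples}), I may compute them inside $(M_{2k})_a$. If moreover $u\sim_{h,t}e$, Proposition~\ref{P:simht in M2na} supplies a chain of $\sim_h$ of length at most $2k-1\le 2\lfloor\frac n2\rfloor-1$, which settles the $\sim_{h,t}$ part of $(b)$.

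For the $\sim_{hc,t}$ statements I would use the determinant. Viewing the two unitaries as $2k\times 2k$ matrices, $\det u,\det e\in\TT$, so I can pick a complex unit $\alpha$ with $\alpha^{2k}\det u=\det e$, i.e. $\det(\alpha u)=\det e$; Proposition~\ref{P:simht in M2na} then gives $\alpha u\sim_{h,t}e$ with a chain of length at most $2k-1$. Absorbing the scalar $\alpha$ into the first link (a single $\sim_h$-step out of $\alpha u$ becomes a $\sim_{hc}$-step out of $u$) turns this into a chain of $\sim_{hc}$ of the same length, so $u\sim_{hc,t}e$ (Proposition~\ref{P:hct-charact}$(b)$ and Corollary~\ref{cor:simhct=trhull}). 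Since this holds for every $\sim_2$-pair, $\sim_2$ and $\sim_{hc,t}$ coincide and $(b)$ is complete. To finish $(a)$ I would pass from $\sim$ to $\le$ using finiteness: given $u\le_2 e$, Remark~\ref{rem:AotimesB(H)a}$(1)$ and \cite[Lemma 3.2(b)]{Finite} furnish a tripotent $v$ with $u\le v\sim_2 e$; by the part just proved $v\sim_{hc,t}e$, and $u\le v$ yields $u\le_{hc}v$, whence $u\le_{hc,t}e$ by transitivity. Assertion $(c)$ then follows from $(b)$ exactly as in the symmetric case, via Lemma~\ref{L:factor leht} (prepending the single $\le_h$-step $u\le v$ to a $\sim_{h,t}$-chain raises the bound from $2\lfloor\frac n2\rfloor-1$ to $2\lfloor\frac n2\rfloor$) together with Proposition~\ref{P:hct-charact}$(a)$ for the $\le_{hc,t}$ version. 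The only genuinely new point, and the step I would watch most carefully, is the solvability of $\alpha^{2k}=\det e/\det u$ in $\TT$: this is what forces any two unitaries of $(M_{2k})_a$ to be $\sim_{hc,t}$-related, and it is exactly here that the evenness of the dimension of the Peirce-$2$ subspace enters.
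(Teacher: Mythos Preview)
Your proof is correct and follows essentially the same route as the paper's: reduce to the Peirce-$2$ subspace $((M_n)_a)_2(e)\cong(M_{2k})_a$ via Lemma~\ref{L:Mna rank}, invoke the determinant criterion of Proposition~\ref{P:simht in M2na} (choosing a complex unit $\alpha$ with $\alpha^{2k}\det u=\det e$ to get $\sim_{hc,t}$), then use finiteness for the passage from $\sim_2$ to $\le_2$ and Lemma~\ref{L:factor leht} for $(c)$. One small caveat: Remark~\ref{rem:relations in subtriples} does \emph{not} say that $\sim_{h,t}$ is intrinsic to subtriples (it explicitly warns that the converse direction may fail), so your appeal to it for computing $\sim_{h,t}$ inside $(M_{2k})_a$ is miscited; the correct justification is that by Proposition~\ref{P:simht-char} any $\sim_{h,t}$-chain consists of $\sim_h$-steps, each preserving the $\sim_2$-class, so the entire chain already lives in $((M_n)_a)_2(e)$.
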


\begin{proof} Let $\uu,\e\in (M_n)_a$ be two tripotents such that $\uu\sim_2 \e$. If $n$ is even and $\e$ is unitary, Propositon~\ref{P:simht in M2na} yields that $\uu\sim_{h,t} \alpha\cdot \e$ where $\alpha$ is a complex unit such that $\alpha^n=\det\uu\cdot\overline{\det\e}$ and the length of the respective chain is at most $n-1$. In general Lemma~\ref{L:Mna rank} says that the Peirce-$2$ subspace of $\e$ is isomorphic to $(M_{2k})_a$, hence by the unitary case
$\uu\sim_{hc,t}\e$ and the length of the respective chain is $2k-1$. This completes the proof of assertion $(a)$ for $\sim_2$ and of assertion $(b)$.

To prove the remaining part of $(a)$ assume $\uu\le_2 \e$. Since $(M_n)_a$ is finite (see Remark~\ref{rem:AotimesB(H)a}$(1)$), there is a tripotent $v\in (M_n)_s$ with $u\le v\sim_2 e$. By the already proved part we get $v\sim_{hc,t}e$. Thus $u\le_{hc,t}e$ (cf. Lemma~\ref{L:factor leht} and  Proposition~\ref{P:hct-charact}).

Assertion $(c)$ follows from $(b)$ using Lemma~\ref{L:factor leht} (for $\le_{h,t}$) and additionally Proposition~\ref{P:hct-charact}$(a)$ (for $\le_{hc,t}$).
\end{proof}

\begin{prop}
Let $n\in \en$, $n\ge 4$ and let $M=L^\infty(\mu,M_n)$, where $\mu$ satisfies \eqref{eq:CK=Linfty}. In this case we have $M=C(\Omega,M_n)$ (cf. Lemma~\ref{L:LinftyC=CKC}).
\begin{enumerate}[$(i)$]
      \item Let $U,V\in M_a$ be two tripotents. Then $U\sim_{h,t}V$ (in $M_a$) if and only if $U(\omega)\sim_{h,t}V(\omega)$ (in $(M_n)_a$) for each $\omega\in\Omega$. Moreover, chains of $\sim_h$ of length at most $2\lfloor\frac n2\rfloor-1$ are enough.
    \item Let $U,V\in M_a$ be two tripotents. Then $U\le_{h,t}V$ (in $M_a$) if and only if $U(\omega)\le_{h,t}V(\omega)$ (in $(M_n)_a$) for each $\omega\in\Omega$. Moreover, chains of $\le_h$ of length at most $2\lfloor\frac n2\rfloor$ are enough.
\end{enumerate}
\end{prop}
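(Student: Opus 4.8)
The plan is to follow the proof of Proposition~\ref{P:AotimesMn} almost verbatim, replacing the symmetric Cartan factor $(M_n)_s$ by the antisymmetric one $(M_n)_a$ together with the determinant machinery of Lemmata~\ref{L:M2na=HnHC}, \ref{L:simht in HnHC} and Proposition~\ref{P:simht in M2na}. First I would record the identifications that make this transfer possible. By Lemma~\ref{L:LinftyC=CKC}$(iii)$ we have $M=C(\Omega,M_n)$, and since the transpose is computed pointwise, $M_a=C(\Omega,(M_n)_a)$. Moreover $(M_n)_a$ is a finite-dimensional Cartan factor of rank $m=\lfloor\frac n2\rfloor$ (Lemma~\ref{L:Mna rank}$(i)$) whose rank-$k$ tripotents have Peirce-$2$ subspace triple-isomorphic to $(M_{2k})_a\cong H_k(\Ha_C)$ (Lemma~\ref{L:Mna rank}$(ii)$). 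The `only if' directions of both $(i)$ and $(ii)$ are immediate: evaluation $U\mapsto U(\omega)$ is a triple homomorphism, hence it maps any $\sim_h$- (resp.\ $\le_h$-) chain joining $U$ and $V$ in $M_a$ to one joining $U(\omega)$ and $V(\omega)$ in $(M_n)_a$.

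For the `if' part of $(i)$, assume $U(\omega)\sim_{h,t}V(\omega)$ for every $\omega$. Since $(M_n)_a$ is a finite-dimensional Cartan factor, Lemma~\ref{L:Cartan vector parametrization} (applied to $V$ and to fixed reference tripotents $\e_1,\dots,\e_m$, with $\e_k$ of rank $k$ as in Lemma~\ref{L:Mna rank}) supplies a triple automorphism $\Psi$ of $M_a$ for which $\Psi(V)$ takes only the values $0,\e_1,\dots,\e_m$ and $\Psi(U)(\omega)\sim_{h,t}\Psi(V)(\omega)$ pointwise. On the clopen set $\Omega_k$ where $\Psi(V)=\e_k$, the function $\Psi(U)$ takes unitary values in the Peirce-$2$ subspace $(M_{2k})_a\cong H_k(\Ha_C)$; applying Lemma~\ref{L:Cartan vector diagonalization} (with $u=\e_k$ and a fixed decomposition $\e_k=\e_k^1+\dots+\e_k^k$ into minimal tripotents) I obtain a further triple automorphism $\Theta$ fixing $\Psi(V)$ and turning $\Theta(\Psi(U))$ into a `diagonal' unitary, i.e.\ a continuous combination $\sum_j\alpha_j(\cdot)\e_k^j$ with $\TT$-valued coefficients. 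Then I would run the reduction of Lemma~\ref{L:simht in HnHC}$(c)$ --- which through the subalgebra $E\cong(M_k)_s$ is ultimately the explicit diagonal procedure of Proposition~\ref{P:simht1 in symmetric matrices} --- now with continuous function coefficients in place of scalars, connecting $\Theta(\Psi(U))$ to $\Psi(V)$ by a $\sim_h$-chain of length at most $2k-1\le 2m-1$ on each $\Omega_k$. Padding the shorter chains by repetition to a common length and undoing $\Theta$ and $\Psi$ yields $U\sim_{h,t}V$ with a chain of $\sim_h$ of length at most $2\lfloor\frac n2\rfloor-1$, matching Proposition~\ref{P:coincidence in Mna}$(b)$ in the fibers.

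For the `if' part of $(ii)$, assume $U(\omega)\le_{h,t}V(\omega)$ for every $\omega$. After the same reduction by $\Psi$ bringing $V$ to the reference values $\e_k$ on $\Omega_k$, I would invoke the antisymmetric Borel selection of Lemma~\ref{L:leht selection a} inside each Peirce-$2$ subspace $(M_{2k})_a$ to choose, measurably in $\omega$, a tripotent $\VV(\omega)$ with $\Psi(U)(\omega)\le\VV(\omega)\sim_{h,t}\e_k=\Psi(V)(\omega)$; the hyper-Stonean/clopen structure of $\Omega$ (as in Lemma~\ref{L:LinftyC=CKC} and in the proof of Proposition~\ref{P:AotimesMn}$(iii)$) ensures $\VV$ is $\mu$-a.e.\ equal to a continuous function, hence a genuine element of $M_a$. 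Then $\Psi(U)\le\VV$ globally and $\VV(\omega)\sim_{h,t}\Psi(V)(\omega)$ pointwise, so part $(i)$ gives $\VV\sim_{h,t}\Psi(V)$, and Lemma~\ref{L:factor leht} yields $\Psi(U)\le_{h,t}\Psi(V)$, i.e.\ $U\le_{h,t}V$; the bound $2\lfloor\frac n2\rfloor$ on the chain length follows from $(i)$ through Lemma~\ref{L:factor leht}.

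The main obstacle is not conceptual but lies in the measurability/continuity bookkeeping: one must verify that the Borel maps coming from Lemmata~\ref{L:Cartan vector parametrization}, \ref{L:Cartan vector diagonalization} and \ref{L:leht selection a}, once composed with the continuous tripotents $U,V$, produce functions that are $\mu$-a.e.\ equal to continuous ones (so that they indeed represent elements of $M_a=C(\Omega,(M_n)_a)$), and that the sign and determinant data on each clopen piece $\Omega_k$ remain compatible with the explicit diagonal reduction so that the latter can be performed with continuous $\TT$-valued coefficients. Both points are settled exactly as in the symmetric case, using the density of clopen sets modulo $\mu$-null sets guaranteed by \eqref{eq:CK=Linfty}.
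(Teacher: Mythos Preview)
Your proposal is correct and follows essentially the same route as the paper's own proof: reduce $V$ to the reference tripotents $\e_1,\dots,\e_{\lfloor n/2\rfloor}$ via Lemma~\ref{L:Cartan vector parametrization}, diagonalize $U$ via Lemma~\ref{L:Cartan vector diagonalization}, and then run the explicit diagonal procedure of Lemma~\ref{L:simht in HnHC}/Proposition~\ref{P:simht1 in symmetric matrices} with continuous coefficients on each clopen piece for $(i)$, while for $(ii)$ one uses the Borel selection of Lemma~\ref{L:leht selection a} together with $(i)$ and Lemma~\ref{L:factor leht}. The paper also explicitly invokes Lemma~\ref{L:M2na=HnHC} to transfer to $H_{\lfloor n/2\rfloor}(\Ha_C)$ before applying the diagonal reduction, which you subsume into your appeal to Lemma~\ref{L:simht in HnHC}$(c)$.
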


\begin{proof} For $1\le j\le\lfloor\frac n2\rfloor$ let $\e_j\in (M_n)_a$ be the tripotent from the proof of Lemma~\ref{L:Mna rank} and $\p_j=\e_j-\e_{j-1}$ (we set $\e_0=0$). Then $\p_j$ form a frame in $(M_n)_a$.

$(i)$ The `only if' part is obvious. To prove the converse assume that 
$U(\omega)\sim_{h,t}V(\omega)$ for each $\omega\in\Omega$. Apply Lemma~\ref{L:Cartan vector parametrization} to $V$ in place of $\e$ and $\e_1,\dots,\e_{\lfloor \frac n2\rfloor}$ in place of $u_1,\dots,u_m$. We get a mapping $\Psi$.

Then $\Psi(V)$ attains only values $0,\e_1,\dots,\e_{\lfloor \frac n2\rfloor}$ and $\Psi(U)(\omega)\sim_{h,t}\Psi(V)(\omega)$ for $\omega\in\Omega$. Hence we may apply Lemma~\ref{L:Cartan vector diagonalization}
to $\Psi(U)$ in place of $\uu$ and $\e_k=\p_1+\dots+\p_k$. We get a mapping $\Theta$.

Then $\Theta(\Psi(V))=\Psi(V)$, $\Theta(\Psi(U))(\omega)\sim_{h,t}\Psi(V)(\omega)$ for $\omega\in\Omega$ and the values of $\Theta(\Psi(U))$ are linear combinations of the $\p_j$'s.

By Lemma~\ref{L:M2na=HnHC} (use $\e_{\lfloor\frac n2\rfloor}$ in place of $\uu$) we may transfer the situation to $H_{\lfloor\frac n2\rfloor}(\Ha_C)$ and then by applying the procedure from the proofs of Lemma~\ref{L:simht in HnHC} and Lemma~\ref{P:simht1 in symmetric matrices} to diagonal matrices whose entries are continuous functions we deduce that $\Theta(\Psi(U))\sim_{h,t}\Psi(V)$ using a chain of $\sim_h$ of length at most $2{\lfloor\frac n2\rfloor}-1$. (We proceed separately on each of the clopen sets $\Omega_k$.)

Now we deduce that the same holds for $U$ and $V$.

$(ii)$ The `only if' part is obvious. The statement of the length chains follows from $(i)$ and Lemma~\ref{L:factor leht}. So, it remains to prove the `if' part.

To this end assume that $U(\omega)\le_{h,t}V(\omega)$ for each $\omega\in\Omega$. Up to applying Lemma~\ref{L:Cartan vector parametrization} as in the proof of $(i)$ we may assume that the values of $\e$ are only $0,\e_1,\dots,\e_{\lfloor\frac n2\rfloor}$. By application of Lemma~\ref{L:leht selection a} on the clopen sets $\Omega_1,\dots,\Omega_{\lfloor\frac n2\rfloor}$ we find a tripotent $W\in L^\infty(\Omega,(M_n)_a)=C(\Omega,(M_n)_a)$
such that for each $\omega\in\Omega$ we have $U(\omega)\le W(\omega)\sim_{h,t} V(\omega)$. Clearly $U\le W$ and by $(i)$ we get $W\sim_{h,t}V$. Thus $U\le_{h,t} V$ by Lemma~\ref{L:factor leht}.
 \end{proof}

\begin{question}
\begin{enumerate}
    \item Let $U,E\in M_a$ be two tripotents such that $U\le_2 E$. Are there decompositions $U=V-V^t$ and $E=W-W^t$ such that $V,W$ are tripotents in $M$, $V\perp V^t$,  $W\perp W^t$ and $V\le_2 W$?
    
    What happens in case
    $$E=\begin{pmatrix}
    0&1&0&0\\
    -1&0&0&0\\
    0&0&0&1\\
    0&0&-1&0
    \end{pmatrix},
    U=\begin{pmatrix}
    0&0&\frac{1}{\sqrt2} &  \frac{1}{\sqrt2} \\
    0&0&\frac{1}{\sqrt2} &  -\frac{1}{\sqrt2} \\
    -\frac{1}{\sqrt2} &  -\frac{1}{\sqrt2}&0&0 \\
    -\frac{1}{\sqrt2} &  \frac{1}{\sqrt2}&0&0
    \end{pmatrix} \ ?$$
    
    \item How long chains of $\sim_h$ are necessary to define $\sim_{h,t}$ in $M_a$? Is the above bound for finite-dimensional $H$ optimal? Is there a uniform bound for a general $H$? 
    
    \item Do the relations $\sim_2$ and $\sim_{hc,t}$ coincide in $B(H)_a$ if $\dim H=\infty$?
\end{enumerate}
\end{question}

\section{Spin factors and exceptional Cartan factors}\label{sec:spinetc}

In this section we will deal with the summands of the form $A\overline{\otimes}C$, where $A$ is an abelian von Neumann algebra and $C$ is a Cartan factor of type $4$, $5$ or $6$. Cartan factors of type $4$ are called {\em spin factors} and they are
defined by introducing an alternative structure on a Hilbert space as we recall below. They are also JC$^*$-triples, i.e., subtriples of $B(H)$ for a Hilbert space $H$, but we will not use this fact as the definition introduces a nice enough structure to work with. Cartan factors of type $5$ and $6$ are exceptional, they are defined as certain matrices of complex octonions which form a non-associative algebra that may be viewed as the eight-dimensional spin factor with an additional structures.

\subsection{Spin factors}\label{sec:spin}
Let us start by recalling the definitions and fixing the notation.
Throughout this subsection $H$ will denote the Hilbert space  $\ell^2(\Gamma)$ for a set $\Gamma$ of cardinality at least $3$, equipped with the canonical (coordinatewise) conjugation. The hilbertian norm on $H$ will be denoted by $\norm{\cdot}_2$, the orthogonality induced by the inner product will be denoted by $\perp_2$. The Hilbert space $H$ can be regarded as a type 1 Cartan factor with its Hilbertian norm.  

We will consider  another structure of JB$^*$-triple on $H$ -- a  triple product and a norm defined by
$$\begin{aligned}
\J xyz&=\ip xy z+ \ip zy x - \ip{x}{\overline{z}}\overline{y},\\
\norm{x}^2&=\ip xx +\sqrt{\ip xx^2-\abs{\ip{x}{\overline{x}}}^2}.
\end{aligned}
$$
The resulting JB$^*$-triple, which is known as a \emph{type 4  Cartan factor} or \emph{spin factor}, will be denoted by $C$. By $\perp$ we denote the relation of orthogonality of tripotents.

The spaces $H$ and $C$ are isomorphic as Banach spaces,
since clearly
$$\norm{x}_2\le\norm{x}\le\sqrt2\norm{x}_2\mbox{ for }x\in C.$$
In particular, $C$ is reflexive, hence it is a JBW$^*$-triple.

We will consider $A=L^\infty(\mu)$ for a probability measure satisfying \eqref{eq:CK=Linfty}. Then $A\overline{\otimes}C=L^\infty(\mu,C)$ due to the reflexivity of $C$ (cf. Lemma~\ref{L:LinftyC=CKC}$(i)$). 

\begin{remark}\label{rem:spin is finite}
By \cite[Corollary 6.4]{Finite} we know that $A\overline{\otimes}C$ is a finite JBW$^*$-triple. Hence, by Proposition~\ref{P:le2=lent in finite} the relations $\le_2$ and $\le_{n,t}$ coincide. We will show that much more is true.
\end{remark}

To simplify notation in the sequel we set
$$H_r=\{x\in H\setsep x\mbox{ has real coordinates}\}=\{x\in H\setsep \overline{x}=x\}.$$
Then $H_r$ is a real-linear subspace of $H$, it is a real Hilbert space. Moreover, 
$$\norm{x}=\norm{x}_2\mbox{ for }x\in H_r,$$
i.e., $H_r$ is also (isometrically) a real-linear subspace of $C$. This subspace will play a key role. 

We continue by a description of tripotents in $C$. The proof is known and easy (see, e.g., \cite[Lemma 6.1]{Finite} or \cite[Section 3]{KP2019} or \cite[\S 3.1.4]{FriedmanBook2005}).

\begin{lemma}\label{L:tripotents in spin factor}
The rank of $C$ equals $2$. Moreover, nonzero tripotents in $C$ are either unitary or minimal. They may be characterized as follows.
\begin{enumerate}[$(a)$]
    \item $u\in C$ is unitary if and only if $u=\alpha z$, where $\alpha$ is a complex unit and $z\in H_r$ satisfies $\norm{z}_2=1$.
    \item For $u\in C$ the following assertions are equivalent:
    \begin{enumerate}[$(i)$]
        \item $u$ is a minimal tripotent;
        \item $u\perp_2 \overline{u}$ and $\norm{u}_2=\frac1{\sqrt{2}}$;
        \item $u=a+ib$, where $a,b\in H_r$, $a\perp_2 b$ and $\norm{a}_2=\norm{b}_2=\frac12$.
    \end{enumerate}
     In this case
    $C_2(u)=\span \{u\}= \mathbb{C} u$, $C_0(u)=\span\{\overline{u}\}= \mathbb{C} \overline{u}$ and $C_1(u)=\{u,\overline u\}^{\perp_2}$ and the Peirce projections are the respective orthogonal projections.
\end{enumerate}
\end{lemma}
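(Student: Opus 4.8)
The plan is to reduce the whole classification to the single algebraic identity saying that $u$ is a tripotent, and then to exploit the interplay between the triple product, the inner product, and the coordinatewise conjugation. First I would compute directly from the definition that
$$\J uuu = 2\ip uu\, u - \ip u{\overline u}\,\overline u ,$$
so that, writing $s=\ip uu=\norm u_2^2\ge 0$ and $t=\ip u{\overline u}$, the tripotent equation $\J uuu=u$ becomes
$$(2s-1)\,u = t\,\overline u. \qquad (\star)$$
Since $u\neq 0$, this splits naturally into the case $t=0$ and the case $t\neq 0$, and the entire statement will follow from analysing these two.

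In the case $t=0$, equation $(\star)$ forces $2s-1=0$, i.e. $\norm u_2=\tfrac1{\sqrt2}$, and $t=0$ means $u\perp_2\overline u$; this is exactly condition $(ii)$, and conversely any $u$ satisfying $(ii)$ verifies $(\star)$ and so is a tripotent. In the case $t\neq 0$ I would note first that $2s-1\neq 0$ (otherwise $t\,\overline u=0$ forces $u=0$), whence $\overline u=\tfrac{2s-1}{t}\,u=:\lambda u$; conjugating gives $u=|\lambda|^2 u$, so $|\lambda|=1$. Choosing a complex unit $\alpha$ with $\overline\alpha^2=\lambda$ and putting $z=\overline\alpha u$, a one-line check shows $\overline z=z$, i.e. $z\in H_r$, and $u=\alpha z$. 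Substituting $u=\alpha z$ back into $(\star)$ (using $t=\alpha^2 s$ and $\overline u=\overline\alpha z$) collapses the equation to $s=1$, i.e. $\norm z_2=1$; this is precisely the shape prescribed in $(a)$, and conversely every such $u$ solves $(\star)$.

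Next I would identify the two cases with \emph{minimal} and \emph{unitary}. For a tripotent of the first kind ($t=0$) I would evaluate $L(u,u)$ on $u$, on $\overline u$, and on $\{u,\overline u\}^{\perp_2}$, using the conjugation identities $\ip{\overline u}{u}=\overline{\ip u{\overline u}}$, $\ip{\overline u}{\overline u}=\ip uu$ and the symmetric bilinear form identity $\ip u{\overline x}=\ip x{\overline u}$; this yields the eigenvalues $1$, $0$, $\tfrac12$, hence $C_2(u)=\ce u$, $C_0(u)=\ce\,\overline u$ and $C_1(u)=\{u,\overline u\}^{\perp_2}$, with the three eigenspaces mutually $\perp_2$-orthogonal, so that the Peirce projections coincide with the orthogonal projections and $u$ is minimal. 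This simultaneously proves the final displayed assertion and the implication $(i)\Rightarrow(ii)$. For an element $u=\alpha z$ of the second kind, a direct computation of $\J uux$ using $\overline z=z$ (so that $\ip xz=\ip z{\overline x}$ makes the two rank-one terms cancel) gives $\J uux=x$ for all $x$, i.e. $u$ is unitary. As $\dim C=\operatorname{card}\Gamma\ge 3$, minimal and unitary tripotents are genuinely distinct, so every nonzero tripotent is exactly one of the two kinds. The equivalence $(ii)\Leftrightarrow(iii)$ I would get by writing $u=a+ib$ with $a,b\in H_r$ and computing $s=\norm a_2^2+\norm b_2^2$ and $t=(\norm a_2^2-\norm b_2^2)+2i\ip ab$, so that $(ii)$ translates verbatim into $(iii)$.

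Finally, for the rank: from $C_0(u)=\ce\,\overline u$ for a minimal $u$, any tripotent orthogonal to $u$ lies in $\ce\,\overline u$ and, being a tripotent, must be a unimodular multiple of $\overline u$; any two such multiples $\beta_1\overline u,\beta_2\overline u$ satisfy $L(\beta_1\overline u,\beta_2\overline u)=\beta_1\overline{\beta_2}L(\overline u,\overline u)\neq 0$, so they are never orthogonal. Hence no three nonzero tripotents can be mutually orthogonal, while $\{u,\overline u\}$ is an orthogonal pair (as $\overline u\in C_0(u)$) and $u+\overline u$ is unitary; together with the fact that unitary tripotents are complete ($C_0=\{0\}$), this gives $\operatorname{rank}(C)=2$. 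The main obstacle I anticipate is not conceptual but bookkeeping: every step rests on keeping the coordinatewise conjugation $\overline{\,\cdot\,}$ consistent with the sesquilinearity of $\ip\cdot\cdot$ — in particular the identities $\ip{\overline u}{\overline u}=\ip uu$, $\ip{\overline u}{u}=\overline{\ip u{\overline u}}$ and the symmetry $\ip a{\overline b}=\ip b{\overline a}$ of the bilinear form $\sum_i a_ib_i$ — and a single sign slip would corrupt the case analysis.
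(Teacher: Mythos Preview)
Your proof is correct and complete. The paper itself does not supply an argument for this lemma --- it simply cites earlier references (``The proof is known and easy (see, e.g., \cite[Lemma 6.1]{Finite} \dots)'') --- so your direct computation from the tripotent identity $\J uuu=2\ip uu\,u-\ip u{\overline u}\,\overline u$ and the resulting dichotomy on $t=\ip u{\overline u}$ is exactly the kind of self-contained argument those references are expected to contain; there is nothing to compare.
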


We continue with characterizations of the above-defined relations for tripotents in $C$. For the sake of completeness we include also the following characterizations of $\le$ and $\le_2$ which follows by combining  \cite[Proposition 6.3]{Finite} and Remark~\ref{rem:spin is finite}.

\begin{prop}\label{P:spin-le-le2}
Let $u,e\in C$ be two nonzero tripotents.
\begin{enumerate}[$(i)$]
    \item $u\le e$ $\Leftrightarrow$ either $u=e$ or $u$ is minimal and $e=u+\alpha\overline{u}$ for a complex unit $\alpha$;
        \item $u\le_2 e$ $\Leftrightarrow$ $u\le_{n,t}e$ $\Leftrightarrow$ either $e$ is unitary or $u=\alpha e$ for a complex unit $\alpha$.
\end{enumerate}
\end{prop}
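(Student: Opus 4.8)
The plan is to split the statement into its two independent components. The equivalence $u\le_2 e\Leftrightarrow u\le_{n,t}e$ is not special to spin factors: it is an immediate consequence of Remark~\ref{rem:spin is finite}, since $C$ itself (the case $A=\ce$) is a finite JBW$^*$-triple and hence $\le_2$ and $\le_{n,t}$ coincide on it by Proposition~\ref{P:le2=lent in finite}. Thus the real work is the explicit description of $\le$ and $\le_2$, which I would obtain by feeding the structural information recorded in Lemma~\ref{L:tripotents in spin factor} into the abstract characterizations of the partial order, exactly as in \cite[Proposition 6.3]{Finite}. The guiding principle throughout is that, since the rank of $C$ equals $2$, every nonzero tripotent is either unitary or minimal, and in each case Lemma~\ref{L:tripotents in spin factor} supplies the complete Peirce decomposition.

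For assertion $(ii)$ I would use that $u\le_2 e$ means precisely $u\in C_2(e)$ and argue by cases on $e$. If $e$ is unitary then $C_2(e)=C$, so $u\le_2 e$ holds automatically for every tripotent $u$; this yields the first alternative. If $e$ is minimal then $C_2(e)=\ce e$ by Lemma~\ref{L:tripotents in spin factor}$(b)$, so $u\le_2 e$ forces $u=\lambda e$ for some scalar $\lambda$; computing $\{u,u,u\}=|\lambda|^2\lambda\, e$ and comparing with $u=\lambda e$ shows $|\lambda|=1$, i.e.\ $u=\alpha e$ for a complex unit $\alpha$. Conversely both alternatives clearly give $u\in C_2(e)$, which closes the equivalence.

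For assertion $(i)$ I would start from the definition $u\le e$, namely that $e-u$ is a tripotent orthogonal to $u$, equivalently a tripotent lying in $C_0(u)$. Again I split on the type of $u$. If $u$ is unitary, then $C_0(u)=\{0\}$, forcing $e=u$. If $u$ is minimal, then $C_0(u)=\ce\overline{u}$ by Lemma~\ref{L:tripotents in spin factor}$(b)$, so $e-u=\alpha\overline{u}$ for some scalar $\alpha$; requiring $\alpha\overline{u}$ to be a tripotent, via $\{\alpha\overline{u},\alpha\overline{u},\alpha\overline{u}\}=|\alpha|^2\alpha\,\overline{u}$, gives either $\alpha=0$ (so $e=u$) or $|\alpha|=1$ and $e=u+\alpha\overline{u}$. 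For the converse, when $u$ is minimal and $e=u+\alpha\overline{u}$ with $|\alpha|=1$, the element $e-u=\alpha\overline{u}$ is a tripotent in $C_0(u)$, hence orthogonal to $u$, so $u\le e$.

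The only genuinely delicate point is the bookkeeping in the minimal case: one must use repeatedly that the conjugation $x\mapsto\overline{x}$ carries minimal tripotents to minimal tripotents and enters the triple product through the term $\langle x,\overline{z}\rangle\overline{y}$, so that $\overline{u}$ spans $C_0(u)$ and $u+\alpha\overline{u}$ is genuinely a (unitary) tripotent precisely when $|\alpha|=1$. These are short computations, already packaged in Lemma~\ref{L:tripotents in spin factor}, so the main obstacle is organizational rather than technical---keeping the unitary/minimal dichotomy for $u$ (driving $(i)$) separate from the dichotomy for $e$ (driving $(ii)$). With this housekeeping in place the statement reduces to combining \cite[Proposition 6.3]{Finite} with Remark~\ref{rem:spin is finite}.
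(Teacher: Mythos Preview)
Your proposal is correct and follows exactly the approach intended by the paper: the paper's own proof merely cites \cite[Proposition 6.3]{Finite} together with Remark~\ref{rem:spin is finite}, and what you have written is precisely the argument behind that cited proposition, carried out via the unitary/minimal dichotomy and the Peirce data of Lemma~\ref{L:tripotents in spin factor}. Your case splits (on $u$ for $(i)$, on $e$ for $(ii)$) and the verification that $|\alpha|=1$ are all accurate, so there is nothing to add.
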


To describe the other relations it will be suitable to distinguish three cases. It is the content of the following three propositions.

The first one deals with minimal tripotents. It follows easily from Proposition~\ref{P:orders special cases}$(a)$ and it is not specific for spin factors.

\begin{prop}\label{P:spin nonunitary tripotents} Let $u,e\in C$ be two nonzero tripotents such that $e$ is minimal. Then we have the following.
\begin{enumerate}[$(i)$]
     \item $u\le_{h,t} e$ $\Leftrightarrow$  $u\le_h e$ $\Leftrightarrow$ $u\le_r e$ $\Leftrightarrow$ $u=\pm e$;
    \item $u\le_2 e$ $\Leftrightarrow$ $u\le_{n,t} e$ $\Leftrightarrow$ $u \le_n e$ $\Leftrightarrow$ $u\le_{hc} e$ 
     $\Leftrightarrow$ $u\le_c e$ $\Leftrightarrow$ $u=\alpha e$ for a complex unit $\alpha$.
\end{enumerate}
\end{prop}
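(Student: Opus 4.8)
The plan is to exploit the fact that since $e$ is minimal, Lemma~\ref{L:tripotents in spin factor}$(b)$ gives $C_2(e)=\ce e$, a one-dimensional unital JB$^*$-algebra with unit $e$, isometrically isomorphic to $\ce$. Thus every tripotent $u$ with $u\le_2 e$ (equivalently $u\in C_2(e)$) is of the form $u=\alpha e$, and the identity $\J uuu=u$ together with $\J eee=e$ forces $|\alpha|=1$; conversely any such $\alpha e$ is a nonzero tripotent in $C_2(e)$. All the non-transitive relations can therefore be read off inside $\ce$ via Remark~\ref{rem:relations in subtriples} and Proposition~\ref{P:orders special cases}$(a)$, while the genuinely new point is to control the transitive hull $\le_{h,t}$, which is formed in the ambient $C$ rather than inside $C_2(e)$.

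For assertion $(ii)$ I would simply close an implication cycle. By the diagram in Proposition~\ref{P:order implications} one has
$$u\le_c e\Rightarrow u\le_{hc}e\Rightarrow u\le_n e\Rightarrow u\le_{n,t}e\Rightarrow u\le_2 e.$$
The relation $u\le_2 e$ means $u\in C_2(e)=\ce e$, so $u=\alpha e$ with $|\alpha|=1$ by the observation above; and if $u=\alpha e$ then $\overline{\alpha}u=e\le e$, i.e.\ $u\le_c e$. This reinserts the starting point and makes all five relations equivalent to ``$u=\alpha e$ for a complex unit $\alpha$''.

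For assertion $(i)$, the equivalences $u\le_h e\Leftrightarrow u\le_r e\Leftrightarrow u=\pm e$ are immediate from Remark~\ref{rem:relations in subtriples} and Proposition~\ref{P:orders special cases}$(a)$ applied in $C_2(e)\cong\ce$: a nonzero self-adjoint tripotent of $\ce$ is $\pm1$, and a nonzero element $u$ with $u$ or $-u$ a projection is $\pm1$. The crux is the transitive hull. Using Lemma~\ref{L:factor leht} I would write $u\le_{h,t}e$ as: there is a tripotent $v$ with $u\le v$ and $v\sim_{h,t}e$. Since $\sim_{h,t}$ refines $\sim_2$ (Proposition~\ref{P:simht-char}), any such $v$ lies in $C_2(e)=\ce e$, so $v=\alpha e$ with $|\alpha|=1$; moreover Proposition~\ref{P:simht-char} provides a $\sim_h$-chain $v=w_0\sim_h\cdots\sim_h w_k=e$ whose members are pairwise $\sim_2$-equivalent, hence all of the form $\alpha_j e$ with $|\alpha_j|=1$. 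A one-line computation $\J{\beta e}{\alpha e}{\beta e}=\beta^2\overline{\alpha}\,e$ shows that $\alpha e\sim_h\beta e$ holds exactly when $\alpha^2=\beta^2$, i.e.\ $\alpha=\pm\beta$; propagating this along the chain from $\alpha_k=1$ forces $v=\pm e$. Finally $u\le\pm e$ with $\pm e$ minimal yields $u=\pm e$ (again since $C_2(\pm e)=\ce e$ is one-dimensional). Thus $u\le_{h,t}e\Leftrightarrow u=\pm e$, matching $\le_h$.

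The main obstacle I anticipate is precisely this last step: guaranteeing that the transitive hull, although built inside all of $C$, cannot escape the one-dimensional Peirce space $C_2(e)$ and reach a multiple $\alpha e$ with $\alpha\neq\pm1$. The argument hinges on the observation that every tripotent occurring in a $\sim_h$-chain issuing from $e$ is $\sim_2$-equivalent to $e$ and hence confined to $\ce e$, after which the scalar identity for $\J{\beta e}{\alpha e}{\beta e}$ closes the gap.
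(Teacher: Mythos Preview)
Your proof is correct and follows the same approach as the paper, which simply cites Proposition~\ref{P:orders special cases}$(a)$ (using $C_2(e)\cong\ce$) and remarks that the argument is not specific to spin factors. Your treatment of $\le_{h,t}$ via Lemma~\ref{L:factor leht} and the explicit scalar computation $\J{\beta e}{\alpha e}{\beta e}=\beta^2\overline{\alpha}\,e$ is more detailed than strictly needed---one could note directly that any $\le_h$-chain issuing from $e$ stays in $C_2(e)$ by transitivity of $\le_2$, so that $\le_{h,t}$ in $C$ and in $C_2(e)\cong\ce$ agree here---but it is valid and makes explicit precisely the point the paper leaves to the reader.
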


\begin{prop}\label{P:spin min-unitary}
 Let $u,e\in C$ be two nonzero tripotents such that $e$ is unitary and $u$ is not. Then we have the following.
 \begin{enumerate}[$(i)$]
     \item $u\le_h e$ $\Leftrightarrow$ $u\le_r e$ $\Leftrightarrow$ $e=\pm u+\alpha\overline{u}$ for a complex unit $\alpha$;
      \item $u\le_n e$ $\Leftrightarrow$ $u\le_{hc} e$ $\Leftrightarrow$ $u\le_c e$ $\Leftrightarrow$  $e\in\span\{u,\overline{u}\}$.
      \end{enumerate}
\end{prop}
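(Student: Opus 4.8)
The plan is to handle the two parts separately; in each, the chain of implications recorded in Proposition~\ref{P:order implications} supplies one direction for free, so the real work is to establish the reverse implication and to identify the explicit geometric condition. Throughout I use that $e$ unitary means $C_2(e)=C$ (so $u\in C_2(e)$ and $u=\{e,e,u\}$ hold automatically), and I write $e=\eta w$ with $\eta$ a complex unit and $w\in H_r$, $\norm{w}_2=1$, as permitted by Lemma~\ref{L:tripotents in spin factor}(a). I also note once and for all that $u\le_r e$ is, by the definition of $\le_r$ together with Proposition~\ref{P:spin-le-le2}(i), exactly the condition $e=\pm u+\beta\overline u$ for some complex unit $\beta$; this already accounts for the second equivalence asserted in (i).

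For (i) it then remains to prove $u\le_h e\Rightarrow u\le_r e$. First I would compute, using bilinearity in the outer variables, that $\{e,u,e\}=\eta^2\{w,u,w\}$; hence $u\le_h e$ is equivalent to $v:=\overline\eta u$ being self-adjoint relative to the real unit $w$, i.e. $v=\{w,v,w\}=2\langle w,v\rangle w-\overline v$. Writing $v=a+ib$ with $a,b\in H_r$, this forces $a\in\mathbb Rw$ and $b\perp_2 w$, so the self-adjoint elements are precisely the $v=\rho w+ib$ with $\rho\in\mathbb R$, $b\in H_r$, $b\perp_2 w$. Imposing in addition that $v$ be a minimal tripotent and invoking Lemma~\ref{L:tripotents in spin factor}(b) pins down $\rho=\pm\tfrac12$ and $\norm{b}_2=\tfrac12$, whence $w=\pm(v+\overline v)$. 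Translating back through $u=\eta v$ gives $e=\eta w=\pm(u+\eta^2\overline u)=\pm u+\beta\overline u$ with $\beta=\pm\eta^2$, which is $u\le_r e$.

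For (ii) the implications $u\le_c e\Rightarrow u\le_{hc}e\Rightarrow u\le_n e$ are again free from Proposition~\ref{P:order implications}, so it suffices to prove $u\le_c e\Leftrightarrow e\in\span\{u,\overline u\}$ and then to close the loop with $u\le_n e\Rightarrow e\in\span\{u,\overline u\}$. If $u\le_c e$ then $\beta u\le e$ for some complex unit $\beta$, and as $\beta u$ is minimal, Proposition~\ref{P:spin-le-le2}(i) gives $e=\beta u+\gamma\overline{\beta u}\in\span\{u,\overline u\}$. Conversely, writing $e=cu+d\overline u$, the fact that $e=\eta w$ with $w$ real gives $\overline e=\overline\eta^{\,2}e$, and a comparison of the $u$- and $\overline u$-coefficients (legitimate since $u\perp_2\overline u$) forces $|c|=|d|$; then $\norm{e}_2=1$ together with $\norm{u}_2^2=\norm{\overline u}_2^2=\tfrac12$ gives $|c|=|d|=1$, so by Proposition~\ref{P:spin-le-le2}(i) one has $cu\le e$ and hence $u\le_c e$. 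The remaining implication $u\le_n e\Rightarrow e\in\span\{u,\overline u\}$ is the computational heart. Using $C_2(u)=\mathbb Cu$, $C_0(u)=\mathbb C\overline u$, $C_1(u)=\{u,\overline u\}^{\perp_2}$ and that the Peirce projections are the Hilbertian orthogonal projections (Lemma~\ref{L:tripotents in spin factor}(b)), I would expand $e=\lambda u+\mu\overline u+f$ with $f=P_1(u)e\in C_1(u)$, so that $\{u,u,e\}=L(u,u)e=\lambda u+\tfrac12 f$. Setting $t=\lambda u+\tfrac12 f$ and imposing $\{t,t,t\}=2\langle t,t\rangle t-\langle t,\overline t\rangle\overline t=t$, I compare Peirce components relative to $u$: the Peirce-$0$ part yields $\langle t,\overline t\rangle\overline\lambda=0$, and since $\langle t,\overline t\rangle\neq0$ whenever $f\neq0$, this forces $\lambda=0$; the Peirce-$1$ part then yields $|\lambda|^2+\tfrac14\norm{f}_2^2=1$, hence $\norm{f}_2^2=4$. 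But $P_1(u)$ is a $\norm{\cdot}_2$-orthogonal projection, so $\norm{f}_2\le\norm{e}_2=1$, a contradiction; therefore $f=0$, i.e. $e\in\span\{u,\overline u\}$.

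The main obstacle is precisely this last step: one must show that the purely algebraic requirement that $\{u,u,e\}$ be a tripotent---which a priori constrains only the Peirce-$2$ and Peirce-$1$ parts of $e$ relative to $u$---already forces the Peirce-$1$ part to vanish. The mechanism is the interplay between the spin identity $\{t,t,t\}=2\langle t,t\rangle t-\langle t,\overline t\rangle\overline t$, which couples $t$ to $\overline t$ and thereby links the Peirce-$2$ and Peirce-$0$ directions, and the Hilbertian norm bound $\norm{P_1(u)e}_2\le\norm{e}_2=1$, which excludes the value $\norm{f}_2^2=4$ demanded by tripotency.
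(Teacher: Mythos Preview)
Your argument for (i) is correct and is in fact organized a bit differently from the paper: the paper first proves (ii) and then uses $u\le_h e\Rightarrow u\le_n e$ together with (ii) to place $e$ in $\span\{u,\overline u\}$ before computing $\{e,u,e\}$, whereas you compute $\{w,v,w\}$ directly and read off the real/imaginary constraints. Both routes are clean.

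In (ii), however, there is a genuine gap at the step ``since $\langle t,\overline t\rangle\neq0$ whenever $f\neq0$.'' With $t=\lambda u+\tfrac12 f$ one has $\langle t,\overline t\rangle=\tfrac14\langle f,\overline f\rangle$, and for a \emph{general} element $f\in C_1(u)=\{u,\overline u\}^{\perp_2}$ the quantity $\langle f,\overline f\rangle$ can perfectly well vanish (take $f=a+ib$ with $a,b\in H_r\cap C_1(u)$ orthogonal of equal norm, possible as soon as $\dim H\ge4$). What saves you here---and what you must invoke explicitly---is the unitarity of $e$: writing $e=\eta w$ with $w\in H_r$ and checking that $P_1(u)$ preserves $H_r$, you get $f=\eta\,P_1(u)w$ with $P_1(u)w\in H_r$, hence $\langle f,\overline f\rangle=\eta^2\norm{f}_2^2\ne0$ when $f\ne0$. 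Once this is supplied, the rest of your contradiction ($\lambda=0$, then $\norm{f}_2^2=4>1=\norm{e}_2^2$) goes through.

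For comparison, the paper avoids the direct tripotent computation for $t$ altogether: it appeals to Lemma~\ref{L:len implies sim2}, which gives $\{u,u,e\}\sim_2 u$, and since $u$ is minimal Proposition~\ref{P:spin-le-le2}(ii) forces $\{u,u,e\}$ to be a scalar multiple of $u$, whence $f=0$ immediately. This is shorter and sidesteps the issue above.
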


\begin{proof}
$(ii)$  We have $e=\gamma z$, where $\gamma$ is a complex unit and $z\in H_r$ is a norm-one vector. Consider the Peirce decomposition of $z$ with respect to $u$, i.e.,
 $$z=\alpha u+\beta \overline{u}+x,$$
 where $\alpha,\beta\in\ce$ and $x\in\{u,\overline{u}\}^{\perp_2}$. We have
 $$z=\overline{z}=\overline{\alpha}\  \overline{u}+\overline{\beta} u+\overline{x},$$
 hence $\beta=\overline{\alpha}$ and $\overline{x}=x$, so $x\in H_r$. We have
 $$\J uue=\gamma\J uuz=\gamma(\alpha u+\frac12 x).$$
 
 Assume $u\le_n e$. Then $\J uue$ is a tripotent. We know from Lemma~\ref{L:len implies sim2} that $\J uue\sim_2 u$, hence $\J uue$ is a scalar multiple of $u$ by Proposition~\ref{P:spin-le-le2}$(ii)$.
 It follows that $x=0$. Thus $e\in\span\{u,\overline{u}\}$.
 
  Conversely, if $e\in\span\{u,\overline{u}\}$, then by the above we have $e=\gamma (\alpha u+\overline{\alpha} \ \overline{u})$. As $u\perp_2 \overline{u}$, $\ip ee=1$, $\ip uu=\ip{\overline{u}}{\overline{u}}=\frac12$, we deduce that $\alpha$ is a complex unit. Hence $\gamma \alpha u\le e$ by Proposition~\ref{P:spin-le-le2}$(i)$, so $u\le_c e$.
  
  The remaining implications are obvious.
  
  $(i)$  If $e=\pm u+\alpha\overline{u}$, by Proposition~\ref{P:spin-le-le2}$(i)$ we deduce that $u\le e$ or $-u\le e$, so $u\le_r e$.

 Conversely, assume $u\le_h e$. Since this implies $u\le_n e$, by the already proved $(ii)$ we have $e=\alpha u+\beta \overline{u}$ for some $\alpha,\beta\in \ce$. Further, we have
    $$\begin{aligned}u &=\J eue=2\ip eu e-\ip e{\overline{e}}\overline{u}\\&=2\ip{\alpha u+\beta \overline u}u (\alpha u+\beta \overline{u})-\ip{\alpha u+\beta \overline{u}}{\overline{\alpha}\  \overline{u}+\overline{\beta} u}\overline{u}
 \\&=2\alpha \ip uu (\alpha u+\beta \overline{u})-\alpha \beta (\ip uu+\ip{\overline{u}}{\overline{u}})\overline{u}
 =\alpha^2 u,\end{aligned}$$
 hence necessarily $\alpha =\pm1$. Finally, by Lemma~\ref{L:tripotents in spin factor} we have
 $$1=\norm{e}_2^2=\abs{\alpha}^2\norm{u}_2^2+\abs{\beta}^2\norm{\overline{u}}_2^2=\frac12(1+\abs{\beta}^2),$$
 hence 
 $\beta$ must be a complex unit.
 \end{proof}

\begin{prop}\label{P:spin-unitaries}
Let $u,e\in C$ be two unitary tripotents.
\begin{enumerate}[$(i)$]
    \item  $u\sim_h e$ if and only if
  either $u=\pm e$ or $\ip ue=0$ and $\ip{u}{\overline{u}}=-\ip{e}{\overline{e}}$. (The last condition is fulfilled if and only if
  $u=\alpha x$ and $e=\pm i\alpha y$, where  $x,y\in H_r$, $x\perp_2 y$ and $\alpha$ is a complex unit.) 
 
   \item  $u\sim_{hc} e$ if and only if
  either $u=\alpha e$ for a complex unit $\alpha$ or $u\perp_2 e$. 
  \item There is a unitary $v\in C$ such that
  $$ e\sim_{hc} v\sim_h u.$$
  In particular $e\sim_{hc,t}u$ (and chains of $\sim_{hc}$ of length two are enough).
   \item  $u\sim_{h,t} e$ if and only if $\ip{u}{\overline{u}}=\pm\ip{e}{\overline{e}}$. (This takes place if and only if there are $x,y\in H_r$ and a complex unit $\alpha$ such that $u=\alpha x$ and $e\in\{\pm \alpha y,\pm i\alpha y\}$.) Moreover, chains of $\sim_h$ of length three are enough.
\end{enumerate}
\end{prop}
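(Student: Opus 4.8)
The plan is to reduce everything to the description of unitary tripotents in Lemma~\ref{L:tripotents in spin factor}$(a)$: every unitary $u$ has the form $u=\alpha x$ with $\alpha$ a complex unit and $x\in H_r$, $\norm{x}_2=1$. Then $\overline u=\overline\alpha x$, so $\ip u{\overline u}=\alpha^2$ is a complex unit, and for $u=\alpha x$, $e=\gamma y$ one has $\ip ue=\alpha\overline\gamma\ip xy$ with $\ip xy\in\er$. The computational engine is the identity $\J eue=2\ip eu\,e-\ip e{\overline e}\,\overline u$, read off directly from the triple product of $C$. For $(i)$, since $u,e$ are unitary we have $u\sim_2 e$ automatically, so by Proposition~\ref{P:charact simh} the relation $u\sim_h e$ is equivalent to $u\le_h e$, i.e.\ to $u=\J eue$. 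Substituting the above, I would rewrite this as the single vector equation $(\alpha+\gamma^2\overline\alpha)x=2\gamma^2\overline\alpha\ip xy\,y$ in $H$ and split according to whether $x,y$ are proportional or linearly independent: proportionality forces $\alpha^2=\gamma^2$, hence $u=\pm e$, while linear independence makes both coefficients vanish, giving $\ip xy=0$ (that is $\ip ue=0$) and $\alpha^2=-\gamma^2$ (that is $\ip u{\overline u}=-\ip e{\overline e}$). The parenthetical reformulation follows since $\gamma^2=-\alpha^2$ means $\gamma=\pm i\alpha$.

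For $(ii)$, Proposition~\ref{P:hc-charact}$(b)$ gives $u\sim_{hc}e$ iff $\alpha u\sim_h e$ for some complex unit $\alpha$. Applying $(i)$ to $\alpha u$ and using $\ip{\alpha u}{\overline{\alpha u}}=\alpha^2\ip u{\overline u}$ and $\ip{\alpha u}e=\alpha\ip ue$, the first alternative becomes $u=\beta e$ and the second becomes $\ip ue=0$ together with a condition on $\alpha^2$ that can always be solved (every complex unit has a complex-unit square root). Hence $u\sim_{hc}e$ iff $u=\beta e$ or $u\perp_2 e$.

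The heart of the matter is $(iii)$ and the sufficiency in $(iv)$, where the hypothesis $\dim\Gamma\ge 3$ (so $\dim H_r\ge 3$) is genuinely used. For $(iii)$, writing $u=\alpha x$ and $e=\gamma y$, I would pick a unit vector $w\in H_r$ with $w\perp_2 x$ and $w\perp_2 y$ (possible because $\operatorname{span}\{x,y\}$ has real dimension at most $2$) and set $v=i\alpha w$. Then $w\perp_2 x$ and $\ip v{\overline v}=-\alpha^2=-\ip u{\overline u}$ give $v\sim_h u$ by $(i)$, while $w\perp_2 y$ gives $e\perp_2 v$ and hence $e\sim_{hc}v$ by $(ii)$, producing the chain $e\sim_{hc}v\sim_h u$ of length two.

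For $(iv)$, necessity follows by inspecting a chain $u=v_1\sim_h\cdots\sim_h v_k=e$ furnished by Proposition~\ref{P:simht-char}: each $v_j$ is $\sim_2 e$, hence unitary, and by $(i)$ every step either fixes or negates $\ip\cdot{\overline\cdot}$, so $\ip u{\overline u}=\pm\ip e{\overline e}$. For sufficiency I would again choose $z\in H_r$ orthogonal to $x$ and $y$ and set $v_1=i\alpha z$, so that $u\sim_h v_1$; when $\ip e{\overline e}=\ip u{\overline u}$ one checks $v_1\sim_h e$ directly, a chain of length two, while when $\ip e{\overline e}=-\ip u{\overline u}$ one inserts one more unitary $v_2=\alpha z'$ with $z'\in H_r$ orthogonal to $z$ and $y$, verifying $v_1\sim_h v_2\sim_h e$ by $(i)$ and obtaining a chain of length three. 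The main obstacle is exactly this length bookkeeping: by $(i)$ two unitaries sharing the \emph{same} value of $\ip\cdot{\overline\cdot}$ but not equal up to sign cannot be joined by a single $\sim_h$ (orthogonal directions force opposite signs), so one must route through an auxiliary direction, and confirming that three steps always suffice is where the dimension hypothesis is used.
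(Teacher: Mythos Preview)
Your proposal is correct and follows essentially the same approach as the paper's proof: the same formula $\J eue=2\ip eu e-\ip e{\overline e}\,\overline u$ drives $(i)$, the reduction via Proposition~\ref{P:hc-charact}$(b)$ gives $(ii)$, and for $(iii)$ and $(iv)$ you pick the same auxiliary orthogonal real directions (the paper writes $z_1,z_2$ where you write $z,z'$) to build chains of length two or three. The only cosmetic difference is that the paper normalizes $u\in H_r$ in $(iii)$ while you keep the general phase $\alpha$, which makes no material change.
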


\begin{proof}
$(i)$ Since $u,e$ are unitaries, we have $u=\alpha x$ and $e=\beta y$ for some complex units $\alpha,\beta$ and norm-one vectors $x,y\in H_r$. Recall that $u\sim_h e$ if and only if $u\le_h e$, which takes place if and only if $u=\J eue$.   The last equality means that 
$$\alpha x=\J {\beta y}{\alpha x}{\beta y}=\beta^2\overline{\alpha} \J yxy=\beta^2\overline{\alpha} (2\ip yx y-\ip yy x)=\beta^2\overline{\alpha} (2\ip yx y-x).$$
If $\ip yx=0$, this equality is equivalent to $\alpha=-\beta^2\overline{\alpha}$, i.e., $\alpha^2=-\beta^2$. 
This means that $\beta=\pm i\alpha$, hence the second case takes place. (Note that $\alpha^2=\ip{u}{\overline{u}}$ and
$\beta^2=\ip{e}{\overline{e}}$.)

If $\ip yx\ne0$, necessarily $y$ is a multiple of $x$. Since both $x,y\in H_r$, we get $y=\pm x$. In both cases the equality reduces to
$\alpha=\beta^2\overline{\alpha}$, thus $\alpha^2=\beta^2$, i.e., $\beta=\pm\alpha$. So, we deduce that this possibility is equivalent to $u=\pm e$.
 
$(ii)$ Recall that $u\sim_{hc} e$ if and only if there is a complex unit $\beta$ such that $u\sim_h \beta e$. Thus the equivalence follows easily from $(i)$
 
$(iii)$ Without loss of generality $u\in H_r$. By the assumptions there is a complex unit $\alpha$ such that $\alpha e\in H_r$ has real coordinates. Find $v\in H_r\cap\{u,\alpha e\}^{\perp_2}$ such that $\norm{v}_2=1$. (This is possible as $\dim H_r\ge 3$.) Then $(i)$ yields
$$\alpha e\sim_h iv\sim_h u,$$
which completes the proof. 

$(iv)$  Since $u,e$ are unitaries, we have $u=\alpha x$ and $e=\beta y$ for some complex units $\alpha,\beta$ and  $x,y\in H_r$. Observe that $\ip{u}{\overline{u}}=\alpha^2$ and $\ip{e}{\overline{e}}=\beta^2$. Hence the condition in brackets is clearly equivalent to $\ip{u}{\overline{u}}=\pm\ip{e}{\overline{e}}$.

Now let us prove the remaining equivalence.

The `only if' part follows easily from $(i)$. Let us prove the `if part'. 

Fix $x,y\in H_r$ such that $\norm{x}_2=\norm{y}_2=1$ and a complex unit $\alpha$ satisfying the hypotheses. Let $z_1\in H_r\cap\{x,y\}^{\perp_2}$ such that $\norm{z_1}_2=1$. Further, let $z_2\in H_r\cap\{z_1,y\}^{\perp_2}$ such that $\norm{z_2}_2=1$. The vectors $z_1,z_2$ exist since $\dim H_r\ge 3$.

Then it follows from $(i)$ that
$$\begin{gathered}\alpha x\sim_h i\alpha z_1\sim_h\pm\alpha y,\\
\alpha x\sim_h i\alpha z_1\sim_h \alpha z_2\sim_h\pm i\alpha y.
\end{gathered}$$
Now the assertion easily follows.
\end{proof}

\begin{cor}\label{cor:relations in spin}
\begin{enumerate}[$(i)$]
    \item The relations $\le_2$, $\le_{n,t}$ and $\le_{hc,t}$  coincide in $C$. 
    \item To describe the relation $\le_{hc,t}$ in $C$ chains of $\le_{hc}$ of length three are enough.
    \item To describe the relation $\le_{h,t}$ in $C$ chains of $\le_{h}$ of length four are enough.
    \item If $e\in C$ is unitary and $u\in C$ is minimal, then $u\le_{h,t} e$.
\end{enumerate}
\end{cor}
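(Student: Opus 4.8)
The plan is to reduce everything to the trichotomy of Lemma~\ref{L:tripotents in spin factor}: a nonzero tripotent of $C$ is either unitary or minimal, and to feed this into the three structural propositions~\ref{P:spin nonunitary tripotents},~\ref{P:spin min-unitary} and~\ref{P:spin-unitaries}. For $(i)$ I would first note that by Remark~\ref{rem:spin is finite} we already have $\le_2\,=\,\le_{n,t}$, while Proposition~\ref{P:order implications} gives $\le_{hc,t}\Rightarrow\le_{n,t}\Rightarrow\le_2$; hence the whole of $(i)$ amounts to the single implication $u\le_2 e\Rightarrow u\le_{hc,t}e$. Using Proposition~\ref{P:spin-le-le2}$(ii)$, the pairs with $u\le_2 e$ split into the easy case $u=\gamma e$ (where $\overline\gamma u=e\le_h e$ gives $u\le_{hc}e$ at once) and the case $e$ unitary, in which $u$ is either unitary or minimal.

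The heart of the matter is the case ``$e$ unitary, $u$ minimal'', which I would settle by proving $(iv)$ first and then reusing its construction. Here I would extend $u$ to a unitary tripotent $v=u+\alpha\overline u$ (legitimate because $u\perp\overline u$, so $v$ is a rank-two, hence unitary, tripotent, and $u\le v$ since $v-u=\alpha\overline u\perp u$). The delicate point is choosing the phase $\alpha$: writing $e=\beta y$ with $y\in H_r$ one has $\ip e{\overline e}=\beta^2$, and a short computation using $\ip u{\overline u}=0$ and $\ip uu=\ip{\overline u}{\overline u}=\tfrac12$ yields $\ip v{\overline v}=\alpha$. Taking $\alpha=\beta^2$ makes $\ip v{\overline v}=\ip e{\overline e}$, so Proposition~\ref{P:spin-unitaries}$(iv)$ gives $v\sim_{h,t}e$, whence $u\le v\sim_{h,t}e$ and $u\le_{h,t}e$ by Lemma~\ref{L:factor leht}. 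This proves $(iv)$, and then $(iv)$ together with $\le_{h,t}\Rightarrow\le_{hc,t}$ closes the last subcase of $(i)$; the remaining subcase ``$e,u$ both unitary'' is immediate from Proposition~\ref{P:spin-unitaries}$(iii)$.

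For the length bounds $(ii)$ and $(iii)$ I would simply bookkeep the chains produced above. For $(ii)$, describing $\le_{hc,t}(=\le_2)$: $u=\gamma e$ needs length one; two unitaries need the length-two $\sim_{hc}$-chain of Proposition~\ref{P:spin-unitaries}$(iii)$; and ``$e$ unitary, $u$ minimal'' needs $u\le_{hc}v$ prepended to that chain, giving length three. For $(iii)$, describing $\le_{h,t}$: a minimal $e$ forces $u=\pm e$ (length one) by Proposition~\ref{P:spin nonunitary tripotents}$(i)$; two unitaries with $u\le_{h,t}e$ satisfy $u\sim_2 e$, hence $u\sim_{h,t}e$ (Proposition~\ref{P:simht-char}) and inherit the length-three $\sim_h$-chain of Proposition~\ref{P:spin-unitaries}$(iv)$; and for ``$e$ unitary, $u$ minimal'' I prepend the single step $u\le_h v$ from $(iv)$ to that length-three chain, obtaining length four.

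The only genuine obstacle is the phase computation in $(iv)$: one must verify that $\ip v{\overline v}$ depends only on $\alpha$ (the cross terms $\ip u{\overline u}$ and $\ip{\overline u}u$ vanishing by minimality of $u$) so that the phase of $v$ can be tuned to match $\ip e{\overline e}$ exactly; everything else is a mechanical pass through the already-established structural propositions together with the factorization results Lemma~\ref{L:factor leht} and Proposition~\ref{P:simht-char}.
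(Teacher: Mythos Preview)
Your proof is correct and follows essentially the same route as the paper: the same case split via Lemma~\ref{L:tripotents in spin factor}, the same phase-tuned extension $v=u+\ip e{\overline e}\,\overline u$ for $(iv)$ (your computation $\ip v{\overline v}=\alpha$ is right, with $\alpha=\ip e{\overline e}$), and the same chain bookkeeping through Propositions~\ref{P:spin-unitaries}$(iii)$,$(iv)$. The only organizational difference is that you prove $(iv)$ first and reuse its explicit $v$, whereas the paper argues $(i)$--$(iii)$ more uniformly via finiteness and Lemma~\ref{L:factor leht} (taking \emph{some} $v$ with $u\le v\sim_{h,t}e$) and proves $(iv)$ separately; both give the same length bounds.
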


\begin{proof}
$(i)$ Assume that $u,e\in C$ are tripotents such that $u\le_2 e$.

If $u=0$, clearly $u\le_{hc}e$. So, assume $u\ne0$.

If $e$ is minimal, then Proposition~\ref{P:spin nonunitary tripotents}$(ii)$ implies that $u\le_{hc}e$.

Assume $e$ is unitary. Since $C$ is finite, there is $v\in C$ unitary with $u\le v$. By Proposition~\ref{P:spin-unitaries}$(iii)$ there is a unitary $w\in C$ such that
$$u\le v\sim_{hc} w\sim_h e.$$
Hence $u\le_{hc,t}e$ and, moreover, the chain of $\le_{hc}$ of length three is enough (cf. Proposition \ref{P:order implications}).

$(ii)$ This follows from the proof of $(i)$.

$(iii)$  Assume that $u,e\in C$ are tripotents such that $u\le_{h,t} e$.

If $u=0$, clearly $u\le_{h}e$. So, assume $u\ne0$.

If $e$ is minimal, then we deduce from Proposition~\ref{P:spin nonunitary tripotents}$(i)$ that $u\le_{h}e$.

So, assume $e$ is unitary. By Lemma~\ref{L:factor leht} we get a tripotent $v\in C$ such that 
$$u\le v\sim_{h,t} e.$$
Now we may conclude by Proposition~\ref{P:spin-unitaries}$(iv)$.

$(iv)$ Assume $e$ is unitary. Then $e=\alpha x$ for a complex unit $\alpha$ and a unit vector $x\in H_r$. Hence, $\ip{e}{\overline{e}}=\alpha^2$.

Further, set
$$v=u+\alpha^2\overline{u}.$$
Then $v$ is a unitary element such that $u\le v$ (cf. Proposition~\ref{P:spin-le-le2}$(i)$).

Then
$$\ip{v}{\overline{v}}=\ip{u+\alpha^2\overline{u}}{\overline{u}+\overline{\alpha}^2u}
=\alpha^2\ip{\overline{u}}{\overline{u}}+\alpha^2\ip{u}{u}=\alpha^2,$$
where we used equalities $\ip{u}{\overline{u}}=0$ and $\ip uu=\ip{\overline{u}}{\overline{u}}=\frac12$ provided by Lemma~\ref{L:tripotents in spin factor}$(b)$.

Using Proposition~\ref{P:spin-unitaries}$(iv)$ we see that $v\sim_{h,t}e$, hence $u\le_{h,t}e$.

\end{proof}

\begin{example2}\label{ex:spin}
Assume that $\dim H=3$. 
\begin{enumerate}[$(a)$]
    \item $(\frac12,\frac i2,0)$, $(\frac i2,-\frac 12,0)$ are two minimal tripotents such that
    $(\frac i2,-\frac 12,0)\sim_c(\frac12,\frac i2,0)$, but they are incomparable with respect to $\le_h$. Thus the cases $(i)$ and $(ii)$ from Proposition~\ref{P:spin nonunitary tripotents} are different.
    \item $e=(1,0,0)$ is a unitary tripotent. 
    
    $u_1=(-\frac12,\frac i2,0)$ is a minimal tripotent such that $u_1\le_r e$ but $u_1\not\le e$.
    
    $u_2=(\frac i2,\frac 12,0)$ is a minimal tripotent such that $u_2\le_n e$ but $u_2\not\le_h e$.
    
    $u_3=(0,\frac i2,\frac 12)$ is a minimal tripotent such that $u_3\not\le_n e$.
    
    It follows, in particular,  that the cases $(i)$ and $(ii)$ in Proposition~\ref{P:spin min-unitary} are different and $\le_2$ does not coincide with $\le_n$. Hence $\le_n$ is not transitive in $C$.
    
    \item Set $u=(1,0,0)$ and $e=(0,1,0)$. It follows from Proposition~\ref{P:spin-unitaries}$(iv)$ that $u\sim_{h,t}e$. However, Proposition~\ref{P:spin-unitaries}$(i)$ shows that $u$ and $e$ are incomparable with respect to $\le_h$. In particular, the relations $\le_h$ and $\sim_h$ are not transitive in $C$.
    
    \item Set $u=(1,0,0)$ and $e=(\frac1{\sqrt{2}},\frac{1}{\sqrt2},0)$. It follows from Proposition~\ref{P:spin-unitaries}$(iv)$ that $u\sim_{h,t}e$. However, Proposition~\ref{P:spin-unitaries}$(ii)$ shows that $u$ and $e$ are incomparable with respect to $\le_{hc}$. In particular, the relations $\le_{hc}$ and $\sim_{hc}$ are not transitive in $C$.
    
\end{enumerate}
\end{example2}

Now let us focus on triples of the form $L^\infty(\mu,C)$ where $\mu$ is a probability measure satisfying \eqref{eq:CK=Linfty}.
Recall that in such a case we have $L^\infty(\mu)=C(\Omega)$. If $\dim C<\infty$, then $L^\infty(\mu,C)=C(\Omega,C)$ as well (by Lemma~\ref{L:LinftyC=CKC}). However, $C$ may have infinite dimension and then we do not have this equality. The following lemma will help us to overcome this small inconvenience.

\begin{lemma}\label{L:spin partition}
Let $\uu\in L^\infty(\mu,C)$ be a tripotent. Then there is a unique decomposition 
$$\Omega=U_{\uu}\cup M_{\uu}\cup Z_{\uu}$$
of $\Omega$ into three clopen sets, such that
\begin{enumerate}[$(i)$]
    \item $\uu(\omega)$ is unitary $\mu$-almost everyhere on $U_{\uu}$;
    \item $\uu(\omega)$ is a minimal tripotent $\mu$-almost everyhere on $M_{\uu}$;
    \item $\uu(\omega)=0$  $\mu$-almost everyhere on $Z_{\uu}$.
\end{enumerate}
\end{lemma}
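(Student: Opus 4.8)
The plan is to reduce everything to the behaviour of the single scalar function $g(\omega)=\ip{\uu(\omega)}{\uu(\omega)}=\norm{\uu(\omega)}_2^2$ and to exploit that $\mu$ has full support together with the extremal disconnectedness of $\Omega$. First I would record that the tripotent equation $\uu=\J\uu\uu\uu$ holds pointwise $\mu$-almost everywhere, since the triple product on $L^\infty(\mu,C)$ is computed pointwise; hence for $\mu$-almost every $\omega$ the element $\uu(\omega)$ is a tripotent in $C$. By Lemma~\ref{L:tripotents in spin factor} such a tripotent is either $0$, minimal, or unitary, and these three possibilities are detected by $g$: they correspond respectively to $g(\omega)=0$, $g(\omega)=\tfrac12$, and $g(\omega)=1$ (using $\ip uu=\tfrac12$ for a minimal tripotent and $\ip uu=1$ for a unitary, as in Lemma~\ref{L:tripotents in spin factor}). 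Thus $g$ takes values in $\{0,\tfrac12,1\}$ $\mu$-almost everywhere.

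Next I would promote this pointwise trichotomy to an honest partition of $\Omega$. Since $\uu$ is an essentially bounded measurable $C$-valued function and the Hilbertian norm is continuous, $g$ is measurable and bounded, i.e. $g\in L^\infty(\mu)=C(\Omega)$; I identify it with its continuous representative on $\Omega$. The continuous function $g\cdot(g-\tfrac12)\cdot(g-1)$ vanishes $\mu$-almost everywhere, and because $\mu$ has full support in $\Omega$ (see \eqref{eq:CK=Linfty}) any continuous function vanishing almost everywhere vanishes identically; hence $g(\omega)\in\{0,\tfrac12,1\}$ for every $\omega\in\Omega$. Setting $U_{\uu}=g^{-1}(\{1\})$, $M_{\uu}=g^{-1}(\{\tfrac12\})$ and $Z_{\uu}=g^{-1}(\{0\})$ gives three disjoint closed sets covering $\Omega$; since each is the complement of the union of the other two, each is also open, so all three are clopen.

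It then remains to verify the stated pointwise properties and uniqueness. On $U_{\uu}$ we have $g=1$ $\mu$-almost everywhere; combined with the first step (a.e. $\uu(\omega)$ is $0$, minimal, or unitary, with $g$-values $0,\tfrac12,1$) this forces $\uu(\omega)$ to be unitary $\mu$-almost everywhere on $U_{\uu}$, and symmetrically $\uu(\omega)$ is minimal $\mu$-almost everywhere on $M_{\uu}$ and zero $\mu$-almost everywhere on $Z_{\uu}$. For uniqueness, if $U_{\uu}',M_{\uu}',Z_{\uu}'$ is another such clopen decomposition, then $U_{\uu}\triangle U_{\uu}'$ is clopen and, by the defining properties, $\mu$-null; a nonempty clopen set has positive measure because $\mu$ has full support, so $U_{\uu}=U_{\uu}'$, and likewise for the remaining two pieces. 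The only genuine subtlety I anticipate is the justification that $g$ really defines an element of $L^\infty(\mu)=C(\Omega)$ when $C$ is infinite-dimensional, so that $L^\infty(\mu,C)\neq C(\Omega,C)$ and $\uu$ need not be continuous; this is where I would be careful to invoke measurability of $\omega\mapsto\norm{\uu(\omega)}_2$ for the (equivalence class of the) measurable representative $\uu$, the remainder being soft topology and the full-support principle.
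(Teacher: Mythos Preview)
Your proof is correct and follows essentially the same approach as the paper: both reduce to the scalar function $\omega\mapsto\norm{\uu(\omega)}_2$ (or its square), use Lemma~\ref{L:tripotents in spin factor} to see it is $\{0,\tfrac1{\sqrt2},1\}$-valued (respectively $\{0,\tfrac12,1\}$-valued) almost everywhere, and then invoke the identification $L^\infty(\mu)=C(\Omega)$ with full support to obtain clopen level sets. The paper passes from the measurable level sets to clopen ones directly, whereas you pass through the continuous representative of $g$ and the polynomial identity $g(g-\tfrac12)(g-1)=0$; this is a cosmetic repackaging of the same idea.
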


\begin{proof}
Set
$$\begin{aligned}
U^0_{\uu}&=\{\omega\in \Omega\setsep \norm{\uu(\omega)}_2=1\},\\
M^0_{\uu}&=\{\omega\in \Omega\setsep \norm{\uu(\omega)}_2=\frac1{\sqrt{2}}\},\\
Z^0_{\uu}&=\{\omega\in \Omega\setsep \uu(\omega)=0\}.
\end{aligned}$$
These sets are disjoint, Borel measurable and cover $\Omega$ up to a set of $\mu$-measure zero. Let $U_{\uu}$ be the clopen set which differs from $U^0_{\uu}$ only by a set of $\mu$-measure zero. The existence of $U_{\uu}$ follows from \eqref{eq:CK=Linfty}. The uniqueness is clear -- if we have two such clopen sets, their symmetric difference is a clopen set of zero measure, hence empty. 

Similarly we define $M_{\uu}$ and $Z_{\uu}$. The resulting three clopen sets are pairwise disjoint as the intersection of any two of them is a clopen set of measure zero. Further, they cover $\Omega$ as their union is a clopen set with full measure.
\end{proof}

We continue by characterizing the relations in $L^\infty(\mu,C)$. The first step is the following proposition which collects descriptions of those relations which may be easily characterized pointwise (almost everywhere). We also provide characterizations using the structure of $C$.

\begin{prop}\label{P:Aotimesspin relations} Let $M=L^\infty(\mu,C)$. Assume that $\uu,\e\in M$ are two tripotents. Let $M_{\uu},U_{\uu},M_{\e},U_{\e}$ be the sets provided by Lemma~\ref{L:spin partition}. Then we have the following:
\begin{enumerate}[$(a)$]
    \item $\uu\le_{n,t}\e$ $\Leftrightarrow$ $\uu\le_2\e$ $\Leftrightarrow$ $\uu(\omega)\le_2\e(\omega)$ $\mu$-a.e. $\Leftrightarrow$ $U_{\uu}\subset U_{\e}$, $M_{\uu}\subset U_{\e}\cup M_{\e}$ and there is $f\in C(M_{\uu}\cap M_{\e},\TT)$ such that $\uu(\omega)=f(\omega)\e(\omega)$ $\mu$-a.e. on $M_{\uu}\cap M_{\e}$.
    \item $\uu\sim_2\e$ $\Leftrightarrow$ $\uu(\omega)\sim_2\e(\omega)$ $\mu$-a.e. $\Leftrightarrow$ $U_{\uu}= U_{\e}$, $M_{\uu}=M_{\e}$ and there is $f\in C(M_{\uu},\TT)$ such that $\uu(\omega)=f(\omega)\e(\omega)$ $\mu$-a.e. on $M_{\uu}$.
    \item $\uu\le\e$ $\Leftrightarrow$ $\uu(\omega)\le\e(\omega)$ $\mu$-a.e. $\Leftrightarrow$ $U_{\uu}\subset U_{\e}$, $M_{\uu}\subset U_{\e}\cup M_{\e}$, $\uu(\omega)=\e(\omega)$ $\mu$-a.e. on $U_{\uu}\cup (M_{\uu}\cap M_{\e})$ and there is $f\in C(M_{\uu}\cap U_{\e},\TT)$ such that $\e(\omega)=\uu(\omega)+f(\omega)\overline{\uu(\omega)}$ $\mu$-a.e. on $M_{\uu}\cap U_{\e}$.
    \item $\uu\le_h\e$ $\Leftrightarrow$ $\uu(\omega)\le_h\e(\omega)$ $\mu$-a.e. $\Leftrightarrow$ 
    \begin{itemize}
        \item $U_{\uu}\subset U_{\e}$ and $M_{\uu}\subset U_{\e}\cup M_{\e}$;
        \item there is a clopen subset $A\subset M_{\uu}\cap M_{\e}$ such that $$\uu(\omega)=(\chi_A(\omega)-\chi_{M_{\uu}\cap M_{\e}\setminus A}(\omega))\e(\omega)\mbox{  $\mu$-a.e. on } M_{\uu}\cap M_{\e};$$
        \item there are a clopen subset $B\subset M_{\uu}\cap U_{\e}$ and a function $f\in C(M_{\uu}\cap U_{\e},\TT)$ such that
        $$\e(\omega)=(\chi_B(\omega)-\chi_{M_{\uu}\cap U_{\e}\setminus B}(\omega))\uu(\omega)+f(\omega)\overline{\uu(\omega)}\mbox{ $\mu$-a.e. on }  M_{\uu}\cap U_{\e};$$
        \item There are disjoint clopen subsets $D,E\subset U _{\uu}$ such that $\uu(\omega)=\e(\omega)$ $\mu$-a.e. on $D$, $\uu(\omega)=-\e(\omega)$ $\mu$-a.e. on $E$ and $\ip{\uu(\omega)}{\e(\omega)}=0$ and $\ip{\uu(\omega)}{\overline{\uu(\omega)}}=-\ip{\e(\omega)}{\overline{\e(\omega)}}$ $\mu$-a.e. on $U_{\uu}\setminus (D\cup E)$.
    \end{itemize}
     \item $\uu\sim_h\e$ $\Leftrightarrow$ $\uu(\omega)\sim_h\e(\omega)$ $\mu$-a.e.  $\Leftrightarrow$ 
    \begin{itemize}
        \item $U_{\uu}=U_{\e}$ and $M_{\uu}=M_{\e}$;
        \item there is a clopen subset $A\subset M_{\uu}$ such that $$\uu(\omega)=(\chi_A(\omega)-\chi_{M_{\uu}\setminus A}(\omega))\e(\omega)\mbox{ $\mu$-a.e. on }  M_{\uu};$$
         \item There are disjoint clopen subsets $D,E\subset M_{\uu}$ such that $\uu(\omega)=\e(\omega)$ $\mu$-a.e. on $D$, $\uu(\omega)=-\e(\omega)$ $\mu$-a.e. on $E$ and $\ip{\uu(\omega)}{\e(\omega)}=0$ and $\ip{\uu(\omega)}{\overline{\uu(\omega)}}=-\ip{\e(\omega)}{\overline{\e(\omega)}}$ $\mu$-a.e. on $U_{\uu}\setminus (D\cup E)$.
    \end{itemize}
     \item $\uu\le_n\e$ $\Leftrightarrow$ $\uu(\omega)\le_n\e(\omega)$ $\mu$-a.e.  $\Leftrightarrow$ 
    \begin{itemize}
        \item $U_{\uu}\subset U_{\e}$ and $M_{\uu}\subset U_{\e}\cup M_{\e}$;
        \item there is $f\in C(M_{\uu}\cap M_{\e},\TT)$ such that $\uu(\omega)=f(\omega)\e(\omega)$ $\mu$-a.e. on $M_{\uu}\cap M_{\e}$;
        \item there are $g,h\in C(M_{\uu}\cap U_{\e},\TT)$ such that $\e(\omega)=g(\omega)\uu(\omega)+h(\omega)\overline{\uu(\omega)}$ $\mu$-a.e. on $M_{\uu}\cap U_{\e}$.
    \end{itemize}
    
\end{enumerate}
\end{prop}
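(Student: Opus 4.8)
The plan is to reduce every assertion to a pointwise ($\mu$-a.e.) condition and then invoke the pointwise classification of the relations in the spin factor $C$. The first equivalence in each of $(a)$--$(f)$ asserts that $\uu\,R\,\e$ holds in $M=L^\infty(\mu,C)$ if and only if $\uu(\omega)\,R\,\e(\omega)$ holds $\mu$-a.e. For the relations $\le,\le_h,\sim_h,\le_n,\le_2,\sim_2$ this is immediate: each is characterized purely by triple-product identities together with the requirement that a certain triple product be a tripotent (for $\le_n$), and since the triple product in $L^\infty(\mu,C)$ is computed pointwise while equality in $L^\infty(\mu,C)$ means $\mu$-a.e. equality, the relation holds globally exactly when it holds pointwise a.e. This is the local analogue of Proposition~\ref{P:C0(T,E)}$(b)$. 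For $\le_{n,t}$ in $(a)$ I would first invoke Remark~\ref{rem:spin is finite} to identify it with $\le_2$.

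Next, for each relation I would use Lemma~\ref{L:spin partition} to split $\Omega$ into the clopen pieces $U_{\uu},M_{\uu},Z_{\uu}$ and $U_{\e},M_{\e},Z_{\e}$, and analyze $\uu(\omega)\,R\,\e(\omega)$ on each intersection. The classification of tripotents (Lemma~\ref{L:tripotents in spin factor}) shows that on $Z_{\uu}$ the relation is automatic, and that the inclusions $U_{\uu}\subset U_{\e}$ and $M_{\uu}\subset U_{\e}\cup M_{\e}$ are forced: indeed $\uu\,R\,\e$ implies $\uu\le_2\e$, and by Proposition~\ref{P:spin-le-le2}$(ii)$ a unitary cannot be $\le_2$-below a minimal or a zero tripotent, and a nonzero minimal tripotent cannot be $\le_2$-below $0$. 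On the remaining pieces I would read off the shape of the relation from the pointwise characterizations already established: Proposition~\ref{P:spin nonunitary tripotents} on $M_{\uu}\cap M_{\e}$, Proposition~\ref{P:spin min-unitary} on $M_{\uu}\cap U_{\e}$, and Proposition~\ref{P:spin-unitaries} on $U_{\uu}\cap U_{\e}$, supplemented by Proposition~\ref{P:spin-le-le2} for $\le$ and $\le_2$.

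The remaining work is to upgrade the pointwise data into the global objects appearing in the statement, namely the unit-modulus functions $f,g,h$ and the clopen sets $A,B,D,E$. Each of these is first defined $\mu$-a.e. through inner products of $\uu(\omega)$ and $\e(\omega)$; for instance, on $M_{\uu}\cap M_{\e}$ one has $\uu(\omega)=f(\omega)\e(\omega)$ with $f(\omega)=2\ip{\uu(\omega)}{\e(\omega)}$ (using $\ip{\e(\omega)}{\e(\omega)}=\tfrac12$ for minimal tripotents), and $A$ is the set where $f=1$. Hence all these objects are measurable, and by the Stonean hypothesis \eqref{eq:CK=Linfty} every measurable subset of $\Omega$ agrees up to a null set with a clopen set while $L^\infty(\mu)=C(\Omega)$, which furnishes the required clopen sets and continuous $\TT$-valued representatives. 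Conversely, once the concrete data are prescribed, the same pointwise propositions show that $\uu(\omega)\,R\,\e(\omega)$ holds a.e., closing each equivalence.

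The main obstacle I anticipate is not a single deep step but the bookkeeping for the harder relations, above all $\le_h$ in $(d)$, whose description splits into several regimes ($M_{\uu}\cap M_{\e}$, $M_{\uu}\cap U_{\e}$, and the unitary--unitary case $U_{\uu}\cap U_{\e}$ governed by the delicate inner-product conditions of Proposition~\ref{P:spin-unitaries}$(i)$), and where one must simultaneously produce the clopen sets $A,B,D,E$ and the function $f$ and verify their mutual consistency. Checking measurability of each auxiliary object and matching the sign patterns ($\pm\uu$, $\pm\e$) to clopen sets, uniformly across the pieces, is where the argument is most technical.
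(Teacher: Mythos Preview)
Your proposal is correct and follows essentially the same approach as the paper: reduce to pointwise via the triple-product characterization of each relation (the analogue of Proposition~\ref{P:C0(T,E)}$(b)$), invoke Remark~\ref{rem:spin is finite} for $\le_{n,t}$, read off the pointwise shape on each piece from Propositions~\ref{P:spin-le-le2}, \ref{P:spin nonunitary tripotents}, \ref{P:spin min-unitary} and~\ref{P:spin-unitaries}, and then upgrade the measurable data to clopen sets and continuous $\TT$-valued functions via \eqref{eq:CK=Linfty}. The paper likewise writes down the explicit inner-product formulas (matching your $f(\omega)=2\ip{\uu(\omega)}{\e(\omega)}$) to verify measurability before passing to continuous/clopen representatives.
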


\begin{proof}
The first equivalence in assertion $(a)$ follows from Remark~\ref{rem:spin is finite}. The second equivalence in assertion $(a)$ and the first equivalences in assertions $(b)$--$(f)$ follow from an obvious analogue of Proposition~\ref{P:C0(T,E)}$(b)$.

The remaining equivalences in assertions $(a)$--$(f)$ follow essentially by combinining Propositions~\ref{P:spin-le-le2}, \ref{P:spin nonunitary tripotents},
\ref{P:spin min-unitary} and~\ref{P:spin-unitaries}. More precisely, the quoted propositions show the equivalence with a formally weaker condition -- without requiring continuity of the respective functions and clopeness of the respective sets. So, it is enough to observe that the functions are continuous after modifying on a set of measure zero and those sets are clopen after taking a symmetric difference with a set of measure zero. It is so, because the following explicit formulae provide Borel measurable functions and Borel sets and we then use assumption \eqref{eq:CK=Linfty}:

\begin{enumerate}[$(a)$]
    \item $f(\omega)=2\ip{\uu(\omega)}{\e(\omega)}$ for $\omega\in M_{\uu}\cap M_{\e}$;
    \item $f(\omega)=2\ip{\uu(\omega)}{\e(\omega)}$ for $\omega\in M_{\uu}$;
    \item $f(\omega)=2\ip{\e(\omega)}{\overline{\uu(\omega)}}$ for $\omega\in M_{\uu}\cap U_{\e}$;
    \item We have
    $$\begin{aligned}
    A&=\{\omega\in M_{\uu}\cap M_{\e}\setsep \uu(\omega)=\e(\omega)\},\\
    (M_{\uu}\cap M_{\e})\setminus A&=\{\omega\in M_{\uu}\cap M_{\e}\setsep \uu(\omega)=-\e(\omega)\},
        \end{aligned}$$
   and
    $$\begin{aligned}
    f(\omega)&=2\ip{\e(\omega)}{\overline{\uu(\omega)}},\quad \omega\in M_{\uu}\cap U_{\e},
    \\
    B&=\left\{\omega\in M_{\uu}\cap U_{\e}\setsep \ip{\e(\omega)}{\uu(\omega)}=\frac12\right\},\\
    (M_{\uu}\cap U_{\e})\setminus B&=\left\{\omega\in M_{\uu}\cap U_{\e}\setsep \ip{\e(\omega)}{\uu(\omega)}=-\frac12\right\}.
    \end{aligned}$$
    
    The definitions of the clopen sets $E$ and $D$ are clear.
    
    \item The formulas are analogous as in $(d)$.
    
    \item The formulas are:
    $$\begin{aligned}
    f(\omega)&=2\ip{\uu(\omega)}{\e(\omega)},
    \quad \mbox{ for } \omega\in M_{\uu}\cap M_{\e},\\
    g(\omega)&=2\ip{\e(\omega)}{\uu(\omega)},
    \quad \mbox{ for } \omega\in M_{\uu}\cap U_{\e},\\
    h(\omega)&=2\ip{\e(\omega)}{\overline{\uu(\omega)}},
    \quad \mbox{ for } \omega\in M_{\uu}\cap U_{\e}.
    \end{aligned}$$
\end{enumerate}
\end{proof}

We continue by looking at relations $\sim_{h,t}$ and $\le_{h,t}$. If $\dim C<\infty$ we may proceed similarly as for symmetric and antisymmetric matrices using Lemmata~\ref{L:Cartan parametrization} and~\ref{L:cartan diagonalization}. But $C$ may be infinite-dimensional or even non-separable, so we cannot use the Kuratowski-Ryll-Nardzewski selection theorem. Fortunately, the structure of $C$ permits to provide explicit formulas for certain mappings.
 
We remark that Lemma~\ref{L:frames} holds also for spin factors. It follows from Lemma~\ref{L:tripotents in spin factor} and Proposition~\ref{P:spin-le-le2}$(i)$ that $C$ has rank $2$ and any frame is of the form $u,\alpha\overline{u}$ where $u$ is a minimal tripotent and $\alpha$ is a complex unit. If $u,\alpha\overline{u}$ and $v,\beta\overline{v}$ are two such frames, a routine computation shows that there is a surjective isometry
$T:H_r\to H_r$ and a complex unit $\gamma$ such that the operator
$$x+i y\mapsto \gamma(T(x)+iT(y))$$
is a triple automorphism of $C$ mapping $u$ to $v$ and $\alpha\overline{u}$ to $\beta\overline{v}$. 

However, we do not wish to work with mappings with values in the nonseparable space of operators on $C$. We rather give a direct proof of a parametrized versions of Proposition~\ref{P:spin-unitaries}$(iv)$ and Corollary~\ref{cor:relations in spin}. This is done in the following proposition.

\begin{prop}\label{P:spin vector leht} Let $M=L^\infty(\mu,C)$. Assume that $\uu,\e\in M$ are two tripotents. Let $M_{\uu},U_{\uu},M_{\e},U_{\e}$ be the sets provided by Lemma~\ref{L:spin partition}. Then we have the following:

\begin{enumerate}[$(a)$]
  \item $\uu\sim_{h,t}\e$ $\Leftrightarrow$ $\uu(\omega)\sim_{h,t}\e(\omega)$ $\mu$-a.e. $\Leftrightarrow$ 
    \begin{itemize}
        \item $U_{\uu}=U_{\e}$ and $M_{\uu}=M_{\e}$;
        \item There is a clopen subset $A\subset M_{\uu}$ such that $$\uu(\omega)=(\chi_A(\omega)-\chi_{M_{\uu}\setminus A}(\omega))\e(\omega)\mbox{ $\mu$-a.e. on } M_{\uu};$$
         \item There is a clopen subset $B\subset U_{\uu}$ such that $$\ip{\uu(\omega)}{\overline{\uu(\omega)}}=(\chi_B(\omega)-\chi_{U_{\uu}\setminus B}(\omega))\ip{\e(\omega)}{\overline{\e(\omega)}} \mbox{ $\mu$-a.e. on } U_{\uu}.$$
    \end{itemize}
    Moreover, chains of $\sim_h$ of length three are enough.
    \item $\uu\le_{h,t}\e$ $\Leftrightarrow$ $\uu(\omega)\le_{h,t}\e(\omega)$ $\mu$-a.e.  $\Leftrightarrow$ 
    \begin{itemize}
        \item $U_{\uu}\subset U_{\e}$ and $M_{\uu}\subset M_{\e}\cup U_{\e}$;
        \item There is a clopen subset $A\subset M_{\uu}\cap M_{\e}$ such that $$\uu(\omega)=(\chi_A(\omega)-\chi_{M_{\uu}\cap M_{\e}\setminus A}(\omega))\e(\omega)\mbox{ $\mu$-a.e. on }  M_{\uu}\cap M_{\e};$$
         \item There is a clopen subset $B\subset U_{\uu}$ such that $$\ip{\uu(\omega)}{\overline{\uu(\omega)}}=(\chi_B(\omega)-\chi_{U_{\uu}\setminus B}(\omega))\ip{\e(\omega)}{\overline{\e(\omega)}} \mbox{ $\mu$-a.e. on }U_{\uu}.$$
    \end{itemize} 
    
    Moreover, chains of $\le_h$ of length four are enough.
        
\end{enumerate}
\end{prop}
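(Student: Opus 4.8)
The plan is to prove both equivalences in each part by treating the three displayed middle conditions as the \emph{pointwise} form of the relation, and then upgrading an almost-everywhere chain to a genuine global chain of the stated length. Throughout I would split $\Omega$ according to Lemma~\ref{L:spin partition} and argue separately on the unitary part $U_{\uu}$, the minimal part $M_{\uu}$, and the zero part $Z_{\uu}$. Any of these relations forces $\uu\le_2\e$, so by Proposition~\ref{P:Aotimesspin relations}(a) we get the inclusions $U_{\uu}\subset U_{\e}$ and $M_{\uu}\subset M_{\e}\cup U_{\e}$; for $\sim_{h,t}$ the stronger relation $\sim_2$ moreover forces $U_{\uu}=U_{\e}$ and $M_{\uu}=M_{\e}$, while the zero part is trivial.

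First the easy implications. If $\uu\sim_{h,t}\e$ (resp.\ $\uu\le_{h,t}\e$) in $M$, then there is a finite chain of $\sim_h$ (resp.\ a single $\le$-step followed by a $\sim_h$-chain, via Lemma~\ref{L:factor leht}) joining them; each single step is preserved pointwise almost everywhere by Proposition~\ref{P:Aotimesspin relations}(c),(d),(e), and intersecting the finitely many conull sets yields $\uu(\omega)\sim_{h,t}\e(\omega)$ (resp.\ $\uu(\omega)\le_{h,t}\e(\omega)$) for almost all $\omega$. The equivalence of the pointwise statement with the three bullet conditions is then the fibrewise application of Propositions~\ref{P:spin nonunitary tripotents}, \ref{P:spin min-unitary} and~\ref{P:spin-unitaries}(iv): on $M_{\uu}$ the relation $x\sim_{h,t}y$ between minimal tripotents reduces to $x=\pm y$ (a $\sim_h$-chain among minimal tripotents stays inside $\{\pm\e(\omega)\}$, since $\sim_2$-equivalent minimal tripotents are scalar multiples by Proposition~\ref{P:spin-le-le2}(ii) and Proposition~\ref{P:spin nonunitary tripotents}(i) then forces the sign), while on $U_{\uu}$ it reduces to $\ip{x}{\overline{x}}=\pm\ip{y}{\overline{y}}$. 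The sign sets $A,B$ and the scaling functions are Borel, hence by assumption \eqref{eq:CK=Linfty} they may be replaced by clopen sets and continuous functions after discarding a null set, exactly as in the proof of Proposition~\ref{P:Aotimesspin relations}.

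The substantive direction is to build, from the pointwise data, a single global chain of the stated length. On $M_{\uu}=M_{\e}$ the equality $\uu=\pm\e$ already gives $\uu\sim_h\e$ in one step (Proposition~\ref{P:Aotimesspin relations}(e)), which I pad to length three. On the unitary part I would write $\uu=\alpha\,x$ and $\e=\beta\,y$ with $\alpha,\beta$ measurable $\TT$-valued and $x,y\in H_r$ measurable unit vectors, and then imitate the fibrewise construction of Proposition~\ref{P:spin-unitaries}(iv): on $B$ I connect $\alpha x\sim_h i\alpha z_1\sim_h\pm\alpha y$ (two steps, padded to three), and on $U_{\uu}\setminus B$ I connect $\alpha x\sim_h i\alpha z_1\sim_h\alpha z_2\sim_h\pm i\alpha y$ (three steps). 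The intermediate tripotents are given by the explicit formulas $i\alpha z_1,\ \alpha z_2$, and each consecutive pair satisfies $\sim_h$ by the criterion of Proposition~\ref{P:spin-unitaries}(i), preserved globally by Proposition~\ref{P:Aotimesspin relations}(e); assembling them yields $\uu\sim_{h,t}\e$ in $M$ with a chain of length three.

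The main obstacle, and the only place where the infinite-dimensional spin factor differs from the matrix cases, is the measurable choice of the auxiliary orthonormal vectors $z_1(\omega),z_2(\omega)\in H_r\cap\{x(\omega),y(\omega)\}^{\perp_2}$: there is no canonical formula, so I cannot simply read them off. To handle this I would invoke Pettis measurability to pass to a separable closed real subspace $H_0\subseteq H_r$ (with $\dim H_0\ge 3$) containing the essential ranges of $x$ and $y$, fix an orthonormal basis $(f_n)$ of $H_0$, and define $z_1(\omega)$ by normalising the projection onto $\{x(\omega),y(\omega)\}^{\perp_2}$ of the first $f_n$ not lying in $\operatorname{span}\{x(\omega),y(\omega)\}$, and $z_2(\omega)$ analogously with $z_1$ adjoined; these are Borel, hence almost everywhere continuous by \eqref{eq:CK=Linfty}. (When $\dim H_r<\infty$ the triple is finite-dimensional and one may instead use the Kuratowski--Ryll-Nardzewski selection, as in the symmetric case.) For part (b) the remaining task is to produce a tripotent $\vv$ with $\uu\le\vv\sim_{h,t}\e$: on $M_{\uu}\cap U_{\e}$ I set $\vv=\uu+\ip{\e}{\overline{\e}}\,\overline{\uu}$, which is unitary with $\uu\le\vv$ (Proposition~\ref{P:spin-le-le2}(i)) and $\ip{\vv}{\overline{\vv}}=\ip{\e}{\overline{\e}}$, so $\vv\sim_{h,t}\e$ by part (a); elsewhere $\vv=\uu$. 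This $\vv$ is explicit and needs no selection, and composing the single $\le_h$-step to $\vv$ with the length-three $\sim_h$-chain of part (a) gives the chain of length four.
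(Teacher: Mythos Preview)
Your argument follows essentially the same route as the paper: reduce the pointwise equivalences to Propositions~\ref{P:spin nonunitary tripotents}, \ref{P:spin min-unitary}, \ref{P:spin-unitaries}(iv), use \eqref{eq:CK=Linfty} to upgrade Borel data to clopen/continuous, and for the hard direction in (a) build the length-three $\sim_h$-chain fibrewise via measurable auxiliary unit vectors in $H_r$ orthogonal to $x_{\uu}(\omega),x_{\e}(\omega)$. The paper's selection of these auxiliary vectors is slightly cleaner than yours: rather than invoking Pettis measurability and an orthonormal basis of a separable subspace, it simply fixes \emph{three} orthogonal unit vectors $z_1,z_2,z_3\in H_r$ (which exist since $\dim H\ge 3$) and writes down an explicit piecewise Gram--Schmidt selector $\psi_2\colon S_{H_r}^2\to S_{H_r}$ with $\psi_2(x_1,x_2)\perp_2 x_1,x_2$. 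Your basis-indexed ``first $f_n$ that works'' device is correct but unnecessarily heavy; three fixed vectors already suffice because the orthogonal complement of two vectors is at most two-dimensional short of $H_r$.

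There is one genuine slip in part (b). You set $\vv=\uu+\ip{\e}{\overline{\e}}\,\overline{\uu}$ on $M_{\uu}\cap U_{\e}$ and ``$\vv=\uu$ elsewhere''. But ``elsewhere'' includes $Z_{\uu}$, where $\uu=0$ while $\e$ need not vanish; your $\vv$ would then equal $0$ there, so $\vv\not\sim_2\e$ and the factorisation $\uu\le\vv\sim_{h,t}\e$ fails. The paper avoids this by putting $\vv=\e$ on $\Omega\setminus(M_{\uu}\cup U_{\uu})=Z_{\uu}$ (and $\vv=\uu$ on $U_{\uu}\cup(M_{\uu}\cap M_{\e})$), which trivially satisfies both $\uu=0\le\vv$ and $\vv\sim_{h,t}\e$ on that piece. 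With this correction your construction agrees with the paper's and the length-four bound follows.
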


\begin{proof} $(a)$ The implication `$\Rightarrow$' from the first equivalence is obvious (by an analogue of Proposition~\ref{P:C0(T,E)}). The implication `$\Rightarrow$' from the second equivalence follows by combining Propositions~\ref{P:spin nonunitary tripotents}$(i)$ and~\ref{P:spin-unitaries}$(iv)$ if we additionally observe that the sets $A,B$ may be clopen. But this follows again using \eqref{eq:CK=Linfty} from the formulas
$$\begin{aligned}
A&=\{\omega\in M_{\uu}\setsep \uu(\omega)=\e(\omega)\},\\
    M_{\uu}\setminus A&=\{\omega\in M_{\uu}\setsep \uu(\omega)=-\e(\omega)\},
    \\B&=\left\{\omega\in U_{\uu}\setsep \ip{\uu(\omega)}{\overline{\uu(\omega)}}=\ip{\e(\omega)}{\overline{\e(\omega)}}\right\},\\
   U_{\uu}\setminus B&=\left\{\omega\in U_{\uu}\setsep \ip{\uu(\omega)}{\overline{\uu(\omega)}}=-\ip{\e(\omega)}{\overline{\e(\omega)}}\right\}.
        \end{aligned}$$

It remains to prove that the third condition implies $\uu\sim_{h,t}\e$. So, assume that the third condition is fulfilled. We will show that $\uu\sim_{h,t}\e$ using a parametrized version of the proof of Proposition~\ref{P:spin-unitaries}$(iv)$.

We will define several Borel measurable functions on $U_{\uu}$. Firstly, the function
$$\alpha_{\uu}(\omega)=\ip{\uu(\omega)}{\overline{\uu(\omega)}},\quad \omega\in U_{\uu},$$
is Borel measurable and its values are complex units ($\mu$-a.e.).
It follows that there is a Borel-measurable function
$\beta_{\uu}:U_{\uu}\to\TT$ such that
$$\alpha_{\uu}(\omega)=\beta_{\uu}(\omega)^2 \mbox{ $\mu$-a.e. on } U_{\uu}.$$

Further, the function $x_{\uu}: U_{\uu}\to C$ defined by
$$x_{\uu}(\omega)=\overline{\beta_{\uu}(\omega)}\uu(\omega), \quad \omega\in U_{\uu},$$
is Borel measurable as well. We observe that it has values in $H_r$  ($\mu$-a.e.) as
$$\ip{x_{\uu}(\omega)}{\overline{x_{\uu}(\omega)}}=
\ip{\overline{\beta_{\uu}(\omega)}\uu(\omega)}{\beta_{\uu}(\omega)\overline{\uu(\omega)}}=\overline{\beta_{\uu}(\omega)}^2\alpha_{\uu}(\omega)=1 \ \mu\mbox{-a.e.}$$

Similarly, using $\e$ instead of $\uu$, we define functions $\alpha_{\e}$, $\beta_{\e}$ and $x_{\e}$.

Fix $z_1,z_2,z_3\in H_r$ three mutually orthogonal unit vectors. 

Next we will define several functions on some cartesian powers of the unit sphere of $H_r$ as follows:

$$\begin{aligned}
\psi_1(x)& =\begin{cases} \frac{z_1-\ip {z_1}x x}{\norm{z_1-\ip{z_1}{x}}_2}, & z_1\ne \ip {z_1}x x,\\ 
\frac{z_2-\ip {z_2}x x}{\norm{z_2-\ip{z_2}{x}}_2},
& z_1=\ip {z_1}x x,
\end{cases}\mbox{\quad for }x\in S_{H_r};
\\
\phi(x_1,x_2)&=\frac{x_2-\ip {x_2}{x_1}x_1} {\norm{x_2-\ip {x_2}{x_1}x_1}_2},\quad (x_1,x_2)\in S_{H_r}^2, x_1\ne\pm x_2,
\\
\psi_2(x_1,x_2)&=\begin{cases} 
\psi_1(x_1), & x_1=\pm x_2,\\
 & \\
 \frac{z_1-\ip {z_1}{x_1} x_1-\ip {z_1}{\phi(x_1,x_2)} \phi(x_1,x_2)}{\norm{z_1-\ip {z_1}{x_1} x_1-\ip {z_1}{\phi(x_1,x_2)} \phi(x_1,x_2)}_2}, &\small \begin{cases}
  x_1\ne\pm x_2, \hbox{ and }\\
 z_1\ne \ip {z_1}{x_1} x_1 \\ \quad
 +\ip {z_1}{\phi(x_1,x_2)}\phi(x_1,x_2),
 \end{cases} \\
  & \\
 \frac{z_2-\ip {z_2}{x_1} x_1-\ip {z_2}{\phi(x_1,x_2)} \phi(x_1,x_2)}{\norm{z_2-\ip {z_2}{x_1} x_1-\ip {z_2}{\phi(x_1,x_2)} \phi(x_1,x_2)}_2}, & \small
 \begin{cases} x_1\ne\pm x_2,\\
 z_1= \ip {z_1}{x_1} x_1\\
\quad+ \ip {z_1}{\phi(x_1,x_2)}\phi(x_1,x_2), \\
 z_2\ne \ip {z_2}{x_1} x_1\\\quad + \ip {z_2}{\phi(x_1,x_2)}\phi(x_1,x_2),
 \end{cases}
 %& 
 \\
 \frac{z_3-\ip {z_3}{x_1} x_1-\ip {z_3}{\phi(x_1,x_2)} \phi(x_1,x_2)}{\norm{z_3-\ip {z_3}{x_1} x_1-\ip {z_3}{\phi(x_1,x_2)} \phi(x_1,x_2)}_2}, & \small
 \begin{cases}
 x_1\ne\pm x_2,\\
 z_1= \ip {z_1}{x_1} x_1 \\\quad +\ip {z_1}{\phi(x_1,x_2)}\phi(x_1,x_2), \\ 
 z_2= \ip {z_2}{x_1} x_1 \\\quad +\ip {z_2}{\phi(x_1,x_2)}\phi(x_1,x_2).
 \end{cases}
\end{cases}
\end{aligned}$$
Then $\psi_2:S_{H_r}^2\to S_{H_r}$ is a Borel measurable mapping such that $\psi_2(x_1,x_2)$ lies in $\{x_1,x_2\}^{\perp_2}$ for any $x_1,x_2\in S_{H_r}$.
 
Now we proceed to a parametrized version of the procedure from Proposition~\ref{P:spin-unitaries}$(iv)$.

We define Borel measurable mappings $\vv_1$ and $\vv_2$ as follows:
$$\begin{aligned}
\vv_1(\omega)&=\begin{cases}
\uu(\omega), &\omega\in M_{\uu},\\
i\beta_{\uu}(\omega)\psi_2(x_{\uu}(\omega),x_{\e}(\omega)), & \omega\in U_{\uu},
\end{cases}\\ 
\vv_2(\omega)&=\begin{cases}
\uu(\omega), &\omega\in M_{\uu},\\
\vv_1(\omega), & \omega\in B,\\
\beta_{\uu}(\omega)\psi_2(\vv_1(\omega),x_{\e}(\omega)), & \omega\in U_{\uu}\setminus B.
\end{cases}
\end{aligned}$$
Then $\vv_1,\vv_2$ are indeed Borel measurable mappings (with separable ranges) whose values are tripotents in $C$. Moreover, it follows from Proposition~\ref{P:spin-unitaries} that for each $\omega\in\Omega$ we have
$$\uu(\omega)\sim_h\vv_1(\omega)\sim_h\vv_2(\omega)\sim_h\e(\omega).$$
 Thus $\uu\sim_{h,t}\e$ and there is a chain of $\sim_h$ of length three witnessing it. This completes the proof.

$(b)$  The implication `$\Rightarrow$' from the first equivalence is again obvious (by an analogue of Proposition~\ref{P:C0(T,E)}). The implication `$\Rightarrow$' from the second equivalence follows by combining Propositions~\ref{P:spin nonunitary tripotents}$(i)$ and~\ref{P:spin-unitaries}$(iv)$ if we additionally observe that the sets $A,B$ may be clopen which may be done by similar formulae as in the proof of $(a)$. 

Assume the third condition holds.
Set
$$\vv(\omega)=\begin{cases} \uu(\omega), & \omega\in U_{\uu}\cup (M_{\e}\cap M_{\uu}) \\
\e(\omega), & \omega\in\Omega\setminus (M_{\uu}\cup U_{\uu}),\\
\uu(\omega)+\ip{\e(\omega)}{\overline{\e(\omega)}}\overline{\uu}(\omega),& \omega\in M_{\uu}\cap U_{\e}.
\end{cases}$$
Clearly, $\vv$ is a Borel measurable mapping with separable range. Moreover, its values are tripotents and $\uu(\omega)\le\vv(\omega)\sim_{h,t}\e(\omega)$ for $\omega\in\Omega$ (on $M_{\uu}\cap U_{\uu}$ we use the proof of Corollary~\ref{cor:relations in spin}$(iv)$). Thus $\uu\le\vv$. Moreover, assertion $(a)$ implies $\vv\sim_{h,t}\e$.  Thus $\uu\le_{h,t}\e$ and, moreover, 
 using again assertion $(a)$ we deduce that chains of $\le_{h}$ of length $4$ are enough.
\end{proof}

\subsection{Type 5 Cartan factor}

In this subsection we investigate the above-defined relations in JBW$^*$-triples of the form $A\overline{\otimes}C_5$, where $A$ is an abelian von Neumann algebra and $C_5$ is the Cartan factor of type 5.

We start by the following remark.

\begin{remark}\label{rem:c5 is finite}
By \cite[Proposition 6.10]{Finite} we know that $A\overline{\otimes}C_5$ is a finite JBW$^*$-triple. Hence, by Proposition~\ref{P:le2=lent in finite} the relations $\le_2$ and $\le_{n,t}$ coincide.
\end{remark}

Recall that $C_5$ may be represented as the space of $1\times 2$ matrices whose entries are complex Cayley numbers. Further, the algebra of complex Cayley numbers is the eight-dimensional spin factor with an additional algebraic structure. Thus $\dim C_5=16$. We will not use details of the algebraic structure, which is described for example in \cite[Section 6.4]{Finite}. We will use basic facts on tripotents in $C_5$  collected in the following lemma which follows from \cite[Proposition 6.11 and the subsequent remarks]{Finite}.

\begin{lemma}\label{L:tripotents in C5}
$C_5$ contains no unitary elements. Any nonzero tripotent in $C_5$ is either complete or minimal (in particular, $C_5$ is of rank $2$). 

Moreover, if $e\in C_5$ is a complete tripotent, then both $(C_5)_2(e)$ and $(C_5)_1(e)$ are triple-isomorphic to the eight-dimensional spin factor.
\end{lemma}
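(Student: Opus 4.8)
The plan is to extract the three assertions from the concrete matrix model of $C_5$ together with the homogeneity of finite-dimensional Cartan factors. Recall that $C_5$ may be realised as the space of $1\times 2$ row matrices with entries in the complex Cayley numbers, so that $\dim_{\ce} C_5 = 16$, and that, by the classification of Cartan factors (see \cite[\S 5]{loos1977bounded} or \cite[Table 1, p.~210]{kaup1997real}), its rank equals $2$. Granting the rank, the dichotomy ``complete or minimal'' is automatic: by the decomposition of tripotents into minimal ones recalled above, a nonzero tripotent $u$ is the orthogonal sum of $k\in\{1,2\}$ minimal tripotents, where $k$ is its rank. If $k=1$ then $u$ is minimal by definition; if $k=2$ then $u$ is maximal, so $(C_5)_0(u)$ contains no nonzero tripotent and therefore vanishes (as $C_5$ is reflexive, every nonzero weak$^*$-closed subtriple carries a complete, hence nonzero, tripotent), i.e. $u$ is complete. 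Conversely a complete tripotent has $(C_5)_0(u)=\{0\}$ and is thus maximal of rank $2$; hence complete $\iff$ rank $2$ and minimal $\iff$ rank $1$.

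Next I would analyse the Peirce decomposition of a single fixed complete tripotent, since the remaining two assertions follow from it. By Lemma~\ref{L:frames}$(ii)$ all complete (rank-$2$) tripotents of $C_5$ are conjugate under a triple automorphism, so it suffices to choose one convenient complete tripotent $e_0$ arising from the matrix model and to compute $(C_5)_2(e_0)$ and $(C_5)_1(e_0)$ explicitly. Completeness gives $(C_5)_0(e_0)=\{0\}$, whence $C_5=(C_5)_2(e_0)\oplus(C_5)_1(e_0)$ splits the $16$-dimensional space into two $8$-dimensional Peirce pieces. The heart of the argument is to identify each of these two pieces, as a JB$^*$-subtriple, with the eight-dimensional spin factor; this is precisely where the octonion multiplication enters and is the main obstacle, as it requires evaluating the triple product on the diagonal and off-diagonal parts of the row-matrix realisation and recognising the resulting quadratic form as the spin-factor structure.

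Finally, the non-existence of unitary elements drops out of this computation: a unitary element would be a tripotent $u$ with $(C_5)_2(u)=C_5$, that is, $\dim (C_5)_2(u)=16$, whereas every unitary is in particular complete and we have just seen that the Peirce-$2$ space of a complete tripotent is the $8$-dimensional spin factor. Thus $C_5$ admits no unitary element. Given the delicacy of the explicit octonionic calculation identifying the Peirce pieces, the cleanest route in the present paper is to invoke the detailed structural analysis already carried out in \cite[Proposition~6.11 and the subsequent remarks]{Finite}, which records exactly the rank, the complete/minimal dichotomy, the absence of unitaries, and the spin-factor description of both Peirce subspaces.
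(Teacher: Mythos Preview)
Your proposal is correct and lands in exactly the same place as the paper: the lemma is stated as a consequence of \cite[Proposition~6.11 and the subsequent remarks]{Finite}, with no independent proof given here. You supply more surrounding argument (the rank dichotomy, the homogeneity reduction via Lemma~\ref{L:frames}, and the dimension count excluding unitaries), but since the key identification of the Peirce pieces with the eight-dimensional spin factor is deferred to the same citation, your route and the paper's are essentially identical.
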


It follows that we may apply the results from the previous subsection. Let us summarize the consequences for tripotents in $C_5$.

\begin{prop}\label{P:c5-relations} Let $u,e\in C_5$ be two nonzero tripotents.
\begin{enumerate}[$(a)$]
    \item If $e$ is minimal, then the equivalences from Proposition~\ref{P:spin nonunitary tripotents} are valid.
    \item Assume that both $e$ and $u$ are complete. Then
    \begin{enumerate}[$(i)$]
        \item $e\sim_2 u\Leftrightarrow e\sim_{hc,t} u$. Moreover, chains of $\sim_{hc}$ of length two are enough to describe $\sim_{hc,t}$.
        \item To describe $\sim_{h,t}$ the chains of $\sim_h$ of length three are enough.
    \end{enumerate}
    \item Assume that $e$ is complete and $u$ is minimal. Then the following assertions hold:
    \begin{enumerate}[$(i)$]
        \item $u\le_h e\Leftrightarrow u\le_r e$;
        \item $u\le_n e\Leftrightarrow u\le_{hc}e\Leftrightarrow u\le_c e$;
        \item $u\le_2 e\Leftrightarrow u\le_{n,t} e\Leftrightarrow u\le_{hc,t}e\Leftrightarrow u\le_{h,t}e$.
        
        Moreover, to describe $\le_{h,t}$ the chains of $\le_h$ of length four are enough.
    \end{enumerate}
\end{enumerate}
\end{prop}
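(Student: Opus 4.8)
The plan is to reduce everything to the spin factor $C$ and invoke the results of the previous subsection, since Lemma~\ref{L:tripotents in C5} tells us that the Peirce-2 subspace of a complete tripotent in $C_5$ is triple-isomorphic to the eight-dimensional spin factor. First I would observe that all the relations studied here are intrinsic to the relevant Peirce-2 subspace: for any of the defined relations $R$, whether $uRe$ holds depends only on the JB$^*$-triple structure of $(C_5)_2(e)$ (and, where needed, on the ambient Peirce calculus). This is made explicit in Remark~\ref{rem:relations in subtriples} for the non-transitive relations, while for the transitive hulls I would use the factorization lemmata (Lemma~\ref{L:factor leht} and Lemma~\ref{L:factor len}) to keep the witnessing chains inside the Peirce-2 subspace.

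For assertion $(a)$, when $e$ is minimal, $(C_5)_2(e)$ is one-dimensional (spanned by $e$), so the situation is identical to the case handled in Proposition~\ref{P:spin nonunitary tripotents}; I would simply note that the proof there only used minimality of $e$ and rank considerations, neither of which is specific to the spin factor. For assertion $(b)$, both $e$ and $u$ complete with $e\sim_2 u$ means $u$ is a unitary element of $(C_5)_2(e)$; via the triple-isomorphism from Lemma~\ref{L:tripotents in C5} this transports to two unitaries in the eight-dimensional spin factor, and Proposition~\ref{P:spin-unitaries}$(iii)$ gives a chain of $\sim_{hc}$ of length two realizing $\sim_{hc,t}$, together with Corollary~\ref{cor:relations in spin}, while Proposition~\ref{P:spin-unitaries}$(iv)$ gives the chain of $\sim_h$ of length three for $\sim_{h,t}$. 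The equivalence $e\sim_2 u\Leftrightarrow e\sim_{hc,t}u$ then follows since $\sim_{hc,t}$ is always finer than $\sim_2$ (by Proposition~\ref{P:hct-charact}$(b)$).

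For assertion $(c)$, with $e$ complete and $u$ minimal, I would again transfer to the spin factor: $u\le_2 e$ means $u\in(C_5)_2(e)$, which is an eight-dimensional spin factor in which $e$ is unitary and $u$ is a (nonzero) minimal tripotent. The equivalences $(i)$ and $(ii)$ are then exactly Proposition~\ref{P:spin min-unitary}$(i)$ and $(ii)$ transported through the isomorphism. For $(iii)$, the coincidence $u\le_2 e\Leftrightarrow u\le_{n,t}e\Leftrightarrow u\le_{hc,t}e$ follows from Remark~\ref{rem:c5 is finite} (which gives $\le_2=\le_{n,t}$) combined with Corollary~\ref{cor:relations in spin}$(i)$ applied in the spin factor $(C_5)_2(e)$, and the additional assertion $u\le_{h,t}e$ with chains of $\le_h$ of length four is precisely Corollary~\ref{cor:relations in spin}$(iv)$ (a minimal tripotent below a unitary one in a spin factor), again transported through the triple-isomorphism.

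The main obstacle I anticipate is bookkeeping for the transitive-hull relations: one must be careful that the chains produced by the spin-factor results stay inside $(C_5)_2(e)$ so that the length bounds genuinely transfer, and that completeness of $u$ in $C_5$ corresponds to unitarity in the Peirce-2 subspace (for $(b)$) while minimality corresponds to a minimal tripotent of the spin factor (for $(c)$). This matching of the two notions of ``completeness/unitarity'' across the isomorphism is the only delicate point; once it is pinned down, every equivalence and every length bound is a direct citation of the corresponding spin-factor statement, so no genuinely new computation is required.
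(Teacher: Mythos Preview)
Your proposal is correct and follows essentially the same route as the paper: reduce via Lemma~\ref{L:tripotents in C5} to the eight-dimensional spin factor sitting as $(C_5)_2(e)$, and then cite the corresponding spin-factor results (Propositions~\ref{P:spin nonunitary tripotents}, \ref{P:spin min-unitary}, \ref{P:spin-unitaries} and Corollary~\ref{cor:relations in spin}). Your extra care about the transitive hulls is not really needed here, since the chains produced by the spin-factor propositions live in $(C_5)_2(e)$ and the implication ``$uRe$ in a subtriple $\Rightarrow$ $uRe$ in the ambient triple'' always holds (Remark~\ref{rem:relations in subtriples}); the paper accordingly dispenses with this discussion.
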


\begin{proof}
Assertion $(a)$ is obvious as Proposition~\ref{P:spin nonunitary tripotents} is not specific for spin factors but holds for minimal tripotents in any JBW$^*$-triple.

$(b)$ This follows by combinining Lemma~\ref{L:tripotents in C5} with Proposition~\ref{P:spin-unitaries}.

$(c)$ By combining Lemma~\ref{L:tripotents in C5} with Proposition~\ref{P:spin min-unitary} we get assertions $(i)$ and $(ii)$. Assertion $(iii)$ then follows using moreover Corollary~\ref{cor:relations in spin}$(iii)$, $(iv)$.
\end{proof}

\begin{cor}\label{cor:relations in C5}
\begin{enumerate}[$(i)$]
    \item The relations $\le_2$, $\le_{n,t}$ and $\le_{hc,t}$  coincide in $C_5$. 
    \item To describe the relation $\le_{hc,t}$ in $C_5$ chains of $\le_{hc}$ of length three are enough.
    \item To describe the relation $\le_{h,t}$ in $C_5$ chains of $\le_{h}$ of length four are enough.
 \end{enumerate}
\end{cor}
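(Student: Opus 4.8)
The plan is to follow the pattern of the proof of Corollary~\ref{cor:relations in spin}, feeding Proposition~\ref{P:c5-relations} into the factorization Lemmas~\ref{L:factor len} and~\ref{L:factor leht}. Two structural facts make $C_5$ behave like a spin factor: by Lemma~\ref{L:tripotents in C5} every nonzero tripotent is complete or minimal, and $C_5$ is finite (it is finite-dimensional, or apply Remark~\ref{rem:c5 is finite} with $A=\ce$), so Proposition~\ref{P:le2=lent in finite} already shows that $\le_2$ and $\le_{n,t}$ coincide. The only substantive task for $(i)$ is therefore to prove $u\le_2 e\Rightarrow u\le_{hc,t}e$, since the reverse chain $\le_{hc,t}\Rightarrow\le_{n,t}\Rightarrow\le_2$ is automatic from Proposition~\ref{P:order implications}; and $(ii)$, $(iii)$ will amount to bookkeeping on the lengths of the chains produced along the way.

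For $(i)$ and $(ii)$ I would argue as follows. Assume $u\le_2 e$; the cases $u=0$ and $e=0$ are trivial, so let $u,e$ be nonzero. Since $C_5$ is finite we have $u\le_{n,t}e$, and Lemma~\ref{L:factor len} (the equivalence $(iii)\Leftrightarrow(iv)$) yields a tripotent $w$ with $u\le w\sim_2 e$. If $e$ is minimal, then $w\sim_2 e$ forces $w$ to be minimal, and $u\le w$ forces $u=w$ (the only nonzero projection in $(C_5)_2(w)=\ce w$ is $w$); hence $u\sim_2 e$ gives $u=\alpha e$ for a complex unit $\alpha$, and Proposition~\ref{P:c5-relations}$(a)$ gives $u\le_{hc}e$ directly. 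If $e$ is complete, then $w\sim_2 e$ is complete as well, so Proposition~\ref{P:c5-relations}$(b)(i)$ provides a chain $w\sim_{hc}w'\sim_{hc} e$ of length two; prepending the step $u\le w$, which is a $\le_{hc}$-step by Proposition~\ref{P:order implications}, produces a chain of $\le_{hc}$ of length three witnessing $u\le_{hc,t}e$. In both cases $u\le_{hc,t}e$, which proves $(i)$, and the longest chain used has length three, which proves $(ii)$.

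For $(iii)$ I would start from $u\le_{h,t}e$ and apply Lemma~\ref{L:factor leht} to obtain a tripotent $v$ with $u\le v\sim_{h,t}e$. If $e$ is minimal, Proposition~\ref{P:c5-relations}$(a)$ already gives the sharper conclusion $u\le_h e$, a chain of length one. If $e$ is complete, then $v\sim_{h,t}e$ implies $v\sim_2 e$, so $v$ is complete, and Proposition~\ref{P:c5-relations}$(b)(ii)$ realizes $v\sim_{h,t}e$ by a chain of $\sim_h$ of length three; inserting the single $\le_h$-step $u\le v$ in front gives a chain of $\le_h$ of length four, as required.

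The delicate point I want to sidestep is the interaction between the rank of a tripotent in $C_5$ and its rank inside a Peirce-$2$ subspace: a tripotent minimal in the spin factor $(C_5)_2(e)$ need not obviously be minimal in $C_5$, because $(C_5)_2(u)$ may have a nontrivial part in $(C_5)_1(e)$. The whole reason for routing the argument through Lemmas~\ref{L:factor len} and~\ref{L:factor leht} is precisely to avoid ever classifying $u$ this way: the interposed tripotents $w$ and $v$ are genuinely $\sim_2$-equivalent to $e$, hence complete whenever $e$ is, so Proposition~\ref{P:c5-relations}$(b)$ is only ever applied to a pair of bona fide complete, $\sim_2$-equivalent tripotents. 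The remaining verifications — that $u\le w$ and $u\le v$ are indeed $\le_{hc}$- and $\le_h$-steps and that the counts add up — are immediate from Proposition~\ref{P:order implications}.
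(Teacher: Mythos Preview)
Your proof is correct and follows essentially the same approach as the paper: the corollary is meant to be read off directly from Proposition~\ref{P:c5-relations}, and your argument does exactly this, mirroring the proof of Corollary~\ref{cor:relations in spin} by routing through Lemmata~\ref{L:factor len} and~\ref{L:factor leht} and invoking finiteness. The only minor remark is that the ``delicate point'' you worry about is in fact harmless here: a tripotent minimal in the spin factor $(C_5)_2(e)$ has a nonzero orthogonal companion there, hence cannot be complete in $C_5$, so by Lemma~\ref{L:tripotents in C5} it is minimal in $C_5$; but your way of sidestepping the issue is perfectly sound.
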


\begin{remark}
As $C_5$ contains the eight-dimensional spin factor as a subtriple, Example~\ref{ex:spin} may be applied for $C_5$ as well.
\end{remark}

Now we are going to look at the triples of the form $L^\infty(\mu,C_5)$ where $\mu$ is a probability measure satisfying \eqref{eq:CK=Linfty}.

\begin{prop}\label{P:relations is Aotimesc5}
Let $M=L^\infty(\mu,C_5)=C(\Omega,C_5)$ (where $\mu$ satisfies \eqref{eq:CK=Linfty}). Let $\uu,\e$ be two tripotents in $M$. 
\begin{enumerate}[$(i)$]
    \item If 
$$R\in\{\le,\le_h,\le_n,\le_{h,t},\le_{n,t},\le_2,\sim_h,\sim_{h,t},\sim_2\},$$
then
$$\uu R\e \Leftrightarrow \forall\omega\in\Omega\colon \uu(\omega) R\e(\omega).$$
\item $\uu\le_{n,t}\e \Leftrightarrow \uu\le_2\e$;
\item To describe $\sim_{h,t}$ chains of $\sim_h$ of length three are enough.
\item To describe $\le_{h,t}$ chains of $\le_h$ of length three are enough.
\end{enumerate}
\end{prop}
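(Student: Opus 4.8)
The plan rests on two structural facts. First, since $\dim C_5=16<\infty$, Lemma~\ref{L:LinftyC=CKC}$(iii)$ gives the genuine equality $M=C(\Omega,C_5)$, which places us inside the framework of continuous-function triples and lets me invoke the pointwise descriptions available there. Second, $M=A\overline{\otimes}C_5$ is a finite JBW$^*$-triple by Remark~\ref{rem:c5 is finite}. The forward implications in $(i)$ (and the forward halves of the pointwise equivalences underlying $(iii)$ and $(iv)$) are free: each evaluation map $\f\mapsto\f(\omega)$ is a triple homomorphism, and all the relations in play are preserved by triple homomorphisms in one direction.

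Assertion $(ii)$ I would dispatch immediately: finiteness of $M$ and Proposition~\ref{P:le2=lent in finite} show that the relations $\le_{n,t}$ and $\le_2$ coincide in $M$. This, together with the pointwise coincidence of $\le_{n,t}$ and $\le_2$ inside $C_5$ (Corollary~\ref{cor:relations in C5}$(i)$), also settles the $\le_{n,t}$ row of $(i)$. For the relations $\le,\le_h,\le_n,\le_2,\sim_h,\sim_2$ the reverse implication of $(i)$ is precisely the analogue of Proposition~\ref{P:C0(T,E)}$(b)$ for the compact-base triple $C(\Omega,C_5)$, so no new work is required.

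The heart of the matter is the reverse implications for $\sim_{h,t}$ (which is $(iii)$) and $\le_{h,t}$ (which is $(iv)$); I would prove them by the diagonalization scheme of Proposition~\ref{P:spin vector leht}. Because $\operatorname{rank}(C_5)=2$, I fix a minimal tripotent $u_1$ and a complete tripotent $u_2$ in $C_5$ and apply Lemma~\ref{L:Cartan vector parametrization} to $\e$: this yields a triple automorphism $\Psi$ of $C(\Omega,C_5)$ and clopen sets $\Omega_0,\Omega_1,\Omega_2$ sorting $\Omega$ by the rank of $\e(\omega)$, with $\Psi(\e)$ equal to $0,u_1,u_2$ on $\Omega_0,\Omega_1,\Omega_2$ respectively. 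Since $\Psi$ preserves every relation, I replace $(\uu,\e)$ by $(\Psi(\uu),\Psi(\e))$ and argue on each clopen piece. On $\Omega_0$ the hypothesis forces $\uu=0$. On $\Omega_1$, where $\e=u_1$ is minimal, Proposition~\ref{P:spin nonunitary tripotents}$(i)$ turns the pointwise hypothesis into $\uu(\omega)=\pm u_1$, whence $\uu\le_h\e$ (resp.\ $\uu\sim_h\e$) there via a chain of length one. On $\Omega_2$, where $\e=u_2$ is complete, the pointwise hypothesis forces $\uu\le_2\e$ (Proposition~\ref{P:order implications} for $\le_{h,t}$, Proposition~\ref{P:simht-char} for $\sim_{h,t}$), so by Proposition~\ref{P:C0(T,E)} we get $\uu\le_2\e$ globally and thus $\uu(\omega)\in(C_5)_2(u_2)$ for every $\omega\in\Omega_2$. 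By Lemma~\ref{L:tripotents in C5} the space $S:=(C_5)_2(u_2)$ is an $8$-dimensional spin factor with unit $u_2$, so $\uu|_{\Omega_2}\in C(\Omega_2,S)=L^\infty(\mu|_{\Omega_2},S)$ and Proposition~\ref{P:spin vector leht} applies verbatim to the pair $(\uu|_{\Omega_2},u_2)$. Recombining the three clopen pieces and taking the maximal chain length yields $\uu\sim_{h,t}\e$ (resp.\ $\uu\le_{h,t}\e$), with the chain lengths inherited from Proposition~\ref{P:spin vector leht}.

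The one point demanding care, rather than any deep idea, is the uniform identification on $\Omega_2$: I must know that $\Psi$ genuinely lands in $C(\Omega,C_5)$ (guaranteed by Lemma~\ref{L:Cartan vector parametrization}$(iii)$, whose Borel-measurable formulae become continuous after the use of \eqref{eq:CK=Linfty}), and that after standardizing $\e$ the Peirce-$2$ space $(C_5)_2(u_2)$ is the \emph{same} spin factor $S$ for all $\omega\in\Omega_2$, so that $\uu|_{\Omega_2}$ really is a single $S$-valued continuous tripotent to which the spin-factor vector result can be fed. Once this uniformity is in place the three-piece recombination is routine.
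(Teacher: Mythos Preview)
Your proposal is correct and follows essentially the same route as the paper's proof. The only organizational difference is that the paper chooses its reference tripotents with $u_1\le u_2$ (taking $u_1$ minimal and $u_2=u_1+u_1'$ for a minimal tripotent $u_1'\perp u_1$), so that $\Psi(\e)$ takes \emph{all} its nonzero values inside the single spin factor $(C_5)_2(u_2)$; this allows the paper to invoke Proposition~\ref{P:spin vector leht} once on the whole of $\Omega$, whereas you handle $\Omega_1$ separately via the minimal-tripotent case before applying the spin result on $\Omega_2$.
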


\begin{proof}
Assertion $(i)$ for $R\in \{\le,\le_h,\sim_h,\le_n,\le_2,\sim_2\}$ follows from Proposition~\ref{P:C0(T,E)}$(b)$.

By Remark~\ref{rem:c5 is finite} we deduce assertion $(ii)$ and hence the validity of $(i)$ for $R=\le_{n,t}$.

It remains to prove $(iii),(iv)$ and the validity of $(i)$ for $R\in\{\le_{h,t},\sim_{h,t}\}$. The `only if' part of $(i)$ is clear from previous results (cf. Proposition~\ref{P:C0(T,E)}$(a)$).

Assume $\uu(\omega)\le_{h,t}\e(\omega)$ for $\omega\in\omega$. 
Fix $u_1,u_2\in C_5$ two minimal orthogonal tripotents (forming hence a frame in $C_5$) and set $e=u_1+u_2$. Then $e$ is a complete tripotent. 

Apply Lemma~\ref{L:Cartan vector parametrization} to $\e$ and $u_1,e$ and let $\Psi$ be the resulting mapping. Then $\Psi$ is a triple automorphism of $M$, $\Psi(\e)$ attains only values $0,u_1,e$ and $\Psi(\uu)(\omega)\le_{h,t}\Psi(\e)(\omega)$ for each $\omega\in\Omega$.

Hence, both $\Psi(\uu)$ and $\Psi(\e)$ have values in $(C_5)_2(e)$, i.e., $\Psi(\uu),\Psi(\e)$ belong to  $C(\Omega,(C_5)_2(e))$. Since $(C_5)_2(e)$ is triple isomorphic to the eight-dimensional spin factor (by Lemma~\ref{L:tripotents in C5}), we may conclude by Proposition~\ref{P:spin vector leht}.
\end{proof}
 
\subsection{Type 6 Cartan factor}\label{subsec:c6}

In this subsection we investigate the above-defined relations in JBW$^*$-triples of the form $A\overline{\otimes}C_6$, where $A$ is an abelian von Neumann algebra and $C_6$ is the Cartan factor of type 6.

We start by the following remark.

\begin{remark}\label{rem:c6 is finite}
By \cite[Proposition 6.8]{Finite} we know that $A\overline{\otimes}C_6$ is a finite JBW$^*$-algebra. Hence, by Proposition~\ref{P:le2=lent in finite} the relations $\le_2$ and $\le_{n,t}$ coincide.
\end{remark}

Recall that $C_6$ may be represented as the space of `hermitian' $3\times 3$ matrices whose entries are complex Cayley numbers. This structure  is described for example in \cite[Section 6.3]{Finite}. In the following lemma we collect basic facts on tripotents in $C_6$ which follow from \cite[Remark 6.9]{Finite}.

\begin{lemma}\label{L:tripotents in C6}
In $C_6$ there are three types of nonzero tripotents -- unitary tripotents, minimal tripotens and rank-two tripotents.  

Moreover, if $u\in C_6$ is a rank-two tripotent, then $(C_6)_2(u)$ is triple-isomorphic to the ten-dimensional spin factor.
\end{lemma}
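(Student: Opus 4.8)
The plan is to combine the general structure theory of finite-dimensional Cartan factors recalled above with the concrete matrix model of $C_6$. Recall that $C_6$ may be represented as the Albert algebra $H_3(\O_\ce)$ of ``hermitian'' $3\times3$ matrices over the complex octonions; it is a unital JB$^*$-algebra of (complex) dimension $27$ and rank $3$, a standard frame being the three diagonal matrix units $E_{11},E_{22},E_{33}$. Since $C_6$ is a finite-dimensional Cartan factor, each nonzero tripotent $u$ has a well-defined rank (the cardinality of an orthogonal family of minimal tripotents summing to $u$), and any maximal such family has cardinality $\operatorname{rank}(C_6)=3$. Hence the rank of a nonzero tripotent is $1$, $2$ or $3$.

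First I would dispose of the two extreme ranks. Rank $1$ is by definition minimality. For rank $3$: if $u$ has rank $3$ and $(C_6)_0(u)\neq\{0\}$, then any nonzero tripotent in $(C_6)_0(u)$ would yield a minimal tripotent orthogonal to a maximal family realising $u$, producing an orthogonal family of minimal tripotents of cardinality $4>3$, a contradiction; thus $(C_6)_0(u)=\{0\}$, i.e. $u$ is complete. Now $C_6$ is finite (Remark~\ref{rem:c6 is finite}) and possesses a unitary element, namely its unit $1$, for which $(C_6)_2(1)=C_6$. Applying the definition of finiteness to $e=1$, every complete tripotent of $C_6$ is unitary in $C_6$. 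Therefore the rank-$3$ tripotents are exactly the unitary ones, and the trichotomy is established.

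It remains to identify $(C_6)_2(u)$ for a rank-$2$ tripotent $u$. By Lemma~\ref{L:frames}$(ii)$ any two rank-$2$ tripotents are interchanged by a triple automorphism of $C_6$, and triple automorphisms carry Peirce-$2$ subspaces onto Peirce-$2$ subspaces triple-isomorphically; so it suffices to treat one convenient representative. I would take the rank-$2$ projection $u=E_{11}+E_{22}$. Its Peirce-$2$ subspace is, by the Peirce decomposition of the Albert algebra with respect to $u$, precisely the set of ``hermitian'' matrices supported in the upper-left $2\times2$ corner, i.e. a copy of $H_2(\O_\ce)$, of dimension $1+1+8=10$.

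The heart of the argument — and the step I expect to be the main obstacle — is the classical identification of $H_2(\O_\ce)$ with the ten-dimensional spin factor. Concretely, one sends $\begin{pmatrix}\alpha & x\\ \overline{x}&\beta\end{pmatrix}$ (with $\alpha,\beta\in\ce$, $x\in\O_\ce$) into the spin factor built on $\ce\oplus\O_\ce$, with the spin inner product governed by the octonionic norm form, so that the quantity $\alpha\beta-N(x)$ plays the role of the relevant quadratic form, and then one checks that the triple product inherited from $H_2(\O_\ce)$ agrees with the spin triple product. The delicate point is the nonassociativity of $\O_\ce$; however, since only two octonionic quantities ever interact in a $2\times2$ ``hermitian'' matrix, Artin's theorem guarantees that the relevant products take place inside an associative subalgebra, whence the verification reduces to a finite computation with the norm form. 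Having matched the two triple products, Lemma~\ref{L:frames}$(ii)$ propagates the conclusion from $u=E_{11}+E_{22}$ to every rank-$2$ tripotent, which completes the proof.
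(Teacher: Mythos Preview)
Your proposal is correct. The paper does not give its own proof of this lemma at all; it simply records the statement with the remark that it ``follows from \cite[Remark 6.9]{Finite}.'' Your argument supplies exactly what such a citation would unpack: the rank classification via the rank-$3$ structure of $C_6$, the identification of rank-$3$ with unitarity through finiteness of $C_6$ (Remark~\ref{rem:c6 is finite}), the reduction of the rank-$2$ case to a single representative via Lemma~\ref{L:frames}$(ii)$, and the classical isomorphism $H_2(\O_\ce)\cong$ ten-dimensional spin factor. So there is nothing to compare against beyond the bare citation, and your route is the expected one.
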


Hence we get the following proposition which is a complete analogue of Proposition~\ref{P:c5-relations}.

\begin{prop}\label{P:c6-relations-easy} Let $u,e\in C_6$ be two nonzero tripotents.
\begin{enumerate}[$(a)$]
    \item If $e$ is minimal, then the equivalences from Proposition~\ref{P:spin nonunitary tripotents} are valid.
    \item Assume that both $e$ and $u$ have rank two. Then
    \begin{enumerate}[$(i)$]
        \item $e\sim_2 u\Leftrightarrow e\sim_{hc,t} u$. Moreover, chains of $\sim_{hc}$ of length two are enough.
        \item To describe $\sim_{h,t}$ the chains of $\sim_h$ of length three are enough.
    \end{enumerate}
    \item Assume that $e$ has rank two and $u$ is minimal. Then the following assertions hold:
    \begin{enumerate}[$(i)$]
        \item $u\le_h e\Leftrightarrow u\le_r e$;
        \item $u\le_n e\Leftarrow u\le_{hc}e\Leftrightarrow u\le_c e$;
        \item $u\le_2 e\Leftrightarrow u\le_{n,t} e\Leftarrow u\le_{hc,t}e\Leftrightarrow u\le_{h,t}e$.
        
        Moreover, to describe $\le_{h,t}$ the chains of $\le_h$ of length four are enough.
    \end{enumerate}
\end{enumerate}
\end{prop}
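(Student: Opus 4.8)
The plan is to reduce every assertion to the spin-factor results already established, using Lemma~\ref{L:tripotents in C6} as the bridge: whenever $e$ has rank two, $(C_6)_2(e)$ is triple-isomorphic to the ten-dimensional spin factor, and inside this Peirce-$2$ subspace $e$ is the unit, i.e.\ a unitary element. All the relations under consideration are defined purely through the triple product, hence are invariant under triple isomorphisms; moreover, by Remark~\ref{rem:relations in subtriples} the basic relations $\le,\le_r,\le_c,\le_h,\le_{hc},\le_n,\le_2$ give the same answer whether computed in the subtriple $(C_6)_2(e)$ or in $C_6$. This lets me import the spin-factor descriptions essentially verbatim.

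For assertion $(a)$ I would simply note that Proposition~\ref{P:spin nonunitary tripotents} is not genuinely specific to spin factors: it concerns a \emph{minimal} tripotent $e$ and was deduced from Proposition~\ref{P:orders special cases}$(a)$, so it holds for a minimal tripotent in any JBW$^*$-triple and applies to a minimal $e\in C_6$ with no extra argument. For assertion $(b)$, when both $u$ and $e$ have rank two, any of the listed relations forces $u\in (C_6)_2(e)$, and $u\sim_2 e$ turns $u$ into a unitary of the spin factor $(C_6)_2(e)$; I would then invoke Proposition~\ref{P:spin-unitaries}, whose part $(iii)$ produces the chain of $\sim_{hc}$ of length two witnessing $e\sim_{hc,t}u$ (the reverse implication being automatic), and whose part $(iv)$ yields the bound of three on chains of $\sim_h$ describing $\sim_{h,t}$.

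For assertion $(c)$, $u$ is minimal and, once the relevant relation holds, sits inside the spin factor $(C_6)_2(e)$, where $e$ is unitary and $u$ is not; Proposition~\ref{P:spin min-unitary} then delivers items $(i)$ and $(ii)$, while Corollary~\ref{cor:relations in spin}$(iii),(iv)$ supplies the coincidence of the transitive hulls and the length-four bound for $\le_{h,t}$. The step requiring care, and the main obstacle, is the passage between $(C_6)_2(e)$ and $C_6$ for the transitive hulls $\le_{h,t},\le_{hc,t},\le_{n,t}$: Remark~\ref{rem:relations in subtriples} guarantees only that a chain built inside the subtriple lifts to $C_6$, so the nontrivial direction is to confine an ambient chain to the spin factor. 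To achieve this I would apply the factorization Lemma~\ref{L:factor leht} (and its $\le_n$-counterpart Lemma~\ref{L:factor len}): a relation $u\le_{h,t}e$ in $C_6$ yields a tripotent $v$ with $u\le v\sim_{h,t}e$, and since $\sim_{h,t}$ entails $\sim_2$ the tripotent $v$ lies in $(C_6)_2(e)$, whence $u\le v$ keeps $u$ there as well, so the whole witnessing chain can be realized inside the spin factor. Finally, finiteness of $A\overline{\otimes}C_6$ (Remark~\ref{rem:c6 is finite}), which makes $\le_{n,t}$ coincide with $\le_2$, is what permits starting such a factorization from a plain $\le_2$-relation, closing the equivalences in $(c)(iii)$.
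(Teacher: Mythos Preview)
Your proposal is correct and follows essentially the same route as the paper: the paper declares this proposition ``a complete analogue of Proposition~\ref{P:c5-relations}'' and proves it (by reference to that earlier proof) via exactly the combination you describe --- Lemma~\ref{L:tripotents in C6} to identify $(C_6)_2(e)$ with a ten-dimensional spin factor, then Proposition~\ref{P:spin nonunitary tripotents} for $(a)$, Proposition~\ref{P:spin-unitaries} for $(b)$, and Proposition~\ref{P:spin min-unitary} with Corollary~\ref{cor:relations in spin}$(iii),(iv)$ for $(c)$. Your additional remarks on confining the transitive-hull chains to $(C_6)_2(e)$ via Lemma~\ref{L:factor leht} are a legitimate elaboration the paper leaves implicit.
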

 
 To describe properties of the relations between unitary elements in $C_6$ we will use the notion of determinant from \cite{cartan6}. If $u\in C_6$ is unitary, the spectral decomposition theorem in this finite-dimensional JBW$^*$-algebra implies that $u=\alpha_1 p_1+\alpha_2 p_2+\alpha_3 p_3$ where $p_1,p_2,p_3$ are mutually orthogonal minimal projections and $\alpha_1,\alpha_2,\alpha_3$ are complex units (cf. \cite[Theorem 4.1]{cartan6}). Following \cite[Section 4]{cartan6} we set in this case $\dt u=\alpha_1\alpha_2\alpha_3$ and call this quantity the \emph{determinant} of $u$.
 
 \begin{prop}\label{P:C6 unitaries}
 Let $u,e\in C_6$ be two unitary elements.
 \begin{enumerate}[$(i)$]
      \item $u\sim_{h,t} e\Leftrightarrow \dt u=\pm\dt e$. Moreover, the respective chain of $\sim_h$ has length at most $5$.
     \item Always $u\sim_{hc,t} e$.  Moreover, the respective chain of $\sim_{hc}$ has length at most $5$.
     \end{enumerate}
 \end{prop}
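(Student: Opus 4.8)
The plan is to reduce everything to the already-settled symmetric-matrix case, Proposition~\ref{P:simht1 in symmetric matrices} with $n=3$, transported through the exceptional determinant. Throughout I use that $C_6$ (realized as $3\times3$ hermitian matrices over the complex octonions) is a finite-dimensional, hence unital, JB$^*$-algebra of rank $3$ whose unit $\1$ is the identity matrix, that $\dt$ is the associated cubic norm (so $\dt(\sum_i\alpha_i p_i)=\alpha_1\alpha_2\alpha_3$ for any frame $p_1,p_2,p_3$ of minimal projections), and that $C_6$ contains the JB$^*$-subalgebra $B$ of complex-scalar matrices, which is triple-isomorphic to $(M_3)_s$, contains $\1$, and contains a frame $p_1,p_2,p_3$ with $\sum_i p_i=\1$. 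The one external ingredient I will invoke is the transformation rule $\dt(Tx)=\dt(T\1)\,\dt(x)$, valid for every triple automorphism $T$ of $C_6$ and unitary $x$; this is the $C_6$-analogue of Lemma~\ref{L:det in Mns}$(d)$ and follows from the description of the triple automorphisms of $C_6$ (cf. \cite{kaup1997real}) together with the multiplicativity of $\dt$ from \cite[Section~4]{cartan6}.

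First I would record that a single $\sim_h$-step between unitaries multiplies $\dt$ by $\pm1$. If $x\sim_h y$ with $x,y$ unitary, Lemmata~\ref{L:sqrt of unitary} and~\ref{L:shift to 1} (applied in the unital JB$^*$-algebra $C_6$ to the unitary $y$) furnish a triple automorphism $\Phi$ with $\Phi^{-1}(y)=\1$ and preserving $\sim_h$, so $\Phi^{-1}(x)\sim_h\1$; by Proposition~\ref{P:charact simh} this forces $\Phi^{-1}(x)$ to be a symmetry, whose eigenvalues are $\pm1$, whence $\dt\Phi^{-1}(x)=\pm1$. The transformation rule gives $\dt\Phi^{-1}(x)=\dt(\Phi^{-1}\1)\,\dt x$ and $1=\dt\1=\dt\Phi^{-1}(y)=\dt(\Phi^{-1}\1)\,\dt y$, so dividing yields $\dt x=\pm\dt y$. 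Since $\sim_h$ implies $\sim_2$ (Proposition~\ref{P:charact simh}), every element of a $\sim_h$-chain joining two unitaries is again unitary, so telescoping proves the ``only if'' part of $(i)$: $u\sim_{h,t}e\Rightarrow\dt u=\pm\dt e$.

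For the converse of $(i)$, assume $\dt u=\pm\dt e$. Applying Lemma~\ref{L:shift to 1} to $e$ I may assume $e=\1$, the determinant hypothesis becoming $\dt u=\pm1$ by the same division computation as above. Writing a spectral decomposition $u=\alpha_1 e_1+\alpha_2 e_2+\alpha_3 e_3$ with a frame $e_1,e_2,e_3$ and $\alpha_1\alpha_2\alpha_3=\dt u=\pm1$, Lemma~\ref{L:frames}$(i)$ provides a triple automorphism $S$ carrying $(e_1,e_2,e_3)$ to the standard frame $(p_1,p_2,p_3)$ of $B\cong(M_3)_s$. Then $S(\1)=\sum_i p_i=\1$ and $S(u)=\sum_i\alpha_i p_i\in B$ is diagonal, with $(M_3)_s$-determinant $\alpha_1\alpha_2\alpha_3=\pm1$. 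Proposition~\ref{P:simht1 in symmetric matrices} (case $n=3$) now gives $S(u)\sim_{h,t}\1$ inside $B$ through a chain of $\sim_h$ of length at most $2\cdot3-1=5$; by Remark~\ref{rem:relations in subtriples} this is also a chain in $C_6$, and applying the triple automorphisms $S^{-1}$ and then $\Phi$ (which preserve $\sim_h$) transports it to $u\sim_{h,t}e$ with a chain of length at most $5$.

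Finally $(ii)$ follows from $(i)$. By Proposition~\ref{P:hct-charact}$(b)$ it suffices to produce a complex unit $\alpha$ with $\alpha u\sim_{h,t}e$; since $\dt(\alpha u)=\alpha^3\dt u$, choosing $\alpha$ to be a cube root of the complex unit $\dt e/\dt u$ gives $\dt(\alpha u)=\dt e$, so $(i)$ yields $\alpha u\sim_{h,t}e$ through a chain of $\sim_h$ of length at most $5$. Converting each $\sim_h$ into $\sim_{hc}$ (the first step absorbing the scalar $\alpha$) turns this into a chain of $\sim_{hc}$ from $u$ to $e$ of length at most $5$, proving $u\sim_{hc,t}e$. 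The main obstacle, and the only genuinely nonroutine point, is justifying the determinant transformation rule under triple automorphisms and the compatibility of $\dt$ with the cubic norm on the symmetric subalgebra $B$; once these are secured, the argument is a faithful transcription of the symmetric-matrix proof.
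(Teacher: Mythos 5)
Your proposal is correct and follows essentially the same route as the paper: reduce to $e=\1$ via Lemma~\ref{L:shift to 1}, diagonalize $u$ with respect to a frame, view the diagonal element inside the copy of $(M_3)_s$, and invoke Proposition~\ref{P:simht1 in symmetric matrices} with $n=3$ to get the chain of length $5$, with $(ii)$ obtained from $(i)$ by a scalar adjustment. The determinant facts you flag as the ``only genuinely nonroutine point'' (that $\sim_h$ between unitaries changes $\dt$ only by a sign, and the transformation rule under automorphisms) are exactly what the paper imports from \cite[Corollaries 4.3 and 4.4]{cartan6}, so they are indeed available.
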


 \begin{proof}
 $(i)$ It follows from \cite[Corollary 4.3]{cartan6} that $\dt u=\pm\dt e$ whenever $u\sim_h e$. Hence, an obvious inductive argument proves the implication $\Rightarrow$.
 
 To prove the converse assume that $\dt u=\pm\dt e$. There is a triple automorphism $T$ of $C_6$ such that $T(e)=1$ (by Lemma~\ref{L:shift to 1}). By \cite[Corollary 4.4]{cartan6} we deduce that $\dt T(u)=\pm1$. Hence, we may and shall assume that $e=1$.
 
 Further, recall that $C_6$ is represented as the JB$^*$-algebra of hermitian $3\times 3$ matrices of complex octonions (see, e.g., \cite[Section 3]{cartan6}). There is a Jordan $*$-automorphism $S$ of $C_6$ such that $S(u)$ is a diagonal matrix (cf. \cite[Theorem 4.1(iii)]{cartan6}). Since Jordan $*$-isomorphisms clearly preserve the value of determinant, we may and shall additionally assume that $u$ is a diagonal matrix. 
 
 So, $e=1$ and $u$ is a diagonal matrix with $\dt u=\pm 1$. Note that $\dt u$ is the product of the numbers on the diagonal, so it is equal to the usual determinant of this complex matrix. Next observe that $(M_3)_s$, the JB$^*$-algebra of symmetric $3\times 3$ complex matrices, canonically embeds into $C_6$. Hence, we may conclude by applying Proposition~\ref{P:simht1 in symmetric matrices} for $n=3$. We conclude that $u\sim_{h,t}e$ and the respective chaing of $\sim_h$ has length at most $2\cdot3-1=5$.
 
 $(ii)$ This follows easily from $(i)$.
  \end{proof}

By combining Propositions~\ref{P:c6-relations-easy} and~\ref{P:C6 unitaries} we get the following result.

\begin{prop}\label{P:relations in C6}
The following holds in $C_6$:
\begin{enumerate}[$(i)$]
    \item The relations $\le_2,\le_{n,t},\le_{hc,t}$ coincide.
    \item To describe $\le_{h,t}$, the chains of $\le_h$ of length $6$ are enough.
    \item To describe $\le_{hc,t}$, the chains of $\le_{hc}$ of length $6$ are enough.
    \end{enumerate}
\end{prop}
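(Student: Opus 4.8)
The plan is to reduce every assertion to the classification of nonzero tripotents in $C_6$ recorded in Lemma~\ref{L:tripotents in C6} (unitary, rank-two, or minimal), to the factorizations through the partial order $\le$ supplied by Lemma~\ref{L:factor len} and Lemma~\ref{L:factor leht}, and to the finiteness of $C_6$ from Remark~\ref{rem:c6 is finite}. The three statements then become a case distinction on the rank of $e$, fed by Proposition~\ref{P:c6-relations-easy} for the minimal and rank-two cases and by Proposition~\ref{P:C6 unitaries} for the unitary case.

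For $(i)$, the implications $\le_{hc,t}\Rightarrow\le_{n,t}\Rightarrow\le_2$ hold in any JB$^*$-triple by Proposition~\ref{P:order implications}, and $\le_2$ and $\le_{n,t}$ coincide since $C_6$ is finite (Remark~\ref{rem:c6 is finite} and Proposition~\ref{P:le2=lent in finite}); so it suffices to prove $\le_2\Rightarrow\le_{hc,t}$. Given $u\le_2 e$, finiteness yields $u\le_{n,t}e$, and Lemma~\ref{L:factor len} (the implication $(iv)\Rightarrow(iii)$) produces a tripotent $v$ with $u\le v$ and $v\sim_2 e$. I would then show $v\sim_{hc,t}e$ by splitting on the rank of $e$: if $e$ is minimal, then $(C_6)_2(e)=\ce e$ forces $v=\alpha e$, whence $v\sim_{hc}e$; if $e$ has rank two, Proposition~\ref{P:c6-relations-easy}(b)(i) applies; and if $e$ is unitary, Proposition~\ref{P:C6 unitaries}(ii) applies. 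Since $u\le v$ gives $u\le_{hc,t}v$, transitivity of $\le_{hc,t}$ yields $u\le_{hc,t}e$.

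For $(ii)$ and $(iii)$ the same scheme controls the chain lengths. For $(ii)$, given $u\le_{h,t}e$, Lemma~\ref{L:factor leht} produces $v$ with $u\le v\sim_{h,t}e$; then $v\sim_2 e$, so $v$ has the same rank as $e$. If $e$ is minimal, then $v=\pm e$ by Proposition~\ref{P:spin nonunitary tripotents}(i) (invoked through Proposition~\ref{P:c6-relations-easy}(a)), so $u\le\pm e$ gives a chain of $\le_h$ of length at most one; if $e$ has rank two, Proposition~\ref{P:c6-relations-easy}(b)(ii) witnesses $v\sim_{h,t}e$ by a chain of $\sim_h$ of length at most three, so that $u\le_h v\le_h\cdots\le_h e$ has length at most four; and if $e$ is unitary, Proposition~\ref{P:C6 unitaries}(i) gives a chain of $\sim_h$ of length at most five, which together with the single step $u\le v$ yields a chain of $\le_h$ of length at most six. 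The unitary case is the extremal one and produces the bound $6$. For $(iii)$ I would argue identically, starting from $u\le_{hc,t}e$ (equivalently $u\le_2 e$ by part $(i)$), factoring $u\le v\sim_2 e$, and translating each $\sim_{hc}$-step into a $\le_{hc}$-edge; the counts are $1+5=6$ in the unitary case (Proposition~\ref{P:C6 unitaries}(ii)), $1+2=3$ in the rank-two case (Proposition~\ref{P:c6-relations-easy}(b)(i)), and at most $2$ in the minimal case, giving the bound $6$.

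The only genuinely delicate point is the bookkeeping of chain lengths: I must be consistent that each $\sim_\kappa$-relation contributes exactly one $\le_\kappa$-edge and that prepending the single order step $u\le v$ adds precisely one further edge, so that the length-five chains for unitaries furnished by Proposition~\ref{P:C6 unitaries} become length-six chains of $\le_h$ (respectively $\le_{hc}$). Everything else is a routine case analysis governed by Lemma~\ref{L:tripotents in C6}, requiring no computation beyond what Propositions~\ref{P:c6-relations-easy} and~\ref{P:C6 unitaries} already provide.
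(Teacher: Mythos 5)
Your proposal is correct and follows essentially the same route as the paper, which obtains this proposition precisely by combining Proposition~\ref{P:c6-relations-easy} and Proposition~\ref{P:C6 unitaries} via the factorizations through $\le$ (Lemmata~\ref{L:factor len} and~\ref{L:factor leht}) and the finiteness of $C_6$; your chain-length bookkeeping ($1+5=6$ in the unitary case) matches the paper's bounds.
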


\begin{remark}
As $C_6$ contains the ten-dimensional spin factor as a subtriple, Example~\ref{ex:spin} may be applied for $C_6$ as well.
\end{remark}

We next address the case of $L^\infty(\mu,C_6)$.

\begin{lemma}\label{L:leht selection C_6}
Let $n\in\en$, $n\ge2$. Let $e\in C_6$ be a fixed unitary element. Then
$$A=\{u\in (M_{2n})_a\setsep u\mbox{ is a tripotent such that }\uu\le_{h,t}\e\}$$
is a compact set. Moreover, there is a Borel measurable mapping $\Phi:A\to C_6$ such that for each $u\in A$ its image $\Phi(u)$ is a tripotent such that $u\le \Phi(u)\sim_{h,t}e$.
\end{lemma}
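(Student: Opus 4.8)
The plan is to follow the pattern of Lemma~\ref{L:leht selection} and Lemma~\ref{L:leht selection a}; the one genuinely new point is to encode the relation $\sim_{h,t}$ between unitaries by closed conditions. (The set $A$ is of course meant inside $C_6$, that is, $A=\{u\in C_6\setsep u=\J uuu\text{ and }u\le_{h,t}e\}$.)

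First I would reduce to a chain description. By Lemma~\ref{L:factor leht}, a tripotent $u$ satisfies $u\le_{h,t}e$ if and only if there is a tripotent $v$ with $u\le v\sim_{h,t}e$. Since $e$ is unitary and $v\sim_{h,t}e$ forces $v\sim_2 e$ (Proposition~\ref{P:simht-char}), every such $v$ is itself unitary. For two unitary elements, Proposition~\ref{P:C6 unitaries}$(i)$ guarantees that $v\sim_{h,t}e$ is witnessed by a chain of $\sim_h$ of length at most $5$; padding a shorter chain by repeating a term (which is legitimate as $\sim_h$ is reflexive) we may always take the chain to have exactly $5$ members. This converts the relation $v\sim_{h,t}e$ into an existential statement over finitely many auxiliary tripotents.

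Accordingly I would introduce
$$B=\left\{(u,w_1,\dots,w_5)\in C_6^{6}\setsep \begin{array}{l} u=\J uuu,\ w_j=\J{w_j}{w_j}{w_j}\ (1\le j\le 5),\\ u=\J u{w_1}u,\ w_5=e,\\ w_j=\J{w_{j+1}}{w_j}{w_{j+1}}\text{ and } w_{j+1}=\J{w_j}{w_{j+1}}{w_j}\ (1\le j\le 4)\end{array}\right\}.$$
Here $u=\J u{w_1}u$ encodes $u\le w_1$ (Proposition~\ref{P:order char}$(ii)$) and the last line encodes $w_j\sim_h w_{j+1}$ (the definition of $\le_h$ applied in both directions). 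Every defining relation is an equality of continuous functions of the triple product, so $B$ is closed; as $C_6$ is finite-dimensional and all coordinates are tripotents (hence of norm at most $1$), $B$ is also bounded, thus compact. Since $A$ is exactly the projection of $B$ onto the $u$-coordinate, $A$ is compact.

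Finally I would obtain $\Phi$ by measurable selection. The set-valued map $u\mapsto\{(w_1,\dots,w_5)\setsep (u,w_1,\dots,w_5)\in B\}$ has nonempty compact values on the compact metric space $A$, so the consequence of the Kuratowski--Ryll-Nardzewski theorem quoted in \cite[Theorem on p.~403]{Ku-RN} yields a Borel selection; composing it with the projection onto the $w_1$-coordinate gives a Borel map $\Phi\colon A\to C_6$ with $u\le\Phi(u)$ and $\Phi(u)\sim_{h,t}e$ for each $u\in A$. The crucial step is the encoding: what makes $\{v\setsep v\sim_{h,t}e\}$ closed is the \emph{uniform} bound on the chain length supplied by Proposition~\ref{P:C6 unitaries}$(i)$. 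Alternatively one could impose the single closed condition $(\dt v)^2=(\dt e)^2$ on the (compact) set of unitaries, using that $\dt$ is continuous there; either way, the uniform control of $\sim_{h,t}$ is the heart of the argument, since without it $\sim_{h,t}$ need not define a closed set and the compactness of $B$ would fail.
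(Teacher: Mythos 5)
Your proposal is correct in substance but takes a different route from the paper's. The paper proves this lemma as ``a slight modification'' of Lemma~\ref{L:leht selection}: there one sets $B=\{(u,v)\setsep u=\J uuu,\ v=\J vvv,\ u=\J uvu,\ \dt v=\pm\dt e\}$, i.e.\ the relation $v\sim_{h,t}e$ between unitaries is encoded by the single closed determinant condition from Proposition~\ref{P:C6 unitaries}$(i)$ (the continuity of $\dt$ and the ``if'' direction of that proposition doing all the work), so that $B$ lives in $C_6^2$ and only one auxiliary variable is needed before applying Kuratowski--Ryll-Nardzewski. What you propose as your main argument is instead to encode $\sim_{h,t}$ by an explicit chain of $\sim_h$ of uniformly bounded length, which costs more variables but isolates very clearly \emph{why} the construction works: the uniform chain-length bound is what makes the relation closed. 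Your closing remark about imposing $(\dt v)^2=(\dt e)^2$ is in fact exactly the paper's argument, so you have both proofs in hand. Two small points. First, you correctly repaired the statement: $A$ should consist of tripotents of $C_6$ (the displayed $(M_{2n})_a$ and the hypothesis on $n$ are leftovers from Lemma~\ref{L:leht selection a}). Second, there is an off-by-one in your chain encoding: in this paper ``a chain of $\sim_h$ of length $5$'' means five \emph{links} (compare the proof of Proposition~\ref{P:spin-unitaries}$(iv)$, where ``length three'' corresponds to four tripotents), so your tuple should be $(u,w_1,\dots,w_6)$ with $w_6=e$ and five relations $w_j\sim_h w_{j+1}$; with only $w_1,\dots,w_5$ the projection of $B$ could a priori be a proper subset of $A$, since the paper does not show that four links suffice (the optimality of such bounds is left open). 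With that indexing fixed, your argument is complete.
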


\begin{proof}
The proof may be done by a slight modification of the proof of Lemma~\ref{L:leht selection}.
\end{proof}

\begin{prop}
Let $\mu$ be a probability measure satisfying \eqref{eq:CK=Linfty} and $M=L^\infty(\mu,C_6)=C(\Omega,C_6)$.  Let $\uu,\e$ be two tripotents in $M$. 
\begin{enumerate}[$(i)$]
    \item If 
$$R\in\{\le,\le_h,\le_n,\le_{h,t},\le_{n,t},\le_2,\sim_h,\sim_{h,t},\sim_2\},$$
then
$$\uu R\e \Leftrightarrow \forall\omega\in\Omega\colon \uu(\omega) R\e(\omega).$$
\item $\uu\le_{n,t}\e \Leftrightarrow \uu\le_2\e$;
\item To describe $\sim_{h,t}$ chains of $\sim_h$ of length five are enough.
\item To describe $\le_{h,t}$ chains of $\le_h$ of length six are enough.
\end{enumerate}
\end{prop}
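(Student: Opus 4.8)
The plan is to follow the scheme of the proof of Proposition~\ref{P:relations is Aotimesc5}, the only genuine new difficulty being that $C_6$ has rank three and therefore admits unitary elements, so the reduction to a spin factor that worked for $C_5$ no longer covers the top stratum. First I would dispose of the routine parts. For $R\in\{\le,\le_h,\sim_h,\le_n,\le_2,\sim_2\}$ the equivalence in $(i)$ is immediate from Proposition~\ref{P:C0(T,E)}$(b)$. Assertion $(ii)$ is Remark~\ref{rem:c6 is finite} combined with Proposition~\ref{P:C0(T,E)}$(b)$ applied to $\le_2$, and this also settles $(i)$ for $R=\le_{n,t}$. For the two remaining relations $\le_{h,t}$ and $\sim_{h,t}$ the implication "$\uu R\e\Rightarrow\uu(\omega)R\e(\omega)$" is clear from Proposition~\ref{P:C0(T,E)}$(a)$, so everything reduces to the converse together with the chain-length bounds $(iii)$ and $(iv)$.

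For the converse I would treat $\sim_{h,t}$ first. Assuming $\uu(\omega)\sim_{h,t}\e(\omega)$ for every $\omega$ (which in particular forces $\uu(\omega)\sim_2\e(\omega)$ pointwise, hence $\uu\sim_2\e$), I fix tripotents $u_1,u_2,u_3\in C_6$ with $\operatorname{rank}(u_j)=j$ and apply Lemma~\ref{L:Cartan vector parametrization} to $\e$. This yields a triple automorphism $\Psi$ of $C(\Omega,C_6)$ for which $\Psi(\e)$ takes only the values $0,u_1,u_2,u_3$, while $\Psi(\uu)(\omega)\sim_{h,t}\Psi(\e)(\omega)$ pointwise. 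The resulting clopen partition $\Omega=\Omega_0\cup\Omega_1\cup\Omega_2\cup\Omega_3$, indexed by the rank of $\e(\omega)$, lets me work on each stratum separately; since the strata are disjoint clopen sets, the global chain length is the \emph{maximum} of the lengths used on the pieces (shorter chains being padded by staying at their endpoint), not their sum. On $\Omega_0$ the claim is trivial; on $\Omega_1$ the target is minimal, so Proposition~\ref{P:c6-relations-easy}$(a)$ gives $\Psi(\uu)=\pm\Psi(\e)$, a chain of length one; on $\Omega_2$ one has $\Psi(\uu)(\omega)\sim_2 u_2$, so both functions take values in the fixed Peirce-$2$ subspace $(C_6)_2(u_2)$, which is triple-isomorphic to the ten-dimensional spin factor (Lemma~\ref{L:tripotents in C6}), and Proposition~\ref{P:spin vector leht}$(a)$ yields a chain of $\sim_h$ of length three.

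The main obstacle is the unitary stratum $\Omega_3$, where $(C_6)_2(u_3)=C_6$ is not a spin factor. Here I would diagonalize: applying Lemma~\ref{L:Cartan vector diagonalization} to $\Psi(\uu)|_{\Omega_3}$ with a fixed decomposition $u_3=e_3^1+e_3^2+e_3^3$ into minimal tripotents produces a triple automorphism $\Theta$ fixing $u_3$ and making $\Theta(\Psi(\uu))(\omega)$ a linear combination of $e_3^1,e_3^2,e_3^3$ for $\omega\in\Omega_3$. These combinations take values in the embedded copy of $(M_3)_s$ inside $C_6$ used in the proof of Proposition~\ref{P:C6 unitaries}, so I may run the diagonal procedure of Proposition~\ref{P:simht1 in symmetric matrices} for $n=3$ with coefficients in $C(\Omega_3,\TT)$, exactly as in the vector argument of Proposition~\ref{P:AotimesMn}; here the continuity of the determinant $\omega\mapsto\dt\Theta(\Psi(\uu))(\omega)$ together with assumption~\eqref{eq:CK=Linfty} guarantees that the Borel data are realized by genuine continuous functions after modification on a null set. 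This gives a chain of $\sim_h$ of length at most $2\cdot 3-1=5$ on $\Omega_3$. Taking the maximum over the four strata proves the converse for $\sim_{h,t}$ and yields the bound $(iii)$.

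Finally, for $\le_{h,t}$ I would exploit Lemma~\ref{L:factor leht}. After the same parametrization reducing $\e$ to the values $0,u_1,u_2,u_3$, a Borel-measurable selection—via Lemma~\ref{L:leht selection C_6} on $\Omega_3$, the minimal-tripotent case on $\Omega_1$, and the selection implicit in Proposition~\ref{P:spin vector leht}$(b)$ on $\Omega_2$—produces a tripotent $\vv\in C(\Omega,C_6)$ with $\uu(\omega)\le\vv(\omega)\sim_{h,t}\e(\omega)$ pointwise (again upgrading Borel to continuous a.e.\ through~\eqref{eq:CK=Linfty}). Then $\uu\le\vv$ by Proposition~\ref{P:C0(T,E)} and $\vv\sim_{h,t}\e$ by the part just established, so $\uu\le_{h,t}\e$ by Lemma~\ref{L:factor leht}. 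Reading off the chain, the single step $\uu\le_h\vv$ followed by the $\sim_h$-chain of length $5$ realizing $\vv\sim_{h,t}\e$ gives a chain of $\le_h$ of length $5+1=6$, which is exactly $(iv)$.
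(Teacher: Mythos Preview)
Your proof is correct and follows essentially the same route as the paper: reduce the pointwise relations via Proposition~\ref{P:C0(T,E)}, normalize $\e$ to take values $0,e_1,e_2,e_3$ via Lemma~\ref{L:Cartan vector parametrization}, handle the strata $\Omega_0,\Omega_1,\Omega_2$ by the minimal and spin-factor cases, and on $\Omega_3$ diagonalize via Lemma~\ref{L:Cartan vector diagonalization} and apply the $(M_3)_s$ procedure from Propositions~\ref{P:simht1 in symmetric matrices}/\ref{P:AotimesMn}; for $\le_{h,t}$ you invoke Lemma~\ref{L:leht selection C_6} on $\Omega_3$ and Lemma~\ref{L:factor leht} exactly as the paper does. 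The only cosmetic difference is that the paper applies $\Theta$ globally rather than just on $\Omega_3$, but this changes nothing.
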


\begin{proof} The proof is completely analogous to that of Proposition~\ref{P:relations is Aotimesc5}. The first part may be copied.

To prove the rest assume first that $\uu(\omega)\sim_{h,t}\e(\omega)$ for $\omega\in\Omega$. Let $p_1,p_3,p_3$ be the canonical diagonal projections in $C_6$ with exactly one $1$ on the diagonal. Then $p_1+p_2+p_3=\1$, the unit of $C_6$. Further, set $e_1=p_1, e_2=p_1+p_2, e_3=p_1+p_2+p_3$ and apply Lemma~\ref{L:Cartan vector parametrization} to $\e$ and $e_1,e_2,e_3$. We get a mapping $\Psi$ and clopen sets $\Omega_0,\dots,\Omega_3$.

Then $\Psi$ is an automorphism of $M$, $\Psi(\e)$ has values only $0,e_1,e_2,e_3$ and $$\hbox{ $\Psi(\uu)(\omega)\sim_{h,t}\Psi(\e)(\omega)$ for $\omega\in\Omega$.}$$ Next we apply Lemma~\ref{L:Cartan vector diagonalization} to $\Psi(\uu)$ and $e_k=p_1+\dots+p_k$, $k=1,2,3$. We thus obtain a mapping $\Theta$. 

Then $\Theta$ is an automorphism of $M$, $\Theta(\Psi(\e))=\Psi(\e)$, the values of $\Theta(\Psi(\uu))$ are diagonal matrices and 
$\Theta(\Psi(\uu)(\omega))\sim_{h,t}\Psi(\e)(\omega)$ for $\omega\in\Omega$.

For $\omega\in\Omega_0\cup\Omega_1$ we have already  $\Theta(\Psi(\uu)(\omega))\sim_{h}\Psi(\e)(\omega)$. On $\Omega_2$ we may apply Proposition~\ref{P:spin vector leht} (since $(C_6)_2(e_2)$ is isomorphic to the ten-dimensional spin factor). Finally, for $\omega\in\Omega_3$ we have $\dt_3\Theta(\Psi(\uu))=\pm1$ by Proposition~\ref{P:C6 unitaries}$(i)$, so we may apply Proposition~\ref{P:AotimesMn}$(ii)$. If put together these things we get a proof of $(i)$ for $\sim_{h,t}$ and of $(iii)$.

Finally, assume that $\uu(\omega)\le_{h,t}\e(\omega)$ for $\omega\in\Omega$. We apply Lemma~\ref{L:Cartan vector parametrization} as in the previous case to get $\Psi$ and $\Omega_0,\dots,\Omega_3$. On $\Omega_0\cup\Omega_1\cup\Omega_2$ we may apply Proposition~\ref{P:spin vector leht}, while on $\Omega_3$ we use Lemma~\ref{L:leht selection C_6}.
In this way we get a proof of $(i)$ for $\le_{h,t}$ and of $(iv)$.
\end{proof}

 \section{Triples of the form $H(W,\alpha)$}
 
 The last type of JBW$^*$-triples to be analyzed are triples of the form $H(W,\alpha)$. Let us recall their definition and some properties.
 
 Let $W$ be a continuous von Neumann algebra. Assume that $\alpha:W\to W$ is a linear involution commuting with the standard involution $^*$. Set
 $$M=H(W,\alpha)=\{x\in W\setsep \alpha(x)=x\}.$$
 We will moreover assume that the involution $\alpha$ is central, i.e. $\alpha(x)=x$ for each $x$ from the center of $W$. This assumption may be done by \cite[Remark 5.2]{Finite}. 
 
 Since $M$ is a subtriple of $W$, the relations $\le,\le_r,\le_c,\le_h,\le_n,\le_2$ can be described in the same way as in a von Neumann algebra and Remark~\ref{rem:relations in subtriples} applies.
 
 \begin{remark}\label{rem:H(W,alpha) is finite}
 By \cite[Proposition 5.8]{Finite} we know that $H(W,\alpha)$ is a finite JBW$^*$-algebra. Hence, by Proposition~\ref{P:le2=lent in finite} the relations $\le_2$ and $\le_{n,t}$ coincide.
 \end{remark}
 
 Further, by Lemmata~\ref{L:shift to 1} and~\ref{L:shift to projection} to describe the relation $uRe$  it is  enough to understand it in case $e=1$ or, more generaly, if $e$ is a projection. In fact, the key thing is just the case $e=1$ as witnessed by the following obvious lemma.
 
 \begin{lemma}
 Let $p\in M=H(W,\alpha)$ be a projection. Then the following assertions hold:
 \begin{enumerate}[$(a)$]
     \item $p$ is a projection in $W$, satisfies $\alpha(p)=p$ and $pWp$ is a continuous von Neumann algebra.
     \item $pWp$ is invariant for $\alpha$.
     \item $M_2(p)$ is canonically Jordan $*$-isomorphic to $H(pWp,\alpha|_{pWp})$.
 \end{enumerate}
 \end{lemma}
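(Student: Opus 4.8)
Since the lemma is asserted to be obvious, the plan is simply to record three short verifications, all resting on the standing hypotheses that $\alpha$ is a linear involution commuting with $*$ --- that is, an anti-multiplicative linear map of period two --- so that $M=H(W,\alpha)$ is at once a Jordan $*$-subalgebra and a JB$^*$-subtriple of $W$ carrying the inherited triple product, and that $W$ is continuous.

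For $(a)$ I would argue as follows. The relation $\alpha(p)=p$ is just the membership $p\in M$. Being a projection in the JBW$^*$-algebra $M$ means $p=p^*=p\circ p$; since the Jordan product inherited from $W$ is $x\circ y=\frac12(xy+yx)$ we have $p\circ p=p^2$, so $p=p^2=p^*$ is a genuine projection of $W$. For the continuity of the corner $pWp$ I would invoke the criterion that a von Neumann algebra is continuous exactly when it has no nonzero abelian projection: if $q\le p$ is abelian in $pWp$, then $qWq=q(pWp)q$ is abelian, so $q$ is abelian in $W$ and hence $q=0$ by continuity of $W$; thus $pWp$ is continuous.

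For $(b)$ the computation is immediate: anti-multiplicativity of $\alpha$ together with $\alpha(p)=p$ gives $\alpha(pxp)=\alpha(p)\alpha(x)\alpha(p)=p\,\alpha(x)\,p\in pWp$ for every $x\in W$, so $\alpha(pWp)\subseteq pWp$. In particular $\alpha|_{pWp}$ is again an anti-multiplicative linear map of period two commuting with $*$, and $H(pWp,\alpha|_{pWp})$ makes sense.

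The substance of the lemma is $(c)$, and the key step is the identification $M_2(p)=M\cap W_2(p)$. This is where the subtriple hypothesis enters: since $\J pxp\in M$ for $x\in M$, the operator $Q(p)$, and hence $P_2(p)=Q(p)^2$, maps $M$ into itself, and for $x\in M$ one has $x\in M_2(p)$ iff $L(p,p)x=x$ iff $x\in W_2(p)$. As $W_2(p)=pWp$ for a projection $p$, this yields $M_2(p)=\{x\in pWp\setsep\alpha(x)=x\}=H(pWp,\alpha|_{pWp})$ as sets. It then remains to match the two algebraic structures: using $xp=x$, $py=y$ and $px^*p=x^*$ for $x,y\in pWp$, the triple-induced operations on $M_2(p)$ satisfy $\J xpy=\frac12(xpy+ypx)=\frac12(xy+yx)=x\circ y$ and $\J pxp=px^*p=x^*$, so they coincide with the Jordan $*$-structure of $pWp$. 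Hence the canonical inclusion is the desired Jordan $*$-isomorphism. The only points that deserve more than a line are the continuity of the corner in $(a)$ and this Peirce identification; everything else is a direct calculation.
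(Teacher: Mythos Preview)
Your proof is correct; the paper itself omits the argument entirely, merely labelling the lemma as ``obvious'', so your verification is exactly what a reader would supply. The only point worth noting is that your identification of ``linear involution'' with ``linear anti-automorphism of period two'' is indeed the intended meaning (cf.\ the paper's earlier use of $^\inv$ on $M_n(\Ha_C)$), so the computation $\alpha(pxp)=p\,\alpha(x)\,p$ in $(b)$ is justified.
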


\begin{prop}\label{P:HWalpha contains M2s} $M=H(W,\alpha)$ contains a subtriple isomoprhic to $(M_2)_s$. Therefore Example~\ref{ex:M2s examples} may be applied to deduce that the relations $\le_h,\sim_h,\le_{hc},\sim_{hc},\le_n$ are not transitive in $M$.
\end{prop}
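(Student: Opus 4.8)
The plan is to produce inside $W$ a unital copy of $M_2$ on which the anti-automorphism $\alpha$ acts precisely as the matrix transpose; its fixed-point space is then a copy of $(M_2)_s$ contained in $M=H(W,\alpha)$. The crucial device is the map $\tau\colon W\to W$ defined by $\tau(x)=\alpha(x^*)$. Since $*$ is a conjugate-linear anti-automorphism while $\alpha$ is a linear anti-automorphism commuting with $*$, the composition $\tau$ is a \emph{conjugate-linear $*$-automorphism} with $\tau^2=\mathrm{id}$, i.e.\ a conjugation; it is moreover normal, as both $\alpha$ and $*$ are. Hence its fixed-point set $W_{\er}=\{x\in W\setsep \tau(x)=x\}$ is a weak$^*$-closed real $*$-subalgebra with $W=W_{\er}\oplus iW_{\er}$, so that $W$ is the complexification of the real von Neumann algebra $W_{\er}$. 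Note that for self-adjoint $x$ one has $\tau(x)=\alpha(x)$, so the projections of $W_{\er}$ are exactly the projections lying in $M$.

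Next I would invoke the halving of the unit. Because $W$ is continuous it has no type $I$ part, and the same is inherited by its real form $W_{\er}$; the halving lemma then applies in $W_{\er}$, yielding two orthogonal projections $e_{11},e_{22}\in W_{\er}$ with $e_{11}+e_{22}=1$ together with a partial isometry $v\in W_{\er}$ satisfying $v^*v=e_{11}$ and $vv^*=e_{22}$. Since $v,e_{11},e_{22}$ are $\tau$-fixed and $e_{11},e_{22}$ are self-adjoint, we obtain $\alpha(e_{11})=e_{11}$, $\alpha(e_{22})=e_{22}$, and, applying $*$ to the identity $\tau(v)=v$ (that is $\alpha(v^*)=v$), also $\alpha(v)=v^*$. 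Setting $e_{12}=v^*$ and $e_{21}=v$ gives a system of $2\times2$ matrix units in $W$ with $\alpha(e_{ij})=e_{ji}$ for all $i,j$.

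Finally, define a unital $*$-homomorphism $\phi\colon M_2\to W$ by $\phi(E_{ij})=e_{ij}$; it is injective because the $e_{ij}$ are matrix units, hence a triple isomorphism onto its image. On generators $\alpha(\phi(E_{ij}))=e_{ji}=\phi(E_{ij}^{\,t})$, so $\alpha\circ\phi=\phi\circ(\cdot)^{t}$ on all of $M_2$. Consequently $\phi(y)\in M$ if and only if $y^{t}=y$, so $\phi$ restricts to a triple isomorphism of $(M_2)_s$ onto a subtriple $\phi((M_2)_s)\subseteq M$, which proves the first assertion. For the second assertion, the tripotents appearing in Example~\ref{ex:M2s examples} are symmetric matrices, hence may be transported into $\phi((M_2)_s)$; by Remark~\ref{rem:relations in subtriples} each of the relations $\le_h,\sim_h,\le_{hc},\sim_{hc},\le_n$ holds between two elements of this subtriple exactly when it holds in $M$, so the counterexamples to transitivity in $(M_2)_s$ remain counterexamples in $M$.

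The main obstacle is the equivariant halving step: one must guarantee that the two complementary halves of $1$ and the partial isometry connecting them can be chosen inside the real algebra $W_{\er}$. This is precisely the halving lemma for continuous real von Neumann algebras (equivalently, a $\tau$-equivariant halving in $W$), resting on the fact that the comparison theory and type decomposition pass from $W$ to its real form and that a continuous algebra places no obstruction on halving its unit. Everything else amounts to routine verification of the matrix-unit relations and of the intertwining $\alpha\circ\phi=\phi\circ(\cdot)^{t}$.
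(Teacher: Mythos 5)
Your argument is correct in outline but follows a genuinely different route from the paper's. You pass to the real form $W_{\er}=\{x\in W:\alpha(x^*)=x\}$, halve the identity \emph{inside} $W_{\er}$, and thereby obtain honest $2\times2$ matrix units $e_{ij}$ on which $\alpha$ acts as the transpose, so that $\phi((M_2)_s)=\phi(M_2)\cap M$ is a unital JC$^*$-copy of $(M_2)_s$; this is clean and makes the intertwining $\alpha\circ\phi=\phi\circ(\cdot)^t$ completely transparent. The paper instead halves in the complex algebra $W$ (continuity gives $p\sim 1-p$) and then invokes two lemmas from the companion paper [Finite, Lemmas 5.5 and 5.7] to produce $\alpha$-fixed partial isometries $e_1$ with $p_i(e_1)=p$, $p_f(e_1)=\alpha(p)$, a unitary $v=u+\alpha(u)\in M$, and $e_2=ue_1^*\alpha(u)$; it then verifies directly that $\operatorname{span}\{e_1,e_2,v\}$ realizes $(M_2)_s$ inside the isotope $W_2(e)$ with $e=e_1+e_2$. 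The two constructions produce essentially the same object (your $e_{11},e_{22},e_{12}+e_{21}$ correspond, after multiplying by the unitary $e$, to the paper's $e_1,e_2,v$). The one point you should not leave as an assertion is the equivariant halving: that $W_{\er}$ is a continuous real von Neumann algebra admitting halving of its unit by a partial isometry \emph{in} $W_{\er}$ is true, but it rests on the type and comparison theory of real von Neumann algebras (Ayupov--Rakhimov--Usmanov), which the paper never develops; you would need an explicit citation, or alternatively you could reroute this step through the halving of the continuous JBW-algebra $M_{sa}$ by a symmetry $s\in M$ exchanging $p$ and $1-p$ (then $v=sp$ lies in $W_{\er}$), or simply through the same [Finite] lemmas the paper uses. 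The deduction of non-transitivity from Example~\ref{ex:M2s examples} via Remark~\ref{rem:relations in subtriples} is exactly as in the paper and is fine, since all the relations in question are preserved when passing to subtriples.
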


\begin{proof}
Since $W$ is continuous, there is a projection $p\in W$ such that $p\sim 1-p$. By \cite[Lemma 5.7]{Finite}  there is a partial isometry $e_1\in M$ such that $p_i(e_1)=p$ and $p_f(e_1)=\alpha(p)$.

 Further, by \cite[Lemma 5.5]{Finite} $1-p\sim 1-\alpha(p)$, hence $p\sim 1-\alpha(p)$. Hence there is a partial isometry $u\in W$ such that $p_i(u)=p$ and $p_f(u)=1-\alpha(p)$. 
Then $\alpha(u)$ is also a partial isometry and $p_i(\alpha(u))=1-p$ and $p_f(\alpha(u))=\alpha(p)$.
Then $v=u+\alpha(u)$ is a unitary element in $M$.

Set $e_2=ue_1^*\alpha(u)$. Then $e_2\in M$ and it is a partial isometry with $p_i(e_2)=1-p$ and $p_f(e_2)=1-\alpha(p)$.
Hence, $e=e_1+e_2$ is a unitary element in $M$. 

Now, we claim that $E=\span\{e_1,e_2,v\}$ is a subtriple of $M$ isomorphic to $(M_2)_s$, the isomorphism being
$$\begin{pmatrix}
a &b \\ b&c
\end{pmatrix} \mapsto a e_1+c e_2+bv.$$
To this end let us work in $W$ (and in $M$) equipped with the operations $\circ_e$ and $^{*_e}$:

In this setting clearly $e_1$ and $e_2$ are mutually orthogonal projections. Moreover,
$$\begin{aligned}
u^{*_e}&=eu^*e=(e_1+e_2)u^*(e_1+e_2)=e_1u^*e_2=e_1u^*ue_1^*\alpha(u)=e_1pe_1^*\alpha(u)\\&=e_1e_1^*\alpha(u)=\alpha(p)\alpha(u)=\alpha(u),\end{aligned}$$
hence $v$ is $^{*_e}$-selfadjoint unitary element in $M$.

It remains to compute
$$\begin{aligned}
e_1\circ_e v&=\J {e_1}{e}{v}=\J{e_1}{e_1}v=\frac12(e_1e_1^*v+ve_1^*e_1)
\\&=\frac12(\alpha(p)(u+\alpha(u))+(u+\alpha(u))p)=\frac12(\alpha(u)+u)=\frac12v,\end{aligned}$$
and
$$e_2\circ_e v=e\circ_e v-e_1\circ_e v=\frac12v.$$
This completes the proof that $E$ is isomorphic to $(M_2)_s$.
Hence, Example~\ref{ex:M2s examples} may be applied.
\end{proof}

It remains to analyze relations $\sim_{h,t}$ and $\sim_{hc,t}$. Since $W$ is continuous, these two relations coincide with $\sim_2$ in $W$ and, moreover, chains of $\sim_h$ of length $16$ are enough to describe $\sim_{h,t}$ in $W$. However, it is not clear whether a similar thing holds also in $H(W,\alpha)$.

Similarly as in Proposition~\ref{P:le1 AotimesB(H)s}$(a)$ we get the following easy result.
 
\begin{lemma}
Let $u\in H(W,\alpha)$ be a unitary element. Then $u\sim_{h,t}1$ if and only if 
$$u=v_1v_2\dots v_n,$$
where $v_1,\dots,v_n\in W$ are symmetries and
$$v_1,v_1v_2,v_1v_2v_3,\dots,v_1v_2\dots v_n\in H(W,\alpha).$$
\end{lemma}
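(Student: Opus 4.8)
The plan is to prove this as a direct transcription of the argument behind Proposition~\ref{P:le1 C*}$(f)$ into the subalgebra setting, exploiting that $H(W,\alpha)$ is a JB$^*$-subalgebra of $W$ and that the relations descend well along subtriples (Remark~\ref{rem:relations in subtriples}). First I would recall that since $u\in H(W,\alpha)$ is unitary, by Proposition~\ref{P:le1 C*}$(n)$ and Proposition~\ref{p:simn=sim2} we automatically have $u\sim_2 1$; thus the only content of $u\sim_{h,t}1$ is whether $u$ can be joined to $1$ by a finite chain of the relation $\sim_h$, all intermediate terms lying in $H(W,\alpha)$. By Proposition~\ref{P:simht-char} (the equivalence $(i)\Leftrightarrow(iii)$), $u\sim_{h,t}1$ holds if and only if there are tripotents $u=w_1\sim_h w_2\sim_h\cdots\sim_h w_n=1$ in $H(W,\alpha)$. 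Since $\sim_h$ between two unitaries is characterized inside $W$ by the observation in the proof of Proposition~\ref{P:le1 C*}$(f)$, namely that for unitary $v,w$ one has $v\sim_h w\Leftrightarrow v^*w$ is a symmetry, the chain structure translates directly into a product decomposition.

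The key step is the iteration. Suppose $u=w_1\sim_h w_2\sim_h\cdots\sim_h w_n=1$ with each $w_j$ a unitary in $H(W,\alpha)$ (being $\sim_2$-equivalent to $1$, each $w_j$ is automatically unitary). Setting $v_j=w_{j+1}^*w_j$ for $j=1,\dots,n-1$, the cited observation shows each $v_j$ is a symmetry in $W$. A telescoping computation gives $w_j=w_{j+1}v_j=\cdots=1\cdot v_{n-1}v_{n-2}\cdots v_j$, so in particular $u=w_1=v_{n-1}v_{n-2}\cdots v_1$, a product of symmetries; after relabelling the factors in reverse order we obtain $u=v_1v_2\cdots v_m$ with $v_1=w_2^*,\ v_1v_2=w_3^*,\dots$, i.e.\ the partial products $v_1v_2\cdots v_k$ are exactly the elements $w_{k+1}^*=w_{k+1}$ (each $w_{k+1}\in H(W,\alpha)$ and is self-adjoint? — here one must be slightly careful, since the $w_k$ need not be self-adjoint). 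The correct bookkeeping, as in Proposition~\ref{P:le1 AotimesB(H)s}$(a)$, is to read off that each partial product $v_1\cdots v_k$ equals $w_{k+1}^{*}$ up to the relevant relabelling and hence lies in $H(W,\alpha)$, because $H(W,\alpha)$ is closed under the involution. Conversely, if $u=v_1\cdots v_n$ with the $v_j$ symmetries in $W$ and all partial products in $H(W,\alpha)$, then writing $w_k=v_1\cdots v_{k-1}$ (with $w_1=1$, $w_{n+1}=u$) one checks $w_{k}^*w_{k+1}=v_k$ is a symmetry, so consecutive $w_k$ are $\sim_h$-related and all lie in $H(W,\alpha)$ by hypothesis, giving a $\sim_h$-chain from $1$ to $u$.

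The main obstacle I expect is purely notational rather than conceptual: matching the indexing of the symmetries $v_j$ and the partial-product condition $v_1,v_1v_2,\dots,v_1\cdots v_n\in H(W,\alpha)$ to the intermediate tripotents $w_k$ of the $\sim_h$-chain, while keeping track of which elements are self-adjoint in $W$ versus merely fixed by $\alpha$. The subtlety is that the intermediate unitaries $w_k$ need not be $^*$-self-adjoint, so the requirement is membership in $H(W,\alpha)$ (fixed by $\alpha$), \emph{not} self-adjointness in $W$; this is exactly why the statement lists the partial-product condition rather than asserting the $v_j$ generate a chain of symmetries in $H(W,\alpha)$ itself (indeed a symmetry of $W$ need not belong to $H(W,\alpha)$). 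Once the dictionary $w_{k+1}=v_1\cdots v_k$ is fixed, the proof is a routine verification: in one direction a telescoping product, in the other the identity $w_k^*w_{k+1}=v_k$ combined with the observation from Proposition~\ref{P:le1 C*}$(f)$ that a unitary $t$ satisfies $t=t^*$ and $t^2=1$ iff the corresponding $\sim_h$ relation holds. No new machinery beyond Proposition~\ref{P:simht-char} and the symmetry characterization is needed, so I would present it as a short lemma with the proof being essentially a reference to the proof of Proposition~\ref{P:le1 C*}$(f)$ together with the closedness of $H(W,\alpha)$ under $^*$.
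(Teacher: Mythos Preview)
Your proposal is correct and follows exactly the approach the paper intends: the paper gives no explicit proof for this lemma, merely noting that it is obtained ``similarly as in Proposition~\ref{P:le1 AotimesB(H)s}$(a)$,'' which in turn is proved by ``following the proof of Proposition~\ref{P:le1 C*}$(f)$''---precisely the symmetry characterization and telescoping argument you outline. Your dictionary $w_{k+1}=v_1\cdots v_k$ (so that $w_k^*w_{k+1}=v_k$) is the right bookkeeping, and your remark distinguishing $\alpha$-fixedness from $^*$-self-adjointness correctly identifies why the partial-product condition is the relevant one.
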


\begin{question}\ 
\begin{enumerate}[$(1)$]
    \item Do the relations $\sim_{h,t}$ and $\sim_2$ coincide in $H(W,\alpha)$?
    \item Is there a  bound of the length of chains of $\sim_h$ needed to describe $\sim_{h,t}$ in $H(W,\alpha)$?
\end{enumerate}
\end{question}

\section{Final overview and open problems}

We defined several natural relations on tripotents and analyzed them firstly in general JB$^*$-triples and then in the individual summands from the representation of JBW$^*$-triples  recalled in \eqref{eq:representation of JBW* triples}. In this last section we briefly review the main results, common points, differences and open problems.

The first level consists of 
relations $\le_r,\le_c,\le_h,\le_{hc},\le_n$. Preorders $\le_r$ and $\le_c$ play an auxilliary role, $\le_h$ and $\le_n$ are inspired by the phenomenon of being self-adjoint (hermitian) and normal, respectively. The relation $\le_{hc}$ is an intermediate one capturing the phenomenon of being a complex multiple of a self-adjoint element.

These relations have natural descriptions in von Neumann algebras given in Section~\ref{sec:vN general}. It does not matter whether we consider these relations in a larger or a smaller triple (cf. Remark~\ref{rem:relations in subtriples}), these descriptions remain to be valid in subtriples of von Neumann algebras (the summands of the form $A\overline{\otimes}C$ where $C$ is a Cartan factor of type $1,2,3$, $H(W,\alpha)$ and $pV$). Cartan factors of type $4$, spin factors, may be also found as subtriples of a von Neumann algebra, but instead of that we used the underlying structure of a Hilbert space to describe the relations  (see Section~\ref{sec:spin}). For the Cartan factor of type $5$ we used the fact that the Peirce-$2$ subspace of any complete tripotent is isomorphic to the eight-dimensional spin factor. It is probably not easy to characterize the relations using directly the structure of $C_5$, but we may  use the results on spin factors. For $C_6$, the Cartan factor of type $6$, the situation is even more complicated - the Peirce-$2$ subspace of a rank-two tripotent is isomorphic to the ten-dimensional spin factor, hence we may again use the results on spin factors. However, $C_6$ admits unitary elements and for them such a simple reduction is not possible. However, we may use Lemma~\ref{L:shift to 1} together with \cite[Corollary 10.3]{cartan6} to reduce it to $H_3(\Ha_C)$ (hence to $(M_6)_a$, cf. Lemma~\ref{L:M2na=HnHC}).

The situation is easy in the simplest JBW$^*$-triple $\ce$ and, more generally, in rank-one Cartan factors. We summarize it in the following theorem.

\begin{thm}\label{T:rank one}
Let $M$ be a rank-one Cartan factor, i.e., either $M=H=B(\ce,H)$ for a Hilbert space $H$ or $M=(M_3)_a$. Let $u,e\in M$ be two tripotents. Then
$$u\le e\Rightarrow u\le_r e\Leftrightarrow u\le_h e \Rightarrow u\le_c\Leftrightarrow u\le_{hc} e\Leftrightarrow u\le_n e\Leftrightarrow u\le_2 e$$
and the remaining implications are not valid.
\end{thm}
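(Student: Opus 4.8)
The plan is to reduce the entire diagram to the one-dimensional computation already recorded in Proposition~\ref{P:orders special cases}$(a)$. First I would observe that $M$ has rank one: for $M=H=B(\ce,H)$ the nonzero tripotents are exactly the unit vectors and no two of them are orthogonal, while for $M=(M_3)_a$ we have $\operatorname{rank}(M)=\lfloor\frac32\rfloor=1$ by Lemma~\ref{L:Mna rank}$(i)$. Hence every nonzero tripotent $e\in M$ is minimal, so its Peirce-$2$ subspace is one-dimensional, $M_2(e)=\ce e$, and as a unital JB$^*$-algebra (equivalently, as a subtriple) it is triple-isomorphic to $\ce$, with $e$ playing the role of the unit $1$.

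Next I would exploit that each relation in the list pushes $u$ into $M_2(e)$. By Proposition~\ref{P:order implications}, for every $R\in\{\le,\le_r,\le_c,\le_h,\le_{hc},\le_n,\le_2\}$ the relation $uRe$ implies $u\le_2 e$, that is, $u\in M_2(e)=\ce e$. Thus if $u\notin\ce e$ then $uRe$ is false for all such $R$, and the asserted implications and equivalences hold vacuously. If $u\in\ce e$, then $u$ and $e$ both lie in the subtriple $M_2(e)\cong\ce$, so by Remark~\ref{rem:relations in subtriples} the validity of $uRe$ is unchanged whether computed in $M$ or in $M_2(e)$. Transporting Proposition~\ref{P:orders special cases}$(a)$ through this identification (triple isomorphisms preserve all the relations) gives exactly
$$u\le e\Rightarrow u\le_r e\Leftrightarrow u\le_h e \Rightarrow u\le_c e\Leftrightarrow u\le_{hc} e\Leftrightarrow u\le_n e\Leftrightarrow u\le_2 e,$$
which is the claimed chain; no transitive-hull relations enter, so no extra bookkeeping is required.

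It then remains to show that the two single-headed arrows cannot be reversed, i.e. that $\le_r\not\Rightarrow\le$ and $\le_c\not\Rightarrow\le_h$. Since $M\neq\{0\}$ it contains a nonzero tripotent $e$, and Example~\ref{ex:distinguishing relations}$(a)$ applies verbatim: $-e\sim_r e$ yields $-e\le_r e$ while $-e$ and $e$ are incomparable for $\le$, and $ie\sim_c e$ yields $ie\le_c e$ while $ie$ and $e$ are incomparable for $\le_h$. As these are the only non-equivalence arrows in the diagram, no further implication holds, which finishes the argument. The one genuinely structural input is the identity $M_2(e)=\ce e$ for rank-one factors; once it is in hand the statement is a direct transport of the already-proved scalar case, so I expect no real obstacle beyond correctly invoking the rank-one structure for the two listed models.
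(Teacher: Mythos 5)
Your argument is correct and matches the paper's proof, which likewise reads the implications off Proposition~\ref{P:orders special cases}$(a)$ after reducing to $M_2(e)=\ce e$ and refutes the two reversible-looking arrows with Example~\ref{ex:distinguishing relations}$(a)$. The only slip is citing Lemma~\ref{L:Mna rank} for $(M_3)_a$ — that lemma is stated only for $n\ge 4$ — whereas the paper records the rank-one fact for $B(H)_a$ with $\dim H=3$ in Example~\ref{ex:c2dim3}$(a)$; the fact itself is true, so nothing breaks.
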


The validity of the mentioned implications is now clear (cf.\ Proposition~\ref{P:orders special cases}$(a)$), counterexamples to the remaining implications are given in Example~\ref{ex:distinguishing relations}$(a)$.

\smallskip

The implications which are in general  valid for the first-level relations are summarized in the next theorem.

\begin{thm}
Let $M$ be a JBW$^*$-triple and let $u,e\in M$ be two tripotents. Then:
$$\begin{array}{ccccccccc}
u\le e & \Rightarrow & u\le_r e& \Rightarrow & u \le_c e   &&& &  \\
 && \Downarrow&& \Downarrow && &&   \\
     && u \le_h e &\Rightarrow&u\le_{hc}e &\Rightarrow & u\le_n e &\Rightarrow&u\le_2 e
\end{array}
$$
Moreover,  if $M$ is not a rank-one Cartan factor, none of the implication may be reversed, except possibly for the last one.
\end{thm}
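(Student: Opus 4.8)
The positive part of the statement is immediate from Proposition~\ref{P:order implications}: its first two rows, together with the implication $\le_n\Rightarrow\le_2$ established in its proof, give exactly the displayed arrows, so the plan is to concentrate entirely on non-reversibility. Throughout I may assume that $M$ contains a nonzero tripotent, since otherwise $M=\{0\}$ and there is nothing to compare.

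The heart of the argument is a single structural reduction: \emph{if $M$ is not a rank-one Cartan factor, then $M$ contains two orthogonal nonzero tripotents.} I would prove this from the representation~\eqref{eq:representation of JBW* triples}. If at least two of the $\ell_\infty$-summands are nonzero, I pick a nonzero (say complete) tripotent in each---these exist as extreme points of the unit ball of each nonzero summand---and they are orthogonal, being supported in different summands. If exactly one summand is nonzero, I run through the possibilities: a summand $H(W,\alpha)$ contains a copy of $(M_2)_s$ by Proposition~\ref{P:HWalpha contains M2s}, and $(M_2)_s$ visibly carries two orthogonal tripotents; for $pV$ the corner $pVp$ is a nonzero continuous von Neumann algebra, hence $p$ splits into two orthogonal nonzero subprojections; for $A_j\overline{\otimes}C_j$ with $\dim A_j\ge2$ I tensor two orthogonal projections of $A_j$ with a minimal tripotent of $C_j$; and if $\dim A_j=1$ the summand is $C_j$ itself, which by hypothesis is not rank-one and therefore has rank at least two.

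Once an orthogonal pair $u\perp v$ of nonzero tripotents is available, the six failures of reversibility are read off from Example~\ref{ex:distinguishing relations}. Part~$(a)$, applied to the single tripotent $u$, yields $-u\le_r u$ with $-u\not\le u$, and $iu\le_c u$ with $iu\not\le_r u$, which reverse $\le\Rightarrow\le_r$ and $\le_r\Rightarrow\le_c$. Part~$(d)$, applied to $u,v$, gives $u-v\sim_h u+v$ that is $\le_c$-incomparable (a fortiori $\le_r$-incomparable) with $u+v$; $i(u-v)\sim_{hc}u+v$ that is incomparable with $u+v$ for both $\le_h$ and $\le_c$; and $u+iv\sim_n u+v$ that is $\le_{hc}$-incomparable with $u+v$. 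These refute, in turn, the reversals of $\le_r\Rightarrow\le_h$, of $\le_c\Rightarrow\le_{hc}$ and $\le_h\Rightarrow\le_{hc}$, and of $\le_{hc}\Rightarrow\le_n$, leaving only $\le_n\Rightarrow\le_2$ possibly reversible, exactly as the statement allows.

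I expect the structural reduction of the second paragraph to be the main obstacle, since it is the only place where one must invoke the full classification~\eqref{eq:representation of JBW* triples} and verify case by case that each continuous or higher-rank summand genuinely produces an orthogonal pair; the deduction of the counterexamples is then a direct citation of Example~\ref{ex:distinguishing relations}. A minor point to double-check is the equivalence ``rank-one JBW$^*$-triple $=$ rank-one Cartan factor'' implicit in the reduction, which is precisely what the summand-by-summand analysis confirms (cf.\ Theorem~\ref{T:rank one}).
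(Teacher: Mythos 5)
Your proof is correct and follows essentially the same route as the paper: the implications come from Proposition~\ref{P:order implications}, and non-reversibility is reduced to producing two orthogonal nonzero tripotents and then citing Example~\ref{ex:distinguishing relations} (parts $(a)$ and $(d)$). The only difference is that you spell out, via the representation \eqref{eq:representation of JBW* triples}, the claim that a JBW$^*$-triple other than a rank-one Cartan factor contains such an orthogonal pair --- a claim the paper simply asserts --- and your summand-by-summand verification of it is sound.
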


The validity of the implications follows from Proposition~\ref{P:order implications}. If $M$ is not a rank-one Cartan factor, it contains at least two mutually orthogonal nonzero tripotents, hence the respective counterxamples may be found in Example~\ref{ex:distinguishing relations}$(d)$.

The equivalence $u\le_n e \Leftrightarrow u\le_2 e$ holds in abelian von Neumann algebras (any element is normal) and in fact characterizes an interesting class of JBW$^*$-triples.

\begin{thm}
Let $M$ be a JBW$^*$-triple. Then the following assertions are equivalent.
\begin{enumerate}[$(1)$]
    \item $M$ is triple isomorphic to $$\bigoplus_{j\in J}^{\ell_\infty}A_j\overline{\otimes} C_j,$$
where $A_j$'s are abelian von Neumann algebras and $C_j$'s are rank one Cartan factors.
\item $M$ does not contain a subtriple isomorphic to $(M_2)_s$.
\item If $u,e\in M$ are tripotents, then $u\le_n e\Leftrightarrow u\le_2 e$.
\item The relation $\le_n$ is transitive in $M$.
\item The relation $\le_h$ is transitive in $M$.
\item The relation $\sim_h$ is transitive in $M$.
\item The relation $\le_{hc}$ is transitive in $M$.
\item The relation $\sim_{hc}$ is transitive in $M$.
\end{enumerate}
\end{thm}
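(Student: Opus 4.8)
My plan is to prove the eight conditions equivalent through a strongly connected web of implications anchored at $(1)$ and $(2)$: I would establish $(2)\Rightarrow(1)$; the three implications $(1)\Rightarrow(3)$, $(1)\Rightarrow(5)$, $(1)\Rightarrow(7)$; the trivial steps $(3)\Rightarrow(4)$, $(5)\Rightarrow(6)$, $(7)\Rightarrow(8)$; and finally $(4)\Rightarrow(2)$, $(6)\Rightarrow(2)$, $(8)\Rightarrow(2)$. From $(1)$ one then reaches $(3),(5),(7)$, hence $(4),(6),(8)$, hence $(2)$, and back to $(1)$, so every condition is reachable from every other.

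The structural implication $(2)\Rightarrow(1)$ is the heart of the matter, and I would argue it by contraposition from the representation \eqref{eq:representation of JBW* triples}, $M=\left(\bigoplus_{j}A_j\overline{\otimes}C_j\right)\oplus^{\ell_\infty}H(W,\alpha)\oplus^{\ell_\infty}pV$. The goal is to show that any summand which is not of the form $A\overline{\otimes}C$ with $C$ a rank-one Cartan factor forces a copy of $(M_2)_s$ inside $M$. If $W\ne 0$, Proposition~\ref{P:HWalpha contains M2s} already embeds $(M_2)_s$ into $H(W,\alpha)$. If $pV\ne 0$, then $pVp$ is a nonzero continuous von Neumann algebra, hence admits two orthogonal Murray--von Neumann equivalent projections summing to $p$ and thus a unital copy of $M_2$, whence $(M_2)_s\subset pVp\subset pV$. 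Finally, every Cartan factor $C_j$ of rank $\ge 2$ contains a copy of $(M_2)_s$ (for types $1$ and $3$ via a $2\times2$ corner $M_2$; for the spin, $5$, and $6$ types via the three-dimensional spin factor, which is isomorphic to $(M_2)_s$; for type $2$ via $(M_4)_a\cong H_2(\Ha_C)$, cf.\ Lemma~\ref{L:M2na=HnHC}), and tensoring with a nonzero $A_j$ preserves this copy, e.g.\ taking $\{\chi_U\otimes X\}$ for a clopen $U$ with $X$ ranging over the copy of $(M_2)_s$ in $C_j$. Each alternative contradicts $(2)$ via Remark~\ref{rem:relations in subtriples}, so under $(2)$ the summands $H(W,\alpha)$ and $pV$ vanish and every $C_j$ has rank one, which is exactly $(1)$.

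For $(1)\Rightarrow(3),(5),(7)$ I would reduce to the fibers. Under $(1)$ each rank-one $C_j$ is (triple-isomorphic to) a Hilbert space, so $M$ is an $\ell_\infty$-sum of summands $A_j\overline{\otimes}C_j=L^\infty(\mu_j,C_j)$. The relations $\le_h,\le_n,\le_2$ are given by triple-product identities, so they hold in $M$ iff they hold coordinatewise over $j$ (Proposition~\ref{P:direct sum}) and $\mu_j$-almost everywhere pointwise on each fiber. As each fiber is a rank-one Cartan factor, Theorem~\ref{T:rank one} gives $\le_n=\le_2$ and the transitivity of $\le_h\,(=\le_r)$ there; transporting these a.e.\ statements back yields $(3)$ and $(5)$. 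For $(7)$ the coordinatewise reduction fails for $\le_{hc}$, but transitivity still follows by a global-scalar argument: if $\alpha u\le_h v$ and $\beta v\le_h w$, then in almost every fiber $\alpha u(\omega)\le_r v(\omega)$ and $\beta v(\omega)\le_r w(\omega)$, and since $\le_r$ absorbs signs, multiplying the first relation by the unit $\beta$ (Proposition~\ref{P:order properties}$(a)$) and chaining along $\le$ produces $\alpha\beta\,u(\omega)\le_r w(\omega)$ a.e., i.e.\ $\alpha\beta\,u\le_h w$, so $u\le_{hc}w$ with the single global unit $\alpha\beta$.

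The implications $(3)\Rightarrow(4)$, $(5)\Rightarrow(6)$, $(7)\Rightarrow(8)$ are immediate: $\le_2$ is transitive, and if $\le_h$ (resp.\ $\le_{hc}$) is transitive then so is its symmetric part $\sim_h$ (resp.\ $\sim_{hc}$), by applying transitivity in both directions. The closing implications $(4),(6),(8)\Rightarrow(2)$ are again contrapositive: if $M$ contains a subtriple isomorphic to $(M_2)_s$, then, since these relations coincide in a subtriple with their restrictions from $M$ (Remark~\ref{rem:relations in subtriples}), the non-transitivity of $\le_n$, $\sim_h$, and $\sim_{hc}$ in $(M_2)_s$ recorded in Example~\ref{ex:M2s examples} transfers to $M$, contradicting $(4)$, $(6)$, $(8)$ respectively. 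The one genuinely delicate point is the sub-claim within $(2)\Rightarrow(1)$ that every rank-$\ge2$ Cartan factor embeds $(M_2)_s$, which relies on the rank theory and classification of Cartan factors assembled in the preceding sections; everything else is bookkeeping over the summands.
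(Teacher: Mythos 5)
Your proof is correct and follows essentially the same route as the paper's: the same cycle of implications, the same case-by-case scan of the summands of \eqref{eq:representation of JBW* triples} for $(2)\Rightarrow(1)$ (with $pV$, $H(W,\alpha)$ via Proposition~\ref{P:HWalpha contains M2s}, and the rank-$\ge 2$ Cartan factors each yielding a copy of $(M_2)_s$), and the same fiberwise reduction to Theorem~\ref{T:rank one} for the implications out of $(1)$, closed off by Example~\ref{ex:M2s examples} and Remark~\ref{rem:relations in subtriples}. The only cosmetic difference is that you prove $(1)\Rightarrow(7)$ directly by tracking the single global scalar $\alpha\beta$ through the fibers, whereas the paper obtains $(7)$ from $(5)$ via Proposition~\ref{P:hct-charact}; both arguments are valid.
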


\begin{proof}
$(1)\Rightarrow (3)\&(4)\&(5)$: We may assume that each $A_j$ is of the form $L^\infty(\mu_j)=C(\Omega_j)$ where $\mu_j$ is a probability measure satisfying $\eqref{eq:CK=Linfty}$. Since $C_j$ is necessarily reflexive, $A_j\overline{\otimes}C_j=L^\infty(\mu_j,C_j)$ (see Lemma~\ref{L:LinftyC=CKC}). Now we can easily conclude using Theorem~\ref{T:rank one}, Proposition~\ref{P:direct sum} and an obvious analogue of Proposition~\ref{P:C0(T,E)}.

$(3)\Rightarrow(4)$: This follows from the transitivity of $\le_2$.

$(5)\Rightarrow(7)$: This follows from Proposition~\ref{P:hct-charact}.

The implications $(5)\Rightarrow(6)$ and $(7)\Rightarrow(8)$ are trivial.

$(4)\vee(6)\vee(8)\Rightarrow(2)$: This follows from Example~\ref{ex:M2s examples} and Remark \ref{rem:relations in subtriples}.

$(2)\Rightarrow(1)$: Consider the representation of $M$ in the form \eqref{eq:representation of JBW* triples}. We observe that any summand different from those given in $(1)$ contains a subtriple isomorphic to $(M_2)_s$:

A Cartan factor of type 1 is of the form $B(H,K)$. If both $H$ and $K$ have dimension at least two, it clearly contains even $M_2$.

 A Cartan factors of type $2$ is of the form $B(H)_a$. If it has rank at least two, $\dim H\ge4$, so it contains a subtriple isomorphic to $(M_4)_a$, hence to $H_2(\Ha_C)$ (see Lemma~\ref{L:M2na=HnHC}). Finally, this subtriple clearly contains a further subtriple isomorphic to $(M_2)_s$.

For Cartan factors of type $3$ the statement is trivial. 

Any spin factor has, by definition, dimension at least $3$, hence we can use \cite[Lemma 3.4$(i)$]{KP2019}.

Further, the Cartan factor of type $5$ contains as a subtriple the eight-dimensional spin factor and the
Cartan factor of type $6$ contains as a subtriple the ten-dimensional spin factor, so we conclude by the previous case. 

For the summand $H(W,\alpha)$ we may use Proposition~\ref{P:HWalpha contains M2s}.  

The summand $pV$ clearly contains even $M_2$ (note that $V$ is continuous).
\end{proof}

In case the relations $\le_h,\le_{hc},\le_n$ are not transitive, we make a further step and consider their transitive hulls. 
Let us point out  that the resulting relations $\le_{h,t},\le_{hc,t},\le_{n,t}$ do depend on the surrounding triple and need not be preserved when passing to a subtriple (see Example~\ref{ex:lent in subtriple}).

When studying the relations $\le_{h,t},\le_{hc,t},\le_{n,t}$, there are two basic sets of questions -- their coincidence with other relations and the smallest possible length of a chain of $\le_h,\le_{hc},\le_n$ needed to describe them.

For $\le_{n,t}$ the situation is quite simple -- a chain of length two is enough (see Lemma~\ref{L:factor len}) and $\le_{n,t}$ coincides with $\le_2$ exactly in those JBW$^*$-triples which are finite (see Proposition~\ref{P:le2=lent in finite}). Finite JBW$^*$-triples are characterized in \cite{Finite}.

For $\le_{h,t}$ and $\le_{hc,t}$ the situation is more complicated. Firstly, if $V$ is a continuous von Neumann algebra and $p\in V$ is a projection, then $\sim_2$ coincides with $\sim_{h,t}$ in $pV$ (cf. Corollary~\ref{cor:pV}$(3)$). It follows that in this case $\le_{h,t}$ coincides with $\le_{n,t}$, and even with $\le_2$ if $V$ is moreover finite (cf. Corollary~\ref{cor:pV}$(3)$). For type I JBW$^*$-triples $\sim_{h,t}$ does not coincide with $\sim_2$ as such a triple contains abelian tripotents (cf. Proposition \ref{P:orders special cases}). It is not known whether $\sim_{h,t}$ coincides with $\sim_2$ in the remaining continuous summand -- $H(W,\alpha)$ -- and we have no idea how to attack this question.

For Cartan factors which are either of type $1$ or of finite rank the relation $\sim_{hc,t}$ coincides with $\sim_2$ (see Proposition~\ref{P:coincidence in B(H,K)}$(a)$, Proposition~\ref{P:coincidence in Mns}$(a)$, Proposition~\ref{P:coincidence in Mna}$(a)$, Corollary~\ref{cor:relations in spin}$(i)$, Corollary~\ref{cor:relations in C5}$(i)$ and Proposition~\ref{P:relations in C6}$(i)$). In Cartan factors of finite rank  $\le_{hc,t}$ coincides even with $\le_2$ (as such JBW$^*$-triples are finite).
In finite rank Cartan factors having a unitary element the relation $\sim_{h,t}$ may be characterized using various notions of determinant (see Proposition~\ref{P:products of symmetries}$(iii)$, Proposition~\ref{P:simht1 in symmetric matrices}, Proposition~\ref{P:simht in M2na}, Proposition~\ref{P:spin-unitaries}$(iv)$ and Proposition~\ref{P:C6 unitaries}$(i)$),
hence $\sim_{h,t}$ does not coincide with $\sim_{hc,t}$. For the remaining Cartan factors of infinite rank, i.e., $B(H)_a$ and $B(H)_s$ for an infinite-dimensional Hilbert space $H$, we do not know whether $\sim_2$ and $\sim_{hc,t}$ coincide.

If we leave Cartan factors and look just to type I JBW$^*$-triples, in particular, to triples of the form $A\overline{\otimes}C$ where $C$ is a Cartan factor, there is no chance to have a coincidence of $\sim_2$ and $\sim_{hc,t}$. There are some results indicating that it would be natural to replace the complex multiple by a `multiple by a central element' -- cf. Proposition~\ref{P:products of symmetries}$(iv)$ or Proposition~\ref{P:AotimesMn}$(i)$. But we have not investigated this direction in more detail.

The last question we are going to comment concerns length of chains of $\sim_h$ needed to describe $\sim_{h,t}$. This is related to some old results on expressing unitary elements as products of symmetries collected in Proposition~\ref{P:products of symmetries}. For triples of the form $pV$ this length is bounded by $16$ (by $4$ if $V$ has no direct summand of type $II$), see Corollary~\ref{cor:pV} (assertion $(1)$ and $(2)$). For Cartan factors of type $4$ or $5$ chains of length $3$ are enough (see Proposition~\ref{P:spin-unitaries}$(iv)$ and Proposition~\ref{P:c5-relations}$(b)(ii)$), for Cartan factors of type $6$ chains of length $5$ are sufficient (see Proposition~\ref{P:c6-relations-easy}$(b)(ii)$ and Propositon~\ref{P:C6 unitaries}$(i)$). Moreover, the chains may be found in a measurable way, so the bounds remain to be valid in tensor products.  For $(M_n)_s$ and $(M_n)_a$ we have bounds depending on $n$ (see Proposition~\ref{P:coincidence in Mns}$(b)$ and Proposition~\ref{P:coincidence in Mna}$(b)$) and we do not know whether this dependence is necessary. It also transfers to the respective tensor products. It is completely unclear whether there is a bound in $B(H)_s$ and $B(H)_a$ for infinite-dimensional $H$ or in $H(W,\alpha)$.

\smallskip\smallskip

\textbf{Acknowledgements.} The first author supported by the project OPVVV CAAS 
 CZ.02.1.01/0.0/0.0/16\_019/0000778.    The third author supported by MCIN / AEI / 10. 13039 / 501100011033 / FEDER ``Una manera de hacer Europa'' project no.  PGC2018-093332-B-I00, Junta de Andaluc\'{\i}a grants FQM375, A-FQM-242-UGR18 and PY20$\underline{\ }$ 00255, and by the IMAG--Mar{\'i}a de Maeztu grant CEX2020-001105-M / AEI / 10.13039 / 501100011033.

\smallskip\smallskip

\textbf{Declaration of competing interests:} none.

\bibliography{orders}\bibliographystyle{acm}

\end{document}